\documentclass[11pt,reqno]{amsart}

\usepackage[T1]{fontenc}
\usepackage[utf8]{inputenc}
\usepackage{lmodern}
\usepackage{microtype}
\usepackage{geometry}
\usepackage{amsmath,amssymb,amsfonts,mathtools,mathrsfs}
\usepackage{bm}
\usepackage{enumitem}
\usepackage{booktabs}
\usepackage{longtable}
\usepackage{array}
\usepackage{tikz-cd}
\usepackage{hyperref}
\usepackage[nameinlink,capitalize,noabbrev]{cleveref}
\usepackage{xcolor}
\usepackage{setspace}
\hypersetup{colorlinks=true,linkcolor=blue!55!black,citecolor=green!35!black,urlcolor=blue!55!black}
\allowdisplaybreaks
\newtheorem{theorem}{Theorem}[section]
\newtheorem{proposition}[theorem]{Proposition}
\newtheorem{lemma}[theorem]{Lemma}
\newtheorem{corollary}[theorem]{Corollary}

\newtheorem{question}[theorem]{Question}
\newtheorem{problem}[theorem]{Problem}
\theoremstyle{definition}
\newtheorem{definition}[theorem]{Definition}
\newtheorem{example}[theorem]{Example}

\newtheorem{assumption}[theorem]{Assumption}
\theoremstyle{remark}
\newtheorem{remark}[theorem]{Remark}
\newtheorem{warning}[theorem]{Warning}

\newcommand{\C}{\mathbb C}
\newcommand{\R}{\mathbb R}

\newcommand{\cA}{\mathcal A}

\newcommand{\cC}{\mathcal C}
\newcommand{\cD}{\mathcal D}

\newcommand{\cF}{\mathcal F}

\newcommand{\cH}{\mathcal H}

\newcommand{\cO}{\mathcal O}
\newcommand{\cP}{\mathcal P}
\newcommand{\cQ}{\mathcal Q}

\newcommand{\cS}{\mathcal S}
\newcommand{\cT}{\mathcal T}

\newcommand{\cZ}{\mathcal Z}
\newcommand{\End}{\operatorname{End}}

\newcommand{\Hom}{\operatorname{Hom}}
\newcommand{\Herm}{\operatorname{Herm}}
\newcommand{\Ext}{\operatorname{Ext}}

\newcommand{\im}{\operatorname{im}}

\newcommand{\Id}{\operatorname{Id}}
\newcommand{\tr}{\operatorname{tr}}
\newcommand{\rk}{\operatorname{rk}}
\newcommand{\Vol}{\operatorname{Vol}}

\newcommand{\Dol}{\mathrm{Dol}}
\newcommand{\dR}{\mathrm{dR}}
\newcommand{\Hod}{\mathrm{Hod}}
\newcommand{\har}{\mathrm{har}}
\newcommand{\st}{\mathrm{st}}

\newcommand{\ssm}{\mathrm{ss}}
\newcommand{\poly}{\mathrm{poly}}

\newcommand{\irr}{\mathrm{irr}}

\newcommand{\ddbar}{\overline\partial}
\newcommand{\eps}{\varepsilon}
\newcommand{\ip}[2]{\left\langle #1,#2\right\rangle}
\newcommand{\norm}[1]{\left\lVert #1\right\rVert}
\newcommand{\abs}[1]{\left\lvert #1\right\rvert}
\newcommand{\set}[1]{\left\{#1\right\}}

\newcommand{\xto}[1]{\xrightarrow{#1}}

\newcommand{\MdiffDol}{\mathscr M_{\Dol}}
\newcommand{\MdiffdR}{\mathscr M_{\dR}}
\newcommand{\MdiffHod}{\mathscr M_{\Hod}}

\newcommand{\MHDol}{\mathscr M^{\cH}_{\Dol}}
\newcommand{\MHdR}{\mathscr M^{\cH}_{\dR}}
\newcommand{\MetHar}{\mathscr M_{\mathrm{Met},\har}}
\newcommand{\Ggauge}{\mathscr G}

\title[Diffeological non-Abelian Hodge theory: metrics and deformations]{Diffeological non-Abelian Hodge theory:\\relative harmonic metrics and deformation theory}
\author{Mahmud Azam}
\email{mahmud.azam@usask.ca}
\author{Steven Rayan}
\email{rayan@math.usask.ca}
\address{Centre for Quantum Topology and Its Applications (quanTA) and Department of Mathematics and Statistics, University of Saskatchewan, SK, Canada S7N 5E6}
\date{July 2026}

\begin{document}

\begin{abstract}
Let $X$ be a compact K\"ahler manifold.  In prior work \cite{AzamRayan2026}, we constructed diffeological moduli stacks of Higgs and flat bundles on $X$, related through finite extension completion of smooth harmonic families.  Here we develop the analytic and infinitesimal geometry of that construction and distinguish the stable, polystable, and semistable family problems.  Our approach is intrinsically relative: we place the Hitchin--Simpson equation on Sobolev completions over arbitrary parameter plots, rather than first imposing a finite-dimensional smooth structure on a coarse moduli space.

Every smooth stable Higgs family satisfying the usual non-Abelian Hodge numerical conditions admits a global smooth harmonic metric.  A global Hermitian--Einstein determinant metric always exists, and prescribing it makes the harmonic metric unique.  Locally, the result follows from a Banach implicit-function theorem: the metric linearization is a self-adjoint elliptic Jacobi operator whose trace-free kernel vanishes by stability.  Parameter-dependent elliptic regularity gives joint smoothness, and normalized uniqueness gives global gluing.  Thus stable families already lie in the prestack-level image of the harmonic mediator, and the non-Abelian Hodge transform is smooth on plots.  The same solution operator preserves every finite parameter regularity $C^d$ and pulls back to reduced singular parameter spaces admitting ambient smooth coefficient extensions.

For a Higgs deformation $\eta$, the normalized first metric variation satisfies
\[
  L_hs=-\mathcal S_h(\eta),
  \qquad
  s=-G_h\mathcal S_h(\eta),
\]
with an independent rank-one determinant term in the general $\mathrm{GL}_r$ case.  This yields the differential along actual plots and agrees with the classical cohomological comparison on smooth deformation loci.

For locally split polystable families of constant type we construct smooth harmonic metrics and describe their centralizer-valued ambiguity.  Explicit real-analytic examples show that an arbitrary polystable family need not admit even a continuous harmonic metric; one example also has no relative harmonic filtration and lies outside every $C^d$ extension-generated locus.  Nevertheless, in that example the singular harmonic reduction has a continuous adjoint term and produces a continuous slicewise semisimple flat family.  This gives a weak $C^0$ operator-level harmonic mediator, strictly larger than the metric-regular mediator, and shows that failure of a continuous reduction need not preclude every family-level correspondence.  We characterize the extension-generated stack by relative harmonic filtrations and develop obstruction and Kuranishi theories for assembling their invariant subbundles.  We also explain why ordinary heat flow retains only the polystable shadow of a strictly semistable object, while a filtered or renormalized limit would have to retain extension data.  Finally, we construct the smooth Hodge $\lambda$-family on the stable locus.
\end{abstract}

\maketitle
\tableofcontents

\section{Introduction}
\subsection{From a family-level correspondence to its analytic geometry}

Let $X$ be a compact K\"ahler manifold.  Classical non-Abelian Hodge theory relates three types of data: semisimple complex local systems, polystable Higgs bundles with the standard Chern-class constraints, and harmonic bundles.  The correspondence has several incarnations.  At the level of coarse moduli spaces it is a homeomorphism.  On smooth stable loci it has finer real-analytic geometry, and for Higgs bundles on curves it participates in the familiar hyperk\"ahler structure of the moduli space.  At the categorical level, Simpson's extension argument gives an equivalence between semistable Higgs bundles satisfying the appropriate Chern conditions and arbitrary flat bundles.  These statements are compatible, but they retain different information near non-polystable objects.

In \cite{AzamRayan2026} we introduced diffeological moduli stacks
\[
  \MdiffDol(X),\qquad \MdiffdR(X)
\]
of smooth families of Higgs bundles and flat bundles on $X$.  We also introduced a dg-prestack of smooth families of harmonic bundles and used finite iterated extension completion to construct substacks
\[
  \MHDol(X)\subset \MdiffDol(X),
  \qquad
  \MHdR(X)\subset \MdiffdR(X)
\]
together with an equivalence
\begin{equation}\label{eq:intro-stack-equivalence}
  \MHDol(X)\simeq \MHdR(X).
\end{equation}
The fibre over a point of the left side is the category of semistable Higgs bundles with the usual Chern-class conditions, while the point fibre of the right side is the category of all flat bundles.  The construction also contains a smooth family appearing in Simpson's counterexample to a continuous extension of the coarse correspondence to the semistable locus.  Thus \eqref{eq:intro-stack-equivalence} is not merely the classical coarse correspondence rewritten in stack language.

The earlier paper was intentionally categorical.  Its central mechanism was to pass from harmonic families to extension-generated stacks.  The analytic question left open was the following: given a smooth family of polystable Higgs bundles, can the fibrewise harmonic metrics be chosen smoothly in the parameter?  Even on the stable locus, a complete proof requires a relative elliptic set-up, a normalization removing scalar ambiguity, and a parameter-dependent nonlinear argument.  Question~5.3.4 of \cite{AzamRayan2026} asked in addition what survives for families that are only $C^d$ in the parameter direction, and whether singular parameter spaces such as $C^\infty$-schemes should be allowed.  One advantage of the analytic set-up used here is that the nonlinear moment-map equation is solved on an ambient Sobolev affine space before restriction to the possibly singular integrable Higgs locus.  This observation supplies a finite-regularity theorem on the stable locus and a pullback result for a class of reduced singular parameter spaces.  At the polystable locus the linearized operator acquires a kernel determined by automorphisms, so the stable proof cannot simply be repeated; the examples in \cref{sec:explicit-polystable-failures} show that the unrestricted assertion is in fact false already for real-analytic families.  At the semistable locus one faces a further issue: a fibrewise Jordan--H\"older filtration need not assemble into a filtration by smooth subbundles over the parameter space.

We first prove global smooth existence of normalized harmonic metrics on the stable locus from a local analytic dependence theorem and normalized gluing.  We then calculate the first variation of the metric and the differential of the non-Abelian Hodge transform, establish a theorem for locally split polystable families of constant decomposition type, and reinterpret the extension-generated semistable stack through relative harmonic filtrations and obstruction classes.

\subsection{Stable harmonic metrics and the infinitesimal transform}

Let $U$ be a finite-dimensional smooth manifold and let
\[
  (E,\cD''_E)\longrightarrow U\times X
\]
be a smooth family of Higgs bundles in the sense of \cite{AzamRayan2026}.  Thus $E$ is a smooth complex vector bundle over $U\times X$ and, in the $X$-direction,
\[
  \cD''_E=\ddbar_E+\theta
\]
is an integrable Higgs operator depending smoothly on $u\in U$.  A Hermitian metric $h$ on $E$ determines a Chern operator $\partial_{E,h}$ and the $h$-adjoint $\theta^{\dagger_h}$.  We write
\[
  \cD'_{E,h}=\partial_{E,h}+\theta^{\dagger_h}
\]
and use the Hitchin--Simpson moment map
\begin{equation}\label{eq:intro-moment-map}
  \mu(\cD'',h)
  =\sqrt{-1}\,\Lambda_\omega
  \bigl(F_{D_h}+[\theta,\theta^{\dagger_h}]\bigr).
\end{equation}
Under the NAH numerical conditions introduced in \cref{sec:numerical-conditions}, a normalized solution of $\mu=0$ is a flat harmonic metric by the standard Chern--Weil identity.  We keep this distinction explicit because the implicit-function theorem solves the moment-map equation, while flatness uses the additional topological input.

Our first main theorem is stated in a form that allows the determinant normalization to vary with the family.

\begin{theorem}[Smooth variation of stable harmonic metrics]\label{thm:intro-smooth-metrics}
Let $X$ be a compact K\"ahler manifold, $U$ a smooth manifold, and
\[
  (E,\cD''_E)\longrightarrow U\times X
\]
a smooth family of stable Higgs bundles of fixed rank.  Assume that every fibre satisfies the NAH numerical conditions \eqref{eq:NAH-condition}.  Then there is a smooth Hermitian metric $h$ on $E$ over all of $U\times X$ such that $h_u$ is harmonic for $(E_u,\cD''_{E,u})$ for every $u\in U$.

More precisely, the determinant line admits a global smooth Hermitian--Einstein metric $q$, and after $q$ is fixed there is a unique such family with $\det h=q$.  Equivalently, after fixing any smooth background metric $\kappa$ on $E$, there is a unique harmonic family satisfying the integral normalization $N_\kappa(h)_u=0$ for every $u$.  If, after a local identification of the underlying bundle, the coefficients of the Higgs family and a chosen determinant normalization are real analytic in local coordinates on $U$, then $u\mapsto h_u$ is real analytic locally with values in every fixed Sobolev completion.
\end{theorem}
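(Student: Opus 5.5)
The plan is to solve the moment-map equation \eqref{eq:intro-moment-map} locally in $U$ by a Banach implicit-function theorem on Sobolev completions, and then glue the local solutions using uniqueness after normalization. Locally on $U$ I trivialize $E\cong U\times E_0$ over a small ball $B\subset U$, so the family becomes a smooth map $u\mapsto \cD''_u$ into a fixed Sobolev affine space of Higgs operators $\cA^{k}$ on $E_0\to X$. This map is real analytic if the coefficients are. Fix a background metric $\kappa$ and write $h=\kappa e^{s}$ with $s\in L^2_{k+2}(\Herm(E_0,\kappa))$, $k$ large. The first step is the determinant. On $\det E$ the Hitchin--Simpson equation is the linear abelian Hermitian--Einstein equation $\sqrt{-1}\Lambda_\omega(F_{\det})=\mathrm{const}$. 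Writing $q=q_0e^{f}$, this is $\Delta f=g_u$, where $g_u$ depends smoothly on $u$ and has zero mean (the mean is fixed by the degree). It is therefore solved by the Green operator, $f=G g_u$, normalized by $\int f=0$. This gives a global smooth $q$ with no local choices, since the abelian Green operator is canonical once $q_0$ is global. I then restrict to $s$ with $\tr s=\log(q/\det\kappa)$ prescribed, equivalently trace-free $s$ after shifting.

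The second step is local existence. Consider
\[
  \Phi\colon B\times L^2_{k+2}(\Herm_0)\to L^2_{k}(\Herm_0),\qquad \Phi(u,s)=\mu(\cD''_u,\kappa e^{s})^{\perp},
\]
the trace-free part of the moment map. The trace part is already handled by $q$, and the constant is fixed by the numerical conditions. This map is smooth (analytic) by Sobolev multiplication for $k>\dim_\R X/2$. At a point $u_0$, fibrewise Simpson theory supplies a harmonic $s_0$. The derivative $D_s\Phi$ at $(u_0,s_0)$ is the Jacobi operator $L=\Delta_{\cD''}$ acting on $\Herm_0$; it is self-adjoint elliptic, of the form $\sqrt{-1}\Lambda(\cD''\cD'-\cD'\cD'')$ up to sign. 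A Weitzenböck/integration-by-parts identity gives $\langle Ls,s\rangle=\norm{\cD''s}^2$. So a kernel element satisfies $\cD''s=0$, i.e.\ it is a trace-free Higgs endomorphism, and stability forces it to vanish (stable implies simple). Hence $L$ is Fredholm of index zero, self-adjoint, and injective, and therefore an isomorphism. The implicit function theorem then yields a unique nearby smooth (analytic) branch $u\mapsto s_u$ with values in $L^2_{k+2}$.

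The third step is regularity and gluing. Fibrewise elliptic regularity makes each $s_u$ smooth on $X$. For joint smoothness on $B\times X$, I differentiate the equation in $u$. The $u$-derivatives $\partial_u^\alpha s$ satisfy $L_u(\partial^\alpha_u s)=$ (terms in lower derivatives), and the inverse $L_u^{-1}$ is uniformly bounded on every $L^2_{k}$. Since the branch exists in $L^2_{k+2}$ for every $k$ and agrees across $k$ by local uniqueness, all mixed derivatives lie in every Sobolev space, and the Sobolev embedding gives $C^\infty$ on $B\times X$. For gluing, I use fibrewise uniqueness: harmonic metrics on a stable bundle are unique up to a positive constant, and that constant is killed by the condition $\det h=q$ (or $N_\kappa(h)=0$). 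So the local solutions coincide on overlaps, and independence of the trivialization holds because $\mu$ is gauge-equivariant. Hence they define a global smooth $h$. Flatness/harmonicity of each $h_u$ then follows from the Chern--Weil identity under \eqref{eq:NAH-condition}. The equivalence with the $N_\kappa$ normalization holds since the two normalizations differ by a smooth positive scalar function of $u$.

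The main obstacle is the joint smoothness. The implicit function theorem only gives smoothness into a fixed Sobolev space. Upgrading this requires consistency of the branches across $k$ and uniform invertibility of $L_u$ near $u_0$. Establishing that uniform invertibility means checking that the operator norm of $L_u^{-1}$ depends continuously on $u$, which I expect to obtain from openness of invertibility in the Fredholm setting.
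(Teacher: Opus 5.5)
Your proposal is correct and follows essentially the same route as the paper: a global determinant Poisson solve, a trace-free Banach implicit-function argument whose Jacobi operator $L_{h_0}=(\cD'')^{*}\cD''$ is invertible because its kernel consists of Hermitian Higgs endomorphisms (scalars, by stability), a parameter-dependent elliptic bootstrap, gluing by determinant-normalized uniqueness, and Chern--Weil for flatness. The one step that needs more care is technical. The endomorphism $\mu(\cD''_u,\kappa e^{s})$ is self-adjoint for the moving metric $\kappa e^{s}$, not for $\kappa$, so you should conjugate by $e^{s/2}$ to get a map into a fixed space of $\kappa$-Hermitian trace-free endomorphisms; and with a fixed $\kappa$ the linearization at $(u_0,s_0)$ is $L_{h_0}$ composed with an invertible zeroth-order factor coming from $d\exp_{s_0}$, a point the paper sidesteps by choosing a background with $\kappa_{u_0}=h_0$ and $\det\kappa_u=q_u$.
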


The analytic construction is local on $U$, and normalized uniqueness will glue its outputs globally.  After choosing a smooth identification of the underlying bundles over a neighbourhood of $u_0$, we write
\[
  h=h_0 e^s
\]
with $s$ a self-adjoint trace-free endomorphism.  The harmonic equation becomes a nonlinear map between Sobolev spaces.  Its derivative in the $s$-direction is a self-adjoint elliptic operator
\[
  L_{h_0}=\cD''^{\,*}_{h_0}\cD''
\]
on Hermitian trace-free endomorphisms, up to the harmless universal convention factor explained in \cref{sec:linearization}.  The kernel consists of Hermitian Higgs endomorphisms.  Stability implies that these are scalar, and the trace-free normalization removes them.  Thus $L_{h_0}$ is an isomorphism between the relevant Sobolev spaces.  The Banach implicit-function theorem gives a Sobolev family; parameter-dependent elliptic regularity gives joint smoothness.

The parameter manifold is arbitrary and finite dimensional; it need not be a coordinate chart in a pre-existing moduli space.  Moreover, smooth pullback of the Higgs family pulls back the normalized harmonic metric.  The theorem is therefore formulated directly on plots of the diffeological family stack.  Its relation to existing real-analytic results on relative moduli spaces is discussed in \cref{sec:literature-positioning}.

\begin{corollary}[Stable locus of the diffeological correspondence]\label{cor:intro-stable-stack}
Every object of the stable subprestack of $\MdiffDol(X)$ satisfying the NAH numerical conditions lies in the essential image of the Dolbeault forgetful map from the harmonic-bundle prestack, before finite extension completion or stackification.  Likewise, every irreducible flat family lies in the essential image of its de Rham forgetful map.  In particular, these loci are contained in $\MHDol(X)$ and $\MHdR(X)$, respectively.  The induced non-Abelian Hodge transform on stable families is a smooth morphism of diffeological stacks, with smooth inverse on irreducible families.
\end{corollary}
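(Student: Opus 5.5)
The plan is to deduce the corollary directly from \cref{thm:intro-smooth-metrics}, together with the functoriality of its normalized solution under pullback, and to argue plot by plot in the diffeological sense.

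\textbf{Step 1 (Dolbeault side).} Let $(E,\cD''_E)\to U\times X$ be a stable family satisfying the numerical conditions. \Cref{thm:intro-smooth-metrics} supplies a global smooth metric $h$ on $E$ with every $h_u$ harmonic. By the Chern--Weil remark after \eqref{eq:intro-moment-map}, $D_h=\cD''_E+\cD'_{E,h}$ is then fibrewise flat. The triple $(E,\cD''_E,h)$ is therefore a smooth family of harmonic bundles, i.e.\ an object of the harmonic-bundle prestack over $U$, and its Dolbeault forgetful image is literally $(E,\cD''_E)$. This places the family in the essential image before any extension completion or stackification. Inclusion in $\MHDol(X)$ follows because the extension completion contains the image of the harmonic prestack.

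\textbf{Step 2 (de Rham side).} For an irreducible flat family $(E,D)\to U\times X$, I would run the same argument with Corlette's equation in place of the Hitchin--Simpson equation. The metric linearization at a harmonic metric is again a self-adjoint elliptic operator of Laplace type. Its kernel consists of $D$-parallel Hermitian endomorphisms, which irreducibility forces to be scalar, so the trace-free normalization removes it. The local Banach implicit-function argument, parameter-dependent elliptic regularity, and gluing by normalized uniqueness then transfer verbatim from the proof of \cref{thm:intro-smooth-metrics}. Alternatively, once each fibre is known to be harmonic, one can read off the Higgs field from $D$ and $h$ and reduce to Step~1. I expect this transfer to be essentially routine, but it is the step I would check most carefully. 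The points to verify are that the determinant normalization is still globally solvable, using a flat determinant line and a harmonic metric on a rank-one local system, and that stability of the associated Higgs fibres matches irreducibility.

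\textbf{Step 3 (morphism of stacks).} On stable families, define the transform by $(E,\cD''_E)\mapsto (E,\cD''_E+\cD'_{E,h})$, using the normalized $h$ from \cref{thm:intro-smooth-metrics}. Three things need checking.
\begin{itemize}
\item \emph{Smoothness on each plot.} The operator $\cD'_{E,h}$ depends smoothly on $u$ because $h$ is jointly smooth.
\item \emph{Compatibility with pullback along smooth maps $V\to U$.} This follows from uniqueness of the normalized solution, since the pulled-back metric satisfies the pulled-back normalization.
\item \emph{Functoriality on isomorphisms.} A gauge isomorphism carries the normalized harmonic metric to the normalized harmonic metric, again by uniqueness. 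The choice of $q$ can be made canonical up to positive constants, which do not affect $\cD'_{E,h}$. Indeed $\cD'_{E,h}$ is insensitive to scaling $h$ by a locally constant positive function of $u$, and more generally by any positive function of $u$ alone, since it is constant on each fibre.
\end{itemize}
Hence the transform defines a morphism of prestacks. It then descends to stacks by the universal property of stackification, restricted to the stable and irreducible loci.

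\textbf{Step 4 (inverse).} The inverse on irreducible families is built symmetrically from Step~2: $(E,D)\mapsto(E,\ddbar_h+\theta_h)$. The two compositions are identity-on-objects because the same harmonic metric is used in both directions, by uniqueness. I expect the main obstacle to be the bookkeeping in Step~3. One must ensure that the scalar ambiguity, including the choice of $q$ on each plot, produces genuinely equal flat operators rather than merely isomorphic ones, so that the construction is strictly compatible with pullbacks and does not require an extra coherence argument.
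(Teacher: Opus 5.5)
Your proposal is correct and follows essentially the paper's route: Step~1 is \cref{prop:metric-stack-section}, Steps~2 and~4 are \cref{thm:smooth-inverse} (the implicit-function argument for Corlette's equation, with Schur's lemma removing the trace-free kernel and a global rank-one Poisson solve on the flat determinant line), and Step~3 is \cref{thm:smooth-stable-transform} together with \cref{prop:pullback-solutions,prop:scalar-independence}. You should drop the ``alternatively'' shortcut in Step~2, because reading off $(\ddbar_{E,u},\theta_u)$ from pointwise-chosen harmonic metrics gives no control of smoothness in $u$, so it cannot be fed into Step~1 without the smooth family of reductions that Step~2 itself has to produce.
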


This answers the stable part of Question 5.3.1 of \cite{AzamRayan2026}.  The polystable problem is subtler and is treated separately below.

The existence theorem also gives access to the infinitesimal geometry of the correspondence.  We therefore continue with the first variation in the same stable-family framework, keeping separate the trace-free metric correction and the determinant contribution.

Let $t\mapsto (\cD''_t,h_t)$ be a one-parameter family through a stable harmonic bundle.  In a smooth gauge write
\[
  \dot{\cD}''=a^{0,1}+\varphi=: \eta.
\]
The fixed-metric companion variation is not the naive adjoint of this sum: variation of the Chern $(1,0)$-part contributes a minus sign.  We therefore set
\[
  \eta^{\star_h}:=-a^{\dagger_h}+\varphi^{\dagger_h}.
\]
For the metric write $\dot h=h s_{\mathrm{tot}}$ and decompose
\[
  s_{\mathrm{tot}}=s_0+\frac{\tau}{r}\Id_E,
  \qquad \tr(s_0)=0.
\]
The scalar function $\tau$ is fixed by the independently varying harmonic metric on $\det E$; only in fixed-determinant directions is $\tau=0$.  Differentiating the trace-free moment-map equation produces the Green-operator equation for $s_0$, while differentiating the determinant Poisson equation determines $\tau$.  This separation is essential in the general $\mathrm{GL}_r$ family problem.

\begin{theorem}[Plotwise infinitesimal non-Abelian Hodge transform]\label{thm:intro-infinitesimal}
On the stable locus, the diffeological non-Abelian Hodge transform is differentiable along every smooth plot.  For a plot tangent represented by an actual one-parameter Higgs family with infinitesimal deformation $\eta=(a^{0,1},\varphi)$, let $s_0$ be the trace-free part of the logarithmic metric variation.  Then
\begin{equation}\label{eq:intro-first-variation}
  L_hs_0=-\cS_h(\eta),
\end{equation}
and, after fixing the determinant family,
\begin{equation}\label{eq:intro-green-formula}
  s_0=-G_h\cS_h(\eta).
\end{equation}
In a chosen local smooth gauge, a representative of the infinitesimal flat deformation is
\begin{equation}\label{eq:intro-dnah}
  \dot\nabla
  =\eta+\eta^{\star_h}+\cD'_h s_{\mathrm{tot}},
\end{equation}
where $\eta^{\star_h}=-a^{\dagger_h}+\varphi^{\dagger_h}$ is the fixed-metric companion variation and $s_{\mathrm{tot}}$ also contains the independently determined scalar determinant variation.  In fixed-determinant directions, $s_{\mathrm{tot}}=s_0$ and \eqref{eq:intro-green-formula} gives an explicit Green-operator formula.

For every actual plot direction, \eqref{eq:intro-dnah} is $d_{\nabla_h}$-closed because it is obtained by differentiating a family of flat connections.  Infinitesimal complex gauge paths change the representative by an exact flat gauge term.  We distinguish this plotwise differential from the classical cohomological comparison of the Dolbeault and de Rham deformation complexes: at smooth or unobstructed points, the two agree on tangent classes represented by plots, but no step of the proof assumes that an arbitrary closed degree-one deformation integrates to a genuine family.
\end{theorem}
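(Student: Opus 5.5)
Differentiability along plots comes first, and it follows from \cref{thm:intro-smooth-metrics}. Given a smooth plot $U\to\MdiffDol(X)$ landing in the stable locus, the theorem supplies a jointly smooth harmonic metric $h$ on $E\to U\times X$, normalized by a global Hermitian--Einstein determinant metric $q$. The transform sends the plot to the flat family $\nabla=\cD''_E+\cD'_{E,h}$, which is smooth in $(u,x)$ because $h$ is. Differentiability along plots is then immediate. For a one-parameter Higgs family through $u_0$, I trivialize the underlying bundle in a smooth gauge near $u_0$. I write $\dot\cD''=\eta=a^{0,1}+\varphi$ and $\dot h=hs_{\mathrm{tot}}$, and split $s_{\mathrm{tot}}=s_0+\frac{\tau}{r}\Id_E$ with $\tr s_0=0$.

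Next I derive \eqref{eq:intro-first-variation} by differentiating the equation $\mu(\cD''_t,h_t)=0$ at $t=0$, split into its trace-free and trace parts. The variation has two pieces.

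\emph{Metric direction.} The derivative in the metric direction is the linearization already computed for the implicit-function argument (\cref{sec:linearization}). On trace-free Hermitian endomorphisms it acts as $L_h s_0$. The scalar part $\tau$ contributes only to the trace part, since $[\theta,\theta^{\dagger_h}]$ and the trace-free curvature are insensitive to conformal rescaling.

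\emph{Operator direction, metric fixed.} Here I vary $\ddbar_E$ by $a^{0,1}$ and $\theta$ by $\varphi$. The Chern $(1,0)$-part changes by $-a^{\dagger_h}$, because $\partial_h$ is determined by $\ddbar$ through $h$-compatibility. The adjoint Higgs field changes by $\varphi^{\dagger_h}$. Hence $\dot\cD'=\eta^{\star_h}$. Substituting into $\sqrt{-1}\Lambda_\omega(F+[\theta,\theta^\dagger])$ gives a first-order expression, and I define $\cS_h(\eta)$ to be its trace-free part (up to the convention factor of \cref{sec:linearization}).

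This yields $L_hs_0=-\cS_h(\eta)$. Stability makes $L_h$ invertible on trace-free Hermitian endomorphisms, as in the proof of \cref{thm:intro-smooth-metrics}. Writing $G_h$ for its inverse gives \eqref{eq:intro-green-formula}. One point needs checking: $\cS_h(\eta)$ must lie in the range of $L_h$. This holds because $L_h$ is self-adjoint and its trace-free kernel is zero. The trace part is the derivative of the determinant Poisson equation for $q_t$, which fixes $\tau$. In fixed-determinant directions $\tau=0$.

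For \eqref{eq:intro-dnah}, I differentiate $\nabla_t=\cD''_t+\cD'_{t,h_t}$, which gives $\dot\nabla=\eta+\frac{d}{dt}\cD'_{t,h_t}$. The derivative of $\cD'$ splits into two terms:
\begin{itemize}
\item the fixed-metric term $\eta^{\star_h}$;
\item the metric-variation term, obtained from $\partial_{he^{ts}}=e^{-ts}\partial_h e^{ts}$ and the corresponding conjugation for $\theta^\dagger$, which gives $\cD'_h s_{\mathrm{tot}}$.
\end{itemize}
Closedness follows by differentiating $\nabla_t^2=0$, which gives $d_{\nabla_h}\dot\nabla=0$; no integrability of abstract classes is needed. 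A complex gauge path $g_t=e^{t\xi}$ changes $\eta$ by $\cD''\xi$. By uniqueness of the normalized metric, $h$ transforms covariantly, so $\dot\nabla$ changes by $d_{\nabla_h}\xi$, which is exact.

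Finally, I compare this with the classical cohomological map $H^1(\mathrm{Dol})\to H^1(\mathrm{dR})$ at smooth points. Both maps send the class of $\eta$ to the harmonic-representative comparison. Using the Kähler identities for $\cD',\cD''$, I check that $\eta+\eta^{\star_h}+\cD'_hs$ differs from the classical image by an exact term.

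I expect the main obstacle to be the sign and convention bookkeeping in the operator-direction computation. Specifically, I need to verify that $\cS_h(\eta)$ defined from $\eta^{\star_h}$ matches the convention factor of $L_h$, and that the determinant term decouples exactly. I would carry out the computation first for $\varphi=0$ and then for $a=0$ as a check.
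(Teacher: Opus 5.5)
Your proposal is correct and follows essentially the paper's route: differentiability from joint smoothness of the global normalized metric; differentiating the moment map into $L_hs_0+\cS_h(\eta)=0$, with the conformal part decoupled and $\tau$ fixed by the differentiated determinant Poisson equation; inverting $L_h$ on the trace-free slice; the conjugation formula $\cD'_{h_t}=e^{-ts}\cD'_h e^{ts}$ together with the fixed-metric companion $\eta^{\star_h}$; closedness from differentiating flatness; and gauge compatibility from covariance of the harmonic metric up to an $X$-constant scalar (which in fact shows the flat gauge generator is $\xi$ itself, up to sign). Two small cautions: with the paper's action $g\cdot\cD''=g\cD''g^{-1}$ the path $e^{t\xi}$ contributes $-\cD''\xi$, which is harmless for exactness; and in the final comparison the classical map must be the real-linear one $\eta_{\mathrm{har}}\mapsto\eta_{\mathrm{har}}+\eta_{\mathrm{har}}^{\star_h}$ on harmonic representatives---this is what your K\"ahler-identity check produces, since $(\cD'')^{*}\eta_{\mathrm{har}}=0$ gives $\Lambda\cD'_h\eta_{\mathrm{har}}=0$ and hence $\cS_h(\eta_{\mathrm{har}})=0$---rather than the complex-linear identity on harmonic forms, with which the plot differential does not agree.
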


We give several equivalent forms of the source term $\cS_h$ and compare the formula with the harmonic-representative description of tangent spaces.  The statement is plotwise: it does not impose a single finite-dimensional smooth structure on the full semistable moduli problem.  The intrinsic object is the induced tangent cohomology class; a connection-valued formula requires a local smooth gauge.

\subsection{Polystable strata, semistable filtrations, and obstructions}

For a polystable Higgs bundle, the kernel of $L_h$ is the Hermitian part of the Higgs endomorphism algebra.  Consequently there is no canonical inverse $G_h$ without choosing a slice transverse to the stabilizer.  We treat a large and geometrically natural class of polystable families: those which are locally split with constant stable decomposition type.

Suppose locally on $U$ that
\begin{equation}\label{eq:intro-poly-split}
  (E,\cD'')\cong\bigoplus_{j=1}^m (E_j,\cD''_j)\otimes W_j,
\end{equation}
where the $(E_j,\cD''_j)$ are smooth stable families, pairwise non-isomorphic fibrewise, and the $W_j$ are fixed finite-dimensional multiplicity spaces.  Applying \cref{thm:intro-smooth-metrics} to each stable factor gives:

\begin{theorem}[Constant decomposition type]\label{thm:intro-poly}
A locally split polystable family of the form \eqref{eq:intro-poly-split} admits a smooth family of harmonic metrics locally on the parameter manifold.  Once harmonic metrics on the stable factors are normalized, the space of harmonic metrics on the multiplicity factors is a smooth bundle with fibre
\[
  \prod_{j=1}^m \operatorname{Herm}^+(W_j),
\]
and the complex centralizer $\prod_j\operatorname{GL}(W_j,\C)$ acts by pullback on these forms.  The isotropy group of a chosen metric is the compact centralizer
\[
  \prod_{j=1}^m U(W_j,q_j).
\]
Equivalently, the groupoid of harmonic metrics varies smoothly and has compact isotropy determined by the polystable decomposition.
\end{theorem}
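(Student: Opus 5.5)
We work locally on $U$ with a fixed splitting \eqref{eq:intro-poly-split}. For each $j$ we apply \cref{thm:intro-smooth-metrics} to the smooth stable family $(E_j,\cD''_j)$. This gives a smooth harmonic family $h_j$ on $E_j$, normalized by a chosen Hermitian--Einstein determinant metric. For any constant Hermitian inner products $q_j\in\Herm^+(W_j)$, the metric
\[
  h=\bigoplus_j h_j\otimes q_j
\]
is then smooth on $U\times X$. It is harmonic fibrewise because the moment map \eqref{eq:intro-moment-map} is additive on orthogonal direct sums of Higgs bundles. Tensoring with a flat metric on a trivial multiplicity space changes neither $F_{D_h}$ nor $[\theta,\theta^{\dagger_h}]$ beyond $\otimes\Id_{W_j}$. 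The numerical conditions for each factor follow from polystability and those of $E$, since each stable summand of a polystable harmonic bundle has vanishing Chern-class obstructions. Flatness of each factor therefore gives flatness of $h$. This settles local existence.

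Next we classify all harmonic metrics on the fibre $E_u$, and we will do this uniformly in $u$. The key input is the uniqueness of harmonic metrics on polystable Higgs bundles up to $\mathrm{Aut}$. If $h'$ is another harmonic metric, write $h'=h\,e^{s}$. The standard argument, namely the convexity of the Donaldson functional or equivalently the subharmonicity of $\tr e^{s}$, shows that $\cD''$ preserves $e^{s}$. Hence $s$ is a Hermitian Higgs endomorphism, i.e.\ $s\in\ker L_h$. Schur's lemma, applied to pairwise non-isomorphic stable factors, gives the fibrewise endomorphism algebra
\[
  \End(E_u,\cD''_u)=\bigoplus_j \C\Id_{E_j}\otimes\End(W_j).
\]
Thus $e^s=\bigoplus_j \Id\otimes g_j$ with $g_j$ positive for $q_j$, and $h'=\bigoplus_j h_j\otimes q_j(g_j\cdot,\cdot)$. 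Once the $h_j$ are normalized, the harmonic metrics are exactly the metrics $\bigoplus h_j\otimes q'_j$ with $q'_j\in\Herm^+(W_j)$.

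The $W_j$ are fixed, so the set of harmonic metrics over $U$ is the trivial, hence smooth, bundle $U\times\prod_j\Herm^+(W_j)$. Varying $u$ smoothly in this bundle produces smooth metrics on $E$. The centralizer $\prod_j\mathrm{GL}(W_j,\C)$ acts by $g\cdot q=q(g\,\cdot,g\,\cdot)$, and the stabilizer of $(q_j)$ is $\prod_j U(W_j,q_j)$. Transitivity of this action on $\Herm^+$ identifies the groupoid of harmonic metrics with the action groupoid, which has compact isotropy.

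We expect the main obstacle to be that the normalization of the $h_j$ is only unique up to scalars, which must be absorbed consistently. A rescaling $h_j\mapsto c_jh_j$ can be traded for $q_j\mapsto c_j^{-1}q_j$, so the description is independent of this choice up to canonical isomorphism. We must also make sure that the fibrewise uniqueness argument yields smooth dependence on $u$ when $h'$ is smooth. This works because $g_j(u)$ is recovered as $h'$ restricted to $E_j\otimes W_j$, paired against $h_j$ and averaged over $X$, and that expression is manifestly smooth in $u$.
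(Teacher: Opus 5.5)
Your proposal is correct and follows essentially the paper's proof (\cref{thm:constant-type} together with \cref{prop:metric-groupoid-poly}). Both apply the stable theorem factorwise, take $\bigoplus_j h_j\otimes q_j$, and use the standard uniqueness argument to force the relative endomorphism into $\bigoplus_j\Id_{E_j}\otimes\End(W_j)$, from which the pullback action of $\prod_j\operatorname{GL}(W_j,\C)$ and the isotropy $\prod_jU(W_j,q_j)$ follow. The paper differs only in running uniqueness on the flat side (the endomorphism is $\nabla_h$-parallel) and in building the NAH conditions on the factors into \cref{def:locally-split-poly} rather than deriving them. One small correction: it is the Schur-type fact that every Higgs automorphism has the form $\bigoplus_j\Id_{E_j}\otimes g_j$ that identifies the metric groupoid with the action groupoid. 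Transitivity of the action on $\Herm^+$ only shows that this groupoid is connected.
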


This theorem does not claim that an arbitrary polystable family admits a smooth local splitting of the form \eqref{eq:intro-poly-split}.  Indeed, jumping stabilizer type and monodromy among isomorphic stable factors are precisely the phenomena that the general problem must confront.  The explicit elliptic-curve families of \cref{sec:explicit-polystable-failures} show that the unrestricted local-existence statement is false: a real-analytic polystable family may admit no continuous harmonic metric, and may even lie outside every finite-regularity extension-generated locus.

The strongest example has a further feature that refines the meaning of this failure.  Although its determinant-normalized harmonic metrics degenerate in the parameter direction, the corresponding adjoint Higgs operators extend continuously, and so do the associated flat connections.  We use this compensated regularity to introduce a weak $C^0$ operator-level harmonic mediator.  Its objects retain continuous Dolbeault and de Rham operators while requiring the harmonic metric only slicewise.  The resulting prestack strictly enlarges the metric-regular mediator and contains the square-root family before extension completion.  It does not define a smooth transform, and it does not eliminate extension completion for strictly semistable objects; rather, it separates failure of a regular harmonic reduction from failure of every family-level non-Abelian Hodge lift.

On the semistable locus, the earlier stack $\MHDol(X)$ was defined through finite iterated extension completion.  We give it a geometric reformulation.

\begin{definition}
A \emph{relative harmonic filtration} of a smooth family $(E,\cD'')$ over $U$ is a filtration by smooth $\cD''$-invariant subbundles
\[
  0=E_0\subset E_1\subset\cdots\subset E_\ell=E
\]
such that every quotient $E_i/E_{i-1}$ is a smooth family arising from harmonic bundles.
\end{definition}

This definition turns the categorical extension condition into a property that can be checked on a family over the parameter space.  The precise equivalence used later is the next theorem.

\begin{theorem}[Relative harmonic filtration criterion]\label{thm:intro-filtration}
A smooth Higgs family is an object of $\MHDol(X)$ if and only if, locally on the parameter manifold, it admits a relative harmonic filtration.  The analogous statement holds on the de Rham side.
\end{theorem}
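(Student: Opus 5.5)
The proof unwinds the definition of $\MHDol(X)$ from \cite{AzamRayan2026}. There, $\MHDol(X)$ is the stackification of the finite iterated extension completion of the essential image of the Dolbeault forgetful map from the harmonic-bundle prestack. Concretely, an object over $U$ lies in the extension completion if it sits in a short exact sequence of smooth families
\[
  0\to E'\to E\to E''\to 0
\]
with $E'$ and $E''$ already in the completion. Exactness here means a sequence of smooth $\cD''$-compatible bundle maps over $U\times X$. Stackification then adds objects that only locally on $U$ belong to the prestack. The strategy is to show that "iterated extension of harmonic families" and "admits a relative harmonic filtration" are the same condition at the prestack level, and then to compare the two sides after stackification.

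\emph{(If.)} Suppose $0=E_0\subset\cdots\subset E_\ell=E$ is a relative harmonic filtration over an open set $V\subset U$. I argue by induction on $\ell$ that $E|_V$ lies in the extension completion. Since $E_{i-1}\subset E_i$ is a smooth $\cD''$-invariant subbundle, the quotient $E_i/E_{i-1}$ is a smooth bundle carrying the induced integrable Higgs operator. This gives a short exact sequence of smooth families
\[
  0\to E_{i-1}\to E_i\to E_i/E_{i-1}\to 0 .
\]
By the inductive hypothesis $E_{i-1}$ is in the completion, and by assumption the quotient lies in the harmonic image. Hence $E_i$ is in the completion. Since the filtrations exist on an open cover, the stack condition places $E$ in $\MHDol(X)$.

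\emph{(Only if.)} Let $\cC$ be the class of smooth families that admit a relative harmonic filtration locally on the base. I show that $\cC$ contains the harmonic image and is closed under extensions and under the stack gluing condition. It then contains the minimal such class, which is $\MHDol(X)$.

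Harmonic families are in $\cC$ via the one-step filtration. The gluing condition holds tautologically, because membership in $\cC$ is already a local condition on the base. For extensions, let $0\to E'\xto{\iota} E\xto{p} E''\to 0$ be exact with filtrations $E'_\bullet$ and $E''_\bullet$ over a common open set, obtained by refining covers. Concatenate them as
\[
  \iota(E'_1)\subset\cdots\subset\iota(E')\subset p^{-1}(E''_1)\subset\cdots\subset E .
\]
Each term is a smooth subbundle: $\iota$ is a smooth injective bundle map of constant rank, and $p$ is a smooth surjection, so preimages of subbundles are subbundles. Each term is $\cD''$-invariant because $\iota$ and $p$ intertwine the Higgs operators. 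The successive quotients are isomorphic, as smooth families, to the quotients $E'_i/E'_{i-1}$ and $E''_j/E''_{j-1}$, and these are harmonic. This shows the extension lies in $\cC$.

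The main obstacle I expect is definitional rather than analytic. One has to check that the morphisms and exact sequences used in \cite{AzamRayan2026} for the extension completion are short exact sequences of smooth subbundles, and not merely fibrewise exact sequences. Fibrewise exactness alone would not give constant-rank subbundles, and this is exactly the phenomenon the Jordan--H\"older remark warns about. Two further points need care:
\begin{itemize}[label={}]
\item \emph{Isomorphism closure.} The essential image is closed under isomorphism, so "quotient isomorphic to a harmonic family" suffices.
\item \emph{Stackification.} If it is performed in the dg or $\infty$-sense, I need it to add nothing beyond objects that are locally in the prestack on plots of $U$. I would verify this using the fact that the relevant prestack is a 1-truncated prestack of groupoids of families over plots.
\end{itemize}

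The de Rham statement follows by the identical argument, with flat connections replacing Higgs operators and invariant subbundles taken to be $\nabla$-invariant. Alternatively, it can be transported along the equivalence \eqref{eq:intro-stack-equivalence}, which preserves extensions by construction.
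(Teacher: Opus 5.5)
Your proposal is correct and takes essentially the same route as the paper. The paper first proves a prestack-level equivalence between finite iterated extensions of harmonic families and relative harmonic filtrations: it inducts on the number of extensions and appends the preimages of the quotient's filtration, which is exactly your concatenation $\iota(E'_\bullet)\subset p^{-1}(E''_\bullet)$. It then uses that membership in the stackified $\MHDol(X)$ is local on the parameter manifold, and your closure-under-extensions-and-gluing formulation is a slightly more explicit packaging of that same argument.
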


The theorem is formally close to the extension-completion construction, but its geometric formulation exposes the source of the remaining semistable problem: fibrewise Jordan--H\"older filtrations may fail to define smooth subbundles.  We therefore introduce an obstruction package.  Given a fibrewise invariant subspace of fixed rank, a local smooth projection $p$ onto the candidate subbundle satisfies
\[
  (1-p)\cD''p=0.
\]
The failure term
\[
  \beta_p=(1-p)\cD''p
\]
is a relative second fundamental form.  Its linearization defines a first hypercohomology class.  After a lower-order formal lift and identifications have been chosen, higher corrections give stagewise obstruction classes.  To pass from formal lifting to an actual smooth family we construct a finite-dimensional Kuranishi map for invariant subbundles.  In particular, vanishing of the relevant $\mathbf H^1$ obstruction space gives a genuine local smooth lifting criterion.

The same filtration viewpoint also clarifies a phenomenon that is sometimes hidden by the language of analytic convergence.  The Yang--Mills--Higgs heat flow naturally moves toward closed complex-gauge orbits; on the strictly semistable locus those closed orbits are represented by polystable associated graded objects.  Thus the ordinary heat-flow limit is geometric precisely where the orbit is already closed.  Away from that locus it records the polystable shadow and forgets the extension class.  This is not merely a weakness of a particular PDE proof.  A positive harmonic metric on the original semistable object would force polystability, so a heat-flow extension that remembers nontrivial extension data must be filtered, renormalized, stacky, or singular in the parameter direction.  We make this formal in \cref{sec:heat-flow-geometricity}.

The same smooth harmonic metric supplies a second, complementary deformation parameter.  We next place the construction in the Hodge $\lambda$-family, which will also clarify the relation between the present analytic results and the broader literature.

The abstract differential formalism in \cite{AzamRayan2026} was developed for $\lambda$-$d$-connections.  Once the harmonic metric varies smoothly, the standard Hodge family becomes a smooth family over $U\times\C$.  In the usual pair notation we set
\begin{equation}\label{eq:intro-lambda}
  \ddbar_{E,\lambda}=\ddbar_E+\lambda\theta^{\dagger_h},
  \qquad
  D_{E,\lambda}=\lambda\partial_{E,h}+\theta.
\end{equation}
The first operator is a varying holomorphic structure and the second is a holomorphic $\lambda$-connection for that structure.  At $\lambda=0$ the pair is the original Higgs bundle; for $\lambda\ne0$ the rescaled ordinary connection
\[
  \ddbar_{E,\lambda}+\lambda^{-1}D_{E,\lambda}
\]
is flat.  We prove pullback functoriality and smooth dependence on $(u,\lambda)$ and explain how this supplies a diffeological Hodge enhancement on the stable locus.

We make no derived-twistor claim from this construction.  Recent work develops shifted and derived twistor structures in a different setting.  Here the result is the smooth finite-dimensional-parameter Hodge family supplied by the parametric harmonic metric.

\subsection{The Hodge direction and relation to earlier work}\label{sec:literature-positioning}

The analytic existence theory behind non-Abelian Hodge theory originates in the work of Hitchin, Donaldson, Corlette, Simpson, and the Hermitian--Einstein theory of stable bundles; see \cite{Hitchin1987,Donaldson1987,Corlette1988,Simpson1988,Simpson1992,UhlenbeckYau1986}.  The local calculation in this paper belongs to that lineage and should not be read as a claim that parameter dependence of harmonic metrics has not previously been studied.  Rather, our contribution is to formulate a direct theorem for arbitrary finite-dimensional smooth parameter plots, with pullback functoriality built into the statement, and then to connect the resulting analytic lift to the diffeological stacks of \cite{AzamRayan2026}.

Several results are close to the analytic questions considered here.  Kim and Wilkin prove analytic dependence of harmonic metrics for stable parabolic Higgs bundles as the parabolic weights and stable Higgs data vary \cite{KimWilkin2018}.  Wilkin's convergence theorem for the Yang--Mills--Higgs flow on Higgs bundles over a compact Riemann surface identifies the limiting critical point with the graded object associated to the Harder--Narasimhan--Seshadri filtration \cite{Wilkin2008}.  Hu, Shi, Sun, and Zuo derive explicit first-order equations for variation of the harmonic metric in their study of the Hitchin--Simpson correspondence and isomonodromic deformation \cite{HuShiSunZuo2025}.  In a different relative setting, Hu, Sun, Yang, and Zuo prove that the relative non-Abelian Hodge correspondence is real analytic near smooth points of relative moduli spaces \cite{HuSunYangZuo2026Relative}; their Appendix~A is particularly close in analytic spirit to the stable-family theorem below.  In the present paper $X$ is fixed, the parameter manifold is an arbitrary finite-dimensional smooth manifold rather than a complex-analytic moduli base, and the theorem is organized plotwise so that it defines a morphism on the diffeological family stack.  The novelty claimed here is the combination of that parameter-space analysis with the stacky extension framework, not the first regular-dependence result for harmonic metrics.

Recent work also addresses broader variation problems in non-Abelian Hodge theory.  Hitchin studies a universal Higgs-bundle moduli space over Teichm\"uller space \cite{Hitchin2026Universal}; Hu, Sun, Yang, and Zuo study higher-order isomonodromic deformation and harmonic-metric deformation equations \cite{HuSunYangZuo2026}; and Kryczka, Tanaka, and Yau develop Lagrangian correspondences and shifted twistor structures for derived stacks of Higgs and flat perfect complexes \cite{KryczkaTanakaYau2026}.  These papers use different parameter spaces and categorical structures.  We therefore distinguish throughout between proved smooth-family statements, stack-theoretic consequences, and open extensions.

A second point of comparison concerns the $\lambda$-direction.  Our earlier work on quiver connections allows $\lambda$ to be a parameter in a stack of connections \cite{AzamRayan2025}.  In the present paper we use the classical harmonic-bundle construction to produce a smooth Hodge family from a stable plot.  We do not claim a new derived twistor structure, and we keep this construction separate from the shifted twistor theory of \cite{KryczkaTanakaYau2026}.

We close the introduction with a guide to the successive analytic, stack-theoretic, and obstruction-theoretic parts of the paper.

\Cref{sec:relative-data} fixes relative Higgs and gauge-theoretic conventions.  \Cref{sec:analytic-setup} develops the Sobolev and parameter-dependent elliptic framework.  \Cref{sec:linearization} computes the metric-direction linearization of the Hitchin--Simpson equation and identifies its kernel.  \Cref{sec:smooth-metrics} proves \cref{thm:intro-smooth-metrics}.  \Cref{sec:stack-smoothness} deduces smoothness of the transform along diffeological plots.  \Cref{sec:first-variation} derives the first-variation and Green-operator formulas.  \Cref{sec:poly} treats locally split polystable strata.  \Cref{sec:filtrations} proves the relative harmonic filtration criterion, and \cref{sec:obstructions} develops the associated obstruction theory.  \Cref{sec:finite-regularity} returns to Question~5.3.4 of \cite{AzamRayan2026}: it proves a $C^d$ stable-family theorem, a $C^d$ filtration criterion, and an ambient pullback theorem for reduced singular parameter spaces, while isolating the nonreduced $C^\infty$-scheme problem.  \Cref{sec:heat-flow-geometricity} explains the same boundary in heat-flow terms and formalizes why ordinary positive harmonic-metric limits factor through the polystable associated graded.  \Cref{sec:lambda} constructs the $\lambda$-connection family.  \Cref{sec:examples} gives the explicit polystable failures, their weak $C^0$ operator lift, and the elliptic-curve degeneration from \cite{Simpson1994,AzamRayan2026}.  The appendices collect parameter-dependent elliptic facts, normalization details, and a dictionary with the notation of our earlier paper.  Taken together, these sections separate three issues that can otherwise be conflated: regular metric selection, relative assembly of subobjects and extensions, and functoriality in the parameter space.  This separation is essential when passing from stable to semistable families.

\subsection*{Acknowledgements}
The first-named author acknowledges support from an NSERC Canada Graduate Scholarship (Doctoral) and a Canadian Department of National Defence/NSERC Supplemental Funding Award.  The second-named author acknowledges support from an NSERC Discovery Grant.

\section{Relative Higgs data and gauge-theoretic conventions}\label{sec:relative-data}

\subsection{Relative Higgs families and numerical conditions}

Throughout the paper $X$ is a connected compact K\"ahler manifold of complex dimension $n$, with K\"ahler form $\omega$.  The volume is
\[
  \Vol_\omega(X)=\int_X\frac{\omega^n}{n!}.
\]
The parameter space $U$ is always a finite-dimensional smooth manifold, not necessarily compact.  We use the standard convention that manifolds are Hausdorff and second countable, hence paracompact; in particular, smooth bundles over $U\times X$ admit global smooth Hermitian metrics.  All assertions about families are local on $U$ unless explicitly stated otherwise.

Let $\pi_X:U\times X\to X$ and $\pi_U:U\times X\to U$ denote the projections.  Write $A^k_{X/U}$ for smooth complex-valued $k$-forms in the $X$-direction and $A^{p,q}_{X/U}$ for their type decomposition.  Thus
\[
  A^k_{X/U}=\Gamma\bigl(U\times X,\pi_X^*\Lambda^kT^*X\otimes_\R\C\bigr).
\]
The partial exterior derivative and partial Dolbeault operators are denoted
\[
  d_{X/U},\qquad \partial_{X/U},\qquad\ddbar_{X/U}.
\]
They differentiate only in the $X$-direction.  The parameter derivatives will be written explicitly in local coordinates on $U$.

\begin{definition}[Smooth relative Higgs bundle]\label{def:relative-higgs}
A smooth family of Higgs bundles on $X$ parametrized by $U$ consists of a smooth complex vector bundle $E\to U\times X$ and an operator
\[
  \cD''_E=\ddbar_E+\theta:
  A^0(E)\longrightarrow
  A^{0,1}_{X/U}(E)\oplus A^{1,0}_{X/U}(E)
\]
such that:
\begin{enumerate}[label=(\roman*)]
\item $\ddbar_E$ is a partial Dolbeault operator;
\item $\theta\in A^{1,0}_{X/U}(\End E)$ is $C^\infty(U\times X)$;
\item $\ddbar_E\theta=0$ and $\theta\wedge\theta=0$.
\end{enumerate}
Equivalently, the total operator $\cD''_E$ has square zero in the Higgs total complex.
\end{definition}

The notation follows the harmonic-bundle convention in which the Higgs operator is regarded as a degree-one differential.  On a slice $\{u\}\times X$ we write
\[
  (E_u,\cD''_u)=(E_u,\ddbar_{E,u},\theta_u).
\]

\begin{remark}
The family need not be globally isomorphic to the pullback of a fixed smooth bundle on $X$.  However, after shrinking $U$ about a chosen point, a smooth connection in the $U$-direction identifies nearby fibres of $E\to U\times X$ with a fixed smooth bundle over $X$.  Every analytic argument below is performed after such a local identification.  The final statements are independent of this choice because the constructions are equivariant under smooth complex gauge transformations.
\end{remark}

With the local nature of the family fixed, we can state the stability terminology.  It is deliberately fibrewise; no constancy of decomposition data is built into the definition.

\begin{definition}[Stable, polystable, semistable family]
A smooth relative Higgs bundle is called stable, polystable, or semistable when every slice has the corresponding property with respect to $\omega$.  This is only a fibrewise condition: it does not assert constancy of Jordan--H\"older type, automorphism group, or stable decomposition.
\end{definition}

The last sentence is essential in later sections.  A fibrewise condition is weaker than the existence of a relative filtration or splitting.

Before introducing the metric equation, we isolate the topological hypotheses under which a zero of the Hitchin--Simpson moment map is promoted to a flat harmonic metric.  This distinction will be used repeatedly in the nonlinear argument.

\label{sec:numerical-conditions}

To avoid repeatedly using the phrase ``the usual Chern conditions'', we fix a convention.  For a complex vector bundle $E$ on an $n$-dimensional compact K\"ahler manifold, put
\begin{equation}\label{eq:NAH-numerical}
  \nu_1(E)=\int_X c_1(E)\wedge\frac{\omega^{n-1}}{(n-1)!},
  \qquad
  \nu_2(E)=\int_X \operatorname{ch}_2(E)\wedge\frac{\omega^{n-2}}{(n-2)!}
\end{equation}
when $n\ge2$, and retain only $\nu_1$ when $n=1$.  We call
\begin{equation}\label{eq:NAH-condition}
  \nu_1(E)=0,
  \qquad
  \nu_2(E)=0
\end{equation}
the \emph{NAH numerical conditions}.  These are the projected characteristic-number conditions used in the compact K\"ahler correspondence; one may impose the stronger assumption that the relevant rational Chern classes vanish.  The use of $\operatorname{ch}_2$ rather than $c_2$ is convenient for the Chern--Weil identity.  Our earlier paper \cite{AzamRayan2026} phrases the condition as vanishing of the appropriate first and second characteristic projections along the K\"ahler class.  Nothing in the local implicit-function argument depends on which equivalent standard normalization is adopted; the numerical condition enters when a Hermitian--Yang--Mills--Higgs solution is promoted to a flat harmonic connection.

There are correspondingly two metric notions that should not be conflated.  A metric $h$ is a \emph{Hitchin--Simpson} or \emph{Hermitian--Yang--Mills--Higgs metric of slope zero} if
\begin{equation}\label{eq:HYM-Higgs-zero}
  \sqrt{-1}\,\Lambda_\omega
  \bigl(F_{D_h}+[\theta,\theta^{\dagger_h}]\bigr)=0.
\end{equation}
Under \eqref{eq:NAH-condition}, the standard Chern--Weil identity forces the remaining curvature components of the associated connection to vanish, so that the metric is harmonic in the flat sense.  This distinction is useful because stability is used analytically to solve \eqref{eq:HYM-Higgs-zero}, whereas the second numerical condition is used only in the final flatness step.

\subsection{Hermitian geometry and the harmonic equation}

Let $h$ be a smooth Hermitian metric on $E\to U\times X$.  For every $u$, the pair $(\ddbar_{E,u},h_u)$ determines a Chern connection
\[
  D_{h,u}=\partial_{E,h,u}+\ddbar_{E,u}.
\]
These slice-wise connections assemble to a partial connection
\[
  D_h=\partial_{E,h}+\ddbar_E
\]
in the $X$-direction.  Smoothness in $u$ follows from the local matrix formula.  If $H$ is the Hermitian matrix of $h$ in a smooth frame and $A^{0,1}$ is the matrix of $\ddbar_E$, then
\begin{equation}\label{eq:chern-matrix}
  A^{1,0}_h
  =H^{-1}\partial_{X/U}H-H^{-1}(A^{0,1})^{\dagger}H,
\end{equation}
with the convention that $\dagger$ denotes conjugate transpose of matrices and the form type is transformed accordingly.

The $h$-adjoint Higgs field is
\[
  \theta^{\dagger_h}\in A^{0,1}_{X/U}(\End E),
\]
and we put
\begin{equation}\label{eq:Dprime-Ddoubleprime}
  \cD''=\ddbar_E+\theta,
  \qquad
  \cD'_h=\partial_{E,h}+\theta^{\dagger_h}.
\end{equation}
The associated total connection is
\begin{equation}\label{eq:total-connection}
  \nabla_h=\cD''+\cD'_h
  =D_h+\theta+\theta^{\dagger_h}.
\end{equation}

\begin{definition}[Harmonic metric]\label{def:harmonic-metric}
A Hermitian metric $h$ on a Higgs bundle $(E,\cD'')$ is harmonic when the connection \eqref{eq:total-connection} is flat.  We write
\begin{equation}\label{eq:HS-equation}
  \mu(\cD'',h)
  :=\sqrt{-1}\,\Lambda_\omega
  \bigl(F_{D_h}+[\theta,\theta^{\dagger_h}]\bigr).
\end{equation}
A flat harmonic metric satisfies $\mu=0$.  Conversely, for an integrable Higgs bundle satisfying the NAH numerical conditions \eqref{eq:NAH-condition}, a slope-zero Hitchin--Simpson solution $\mu=0$ is flat by the standard Chern--Weil identity of the compact K\"ahler correspondence.
\end{definition}

For nonzero slope one subtracts the central Einstein constant.  Since the diffeological correspondence in \cite{AzamRayan2026} is formulated on the numerical locus leading to flat bundles, we work primarily with the degree-zero equation.  The local analytic argument itself applies to the central Hitchin--Simpson equation with only notational changes.

\subsection{Gauge symmetry, logarithmic coordinates, and normalization}

Fix a smooth complex vector bundle $E_0\to X$.  Its complex gauge group is
\[
  \Ggauge^{\C}=\Gamma(X,\operatorname{GL}(E_0)),
\]
and, relative to a background metric $\kappa$, its unitary gauge group is
\[
  \Ggauge_\kappa=\Gamma(X,U(E_0,\kappa)).
\]
For a gauge-theoretic Sobolev index $\ell>n+1$ we use the completions
\[
  \Ggauge^{\C}_{\ell+1}=W^{\ell+1,2}(X,\operatorname{GL}(E_0)),
  \qquad
  \Ggauge_{\kappa,\ell+1}=W^{\ell+1,2}(X,U(E_0,\kappa)).
\]
The inequality $\ell>n+1$ ensures that $W^{\ell,2}$ is a Banach
algebra and that $W^{\ell+1,2}$ gauge transformations are at least
$C^1$.  This gauge-completion index is independent of the $k$ used
later for the moment-map target.

The complex gauge action on a Higgs operator is
\[
  g\cdot\cD''=g\cD''g^{-1}.
\]
The action on metrics is chosen so that $g:(E,h)\to(E,g\cdot h)$ is an isometry:
\[
  g\cdot h(v,w)=h(g^{-1}v,g^{-1}w).
\]
In matrix notation, if $h$ is represented by $H$, then
\[
  g\cdot H=(g^{-1})^\dagger H g^{-1}.
\]
The harmonic equation is gauge equivariant.

For the implicit-function argument it is convenient to express changes of metric by positive endomorphisms.  This gives logarithmic coordinates on the space of metrics and makes the trace-free normalization linear.

Given two Hermitian metrics $h_0$ and $h$ on $E_0$, there is a unique positive $h_0$-self-adjoint endomorphism $f$ such that
\[
  h(v,w)=h_0(fv,w).
\]
We write $f=e^s$ with $s=s^{\dagger_{h_0}}$.  Thus metrics near $h_0$ are parametrized by Hermitian endomorphisms.  If the determinant is fixed, then
\[
  \tr(s)=0.
\]
Let
\[
  \Herm_{h_0}(\End E_0)
  =\set{s\in\End E_0:s^{\dagger_{h_0}}=s},
\]
and similarly $\Herm_{h_0}^0$ for the trace-free part.

We use the exponential coordinate $h_s=h_0e^s$.  With the gauge action
on metrics fixed above, the same metric is written symmetrically as
\[
  h_s=e^{-s/2}\cdot h_0,
\]
because $e^{-s/2}:(E,h_0)\to(E,h_s)$ acts by pullback and $s$ is
$h_0$-self-adjoint.  The minus sign is forced by our convention
$g\cdot H=(g^{-1})^\dagger Hg^{-1}$.  The exponential description is
used for Banach charts, while the equivalent symmetric description is
useful for gauge covariance.

The remaining scalar freedom is best handled on the determinant line.  We fix that normalization now so that the later Jacobi operator is invertible on precisely the intended trace-free slice.

Stable harmonic metrics are unique only up to multiplication by a positive scalar.  A relative theorem therefore requires a normalization.  There are several equivalent choices.

\begin{definition}[Integral normalization]\label{def:integral-normalization}
Fix a smooth background metric $\kappa$ on $E$.  Define
\[
  N_\kappa(h)_u
  =\frac{1}{r\Vol_\omega(X)}
  \int_X \log\det(\kappa_u^{-1}h_u)\,\frac{\omega^n}{n!}.
\]
A Hermitian metric $h$ is integrally normalized relative to $\kappa$ if
\begin{equation}\label{eq:integral-normalization}
  N_\kappa(h)_u=0
\end{equation}
for every $u$.
\end{definition}

Alternatively, one may prescribe the determinant metric.  The latter is better suited to the trace-free implicit-function theorem.

\begin{definition}[Determinant normalization]\label{def:det-normalization}
Let $q$ be a smooth Hermitian metric on $\det E\to U\times X$.  A metric $h$ on $E$ is $q$-normalized if
\[
  \det h=q.
\]
\end{definition}

When the determinant has degree zero, equivalently $\nu_1(E_u)=0$ in the notation of \cref{sec:numerical-conditions}, a Hermitian--Einstein determinant metric exists on each fibre.  We shall prove in \cref{sec:smooth-metrics} that, for a family over a smooth parameter manifold, these metrics may be chosen as one smooth metric on the determinant line over all of $U\times X$.  Prescribing this determinant metric reduces the nonlinear equation to its trace-free part.

The term ``normalization'' refers precisely to the scalar ambiguity of a stable harmonic metric.  If $h_1$ and $h_2$ are harmonic metrics on the same stable Higgs bundle, then $h_2=c h_1$ for a positive constant $c$ on the connected manifold $X$, and therefore
\[
  \det h_2=c^r\det h_1.
\]
Thus two harmonic metrics with the same prescribed determinant are equal.  Conversely, if $q$ is a degree-zero Hermitian--Einstein metric on $\det E$, then $q$ and the determinant of any harmonic metric solve the same abelian equation; their ratio is constant on each $X$-slice, and a unique positive scalar rescaling makes the determinant equal to $q$.  An arbitrary metric on $\det E$ need not be admissible in this sense, so whenever determinant normalization is used below, $q$ is understood to be a chosen Hermitian--Einstein determinant metric.  Integral normalization removes the same scalar freedom because
\[
  N_\kappa(c h)=N_\kappa(h)+\log c.
\]
Appendix~\ref{app:normalization} gives the family-level comparison of these two normalizations and explains why they determine the same relative flat connection.

\subsection{Deformation complexes}

The gauge action is reflected infinitesimally by the standard Dolbeault and de Rham deformation complexes.  Recording them here fixes the cohomological language used later for the differential of the transform and for the obstruction theory of invariant subbundles.

For a fixed Higgs bundle $(E,\cD'')$, let
\[
  \cC^\bullet_{\Dol}(E):
  A^0(\End E)
  \xto{\cD''}
  A^1_{\Dol}(\End E)
  \xto{\cD''}
  A^2_{\Dol}(\End E)\to\cdots
\]
be the total Higgs complex, where
\[
  A^j_{\Dol}(\End E)
  =\bigoplus_{p+q=j}A^{p,q}(\End E)
  =\bigoplus_{p+q=j}
  \Gamma^\infty\!\left(
    X,\End E\otimes\Lambda^{p,q}T^*X
  \right).
\]
In degree one,
\[
  A^1_{\Dol}(\End E)
  =A^{0,1}(\End E)\oplus A^{1,0}(\End E).
\]
An infinitesimal deformation is written
\[
  \eta=a+\varphi,
  \qquad
  a\in A^{0,1}(\End E),\quad
  \varphi\in A^{1,0}(\End E),
\]
and satisfies
\begin{equation}\label{eq:linearized-Higgs}
  \cD''\eta=0.
\end{equation}
The infinitesimal complex-gauge directions in this space of closed deformations are of the form
\[
  \eta=\cD''\xi,
\]
where
\[
  \xi\in A^0(\End E)=\Gamma^\infty(X,\End E)
\]
is an infinitesimal complex gauge parameter.  With the action convention
$g\cdot\cD''=g\cD''g^{-1}$ used above, the path $g_t=e^{-t\xi}$ adds
$\cD''\xi$ to first order; the path $g_t=e^{t\xi}$ instead gives
$-\cD''\xi$.  At coefficient level $W^{\ell,2}$, the
Sobolev-completed gauge complex takes
$\xi\in W^{\ell+1,2}(\End E)$.

For a flat connection $\nabla$, the de Rham deformation complex is
\[
  \cC^\bullet_{\dR}(E,\nabla):
  A^0(\End E)
  \xto{d_\nabla}
  A^1(\End E)
  \xto{d_\nabla}
  A^2(\End E)\to\cdots.
\]
The first hypercohomology of the Higgs complex and the first de Rham cohomology control first-order deformations modulo gauge.

\subsection{Pullback and functoriality of relative objects}

Let $f:V\to U$ be smooth.  Pullback gives
\[
  f_X=f\times\Id_X:V\times X\to U\times X
\]
and a pulled-back family
\[
  f_X^*(E,\cD'').
\]
A metric $h$ pulls back to $f_X^*h$.  Since all operators in \eqref{eq:HS-equation} differentiate only in the $X$-direction,
\begin{equation}\label{eq:pullback-moment}
  \mu(f_X^*\cD'',f_X^*h)=f_X^*\mu(\cD'',h).
\end{equation}
This elementary identity is the source of the functoriality that will later turn the local analytic construction into a morphism on diffeological plots.

\begin{proposition}[Pullback of normalized solutions]\label{prop:pullback-solutions}
Suppose $h$ is a harmonic metric on a smooth family $(E,\cD'')$ over $U$.  Then $f_X^*h$ is harmonic for the pulled-back family.  If the normalization is prescribed by a determinant metric $q$, then $f_X^*h$ is normalized by $f_X^*q$.  If the integral normalization \eqref{eq:integral-normalization} is used with a pulled-back background metric, it is likewise preserved.
\end{proposition}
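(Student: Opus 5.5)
The plan is to reduce everything to the naturality identity \eqref{eq:pullback-moment} together with its analogue for the full curvature of the total connection. Fix a point $v\in V$ and put $u=f(v)$. The slice of $f_X^*E$ over $\{v\}\times X$ is canonically identified with the slice of $E$ over $\{u\}\times X$. Under this identification the slice operators of $f_X^*\cD''$ and the slice metric of $f_X^*h$ are exactly $\cD''_u$ and $h_u$. This holds because $f_X=f\times\Id_X$ is the identity in the $X$-direction, and $\ddbar_E$, $\theta$ and $h$ are pulled back coefficientwise. In a local frame of $E$ near $\{u\}\times X$, pulled back to a frame of $f_X^*E$, the matrices $A^{0,1}$, $\theta$ and $H$ are composed with $f_X$. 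The operators $\partial_{X/U}$, $\ddbar_{X/U}$ and $\Lambda_\omega$ differentiate or contract only in the $X$-variables, so they commute with composition by $f_X$.

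First, I apply the Chern matrix formula \eqref{eq:chern-matrix} to the pulled-back frame. This gives $A^{1,0}_{f_X^*h}=f_X^*A^{1,0}_h$, and likewise $\theta^{\dagger_{f_X^*h}}=f_X^*\theta^{\dagger_h}$. Hence the total connection \eqref{eq:total-connection} satisfies $\nabla_{f_X^*h}=f_X^*\nabla_h$ as partial connections in the $X$-direction. The same argument applies to its curvature: the curvature of the pullback is the pullback of the curvature. Flatness of $\nabla_h$ on every slice therefore gives flatness of $\nabla_{f_X^*h}$ on every slice. So $f_X^*h$ is harmonic in the sense of \cref{def:harmonic-metric}, slice by slice. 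Smoothness of $f_X^*h$ on $V\times X$ holds because it is the composition of a smooth metric with a smooth map.

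The normalization statements are then formal. The determinant commutes with pullback, so $\det(f_X^*h)=f_X^*\det h=f_X^*q$. If $q$ is a Hermitian--Einstein determinant family, then $f_X^*q$ is again one, by the abelian case of the same naturality. For the integral normalization, I take the background metric $f_X^*\kappa$. For each $v$,
\[
  N_{f_X^*\kappa}(f_X^*h)_v
  =\frac{1}{r\Vol_\omega(X)}\int_X\log\det(\kappa_{f(v)}^{-1}h_{f(v)})\,\frac{\omega^n}{n!}
  =N_\kappa(h)_{f(v)}=0,
\]
since the fibre integral over $X$ is taken slice by slice and the slices are identified.

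I expect no real obstacle. The only point needing care is that everything is a partial ($X$-direction) object, so no derivatives of $f$ ever appear. I will state explicitly that $D_h$, $\theta^{\dagger_h}$ and $\Lambda_\omega$ involve no $U$-derivatives. I will also note that the frame-dependent formula is independent of the frame chosen, so the local identities glue.
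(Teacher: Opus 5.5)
Your proposal is correct and follows essentially the same route as the paper: you pull back the local Chern-matrix formula and the Higgs adjoint to get $\nabla_{f_X^*h}=f_X^*\nabla_h$, deduce flatness from naturality of curvature, and check both normalizations slice by slice. Your extra remark that $f_X^*q$ is again Hermitian--Einstein is a harmless addition that the paper leaves implicit.
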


\begin{proof}
Work in a local frame of $E$ and write $H(u,x)$ for the matrix of
$h$.  Since $f_X$ is the identity in the $X$-direction, relative
differentiation commutes with pullback.  In particular,
\[
 (f_X^*H)^{-1}\partial_X(f_X^*H)
 =f_X^*(H^{-1}\partial_XH),
\]
so the relative Chern connection of $f_X^*h$ is the pullback of the
relative Chern connection of $h$.  The adjoint of the pulled-back Higgs
field is likewise the pullback of $\theta^{\dagger_h}$, because the
adjoint is characterized pointwise by the metric.  Consequently
\[
  \nabla_{f_X^*h}=f_X^*\nabla_h.
\]
Curvature commutes with pullback, and hence flatness of $\nabla_h$
implies flatness of $\nabla_{f_X^*h}$.  Equivalently,
\eqref{eq:pullback-moment} carries the moment-map equation to the
pulled-back family.

For the determinant normalization one has
\[
 \det(f_X^*h)=f_X^*(\det h)=f_X^*q.
\]
For the integral normalization, if $\kappa$ is the background metric,
then for every $v\in V$,
\[
 \int_X\log\det\bigl((f_X^*\kappa)^{-1}f_X^*h\bigr)
             \frac{\omega^n}{n!}
 =\int_X\log\det(\kappa^{-1}h)(f(v),x)
             \frac{\omega^n}{n!}=0.
\]
Thus both normalizations are preserved, with no change-of-variables
factor because the map on each $X$-slice is the identity.
\end{proof}

\section{Analytic set-up for parameter-dependent harmonic metrics}\label{sec:analytic-setup}

\subsection{Local trivializations and normalized metric spaces}

Fix $u_0\in U$.  Choose a coordinate ball $B\subset U$ about $u_0$ and a connection on $E$ in the $U$-direction.  Parallel transport along radial paths identifies
\[
  E|_{B\times X}\cong \pi_X^*E_0,
  \qquad E_0=E_{u_0},
\]
as smooth complex bundles.  Under this identification the family of Higgs operators becomes a smooth map
\begin{equation}\label{eq:parameter-map-D}
  B\longrightarrow \cZ^{k+1}_{\mathrm{Higgs}}(E_0)
  \subset \cA^{k+1}_{\mathrm{amb}}(E_0),
  \qquad
  u\longmapsto\cD''_u,
\end{equation}
where the ambient affine coefficient space is
\begin{equation}\label{eq:ambient-Higgs-affine-space}
  \cA^{k+1}_{\mathrm{amb}}(E_0)
  =\cD''_0+
  W^{k+1,2}\!\left(
    X,
    \bigl(\Lambda^{0,1}T^*X\oplus\Lambda^{1,0}T^*X\bigr)
    \otimes\End E_0
  \right).
\end{equation}
Thus a point of the ambient space is written
\[
  \cD''_0+a^{0,1}+\varphi,
\]
and allows both the partial Dolbeault operator and the Higgs field to vary.  The space
\begin{equation}\label{eq:integrable-Higgs-locus}
  \cZ^{k+1}_{\mathrm{Higgs}}(E_0)
  =\set{\cD''\in\cA^{k+1}_{\mathrm{amb}}(E_0):(\cD'')^2=0}
\end{equation}
is the integrable Higgs locus.  It need not be a Banach submanifold of
the ambient affine space.  The $k+1$ coefficient regularity is used
because the moment map differentiates the varying Dolbeault coefficient
once and takes values in $W^{k,2}$.  In particular,
\eqref{eq:parameter-map-D} is not merely a family of elements of
$W^{k+1,2}(\End E_0\otimes\Omega_X^{1,0})$ unless the holomorphic
structure has separately been fixed.
If $\ddbar_E$ is fixed and only the Higgs field varies, the Higgs-only
slice may use $W^{k,2}(\End E_0\otimes\Omega_X^{1,0})$, because the
moment map is algebraic in $\theta$.  The full family problem allows the
Dolbeault coefficient to vary, so we use the uniform $W^{k+1,2}$
coefficient space above.

We make the regularity precise.  Let $m=\dim_\R X=2n$ and choose an integer
\begin{equation}\label{eq:k-assumption}
  k>\frac m2+1=n+1.
\end{equation}
Then multiplication
\[
  W^{k,2}\times W^{k,2}\to W^{k,2}
\]
is continuous, and $W^{k+1,2}\hookrightarrow C^1$.  Smooth family data define a $C^\infty$ map from $B$ into every $W^{k,2}$ completion.

After the local identification of the underlying bundle, the metric variable can be placed in a fixed Sobolev space.  The determinant condition then cuts out the normalized trace-free slice used by the implicit-function theorem.

Fix a smooth background Hermitian metric $h_0$ on $E_0$.  Let
\[
  \cS_{k+2}^0
  =W^{k+2,2}\bigl(X,\Herm^0_{h_0}(\End E_0)\bigr).
\]
For $s$ in a sufficiently small open ball about zero, set
\[
  h_s=h_0e^s.
\]
The map $s\mapsto h_s$ is a smooth chart on the Banach manifold of $W^{k+2,2}$ Hermitian metrics with determinant $\det h_0$.

Let
\[
  \cT_k^0
  =W^{k,2}\bigl(X,\Herm^0_{h_0}(\End E_0)\bigr).
\]
A trace-free projection will be used repeatedly.  For an endomorphism $A$, put
\begin{equation}\label{eq:tracefree-projection}
  \operatorname{pr}_0(A)
  =A-\frac1r\tr(A)\Id,
\end{equation}
and define the trace-free moment map by
\begin{equation}\label{eq:tracefree-moment-map}
  \mu_0(\cD'',h)
  :=\operatorname{pr}_0\bigl(\mu(\cD'',h)\bigr).
\end{equation}
A small identification issue is important here.  The endomorphism $\mu_0(\cD'',h_s)$ is self-adjoint with respect to the \emph{moving} metric $h_s$, not with respect to $h_0$.  Since
\[
  h_s(v,w)=h_0(e^{s/2}v,e^{s/2}w),
\]
the isometry $e^{s/2}:(E,h_s)\to(E,h_0)$ identifies the moving Hermitian endomorphisms with the fixed $h_0$-Hermitian space.  We therefore set
\[
  \mathcal I_s(A)=e^{s/2}Ae^{-s/2}
\]
and define
\begin{equation}\label{eq:nonlinear-F}
  \cF:B\times\cS_{k+2}^0\longrightarrow\cT_k^0,
  \qquad
  \cF(u,s)=\mathcal I_s\!\left(\mu_0(\cD''_u,h_s)\right).
\end{equation}
The zero set is unchanged by $\mathcal I_s$.  Moreover, at a zero of the moment map the derivative of the conjugating factor contributes nothing, so the metric-direction linearization remains the Jacobi operator computed below.

\begin{lemma}[Smoothness of the nonlinear map]\label{lem:F-smooth}
The map \eqref{eq:nonlinear-F} is $C^\infty$ between Banach manifolds.  If $u\mapsto\cD''_u$ is real analytic as a map into the Sobolev affine space, then $\cF$ is real analytic.
\end{lemma}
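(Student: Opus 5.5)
The plan is to write $\cF$ as a composition of maps, each of which is visibly smooth (indeed real analytic) between Sobolev spaces, using only that $W^{k,2}$ and $W^{k+1,2}$ are Banach algebras for $k>n+1$ by \eqref{eq:k-assumption}. Working in a local smooth frame of $E_0$, write $\cD''_u=\cD''_0+a_u+\varphi_u$ with $(a_u,\varphi_u)\in W^{k+1,2}$, and $h_s=h_0e^s$ with $s\in\cS^0_{k+2}$.

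\textbf{First step: the exponential factors.} The maps $s\mapsto e^{s}$, $e^{-s}$, $e^{\pm s/2}$ are given by convergent power series in the Banach algebra $W^{k+2,2}(\End E_0)$. They are therefore real analytic from a small ball in $\cS^0_{k+2}$ into $W^{k+2,2}(\End E_0)$. The same holds for $H_s=H_0e^s$ and $H_s^{-1}=e^{-s}H_0^{-1}$, since $H_0$ is smooth.

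\textbf{Second step: the moment map.} By \eqref{eq:chern-matrix}, the Chern $(1,0)$-form is
\[
A^{1,0}_{u,s}=H_s^{-1}\partial_X H_s-H_s^{-1}(A^{0,1}_u)^\dagger H_s .
\]
Here $\partial_X H_s\in W^{k+1,2}$ and $A^{0,1}_u\in W^{k+1,2}$, so $A^{1,0}_{u,s}$ is a polynomial expression in Banach-algebra elements of $W^{k+1,2}$. The adjoint Higgs field is $\theta^{\dagger_{h_s}}=H_s^{-1}\theta_u^\dagger H_s\in W^{k+1,2}$, again polynomial. The curvature $F_{D_{h_s}}$ involves one further derivative of $A^{1,0}_{u,s}$ and of $A^{0,1}_u$, together with quadratic terms. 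Hence $F_{D_{h_s}}+[\theta_u,\theta^{\dagger_{h_s}}]\in W^{k,2}$, and it depends polynomially on
\[
\bigl(a_u,\varphi_u,\,H_s,\,H_s^{-1}\bigr).
\]
The remaining operations are all bounded linear: contraction $\Lambda_\omega$ with the smooth form $\omega$, multiplication by $\sqrt{-1}$, the projection $\operatorname{pr}_0$ from \eqref{eq:tracefree-projection}, and finally the conjugation $\mathcal I_s(A)=e^{s/2}Ae^{-s/2}$, which is bilinear in already-analytic quantities. The output lies in $\cT^0_k$: it is trace-free because $\operatorname{pr}_0$ commutes with conjugation, and it is $h_0$-Hermitian by the isometry remark preceding \eqref{eq:nonlinear-F}.

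\textbf{Third step: composition.} Compositions of smooth (respectively analytic) maps with continuous multilinear maps remain smooth (respectively analytic). So $(\cD'',s)\mapsto\cF$ is real analytic as a map from the ambient affine space times a ball into $\cT^0_k$. Precomposing with the smooth map \eqref{eq:parameter-map-D} gives smoothness in $(u,s)$. If \eqref{eq:parameter-map-D} is real analytic, the composite is analytic as well.

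\textbf{Main obstacle: derivative counts.} The step needing care is the bookkeeping of derivatives. One must check that every product appearing in $F$ pairs factors of at least $W^{k,2}$ regularity, so that the multiplication theorem applies. In particular, $\partial_X\ddbar_X H_s$ terms lose two derivatives from $W^{k+2,2}$, landing exactly in $W^{k,2}$, and $\ddbar A^{1,0}$ lands in $W^{k,2}$. One must also verify that the frame change used for the local identification introduces only smooth coefficients.
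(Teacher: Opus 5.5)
Your proposal is correct and follows essentially the same route as the paper: both write $\cF$ in a local frame as a composition of power-series exponentials in the Sobolev Banach algebras, the Chern-matrix formula \eqref{eq:chern-matrix} with one further $X$-derivative placing the curvature in $W^{k,2}$ (which is why the Dolbeault coefficient is taken in $W^{k+1,2}$), the bounded linear contraction and trace-free projection, and the analytic conjugation $\mathcal I_s$, and then precompose with the parameter map. The only cosmetic differences are that you get $H_s^{-1}=e^{-s}H_0^{-1}$ directly instead of invoking analyticity of inversion on invertible $W^{k+2,2}$ endomorphisms, and that you explicitly check that the output is trace-free and $h_0$-Hermitian.
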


\begin{proof}
In a smooth frame the Chern connection matrix is given by
\eqref{eq:chern-matrix}.  The dependence on $s$ is generated by the maps
\[
  s\mapsto e^s,
  \quad
  s\mapsto e^{-s},
  \quad
  s\mapsto \partial e^s,
\]
combined with multiplication, inversion, differentiation, and
contraction by the fixed K\"ahler form.  Because $k>m/2$, each
$W^{j,2}$ with $j\ge k$ is a Banach algebra.  The power series for the
exponential therefore converges locally in $W^{k+2,2}$ and defines a
real-analytic map; inversion is real analytic on the open set of
invertible $W^{k+2,2}$ endomorphisms.  Hence
\[
 s\longmapsto H_s^{-1}\partial H_s
\]
is analytic from $W^{k+2,2}$ to $W^{k+1,2}$.  Applying one further
$X$-derivative, and adding the quadratic connection term, places the
Chern curvature in $W^{k,2}$.  The commutator
$[\theta,\theta^{\dagger_{h_s}}]$ is algebraic in the coefficient of
$\theta$, $H_s$, and $H_s^{-1}$, so Sobolev multiplication gives the
same target regularity.  The coefficient dependence on $\cD''_u$ is
polynomial apart from these already controlled metric operations.  More
precisely, the Chern formula is algebraic in the $(0,1)$ coefficient
$A^{0,1}$, but its curvature differentiates that coefficient once.
Thus
\[
 W^{k+1,2}\ni A^{0,1}\longmapsto F_{D_{h_s}}in W^{k,2}
\]
has the required mapping property.  This is the reason for using
$\cA^{k+1}$, rather than $\cA^k$, in the universal Banach map.

Finally, $s\mapsto\mathcal I_s$ and the trace-free projection are,
respectively, analytic and bounded linear maps on the relevant
Sobolev spaces.  Composing these maps proves that $\cF$ is smooth.  If
$u\mapsto\cD''_u$ is real analytic, every operation just listed is
real analytic, which proves the final assertion.  Notice that this
argument takes place on the ambient coefficient space; integrability
of $\cD''_u$ is not needed for the mapping-property statement.
\end{proof}

We now package the preceding choices into a single nonlinear map with
fixed domain and target, separating the Higgs data from the metric.  We
abbreviate the ambient affine space
\eqref{eq:ambient-Higgs-affine-space} to $\cA^{k+1}$; no
Maurer--Cartan equation is imposed on this space.  Define the ambient map
\begin{equation}\label{eq:universal-moment-map}
  \Phi:\cA^{k+1}\times\cS_{k+2}^0\to\cT_k^0,
  \qquad
  \Phi(\cD'',s)=\mathcal I_s\!\left(\mu_0(\cD'',h_0e^s)\right).
\end{equation}
For an actual integrable family one has $\cF(u,s)=\Phi(\cD''_u,s)$.  The derivative at a harmonic point decomposes as
\begin{equation}\label{eq:derivative-splitting}
  D\Phi_{(\cD'',0)}(\eta,s)
  =\cS_h(\eta)+L_hs.
\end{equation}
Here $\cS_h$ is the fixed-metric source operator and $L_h$ is the metric-direction Jacobi operator.  The next section identifies both terms.

\subsection{Uniform elliptic control in parameter families}

Suppose $P_u$ is a smooth family of second-order differential operators on a fixed vector bundle over compact $X$, with principal symbols satisfying a uniform strong ellipticity estimate on a relatively compact parameter neighbourhood $B'\Subset B$.  Then for every integer $j\ge0$ there is a constant $C_j$ such that
\begin{equation}\label{eq:uniform-elliptic-estimate}
  \norm{v}_{W^{j+2,2}}
  \le C_j\bigl(\norm{P_uv}_{W^{j,2}}+\norm{v}_{L^2}\bigr)
\end{equation}
for all $u\in B'$.

The next proposition is the parameter consequence used below.

\begin{proposition}[Parameter-dependent elliptic bootstrap]\label{prop:parametric-bootstrap}
Let $B\subset\R^d$ be open, $P_u$ a $C^\infty$ family of strongly elliptic second-order operators on a vector bundle $V\to X$, and $f:B\to W^{k,2}(V)$ a $C^\infty$ map.  Suppose $v:B\to W^{k+2,2}(V)$ is $C^\infty$ and
\[
  P_uv(u)=f(u).
\]
If the coefficients of $P_u$ and $f(u)$ are jointly smooth on $B\times X$, then $v$ is jointly smooth on $B\times X$.

If the dependence on $u$ is real analytic in Sobolev topology and $P_u$ is invertible with locally uniformly bounded inverse, then $u\mapsto v(u)$ is real analytic in each Sobolev topology.
\end{proposition}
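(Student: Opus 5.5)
The plan is to prove joint smoothness by induction on the number of parameter derivatives. At each stage we differentiate the equation $P_uv(u)=f(u)$ in $u$ and apply the uniform estimate \eqref{eq:uniform-elliptic-estimate} to convert $X$-regularity of the right-hand side into $X$-regularity of the derivatives of $v$.

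\textbf{Step 1: $X$-regularity at each fixed parameter.} Because the coefficients of $P_u$ and $f(u)$ are jointly smooth, $f(u)$ lies in every $W^{j,2}$. Ordinary elliptic regularity for the single operator $P_u$ then places $v(u)$ in every $W^{j+2,2}$. We need this with locally uniform bounds in $u$. For $u\in B'\Subset B$, \eqref{eq:uniform-elliptic-estimate} gives
\[
  \norm{v(u)}_{W^{j+2,2}}\le C_j\bigl(\norm{f(u)}_{W^{j,2}}+\norm{v(u)}_{L^2}\bigr).
\]
The right-hand side is locally bounded, since $v$ is continuous into $W^{k+2,2}\subset L^2$.

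\textbf{Step 2: differentiating in the parameter.} By hypothesis $v$ is $C^\infty$ into $W^{k+2,2}$. So we may differentiate $P_uv(u)=f(u)$ in $W^{k,2}$ with respect to a multi-index $\alpha$. This gives
\[
  P_u(\partial^\alpha v)=\partial^\alpha f-\sum_{0\ne\beta\le\alpha}\binom{\alpha}{\beta}(\partial^\beta P_u)(\partial^{\alpha-\beta}v).
\]
Each $\partial^\beta P_u$ is a second-order operator with jointly smooth coefficients. We then induct on $|\alpha|$: if every $\partial^\gamma v$ with $|\gamma|<|\alpha|$ is locally bounded in all $W^{j,2}$, the right-hand side is locally bounded in all $W^{j,2}$. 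Applying \eqref{eq:uniform-elliptic-estimate} to $\partial^\alpha v$, which is already continuous into $L^2$, shows that $\partial^\alpha v$ is locally bounded in every $W^{j+2,2}$.

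To obtain continuity in $u$ with values in $W^{j+2,2}$, and not merely boundedness, we apply the same estimate to the difference $\partial^\alpha v(u)-\partial^\alpha v(u')$. It satisfies
\[
  P_u\bigl(\partial^\alpha v(u)-\partial^\alpha v(u')\bigr)=(\text{RHS at }u-\text{RHS at }u')-(P_u-P_{u'})\,\partial^\alpha v(u'),
\]
and every term on the right tends to zero in $W^{j,2}$ as $u'\to u$. Hence $v$ is $C^\infty$ from $B$ into every $W^{j,2}$. Sobolev embedding $W^{j,2}\hookrightarrow C^{j-n-1}$ then gives that all mixed derivatives $\partial_u^\alpha\partial_x^\gamma v$ exist and are continuous. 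This is joint smoothness on $B\times X$.

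\textbf{Step 3: the analytic case.} We write $v(u)=P_u^{-1}f(u)$. The map $u\mapsto P_u\in\mathcal L(W^{j+2,2},W^{j,2})$ is real analytic, and inversion is analytic on the open set of isomorphisms. The locally uniform bound on $P_u^{-1}$ keeps us inside this open set with controlled analytic radius, so $u\mapsto P_u^{-1}$ is analytic. Composing with the analytic map $f$ shows that $u\mapsto v(u)$ is analytic into $W^{j+2,2}$ for each $j$.

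\textbf{Main obstacle.} The delicate point is Step 2: making sure the $u$-derivatives of $v$ exist in the higher Sobolev spaces and are continuous there, not just in $W^{k+2,2}$. Simply differentiating the equation does not give this. The way around it is the difference estimate above: applying the uniform estimate to difference quotients, together with the $L^2$ control inherited from the assumed $C^\infty$ dependence into $W^{k+2,2}$, upgrades the regularity at each stage.
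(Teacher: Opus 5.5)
Your proposal is correct and follows essentially the same route as the paper. Like the paper, you differentiate $P_uv(u)=f(u)$ via the Leibniz formula, induct on $|\alpha|$ using the uniform estimate \eqref{eq:uniform-elliptic-estimate}, conclude joint smoothness by Sobolev embedding, and treat the analytic case through $v(u)=P_u^{-1}f(u)$ with analytic dependence of the inverse (the paper phrases this via the Neumann series). Your difference estimate for $\partial^\alpha v$ in higher Sobolev norms, together with the standard fundamental-theorem-of-calculus argument that upgrades derivatives taken in $W^{k+2,2}$ to derivatives in $W^{j,2}$, spells out a step the paper compresses into its ``second induction on the Sobolev order.''
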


\begin{proof}
Fix $B'\Subset B$.  On $B'$ the coefficients and all of their parameter
derivatives are bounded in the Sobolev norms under consideration, and
the constants in \eqref{eq:uniform-elliptic-estimate} may be chosen
uniformly.  Differentiating once in a parameter direction gives
\[
  P_u(\partial_{u_i}v)
  =\partial_{u_i}f-(\partial_{u_i}P_u)v.
\]
More generally, for a multi-index $\alpha$,
\[
 P_u(\partial_u^\alpha v)
 =\partial_u^\alpha f
  -\sum_{0<\beta\le\alpha}\binom{\alpha}{\beta}
    (\partial_u^\beta P_u)
    (\partial_u^{\alpha-\beta}v).
\]
Assume inductively that all lower parameter derivatives on the right
have the desired $X$-regularity.  The coefficient multiplication
theorems and \eqref{eq:uniform-elliptic-estimate} then give two more
$X$-derivatives for $\partial_u^\alpha v$.  A second induction on the
Sobolev order proves that $u\mapsto v(u)$ is $C^\infty$ with values in
$W^{\ell,2}$ for every $\ell$.  Taking $\ell$ larger than
$m/2+a$ and using Sobolev embedding shows that every mixed derivative
$\partial_u^\alpha\partial_x^\gamma v$ of order $|\gamma|\le a$ is
continuous.  Since $a$ and $\alpha$ are arbitrary, $v$ is jointly
smooth on $B'\times X$, and hence on $B\times X$.

For the analytic assertion, fix a Sobolev level and regard
\[
 P_u:W^{\ell+2,2}(V)\longrightarrow W^{\ell,2}(V).
\]
Local uniform invertibility and the Neumann-series identity show that
$u\mapsto P_u^{-1}$ is analytic as a bounded-operator-valued map.  Thus
$v(u)=P_u^{-1}f(u)$ is analytic at this Sobolev level.  Repeating the
argument after elliptic bootstrapping gives analyticity into every
$W^{\ell,2}$; the solutions obtained at different levels agree by
uniqueness.  This proves both claims.
\end{proof}

A more detailed proof, including the non-invertible Fredholm case with a fixed kernel projection, is given in \cref{app:elliptic}.

Uniform elliptic estimates have an immediate consequence for nearby parameters: once the kernel vanishes at one point, invertibility persists after shrinking the parameter neighbourhood.  This is the bridge from the general elliptic set-up to the stable Higgs locus.

Let $L_0:W^{k+2,2}(V)\to W^{k,2}(V)$ be an isomorphism.  If $L_u$ depends continuously on $u$ in operator norm, then $L_u$ remains invertible for $u$ near $0$ and
\begin{equation}\label{eq:inverse-neumann}
  L_u^{-1}
  =L_0^{-1}\sum_{j=0}^\infty
  \bigl(-(L_u-L_0)L_0^{-1}\bigr)^j
\end{equation}
whenever the norm of $(L_u-L_0)L_0^{-1}$ is less than one.  We use this repeatedly on the stable normalized locus, where the linearization has no kernel and no moving finite-dimensional obstruction space.

The implicit-function theorem requires the $s$-derivative of \eqref{eq:nonlinear-F} to be an isomorphism.  On a stable Higgs bundle, harmonic endomorphisms are scalar, so trace-free normalization gives invertibility.  On a polystable bundle with decomposition
\[
  E\cong\bigoplus_j E_j\otimes W_j,
\]
the harmonic endomorphisms contain
\[
  \bigoplus_j \Id_{E_j}\otimes\End(W_j).
\]
Thus the linearization has a kernel whose dimension can jump in families.  This is not a technical nuisance.  It is the analytic shadow of stabilizer stratification in the moduli stack.  We therefore resist the temptation to state a single parametric theorem across the entire polystable locus.

\section{Linearization of the Hitchin--Simpson equation}\label{sec:linearization}

\subsection{Conventions and first-order variation formulas}

We now work at a fixed harmonic bundle $(E,\cD'',h)$ on $X$.  The operators
\[
  \cD''=\ddbar_E+\theta,
  \qquad
  \cD'_h=\partial_{E,h}+\theta^{\dagger_h}
\]
act on the total Higgs complex.  We use the graded commutator
\[
  [A,B]_{\mathrm{gr}}
  =AB-(-1)^{\deg A\deg B}BA.
\]
Since $\cD''$ and $\cD'_h$ have odd degree, their graded commutator is the anticommutator
\[
  [\cD'',\cD'_h]_{\mathrm{gr}}
  =\cD''\cD'_h+\cD'_h\cD''.
\]
At a harmonic metric the total connection $\nabla_h=\cD''+\cD'_h$ is flat, and therefore
\begin{equation}\label{eq:harmonic-bicomplex}
  (\cD'')^2=0,
  \qquad
  (\cD'_h)^2=0,
  \qquad
  [\cD'',\cD'_h]_{\mathrm{gr}}=0.
\end{equation}

The moment map can be represented, on endomorphism-valued data, as
\begin{equation}\label{eq:moment-graded}
  \mu(\cD'',h)
  =\sqrt{-1}\,\Lambda
  [\cD'',\cD'_h]_{\mathrm{gr}}.
\end{equation}
This agrees with \eqref{eq:HS-equation} after using Higgs integrability and the standard type decomposition.  Our normalization of $\Lambda$ is the adjoint of wedge multiplication by $\omega$.

With the graded conventions fixed, we can compute how the adjoint pieces vary.  The sign in the Chern part is important and is the source of the companion deformation used throughout the first-variation formulas.

Let $s=s^{\dagger_h}$ and set
\[
  h_t(v,w)=h(e^{ts}v,w).
\]
We shall use the next variation formula repeatedly.

\begin{lemma}[Metric variation of $\cD'_h$]\label{lem:variation-Dprime}
At $t=0$,
\begin{equation}\label{eq:variation-Dprime}
  \frac{d}{dt}\bigg|_{t=0}\cD'_{h_t}
  =\cD'_h s,
\end{equation}
where the right side denotes the endomorphism-valued degree-one form obtained by applying the induced operator $\cD'_h$ to $s$.
\end{lemma}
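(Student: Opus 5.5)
The plan is to show that the whole operator $\cD'_{h_t}$ is obtained from $\cD'_h$ by conjugation with the positive endomorphism $f_t:=e^{ts}$:
\[
  \cD'_{h_t}=f_t^{-1}\circ\cD'_h\circ f_t .
\]
Differentiating at $t=0$ then gives $\frac{d}{dt}\big|_{t=0}\cD'_{h_t}=[\cD'_h,s]$. This commutator is, by definition, the induced operator $\cD'_h$ applied to the endomorphism $s$. I will check the conjugation identity separately for the Chern part $\partial_{E,h}$ and for the adjoint Higgs field $\theta^{\dagger_h}$. Each piece is determined pointwise, or by a first-order identity, by the metric, so no analytic input is needed and the computation can be done for a single harmonic bundle on $X$.

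\emph{Chern part.} The $(1,0)$-part $\partial_{E,h_t}$ is characterized by the compatibility identity
\[
  \partial\, h_t(u,v)=h_t(\partial_{E,h_t}u,v)+h_t(u,\ddbar_E v)
\]
for all smooth sections $u,v$. Using $h_t(u,v)=h(f_tu,v)$ and the compatibility of $\partial_{E,h}$ with $h$, I will compute
\[
  \partial\, h(f_tu,v)=h(\partial_{E,h}(f_tu),v)+h(f_tu,\ddbar_Ev)
  =h_t\bigl(f_t^{-1}\partial_{E,h}(f_tu),v\bigr)+h_t(u,\ddbar_Ev).
\]
By uniqueness of the Chern connection this yields $\partial_{E,h_t}=f_t^{-1}\partial_{E,h}f_t$. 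Here $\ddbar_E$ does not depend on $t$ because only the metric varies.

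\emph{Adjoint Higgs part.} The adjoint is characterized pointwise by $h_t(\theta u,v)=h_t(u,\theta^{\dagger_{h_t}}v)$. The left side equals $h(f_t\theta u,v)=h(u,\theta^{\dagger_h}f_tv)$, using that $f_t$ is $h$-self-adjoint. The right side equals $h(u,f_t\theta^{\dagger_{h_t}}v)$. Comparing the two gives $\theta^{\dagger_{h_t}}=f_t^{-1}\theta^{\dagger_h}f_t$. Adding the two pieces gives the displayed conjugation formula for $\cD'_{h_t}$.

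\emph{Differentiation.} Since $f_t=\Id+ts+O(t^2)$ and $f_t^{-1}=\Id-ts+O(t^2)$, the derivative at $t=0$ is
\[
  \cD'_h\circ s-s\circ\cD'_h=\partial_{E,h}s+[\theta^{\dagger_h},s],
\]
which is exactly $\cD'_hs$ on $\End E$. The step requiring care, rather than any real difficulty, is the convention bookkeeping. First, one must confirm that the relevant conjugation is by $f_t=e^{ts}$ itself and not by $e^{ts/2}$. The symmetric description $h_s=e^{-s/2}\cdot h_0$ would naturally produce the gauge-covariant form, and that is a different operator. Second, the signs of the form types must match, including the anti-linearity in the second slot when the $(1,0)$-form is moved across the metric. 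I will fix these signs once with the matrix formula \eqref{eq:chern-matrix}, taking $H_t=He^{ts}$ in the appropriate order, as a consistency check.
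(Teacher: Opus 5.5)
Your proof is correct and follows essentially the same route as the paper, which likewise writes $\theta^{\dagger_{h_t}}=e^{-ts}\theta^{\dagger_h}e^{ts}$ and sums the Chern and Higgs contributions. The only difference is that the paper obtains the Chern part by differentiating the holomorphic-frame matrix $H_t^{-1}\partial H_t$ with $H_t=He^{ts}$, whereas you derive the exact identity $\partial_{E,h_t}=e^{-ts}\partial_{E,h}e^{ts}$ from the metric-compatibility relation; this is the frame-free form of the same calculation, and your conventions agree with the paper's.
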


\begin{proof}
Choose a local holomorphic frame for $\ddbar_E$ and let $H$ denote the
matrix of $h$.  With the
present convention the matrix of $h_t$ is $H_t=He^{ts}$, while the
$(1,0)$ Chern connection matrix is $H_t^{-1}\partial H_t$.  Therefore
\begin{align*}
 \left.\frac d{dt}\right|_0(H_t^{-1}\partial H_t)
 &= -sH^{-1}\partial H
    +H^{-1}\left.\frac d{dt}\right|_0\partial(He^{ts})\\
 &=\partial s+[H^{-1}\partial H,s]
 =\partial_{E,h}s.
\end{align*}
Equivalently, in invariant notation,
\[
  \frac{d}{dt}\bigg|_0\partial_{E,h_t}=\partial_{E,h}s.
\]
For the Higgs adjoint, the defining identity
\[
  h_t(\theta v,w)=h_t(v,\theta^{\dagger_{h_t}}w)
\]
is equivalent in this frame to
$\theta^{\dagger_{h_t}}=e^{-ts}\theta^{\dagger_h}e^{ts}$.  Hence
\[
  \frac{d}{dt}\bigg|_0\theta^{\dagger_{h_t}}
  =[\theta^{\dagger_h},s].
\]
Both formulas are tensorial, so the frame computation is global.
Adding the two degree-one terms yields
\[
  \partial_{E,h}s+[\theta^{\dagger_h},s]=\cD'_hs.
\]
\end{proof}

\subsection{The Jacobi operator and its kernel}

Differentiate \eqref{eq:moment-graded} in the metric direction.  Using \cref{lem:variation-Dprime}, we obtain
\begin{equation}\label{eq:metric-linearization-raw}
  \frac{d}{dt}\bigg|_0\mu(\cD'',h_t)
  =\sqrt{-1}\Lambda\cD''\cD'_h s.
\end{equation}
The right side is the Higgs Laplacian on zero-forms.

\begin{proposition}[Jacobi operator]\label{prop:Jacobi}
At a harmonic metric define
\begin{equation}\label{eq:Jacobi-definition}
  L_hs
  :=\sqrt{-1}\Lambda\cD''\cD'_hs.
\end{equation}
Then
\begin{equation}\label{eq:Jacobi-Laplacian}
  L_h=(\cD'')^{*h}\cD''
\end{equation}
on endomorphism-valued zero-forms.  In particular, $L_h$ is nonnegative, self-adjoint, and elliptic with scalar principal symbol $\abs{\xi}^2\Id$ up to the universal Laplacian normalization.
\end{proposition}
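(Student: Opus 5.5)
The plan is to prove \eqref{eq:Jacobi-Laplacian} from the Kähler identities for the Higgs pair, and then read off positivity, self-adjointness and ellipticity. Recall that at a harmonic metric the pair $(\cD'',\cD'_h)$ satisfies \eqref{eq:harmonic-bicomplex}. The key input is Simpson's Kähler identities for harmonic bundles:
\[
  [\Lambda,\cD'']=-\sqrt{-1}\,(\cD'_h)^{c*},\qquad
  [\Lambda,\cD'_h]=\sqrt{-1}\,(\cD'')^{c*},
\]
where the superscript $c$ denotes the conjugate operator $\ddbar_E-\theta$, respectively $\partial_{E,h}-\theta^{\dagger_h}$. These are derived pointwise from the classical identities $[\Lambda,\ddbar]=-\sqrt{-1}\partial^*$ and $[\Lambda,\partial]=\sqrt{-1}\ddbar^*$ for the Chern connection. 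To these one adds the algebraic identities $[\Lambda,\theta]=\sqrt{-1}(\theta^{\dagger_h})^*$-type relations for the zeroth-order terms, which hold because $\theta$ and $\theta^{\dagger_h}$ are pointwise adjoint up to the standard sign of contraction with $\omega$. In the computation I will only need the version applied to zero-forms, where $\Lambda$ kills $s$. This gives $\Lambda\cD''\cD'_hs=[\Lambda,\cD'']\cD'_hs$, and likewise for the reversed order.

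Concretely, I will split $\cD''\cD'_hs$ into its bidegree pieces:
\[
  \ddbar_E\partial_{E,h}s,\qquad \ddbar_E[\theta^{\dagger_h},s],\qquad [\theta,\partial_{E,h}s],\qquad [\theta,[\theta^{\dagger_h},s]].
\]
After applying $\sqrt{-1}\Lambda$, only the $(1,1)$-components survive. These are $\ddbar_E\partial_{E,h}s$ and $[\theta,[\theta^{\dagger_h},s]]$, together with the mixed term $[\theta^{\dagger_h},[\theta,s]]$ coming from symmetrization.

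\begin{itemize}
\item For the first piece, the classical identity gives $\sqrt{-1}\Lambda\ddbar_E\partial_{E,h}s=\ddbar_E^{*}\ddbar_E s$, up to the curvature correction $\sqrt{-1}\Lambda[F_{D_h},s]$.
\item For the algebraic piece, the pointwise adjointness of $\theta$ and $\theta^{\dagger_h}$ gives $\sqrt{-1}\Lambda[\theta,[\theta^{\dagger_h},s]]=\theta^*\theta s$, up to the correction $[\sqrt{-1}\Lambda[\theta,\theta^{\dagger_h}],s]$.
\end{itemize}

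The correction terms combine into $[\mu(\cD'',h),s]$, which vanishes at a harmonic metric. This cancellation is exactly where harmonicity, or more precisely $\mu=0$, enters. The mixed terms $\ddbar_E[\theta^{\dagger_h},s]$ and $[\theta,\partial_{E,h}s]$ are of type $(0,2)$ and $(2,0)$, so $\Lambda$ kills them. The cross terms in $(\cD'')^{*}\cD''s=(\ddbar_E+\theta)^*(\ddbar_Es+[\theta,s])$ are $\ddbar_E^*[\theta,s]$ and $\theta^*\ddbar_Es$. These also vanish: the first by type, since $[\theta,s]$ is a $(1,0)$-form and $\ddbar_E^*$ lowers $(0,1)$-degree, and similarly for the second.

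This yields $L_hs=\ddbar_E^*\ddbar_Es+\theta^*\theta s=(\cD'')^{*h}\cD''s$ on zero-forms, with $(\cD'')^{*h}$ the formal $L^2$ adjoint for the metric induced by $h$ and $\omega$. The remaining claims then follow quickly.
\begin{itemize}
\item \emph{Nonnegativity and self-adjointness.} Integrating, $\langle L_hs,s\rangle_{L^2}=\|\cD''s\|^2_{L^2}\ge0$, and $(\cD'')^{*}\cD''$ is formally self-adjoint. It preserves $h$-Hermitian endomorphisms because $\mu=0$ makes $L_h$ commute with $s\mapsto s^{\dagger_h}$.
\item \emph{Ellipticity.} The principal symbol of $L_h$ is that of $\ddbar_E^*\ddbar_E$, namely $\tfrac12|\xi|^2\Id$ for the standard normalization, since the $\theta$-terms are of lower order. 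The factor $\tfrac12$ is the ``universal Laplacian normalization'' mentioned in the statement.
\end{itemize}

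I expect the main obstacle to be sign and convention bookkeeping. This includes the sign in the Kähler identity under the paper's graded-commutator conventions, the factor $\sqrt{-1}$, and confirming that the curvature-type corrections assemble to exactly $[\mu,s]$ rather than leaving a residual term. I would first check the whole identity in the flat rank-one case on a torus, with $\theta$ constant, to pin down signs before writing the general computation.
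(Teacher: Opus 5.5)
Your strategy is the paper's (Kähler identities plus harmonicity), but the identity you single out as the key input is false in its Higgs part, and your second bullet does not follow from the inputs you list. With the convention $[\Lambda,\ddbar]=-\sqrt{-1}\partial^*$, $[\Lambda,\partial]=\sqrt{-1}\ddbar^*$ that you and the paper both use, the zeroth-order identities on $\End E$-valued forms are $[\Lambda,\theta]=-\sqrt{-1}(\theta^{\dagger_h})^*$ and $[\Lambda,\theta^{\dagger_h}]=\sqrt{-1}\,\theta^*$. These carry the same signs as the derivative identities, so
\[
  [\Lambda,\cD'']=-\sqrt{-1}\,(\cD'_h)^{*},\qquad
  [\Lambda,\cD'_h]=\sqrt{-1}\,(\cD'')^{*},
\]
with no conjugate operators. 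The second of these is exactly the identity the paper uses.

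This is not a matter of convention. Work in flat coordinates with $\omega=\tfrac{\sqrt{-1}}{2}\sum_j dz_j\wedge d\bar z_j$, $\theta=\sum_j\Theta_j\,dz_j$, and an $h$-unitary frame at a point. Then $\sqrt{-1}\Lambda[\theta,[\theta^{\dagger_h},s]]=2\sum_j[\Theta_j,[\Theta_j^*,s]]$ and $\theta^*\theta s=2\sum_j[\Theta_j^*,[\Theta_j,s]]$.

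Your version is also refuted by your own first paragraph. Combining $\Lambda\cD''\cD'_hs=[\Lambda,\cD'']\cD'_hs$ with your $[\Lambda,\cD'']=-\sqrt{-1}((\cD'_h)^c)^*$ would give $L_h=((\cD'_h)^c)^*\cD'_h$. Its quadratic form is $\norm{\partial_{E,h}s}^2-\norm{[\theta^{\dagger_h},s]}^2$, which is indefinite: take a torus, the trivial rank-two bundle with the flat (harmonic) metric, constant diagonal non-scalar $\Theta$, and constant Hermitian $s$ with $[\Theta^*,s]\neq0$.

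Likewise, feed your relation $[\Lambda,\theta]=\sqrt{-1}(\theta^{\dagger_h})^*$ (hence $[\Lambda,\theta^{\dagger_h}]=-\sqrt{-1}\theta^*$) into the algebraic bullet. It yields $-\theta^*\theta s$ rather than $+\theta^*\theta s$, so the final operator would be $\ddbar_E^*\ddbar_E-\theta^*\theta$, which is not nonnegative. The rank-one sanity check you propose cannot detect this, because $[\theta,\cdot\,]=0$ on $\End$ of a line bundle. You need a non-scalar $\theta$.

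With the identity corrected, the rest of what you wrote goes through. The $(1,1)$ pieces give $L_hs=\ddbar_E^*\ddbar_Es+\theta^*\theta s+[\mu(\cD'',h),s]$, the cross terms of $(\cD'')^*\cD''s$ vanish by type, and the positivity and symbol claims follow. One small inaccuracy: $[\theta^{\dagger_h},[\theta,s]]$ is not a component of $\cD''\cD'_hs$. It enters through $\theta^*\theta s=-\sqrt{-1}\Lambda[\theta^{\dagger_h},[\theta,s]]$ and the graded Jacobi identity.

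The paper applies $(\cD'')^{*h}=-\sqrt{-1}[\Lambda,\cD'_h]$ to the one-form $\cD''s$ and then uses flatness to write $\cD'_h\cD''s=-\cD''\cD'_hs$. Your type-by-type route is the same computation unpacked. What it buys is the explicit defect $[\mu,s]$, which shows that only centrality of $\mu$ is needed, not full flatness. Shorter still: the corrected identity together with your opening observation gives $L_h=(\cD'_h)^*\cD'_h$ for every metric, and harmonicity is used only to replace this by $(\cD'')^*\cD''$.
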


\begin{proof}
The K\"ahler identity for the harmonic-bundle operators is
\[
  (\cD'')^{*h}=-\sqrt{-1}[\Lambda,\cD'_h].
\]
Apply this identity to the one-form $\cD''s$.  The operator $\Lambda$
lowers degree by two, and therefore $\Lambda\cD''s=0$.  It follows that
\[
  (\cD'')^{*h}\cD''s
  =-\sqrt{-1}\Lambda\cD'_h\cD''s.
\]
Flatness of the harmonic connection gives the anticommutation relation
in \eqref{eq:harmonic-bicomplex}, so
\[
  \cD'_h\cD''s=-\cD''\cD'_hs,
\]
and hence the last display is
$\sqrt{-1}\Lambda\cD''\cD'_hs=L_hs$.  This proves
\eqref{eq:Jacobi-Laplacian}.  An operator of the form $A^*A$ is
self-adjoint and nonnegative on its natural $L^2$ domain.  Its
second-order part is the ordinary Dolbeault Laplacian on $\End E$;
the Higgs commutators are of lower order.  Thus its principal symbol is
the positive scalar Laplace symbol, which proves ellipticity and the
stated principal-symbol normalization.
\end{proof}

\begin{corollary}[Energy identity]\label{cor:energy-identity}
For every smooth endomorphism $s$,
\begin{equation}\label{eq:energy-identity}
  \int_X\ip{L_hs}{s}_h\,\frac{\omega^n}{n!}
  =\norm{\cD''s}_{L^2(h,\omega)}^2.
\end{equation}
Hence
\[
  \ker L_h=\ker\cD''.
\]
\end{corollary}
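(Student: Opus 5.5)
The plan is to derive the energy identity directly from the factorization $L_h=(\cD'')^{*h}\cD''$ in \cref{prop:Jacobi}, using integration by parts on the compact manifold $X$. Fix a smooth endomorphism $s$. The operator $(\cD'')^{*h}$ in \cref{prop:Jacobi} is the formal $L^2$-adjoint of $\cD''$ with respect to the pointwise inner product induced by $h$ on $\End E$ and the volume form $\omega^n/n!$. Since $X$ is closed, the pairing produces no boundary terms, so
\[
  \int_X\ip{L_hs}{s}_h\,\frac{\omega^n}{n!}
  =\int_X\ip{(\cD'')^{*h}\cD''s}{s}_h\,\frac{\omega^n}{n!}
  =\int_X\ip{\cD''s}{\cD''s}_h\,\frac{\omega^n}{n!}
  =\norm{\cD''s}_{L^2(h,\omega)}^2 .
\]
This is \eqref{eq:energy-identity}. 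The only point needing care is the normalization of the Hermitian pairing on $\End E$-valued forms, namely $\ip{A}{B}_h=\tr(AB^{\dagger_h})$ extended by the metric on forms. This pairing is exactly the one for which the Kähler identity used in the proof of \cref{prop:Jacobi} exhibits $-\sqrt{-1}[\Lambda,\cD'_h]$ as the adjoint of $\cD''$. I will state this convention explicitly, since the "universal convention factor" mentioned earlier could otherwise introduce a positive constant. Such a constant would not affect the kernel conclusion.

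For the kernel statement, one inclusion is immediate. If $\cD''s=0$, then $L_hs=(\cD'')^{*h}\cD''s=0$. Conversely, suppose $L_hs=0$ with $s$ smooth. The identity gives $\norm{\cD''s}_{L^2}^2=0$, hence $\cD''s=0$. If the kernel is meant on the Sobolev domain $W^{k+2,2}$, I will first pass to smoothness by elliptic regularity, which applies because $L_h$ is elliptic by \cref{prop:Jacobi}. Alternatively, I will note that the integration by parts is valid for $W^{2,2}$ sections by density of smooth sections together with continuity of both sides. Either route yields $\ker L_h=\ker\cD''$ on every Sobolev completion used later.

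I expect no real obstacle. The one step deserving attention is the adjointness claim: $(\cD'')^{*h}$, defined via the Kähler identity, must coincide with the genuine formal $L^2$-adjoint, including the sign of $\sqrt{-1}$. I will check this on the two components of $\cD''$ separately. For $\ddbar_E$, the standard Kähler identity gives $\ddbar^{*}=-\sqrt{-1}[\Lambda,\partial_{E,h}]$ for the Chern connection. For the Higgs term $\theta$, pointwise adjointness of $\theta$ and $\theta^{\dagger_h}$ gives $\theta^{*}=-\sqrt{-1}[\Lambda,\theta^{\dagger_h}]$. Summing these two identities recovers the identity quoted in \cref{prop:Jacobi}, with $\cD'_h=\partial_{E,h}+\theta^{\dagger_h}$.
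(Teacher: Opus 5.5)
Your proposal is correct and follows the paper's proof: you pair $L_h=(\cD'')^{*h}\cD''$ against $s$ on the closed manifold $X$ to get the energy identity, read off both inclusions for $\ker L_h=\ker\cD''$, and pass to the Sobolev realizations by density and elliptic regularity. Your extra componentwise check of the sign in the K\"ahler identity $(\cD'')^{*h}=-\sqrt{-1}[\Lambda,\cD'_h]$ is a reasonable addition that the paper leaves implicit; the sign does work out the same way for $\ddbar_E$ and for the Higgs term.
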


\begin{proof}
By \cref{prop:Jacobi}, the Jacobi operator on zero-forms is
\[
  L_h=(\cD'')^{*h}\cD''.
\]
Here the adjoint is taken with respect to the $L^2$ inner product determined by $h$ and $\omega$.  Since $X$ is compact, there is no boundary contribution when the derivative is transferred to the first factor.  Thus
\[
  \int_X\ip{L_hs}{s}_h\,\frac{\omega^n}{n!}
  =\int_X\ip{\cD''s}{\cD''s}_h\,\frac{\omega^n}{n!},
\]
which is \eqref{eq:energy-identity}.  If $L_hs=0$, the left side
vanishes, hence $\|\cD''s\|_{L^2}=0$ and therefore $\cD''s=0$.
Conversely, $\cD''s=0$ immediately gives $L_hs=0$.  This proves the
kernel identity for smooth sections.  The energy identity extends by
density to $W^{1,2}$ sections.  In particular, a Sobolev kernel element
is weakly $\cD''$-closed and is smooth by elliptic regularity, so the
same kernel identity holds for the Sobolev realizations used below.
\end{proof}

The kernel determines whether the Jacobi operator can be inverted on the normalized slice.  We identify it with infinitesimal Hermitian automorphisms and then use stability to eliminate it.

The equation $\cD''s=0$ means that $s$ is a Higgs endomorphism.  If $s$
is Hermitian, it is therefore an infinitesimal Hermitian symmetry of
the harmonic bundle; it need not itself be invertible.

\begin{proposition}[Kernel on the stable locus]\label{prop:stable-kernel}
Let $(E,\cD'')$ be stable and $h$ harmonic.  Then
\[
  \ker\bigl(L_h:\Herm_h(\End E)\to\Herm_h(\End E)\bigr)
  =\R\Id_E.
\]
Consequently
\begin{equation}\label{eq:tracefree-invertible}
  L_h:
  W^{k+2,2}(\Herm_h^0\End E)
  \xrightarrow{\cong}
  W^{k,2}(\Herm_h^0\End E)
\end{equation}
is an isomorphism.
\end{proposition}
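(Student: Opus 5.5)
By \cref{cor:energy-identity}, $\ker L_h=\ker\cD''$. The first task is therefore to identify Hermitian sections $s$ with $\cD''s=0$. Such an $s$ commutes with $\theta$ and is holomorphic for $\ddbar_{\End E}$, so it is an endomorphism of the Higgs bundle. I would invoke the standard fact that a stable Higgs bundle is simple, $\End(E,\cD'')=\C\Id_E$. The argument runs as follows. Pick a point $x\in X$ and an eigenvalue $c$ of $s(x)$. The endomorphism $s-c\Id$ is a Higgs endomorphism that is not invertible at $x$. Its kernel and image sheaves are $\theta$-invariant coherent subsheaves. Stability forces the slope inequalities $\mu(\ker)<\mu(E)<\mu(\im)$ whenever both are proper and nonzero, while $\im\cong E/\ker$ gives $\mu(\im)<\mu(E)$ if $\ker\neq0$, which is a contradiction. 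Hence either $s-c\Id$ is an isomorphism, impossible since its determinant vanishes at $x$, or $s-c\Id=0$. The Hermitian condition then forces $c\in\R$, giving $\ker L_h\cap\Herm_h=\R\Id_E$.

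An alternative analytic route avoids sheaves. For Hermitian $s$ with $\cD''s=0$, the metric compatibility of $\cD'_h$ with $\cD''$ under adjoints gives $\cD'_hs=(\cD''s)^{\dagger}$ up to the sign conventions, so $\cD'_hs=0$ and $\nabla_hs=0$. The eigenspaces of $s$ are then $\nabla_h$-parallel, hence smooth $\cD''$-invariant orthogonal subbundles splitting $E$ as a direct sum of Higgs subbundles. Each summand has degree zero because it is a flat harmonic subbundle. This contradicts stability unless there is only one eigenvalue. I would present this argument, since it stays within the paper's harmonic framework, and record the sheaf argument as a remark.

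For the isomorphism \eqref{eq:tracefree-invertible}, first note that $L_h$ preserves $\Herm_h$: $L_h$ is real with respect to $s\mapsto s^{\dagger_h}$, which one sees from $L_h=(\cD'')^{*h}\cD''$ being the same as $(\cD'_h)^{*}\cD'_h$ up to conjugation at a harmonic metric. Next, $L_h$ preserves trace-freeness. The trace of $L_hs$ is the scalar Laplacian of $\tr s$, since the commutator terms are traceless and the trace part of the Chern connection on $\End E$ is trivial. Hence trace-free sections map to trace-free sections.

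$L_h$ is elliptic and self-adjoint by \cref{prop:Jacobi}, so as a map $W^{k+2,2}\to W^{k,2}$ it is Fredholm of index zero. Its kernel consists of smooth sections by elliptic regularity, and its cokernel is identified with its $L^2$-kernel by self-adjointness. On the trace-free Hermitian subspace the kernel is $\R\Id\cap\Herm^0=0$. The range is therefore the $L^2$-orthogonal complement of the zero kernel, so the map is surjective, and bounded inversion follows from the open mapping theorem.

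The main obstacle is the realness and trace-compatibility bookkeeping: checking that $L_h$ genuinely maps $\Herm^0_h$ into itself with the paper's sign conventions, so that Fredholm index zero on the real subbundle applies. I would verify this directly from \eqref{eq:Jacobi-definition} via the identity $\overline{\sqrt{-1}\Lambda\cD''\cD'_h s}^{\dagger}=\sqrt{-1}\Lambda\cD''\cD'_hs$, which uses the flatness relation \eqref{eq:harmonic-bicomplex}.
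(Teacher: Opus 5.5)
Your proposal is correct, and its skeleton is the paper's: the energy identity, simplicity from stability, reality of the scalar, and then Fredholm index zero with the cokernel identified with the kernel by self-adjointness.

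Your sheaf argument is essentially the paper's proof. You take an eigenvalue $c$ of $s(x)$ and use that $\det(s-c\Id)$ is a constant holomorphic function vanishing at $x$; the paper takes a root of the characteristic polynomial, whose coefficients are constant. You have, however, attributed the two slope inequalities backwards. The slope of $\im$ is smaller than that of $E$ by stability applied to the subsheaf $\im$. It is larger because $\im\cong E/\ker$ and $\ker$ has smaller slope.

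The genuine differences are two.

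First, your preferred analytic route replaces kernel and saturated-image sheaves by the parallel eigenbundles of $s$. This sidesteps saturation issues when $\dim X\ge2$, but it relies on the harmonic metric, which is given here. The identity you quote is not a single global sign. Componentwise one has $(\ddbar_Es)^{\dagger_h}=\partial_{E,h}s$ and $([\theta,s])^{\dagger_h}=-[\theta^{\dagger_h},s]$. So $\cD'_hs=0$ holds because $\cD''s=0$ splits by type into $\ddbar_Es=0$ and $[\theta,s]=0$. Invariance of the eigenbundles under $\cD''$ comes from $s$ commuting with $\cD''$, not from parallelism. The degree-zero remark is unnecessary: degree and rank are additive, so the slope of $E$ is a weighted average of the slopes of the summands, and any nontrivial splitting into Higgs subbundles already contradicts stability.

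Second, you verify that $L_h$ maps $\Herm^0_h$ to itself, which the paper's proof uses implicitly when it restricts to the trace-free Hermitian summand. Your conjugation argument is sound. With $\tau(s)=s^{\dagger_h}$ one has $\tau\cD''\tau=\partial_{E,h}-\operatorname{ad}\theta^{\dagger_h}$ on zero-forms. Since $\partial_{E,h}s$ and $[\theta^{\dagger_h},s]$ have different types, the sign of the second term is invisible in $A^{*}A$, so $\tau L_h\tau=(\cD'_h)^{*}\cD'_h$. This equals $(\cD'')^{*}\cD''$ at a harmonic metric. Computing directly, $(L_hs)^{\dagger_h}=L_hs-[\mu(\cD'',h),s]$ for $h$-Hermitian $s$, so reality uses exactly the moment-map equation. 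Together with your trace computation, this makes the Fredholm step on the real trace-free subspace fully justified rather than implicit.
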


\begin{proof}
Let $s\in\Herm_h(\End E)$ satisfy $L_hs=0$.  By
\cref{cor:energy-identity}, $\cD''s=0$, so $s$ is a holomorphic Higgs
endomorphism of $(E,\cD'')$.  Stability implies simplicity.  For
completeness, if $f$ is a Higgs endomorphism, its kernel and saturated
image are Higgs subsheaves.  If both were nontrivial, stability together
with additivity of rank and degree would force the image to have slope
simultaneously less than and greater than the slope of $E$, a
contradiction.  Applying this observation to $f-\lambda\Id$ for a root
$\lambda$ of the characteristic polynomial, whose coefficients are
constant holomorphic functions on compact $X$, shows that
$f=\lambda\Id$.  Thus every complex Higgs endomorphism is scalar.
Hence $s=\lambda\Id_E$ for some $\lambda\in\C$.  Because $s$ is
$h$-Hermitian, $\lambda=\overline\lambda$, and therefore
$\lambda\in\R$.  This proves the kernel statement.

Restrict now to trace-free Hermitian endomorphisms.  The scalar kernel
disappears, so the restricted operator is injective.  It remains a
second-order self-adjoint elliptic operator with the same principal
symbol on the trace-free summand.  Consequently
\[
  L_h:W^{k+2,2}(\Herm_h^0\End E)
  \longrightarrow W^{k,2}(\Herm_h^0\End E)
\]
is Fredholm.  Self-adjointness identifies the cokernel with the kernel
of the adjoint, which is again the kernel of $L_h$ on the trace-free
subspace.  That kernel is zero, so the cokernel is zero as well.  Thus
the map is an isomorphism.  Elliptic regularity ensures that any Sobolev
kernel element is smooth, so no additional weak kernel appears.
\end{proof}

\begin{remark}
The use of stability occurs exactly here.  The nonlinear implicit-function argument itself does not know about slope stability.  Stability is converted, through simplicity, into invertibility of the Jacobi operator after normalization.
\end{remark}

\subsection{The fixed-metric source operator and local formulas}

Let $\eta$ be a degree-one variation of $\cD''$.  In degree one write
\[
  \eta=a+\varphi,
  \qquad
  a\in A^{0,1}(\End E),
  \quad
  \varphi\in A^{1,0}(\End E).
\]
A small but important sign enters here.  With the Hermitian metric $h$ held fixed, variation of the Dolbeault operator by $a$ changes the $(1,0)$ Chern part by $-a^{\dagger_h}$, whereas variation of the Higgs field by $\varphi$ changes its adjoint by $\varphi^{\dagger_h}$.  Thus the fixed-metric variation of the companion operator $\cD'_h$ is
\begin{equation}\label{eq:star-companion}
  \eta^{\star_h}
  :=-a^{\dagger_h}+\varphi^{\dagger_h}.
\end{equation}
The symbol $\star_h$ distinguishes this companion variation from the ordinary adjoint of the total inhomogeneous form $a+\varphi$.

\begin{definition}[Source operator]\label{def:source-operator}
At a harmonic bundle $(E,\cD'',h)$ define
\begin{equation}\label{eq:source-operator}
  \cS_h(\eta)
  :=\sqrt{-1}\Lambda\Bigl(
  [\eta,\cD'_h]_{\mathrm{gr}}
  +[\cD'',\eta^{\star_h}]_{\mathrm{gr}}
  \Bigr)_0,
\end{equation}
where the subscript $0$ denotes the trace-free part.
\end{definition}

Because both entries in each commutator have odd total degree, the brackets in \eqref{eq:source-operator} are graded anticommutators.  Gauge covariance of the moment map shows that the resulting degree-zero expression is Hermitian; the final projection makes it trace-free.

\begin{proposition}[Full linearization]\label{prop:full-linearization}
Let $\Phi$ be the universal trace-free moment map \eqref{eq:universal-moment-map}.  At a normalized harmonic point,
\begin{equation}\label{eq:full-linearization}
  D\Phi_{(\cD'',0)}(\eta,s)
  =\cS_h(\eta)+L_hs.
\end{equation}
\end{proposition}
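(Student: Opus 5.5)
Since $\Phi$ is smooth by \cref{lem:F-smooth}, the derivative at $(\cD'',0)$ is a bounded linear map that splits as the sum of its partial derivatives. I will compute the $s$-partial and the $\eta$-partial separately, and in each case differentiate the three layers of $\Phi$: the conjugation $\mathcal I_s$, the trace-free projection $\operatorname{pr}_0$, and the moment map $\mu(\cD'',h_0e^s)$. Throughout, the base point is a normalized harmonic point, so $h_0=h$ and $\mu(\cD'',h)=0$ (in particular $\mu_0=0$).

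\textbf{The conjugation layer.} The factor $\mathcal I_s(A)=e^{s/2}Ae^{-s/2}$ has derivative $\tfrac12[s,A]$ in the $s$-direction at $s=0$. By the product rule, the corresponding contribution to $D\Phi$ is $\tfrac12[s,\mu_0(\cD'',h)]$. This vanishes because $\mu_0=0$ at the base point. In the $\eta$-direction the factor $\mathcal I_0=\Id$ does not vary. So in both directions only the derivative of $\operatorname{pr}_0\circ\mu$ survives. Since $\operatorname{pr}_0$ is a fixed bounded linear map, it commutes with differentiation.

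\textbf{The metric direction.} With $h_t=he^{ts}$, \eqref{eq:metric-linearization-raw} gives $\tfrac{d}{dt}\big|_0\mu=\sqrt{-1}\Lambda\cD''\cD'_hs=L_hs$. This identity was obtained from \cref{lem:variation-Dprime} together with the graded formula \eqref{eq:moment-graded}, in which $\cD''$ is fixed. For trace-free Hermitian $s$, $L_hs$ is already trace-free: $\tr L_hs=\sqrt{-1}\Lambda\,\ddbar\partial\,\tr s=0$, because the Higgs commutator terms are traceless. Hence $\operatorname{pr}_0L_hs=L_hs$, and the $s$-partial is $L_hs$.

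\textbf{The Higgs direction (main step).} Hold $h$ fixed and vary $\cD''_t=\cD''+t\eta$, with $\eta=a+\varphi$. The companion operator $\cD'_{h}(\cD''_t)$ then varies by $\eta^{\star_h}$. For the Higgs part this is immediate: $(\theta+t\varphi)^{\dagger_h}=\theta^{\dagger_h}+t\varphi^{\dagger_h}$. For the Dolbeault part I will use \eqref{eq:chern-matrix} in a fixed smooth frame: the $(1,0)$ Chern matrix contains $-H^{-1}(A^{0,1})^\dagger H$, so replacing $A^{0,1}$ by $A^{0,1}+ta$ changes it by $-t\,a^{\dagger_h}$. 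Differentiating \eqref{eq:moment-graded} by the Leibniz rule for the graded bracket then gives
\[
  \frac{d}{dt}\Big|_0\mu=\sqrt{-1}\Lambda\bigl([\eta,\cD'_h]_{\mathrm{gr}}+[\cD'',\eta^{\star_h}]_{\mathrm{gr}}\bigr).
\]
Applying $\operatorname{pr}_0$ yields $\cS_h(\eta)$ as in \cref{def:source-operator}.

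I expect this step to be the main obstacle, for two reasons. First, \eqref{eq:moment-graded} was asserted only at integrable data, whereas $\Phi$ is defined on the ambient space, where $\cD''+t\eta$ need not square to zero. The remedy is to work directly with \eqref{eq:HS-equation}, namely $\sqrt{-1}\Lambda(F_{D_h}+[\theta,\theta^{\dagger_h}])$. One expands $F_{D_h}=[\ddbar_E,\partial_{E,h}]_{\mathrm{gr}}+(\ddbar_E)^2+(\partial_{E,h})^2$ and notes that $\Lambda$ kills the $(0,2)$ and $(2,0)$ parts. The $\Lambda$-contraction of $[\cD'',\cD'_h]_{\mathrm{gr}}$ then agrees with $\mu$ identically on the ambient space, not merely on the integrable locus, so the derivative computation is legitimate there. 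Second, the sign of $a^{\dagger_h}$ must be tracked consistently with the form-type and conjugation conventions of \eqref{eq:chern-matrix}.

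Finally, summing the two partial derivatives gives \eqref{eq:full-linearization}.
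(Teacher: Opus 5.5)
Your proposal is correct and follows essentially the same route as the paper's proof. You split $D\Phi$ into its metric and data partials, kill the conjugation term $\tfrac12[s,\mu_0]$ because the base point is a zero of the moment map, obtain $L_hs$ from \cref{lem:variation-Dprime}, and obtain $\cS_h(\eta)$ from the companion variation $\eta^{\star_h}$ inside the graded commutator. Your two added checks tighten points the paper passes over quickly: that $L_hs$ is already trace-free, and that the $\Lambda$-contracted graded formula equals $\mu$ on the whole ambient space without assuming integrability.
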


\begin{proof}
First vary the two arguments independently.  For the metric argument,
\cref{lem:variation-Dprime,prop:Jacobi} gives
$\left.\frac d{dt}\right|_0\mu_0(\cD'',he^{ts})=L_hs$; the operator
preserves the trace-free Hermitian summand.  For the data argument, hold
$h$ fixed and write
\[
  \cD''_t=\cD''+t\eta+O(t^2).
\]
The Chern compatibility relation and the definition of the Higgs adjoint give
\[
  \cD'_{h,t}
  =\cD'_h+t\eta^{\star_h}+O(t^2).
\]
Differentiating the graded commutator in \eqref{eq:moment-graded}
therefore gives
\[
 \sqrt{-1}\Lambda\bigl(
 [\eta,\cD'_h]_{\mathrm{gr}}
 +[\cD'',\eta^{\star_h}]_{\mathrm{gr}}
 \bigr).
\]
Its trace-free part is precisely $\cS_h(\eta)$.  The derivative of the
conjugating identification $\mathcal I_s$ does not add a term: at the
base point it would be a commutator with
$\mu_0(\cD'',h)$, which vanishes.  Adding the independent metric
variation proves \eqref{eq:full-linearization}.
\end{proof}

The invariant expression for the source operator can be unpacked by type.  This local formula is useful for checking signs and for the model calculations later in the paper.

For readers who prefer the usual Higgs notation, the invariant formula can be expanded by type.  With $\eta=(a,\varphi)$, the $(1,1)$ fixed-metric variation of the Chern curvature is
\[
  \partial_{E,h}a-\ddbar_E a^{\dagger_h},
\]
while the Higgs commutator contributes
\[
  [\varphi,\theta^{\dagger_h}]
  +[\theta,\varphi^{\dagger_h}].
\]
Consequently, with the curvature convention fixed in \cref{sec:relative-data},
\begin{align}\label{eq:source-local}
  \cS_h(a,\varphi)
  =\sqrt{-1}\Lambda\Bigl(&
  \partial_{E,h}a-\ddbar_E a^{\dagger_h}
  +[\varphi,\theta^{\dagger_h}]
  +[\theta,\varphi^{\dagger_h}]
  \Bigr)_0.
\end{align}
Formula \eqref{eq:source-operator} remains the coordinate-free definition.  Writing the type formula explicitly is useful because it exposes the minus sign in the Chern variation and prevents a common but incorrect replacement of the companion variation by the naive adjoint of $a+\varphi$.

\subsection{Gauge directions and the polystable kernel}

A fundamental consistency check is the behavior under an actual complex gauge path.  Let $g_t$ be a smooth path in the complex gauge group with $g_0=\Id$, and transform both the Higgs operator and the metric by the gauge action:
\[
  \cD''_t=g_t\cdot\cD'',
  \qquad
  h_t=g_t\cdot h.
\]
Write
\[
  \eta_\xi=\left.\frac d{dt}\right|_0\cD''_t,
  \qquad
  \dot h_0(v,w)=h(s_\xi v,w),
\]
so that $s_\xi$ is the Hermitian logarithmic metric variation determined by the chosen action convention.

\begin{lemma}[Linearized gauge covariance]\label{lem:source-gauge}
At a zero of the trace-free moment map,
\begin{equation}\label{eq:source-gauge}
  \cS_h(\eta_\xi)+L_h(s_\xi)_0=0.
\end{equation}
Thus the data variation tangent to a complex gauge path is exactly cancelled, in the moment-map equation, by the induced Hermitian metric variation after taking the trace-free part.
\end{lemma}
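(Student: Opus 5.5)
The identity follows from gauge equivariance of the moment map, differentiated along the gauge path. By construction $g_t:(E,h)\to(E,g_t\cdot h)$ is an isometry intertwining $\cD''$ with $g_t\cdot\cD''$. Hence the companion operator, Chern curvature, Higgs adjoint and $\Lambda$-contraction are all transported by conjugation:
\[
  \mu\bigl(g_t\cdot\cD'',\,g_t\cdot h\bigr)=g_t\,\mu(\cD'',h)\,g_t^{-1},
\]
and the same holds for the trace-free part, since $\operatorname{pr}_0$ commutes with conjugation. I would check this directly with the matrix formula \eqref{eq:chern-matrix} and the convention $g\cdot H=(g^{-1})^\dagger Hg^{-1}$. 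Equivalently, $\cD'_{g\cdot h}$ computed for $g\cdot\cD''$ equals $g\,\cD'_h\,g^{-1}$, so the graded form \eqref{eq:moment-graded} transforms by conjugation.

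Next I differentiate the identity at $t=0$. The right side has derivative $[\dot g_0,\mu_0(\cD'',h)]$, which vanishes because we sit at a zero of $\mu_0$. For the left side, I write $h_t=g_t\cdot h$ as $h(e^{ts_\xi+O(t^2)}\,\cdot,\cdot)$ and $\cD''_t=\cD''+t\eta_\xi+O(t^2)$. The chain rule applied to the two-variable map $(\cD'',h)\mapsto\mu_0$ then yields the data derivative plus the metric derivative.

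By \cref{prop:full-linearization} (before the trace-free projection) and \cref{lem:variation-Dprime}, the data derivative is $\cS_h(\eta_\xi)$ and the metric derivative is $\operatorname{pr}_0(L_hs_\xi)$. Since $L_h$ is given by the scalar-symbol operator $(\cD'')^{*h}\cD''$, which kills $\Id$ and commutes with taking trace parts, we have $\operatorname{pr}_0L_hs_\xi=L_h(s_\xi)_0$. A scalar component of $s_\xi$ contributes nothing. Summing the two contributions gives \eqref{eq:source-gauge}.

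The main obstacle is bookkeeping of conventions rather than analysis. First, $s_\xi$ must be the logarithmic variation in the same convention $h_t=h(e^{ts}\cdot,\cdot)$ used in \cref{lem:variation-Dprime}. For $g_t=e^{-t\xi}$ one finds $s_\xi=\xi+\xi^{\dagger_h}$, possibly with a sign that I would fix by the matrix computation, and $\eta_\xi=\cD''\xi$. Second, the companion variation $\eta^{\star_h}$, including the minus sign on $a^{\dagger_h}$, must match what the Chern compatibility relation produces. As a sanity check I would verify the identity directly: with $\eta_\xi=\cD''\xi$, one has $\eta_\xi^{\star_h}=-\cD'_h\xi^{\dagger_h}$ up to sign. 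Then \eqref{eq:source-operator} reduces via \eqref{eq:harmonic-bicomplex} to $\sqrt{-1}\Lambda\cD''\cD'_h(-\xi-\xi^{\dagger_h})$, which is $-L_hs_\xi$. The nonhermitian part of $\xi$ cancels in this computation because the graded anticommutator $[\cD'',\cD'_h]_{\mathrm{gr}}$ vanishes. This direct computation could replace the equivariance argument if the latter's transport of $\Lambda$ and adjoints feels too implicit.
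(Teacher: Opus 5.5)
Your proposal is correct and is essentially the paper's proof. Both differentiate the conjugation identity $\mu_0(g_t\cdot\cD'',g_t\cdot h)=g_t\,\mu_0(\cD'',h)\,g_t^{-1}$, note that the right side's derivative $[\dot g_0,\mu_0(\cD'',h)]$ vanishes at a zero, and split the left side into the data term $\cS_h(\eta_\xi)$ plus a metric term whose scalar part disappears under trace-free projection.

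Your supplementary direct check is also sound and goes slightly beyond what the paper writes. With the paper's conventions and $g_t=e^{-t\xi}$ one gets exactly $\eta_\xi=\cD''\xi$, $s_\xi=\xi+\xi^{\dagger_h}$ and $\eta_\xi^{\star_h}=-\cD'_h\xi^{\dagger_h}$, with no extra sign. One small correction: $L_h$ does not kill nonconstant scalar multiples of $\Id$. What you need, and what holds, is that $L_h$ preserves the scalar/trace-free splitting.
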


\begin{proof}
Gauge covariance gives
\[
  \mu_0(g_t\cdot\cD'',g_t\cdot h)
  =g_t\,\mu_0(\cD'',h)\,g_t^{-1}.
\]
Differentiating the right side gives
$[\dot g_0,\mu_0(\cD'',h)]$, which is zero because the base point is a
zero of the trace-free moment map.  On the left, the operator variation
is $\eta_\xi$.  The induced metric path has logarithmic derivative
$s_\xi$; its scalar part is invisible after trace-free projection, so
the metric contribution is $L_h(s_\xi)_0$.  Applying
\cref{prop:full-linearization} therefore gives
\[
 0=\cS_h(\eta_\xi)+L_h(s_\xi)_0,
\]
as claimed.  No separate undefined ``unitary remainder'' is needed:
the combined gauge path already contains the correct variation of both
the operator and the metric.
\end{proof}

This identity is a consistency check for the source operator.  The later stack-level gauge statement, \cref{prop:gauge-compatibility}, is formulated directly for derivatives of actual plots and therefore does not require every closed infinitesimal deformation to be integrable.

The same calculation also shows exactly what fails at a polystable point: the normalized Jacobi operator acquires the Hermitian part of the reductive centralizer as kernel.  This observation motivates the slice construction of \cref{sec:poly}.

Let
\begin{equation}\label{eq:polystable-decomp-fixed}
  (E,\cD'')\cong
  \bigoplus_{j=1}^m(E_j,\cD''_j)\otimes W_j
\end{equation}
be a polystable decomposition with stable pairwise non-isomorphic factors.  A compatible harmonic metric is
\[
  h=\bigoplus_j h_j\otimes q_j,
\]
with $q_j$ a Hermitian metric on $W_j$.

\begin{proposition}[Kernel on a polystable object]\label{prop:poly-kernel}
For \eqref{eq:polystable-decomp-fixed},
\begin{equation}\label{eq:poly-kernel}
  \ker L_h\cap\Herm_h(\End E)
  \cong
  \bigoplus_{j=1}^m\Herm_{q_j}(\End W_j).
\end{equation}
The kernel is canonically the Hermitian part of the Higgs endomorphism algebra.
\end{proposition}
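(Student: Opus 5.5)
The argument follows the stable case, \cref{prop:stable-kernel}. By \cref{cor:energy-identity}, $\ker L_h=\ker\cD''$ on Sobolev sections, and any kernel element is smooth. Hence $\ker L_h\cap\Herm_h(\End E)$ is exactly the set of $h$-Hermitian Higgs endomorphisms of $(E,\cD'')$. This already gives the final sentence of the statement. It remains to compute the Higgs endomorphism algebra of the decomposition \eqref{eq:polystable-decomp-fixed} and then its Hermitian part.

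\textbf{Step 1 (endomorphism algebra).} Write a Higgs endomorphism $f$ in block form. It has components
\[
f_{jk}\in\Hom(E_k,E_j)\otimes\Hom(W_k,W_j),
\]
where each component is a Higgs morphism $E_k\to E_j$ tensored with a constant linear map. Since the $W_j$ are constant, $\cD''$ acts only on the $E$-factor.

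Each $E_j$ is stable of the same slope zero, so the standard argument applies. The kernel and saturated image of a nonzero Higgs morphism $E_k\to E_j$ are Higgs subsheaves, and slope comparison, as in the proof of \cref{prop:stable-kernel}, forces such a morphism to be an isomorphism. Pairwise non-isomorphism then kills the off-diagonal blocks. On the diagonal blocks, simplicity of $E_j$ (already proved in \cref{prop:stable-kernel}) gives $\Hom(E_j,E_j)=\C\,\Id_{E_j}$. Hence
\[
\ker\cD''\cap\End E=\bigoplus_j\Id_{E_j}\otimes\End(W_j).
\]

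\textbf{Step 2 (Hermitian part).} The metric is the orthogonal sum $h=\bigoplus_j h_j\otimes q_j$. Therefore the $h$-adjoint of $\Id_{E_j}\otimes A$ is $\Id_{E_j}\otimes A^{\dagger_{q_j}}$. So the element $\bigoplus_j \Id_{E_j}\otimes A_j$ is $h$-Hermitian if and only if each $A_j$ is $q_j$-Hermitian. This yields \eqref{eq:poly-kernel}. The isomorphism is canonical once the decomposition is fixed, namely $A_j\mapsto\Id_{E_j}\otimes A_j$.

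\textbf{Main obstacle.} The only point needing care is the claim that a nonzero Higgs morphism between stable Higgs bundles of equal slope is an isomorphism. This has to be handled on a compact Kähler manifold of arbitrary dimension, where images are only torsion-free subsheaves. I would argue via saturation and reflexive hulls, exactly as in \cref{prop:stable-kernel}. In that argument, $\deg$ of a saturated subsheaf is computed with $\omega^{n-1}$, and the degree inequality for a generically isomorphic map of equal-rank reflexive sheaves with equal degree forces an isomorphism in codimension one. Since the target is locally free, the morphism is then an isomorphism everywhere.

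Harmonicity of $h$ is used only through \cref{cor:energy-identity}, that is, through the identity $L_h=(\cD'')^{*h}\cD''$.
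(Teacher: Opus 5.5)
Your proposal is correct and follows the paper's proof step for step: the energy identity identifies $\ker L_h\cap\Herm_h(\End E)$ with the Hermitian Higgs endomorphisms. Equal-slope stability together with pairwise non-isomorphism kills the off-diagonal blocks, simplicity reduces the diagonal blocks to $\Id_{E_j}\otimes\End(W_j)$, and the orthogonal product metric turns the $h$-adjoint into the blockwise $q_j$-adjoint. Your extra care about why an injective, generically invertible map of equal degree is an isomorphism fills in a point the paper compresses into ``stability applied to its kernel and saturated image rules out any intermediate rank''; the map is an isomorphism in codimension one, hence everywhere, and the reason is that both $E_k$ and $E_j$ are locally free, not only the target.
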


\begin{proof}
By \cref{cor:energy-identity}, the kernel is the space of Hermitian
Higgs endomorphisms.  The decomposition \eqref{eq:polystable-decomp-fixed}
gives
\[
 \End_{\mathrm{Higgs}}(E)
 \cong
 \bigoplus_{i,j}
 \Hom_{\mathrm{Higgs}}(E_i,E_j)
 \otimes\Hom(W_i,W_j).
\]
All stable factors have the same slope.  A nonzero Higgs morphism
between two stable bundles of that slope is an isomorphism: stability
applied to its kernel and saturated image rules out any intermediate
rank.  Because the $E_j$ are pairwise non-isomorphic, the off-diagonal
Hom spaces therefore vanish, while simplicity gives
$\End_{\mathrm{Higgs}}(E_j)=\C\Id_{E_j}$.  It follows that
\[
  \End_{\mathrm{Higgs}}(E)
  \cong\bigoplus_j\End(W_j).
\]
Under this identification the adjoint induced by
$h=\bigoplus_jh_j\otimes q_j$ is the ordinary $q_j$-adjoint on each
factor $\End(W_j)$.  Taking the fixed points of that adjoint yields
\eqref{eq:poly-kernel}.
\end{proof}

The dimension of \eqref{eq:poly-kernel} can jump when stable factors collide or multiplicities change.  This is the analytic obstruction to a uniform inverse for $L_h$ across arbitrary polystable families.

\section{Smooth dependence of harmonic metrics on the stable locus}\label{sec:smooth-metrics}

\subsection{Determinant normalization and background metrics}

We first solve the determinant equation with parameter dependence included.

Let $(L,\ddbar_L)\to U\times X$ be a smooth family of holomorphic line bundles of degree zero.  Fix a smooth Hermitian metric $q^0$ on $L$.  Any other metric has the form
\[
  q=q^0e^f,
  \qquad f\in C^\infty(U\times X,\R).
\]
The Chern curvature changes by
\[
  F_q=F_{q^0}+\ddbar_{X/U}\partial_{X/U}f
\]
with our sign convention.  Hence the Hermitian--Einstein equation in degree zero is a scalar Poisson equation
\begin{equation}\label{eq:det-poisson}
  \Delta_\omega f_u
  =-2\sqrt{-1}\Lambda F_{q^0_u},
\end{equation}
up to the fixed convention factor in the definition of the positive Laplacian.  The right side has integral zero because $\deg L_u=0$.

\begin{proposition}[Smooth Hermitian--Einstein determinant metrics]\label{prop:smooth-det}
Let $(L,\ddbar_L)$ be a smooth family of degree-zero holomorphic line bundles over $U\times X$, and fix a smooth background metric $q^0$ on $L$.  There is a smooth Hermitian metric $q$ on all of $L$ satisfying
\[
  \sqrt{-1}\Lambda F_{q_u}=0
\]
for every $u\in U$.  Relative to $q^0$, it is unique after imposing
\[
  \int_X\log(q_u/q^0_u)\,\frac{\omega^n}{n!}=0.
\]
If the coefficients of the family and the chosen background metric are real analytic in $u$ on a parameter chart, then the restriction of $q$ to that chart depends real analytically on $u$ in Sobolev topology.
\end{proposition}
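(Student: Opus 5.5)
The plan is to reduce everything to the scalar Poisson equation \eqref{eq:det-poisson} and solve it with a parameter-independent Green operator, so that no local gluing is needed at all. Write $q=q^0e^f$ with $f\in C^\infty(U\times X,\R)$. The Hermitian--Einstein condition on each slice becomes
\[
  \Delta_\omega f_u=\rho_u,
  \qquad
  \rho_u:=-2\sqrt{-1}\Lambda F_{q^0_u}.
\]
Two features make this easy. First, the operator $\Delta_\omega$ on functions depends only on the fixed Kähler metric on $X$, and not on $u$. Second, $\rho$ is jointly smooth on $U\times X$: it is a local algebraic and differential expression in the coefficients of $\ddbar_L$ and $q^0$, computed through the line-bundle case of \eqref{eq:chern-matrix}. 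Finally, $\int_X\rho_u\,\omega^n/n!=0$ for every $u$, because this integral is a universal multiple of $\deg L_u=\nu_1(L_u)=0$ by Chern--Weil.

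Let $G$ be the Green operator of $\Delta_\omega$ on the $L^2$-orthogonal complement of the constants, and set $f_u:=G\rho_u$. Then $f_u$ is the unique solution of the slice equation with $\int_X f_u\,\omega^n/n!=0$. This integral condition is precisely the stated normalization, since $\log(q_u/q^0_u)=f_u$.

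Uniqueness comes from the kernel of $\Delta_\omega$ on the connected compact manifold $X$. Any two slicewise solutions differ by a function $c(u)$ that is harmonic, hence constant on each slice, and the integral normalization forces $c\equiv0$. Because $f$ is defined globally by a single formula, the metric $q=q^0e^{G\rho}$ is automatically a global object on $L\to U\times X$. No parallel-transport identification is needed either, since $L$, $q^0$ and $\rho$ are already global and $G$ acts only in the $X$-direction.

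The main step is joint smoothness of $(u,x)\mapsto (G\rho_u)(x)$. I will use that $G:W^{j,2}_0(X)\to W^{j+2,2}_0(X)$ is a bounded linear map for every $j$, where the subscript $0$ denotes mean zero. I will also use the observation, made in the analytic set-up, that smooth family data define a $C^\infty$ map $u\mapsto\rho_u\in W^{j,2}_0$ for every $j$. Since composing a $C^\infty$ map with a fixed bounded linear map is $C^\infty$, and $G$ commutes with parameter derivatives, we get $\partial_u^\alpha f=G\,\partial_u^\alpha\rho$, which is continuous into $W^{j+2,2}$ for all $j$ and $\alpha$. Sobolev embedding then makes every mixed derivative $\partial_u^\alpha\partial_x^\gamma f$ continuous, exactly as in the last part of the proof of \cref{prop:parametric-bootstrap}; in fact this is that proposition with $P_u=\Delta_\omega$ constant and invertible on mean-zero functions. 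The only point requiring care is the claim that a jointly smooth function defines a $C^\infty$ map into each $W^{j,2}$. This follows from uniform continuity of the $x$-derivatives on $K\times X$ for compact $K\subset U$, which justifies differentiating under the Sobolev norm.

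For the real-analytic statement, I will restrict to a chart on which the coefficients of $\ddbar_L$ and $q^0$ are real analytic in $u$. Then $u\mapsto\rho_u$ is real analytic into $W^{k,2}$, because it is a composition of analytic Sobolev operations as in \cref{lem:F-smooth}. Applying the bounded linear map $G$ preserves analyticity, so $u\mapsto f_u$ is real analytic into $W^{k+2,2}$, and the same argument works at every level. Composition with the analytic map $f\mapsto e^f$ on the Banach algebra $W^{k+2,2}$, for $k>n$, then gives analyticity of $u\mapsto q_u$ in Sobolev topology.
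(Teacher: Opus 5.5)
Your proposal is correct and follows the paper's proof essentially step for step. Both write $q=q^0e^f$, apply the fixed, parameter-independent inverse Laplacian on mean-zero functions to the degree-zero curvature source (so smoothness or analyticity in $u$ passes directly from the source to $f$), obtain joint smoothness from Sobolev embedding at every level, and deduce uniqueness from the constant kernel of $\Delta$ on connected $X$. Your additional points, namely $\partial_u^\alpha f=G\,\partial_u^\alpha\rho$ at each Sobolev level and the final composition with $f\mapsto e^f$, only make explicit details that the paper leaves implicit.
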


\begin{proof}
Let $\Delta$ be the scalar Laplacian on $X$.  Because $X$ is connected,
its kernel consists of the constant functions.  Self-adjointness and the
Fredholm alternative therefore give an isomorphism
\[
  \Delta:W^{k+2,2}_0(X)\longrightarrow W^{k,2}_0(X)
\]
on the closed subspaces of mean-zero functions.  The curvature
$F_{q^0}$ is a globally defined relative two-form, and
\[
 \int_X\sqrt{-1}\Lambda F_{q^0_u}\,\frac{\omega^n}{n!}
 =\int_X\sqrt{-1}F_{q^0_u}\wedge
                  \frac{\omega^{n-1}}{(n-1)!}=0,
\]
up to the fixed convention factor, because $L_u$ has degree zero.
Thus the source in \eqref{eq:det-poisson} is a smooth map from $U$ into
$W^{k,2}_0(X)$.  Define
\[
  f_u=\Delta^{-1}\bigl(-2\sqrt{-1}\Lambda F_{q^0_u}\bigr).
\]
The inverse is a fixed bounded linear map, so $u\mapsto f_u$ is smooth
at the chosen Sobolev level.  Since the source is smooth in $x$ to every
order, elliptic regularity and Sobolev embedding make $f$ jointly smooth
on $U\times X$.  The metric $q=q^0e^f$ then satisfies the desired
Hermitian--Einstein equation and the imposed integral normalization.

If $q^0e^{f_1}$ and $q^0e^{f_2}$ are two normalized solutions, then
$\Delta(f_1-f_2)=0$.  Their difference is constant on $X$, and its
mean is zero, so it vanishes.  This proves uniqueness.  Finally, on a
real-analytic parameter chart the curvature source is analytic as a
Sobolev-valued map.  Composition with the same fixed bounded inverse
$\Delta^{-1}$ preserves analyticity, proving the last assertion.
\end{proof}

Apply this to $L=\det E$.  We obtain one smooth Hermitian--Einstein metric $q$ on the determinant line over all of $U\times X$.

Solving the scalar determinant equation is only the first step.  We next choose a smooth background metric on the full bundle with exactly that determinant, so that the remaining nonlinear variable is trace free.

Choose any smooth Hermitian metric $\kappa^0$ on $E|_{V\times X}$.  The quotient
\[
  \rho=\frac{q}{\det \kappa^0}
\]
is a positive smooth function.  Define
\begin{equation}\label{eq:background-prescribed-det}
  \kappa=\rho^{1/r}\kappa^0.
\end{equation}
Then $\det \kappa=q$.  At the distinguished point $u_0$, we may
arrange that $\kappa_{u_0}$ equals a chosen normalized harmonic metric
$h_0$ by first modifying $\kappa^0$ in the parameter neighbourhood.
Concretely, extend $h_0$ to a smooth metric $\widehat h$ in the chosen
local bundle trivialization, choose a parameter bump function $\chi$
with $\chi(u_0)=1$, and replace $\kappa^0$ by
\[
 (1-\chi)\kappa^0+\chi\widehat h.
\]
The cone of Hermitian metrics is convex, so this remains positive and
equals $h_0$ on the central slice.  Since $\det h_0=q_{u_0}$, the scalar
correction in \eqref{eq:background-prescribed-det} is one there.

We henceforth assume
\begin{equation}\label{eq:kappa-central-harmonic}
  \kappa_{u_0}=h_0,
  \qquad
  \det \kappa_u=q_u.
\end{equation}
For the real-analytic statement we use a different local choice, because the preceding bump-function adjustment is only smooth.  In a real-analytic local trivialization choose the determinant background so that $q_{u_0}=\det h_0$.  After the analytic Poisson construction of \cref{prop:smooth-det}, set
\begin{equation}\label{eq:analytic-background-prescribed-det}
  \kappa_u=\left(\frac{q_u}{\det h_0}\right)^{1/r}h_0.
\end{equation}
The positive ratio and its $r$th root depend real analytically on the parameter in each Sobolev algebra under consideration.  Thus \eqref{eq:analytic-background-prescribed-det} has determinant $q_u$, equals $h_0$ at $u_0$, and supplies the analytic background needed below without using analytic bump functions.

Every metric with determinant $q_u$ is uniquely of the form
\[
  h_{u,\widetilde s}=\kappa_ue^{\widetilde s},
  \qquad \widetilde s\in\Herm^0_{\kappa_u}(\End E_0).
\]
Let $b_u:(E_0,\kappa_u)\to(E_0,h_0)$ be the positive isometry obtained
from the square root of the relative metric endomorphism, chosen
smoothly with $b_{u_0}=\Id$.  Explicitly, if $r_u$ is defined by
$\kappa_u(v,w)=h_0(r_uv,w)$, then $b_u=r_u^{1/2}$; smooth functional
calculus on the open cone of positive endomorphisms gives smooth
dependence on $u$.  Conjugation by $b_u$ identifies the moving spaces
$\Herm^0_{\kappa_u}(\End E_0)$ with the fixed space
$\Herm^0_{h_0}(\End E_0)$.  Thus, for a variable
\[
  s\in\Herm^0_{h_0}(\End E_0),
\]
we set
\begin{equation}\label{eq:transported-s}
  \widetilde s_u=b_u^{-1}s\,b_u
  \in\Herm^0_{\kappa_u}(\End E_0),
  \qquad
  h_{u,s}:=\kappa_u e^{\widetilde s_u}.
\end{equation}
This makes the domain of the nonlinear problem a single fixed Sobolev space.  Notice that $\widetilde s_{u_0}=s$.

\subsection{The nonlinear moment-map equation and harmonicity}

The target Hermitian subspace also moves with the metric $h_{u,s}$.  Define the isometry
\[
  c_{u,s}=b_u e^{\widetilde s_u/2}
  =e^{s/2}b_u:
  (E_0,h_{u,s})\longrightarrow(E_0,h_0)
\]
and the corresponding identification
\[
  \mathcal J_{u,s}(A)=c_{u,s}Ac_{u,s}^{-1}.
\]
We then define
\begin{equation}\label{eq:stable-F}
  \cF(u,s)
  =\mathcal J_{u,s}\!\left(
  \mu_0(\cD''_u,h_{u,s})
  \right).
\end{equation}
This has the same zero set as the trace-free moment-map equation and takes values in a fixed Banach space of $h_0$-Hermitian trace-free endomorphisms.
At $(u_0,0)$,
\[
  \cF(u_0,0)=0.
\]
By \cref{lem:F-smooth}, $\cF$ is smooth between the Sobolev Banach spaces
\[
  V\times W^{k+2,2}(\Herm^0\End E_0)
  \longrightarrow
  W^{k,2}(\Herm^0\End E_0).
\]
Because the central moment map vanishes, differentiating the conjugating identification $\mathcal J_{u,s}$ contributes no extra term at $(u_0,0)$.  Hence the derivative in the $s$-direction is the Jacobi operator
\[
  D_s\cF(u_0,0)=L_{h_0}.
\]
By \cref{prop:stable-kernel}, this is an isomorphism.

\begin{theorem}[Local Sobolev family of normalized solutions]\label{thm:sobolev-family}
After shrinking $V$ about $u_0$, there is a unique smooth map
\[
  s:V\to W^{k+2,2}(\Herm^0\End E_0)
\]
with $s(u_0)=0$ and
\[
  \cF(u,s(u))=0.
\]
If the locally trivialized coefficients of the family and the determinant background are real analytic in $u$, then $s$ is real analytic.
\end{theorem}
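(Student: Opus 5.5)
The plan is to apply the Banach implicit-function theorem directly to the map $\cF$ of \eqref{eq:stable-F}, viewed as a map
\[
  V\times W^{k+2,2}(\Herm^0_{h_0}\End E_0)\longrightarrow W^{k,2}(\Herm^0_{h_0}\End E_0).
\]
Three inputs are needed. The first is smoothness of $\cF$ jointly in $(u,s)$. I obtain this by writing $\cF(u,s)=\mathcal J_{u,s}\bigl(\mu_0(\cD''_u,\kappa_ue^{b_u^{-1}sb_u})\bigr)$ and composing smooth maps:
\begin{itemize}
\item $u\mapsto\cD''_u\in\cA^{k+1}$ is smooth by \eqref{eq:parameter-map-D};
\item $u\mapsto\kappa_u$ and $u\mapsto b_u$ are smooth $W^{j,2}$-valued maps for every $j$;
\item the remaining operations (exponential, inversion, conjugation, one derivative in the Chern form, contraction by $\Lambda$, and $\operatorname{pr}_0$) are smooth or analytic by the Banach-algebra argument of \cref{lem:F-smooth}.
\end{itemize}
Multiplying by the fixed smooth factors $\kappa_u,b_u$ preserves $W^{k+2,2}$, since $k>n+1$. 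The second input is $\cF(u_0,0)=0$. This holds because $\kappa_{u_0}=h_0$ is harmonic, by \eqref{eq:kappa-central-harmonic}, and $b_{u_0}=\Id$.

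The third input is the derivative $D_s\cF(u_0,0)$. At $u=u_0$ we have $\widetilde s=s$, so $h_{u_0,s}=h_0e^s$ and $\mathcal J_{u_0,s}=\mathcal I_s$. The computation is then exactly the one in \cref{prop:full-linearization} with $\eta=0$. The conjugation contributes $[\,\cdot\,,\mu_0(\cD''_{u_0},h_0)]=0$, so $D_s\cF(u_0,0)=L_{h_0}$ restricted to trace-free Hermitian sections. By \cref{prop:stable-kernel}, stability of the central fibre makes this an isomorphism $W^{k+2,2}\to W^{k,2}$.

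The implicit-function theorem then gives neighbourhoods $V'\ni u_0$ and $\mathcal B\ni0$ and a unique smooth map $s:V'\to\mathcal B$ with $s(u_0)=0$ and $\cF(u,s(u))=0$; shrinking $V$ to $V'$ completes this part. Two points remain to be checked. First, "unique" is meant within the small ball $\mathcal B$, which is the standard IFT uniqueness; global uniqueness comes later from normalized uniqueness of stable harmonic metrics. Second, the zero set of $\cF$ agrees with that of $\mu_0$ because $\mathcal J_{u,s}$ is invertible.

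For the real-analytic clause, I use the analytic background \eqref{eq:analytic-background-prescribed-det} in place of the bump-function background. With it, $u\mapsto\kappa_u$ and $u\mapsto b_u=r_u^{1/2}$ are analytic, since the positive square root is analytic on the open cone of positive endomorphisms. Combined with \cref{prop:smooth-det} and the analytic part of \cref{lem:F-smooth}, this makes $\cF$ real analytic, and the analytic implicit-function theorem gives analyticity of $s$.

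The main obstacle is the joint-smoothness step: $\cF$ must be a genuinely $C^\infty$ map on $V\times W^{k+2,2}$, not merely smooth in each variable separately. The $u$-dependence enters through the Dolbeault coefficient, which is differentiated once, and this is why the $\cA^{k+1}$ regularity is needed. It also enters through $b_u$ conjugating $s$, which requires multiplication $W^{k+2,2}\times C^\infty\to W^{k+2,2}$ to be smooth in the smooth factor. I would handle both by observing that every $u$-dependent coefficient is a smooth map $V\to W^{j,2}$ for large $j$. Then $\cF$ is a composition of a smooth map with bounded multilinear maps, and no separate joint-continuity argument is required.
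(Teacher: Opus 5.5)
Your proposal is correct and follows the paper's proof: apply the Banach implicit-function theorem to $\cF$ on the fixed Sobolev spaces, using smoothness of $\cF$ by composition, $\cF(u_0,0)=0$, $D_s\cF(u_0,0)=L_{h_0}$ invertible by \cref{prop:stable-kernel}, and, for the real-analytic clause, the analytic background \eqref{eq:analytic-background-prescribed-det} with the analytic implicit-function theorem. The paper leaves two points implicit that you make explicit: that $\mathcal J_{u_0,s}=\mathcal I_s$ reduces the derivative computation to \cref{prop:full-linearization} with $\eta=0$, and that the stated uniqueness is the local uniqueness from the implicit-function theorem.
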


\begin{proof}
The fixed-domain and fixed-target identifications preceding \eqref{eq:stable-F} put the nonlinear equation on fixed Banach spaces:
\[
  \cF:V\times W^{k+2,2}(\Herm^0\End E_0)
  \longrightarrow W^{k,2}(\Herm^0\End E_0),
\]
with $\cF(u_0,0)=0$.  The choice of $k$ guarantees that the Sobolev spaces used here are closed under the nonlinear products occurring in the Chern connection, the adjoint Higgs field, and the curvature expression.  The exponential parametrization of positive metrics is smooth on a neighbourhood of the origin, and the smoothly varying coefficients of the family therefore make $\cF$ a smooth Banach map.

The linearization in the metric variable has already been computed:
\[
  D_s\cF(u_0,0)=L_{h_0}.
\]
By \cref{prop:stable-kernel}, this operator is an isomorphism on the trace-free normalized slice.  The Banach implicit-function theorem therefore gives a smaller neighbourhood $V'\subset V$ of $u_0$, a neighbourhood $B$ of $0$ in $W^{k+2,2}$, and a unique smooth map
\[
  s:V'\to B
\]
such that $s(u_0)=0$ and $\cF(u,s(u))=0$.  This is the desired local Sobolev family.

If the locally trivialized coefficients and determinant background depend real analytically on $u$, then the same algebraic operations define a real-analytic Banach map $\cF$.  Since the vertical derivative is unchanged and remains invertible, the analytic Banach implicit-function theorem yields real-analytic dependence of $s$ in the stated Sobolev topology.
\end{proof}

The implicit-function theorem solves the trace-free moment-map equation.  We now explain why, under the NAH numerical conditions already imposed, this solution satisfies the full harmonic flatness equation.

The solution of \cref{thm:sobolev-family} satisfies
\[
  \mu_0(\cD''_u,h_u)=0,
  \qquad
  h_u=h_{u,s(u)}=\kappa_u e^{\widetilde s_u},
  \qquad
  \widetilde s_u=b_u^{-1}s(u)b_u.
\]
Since $\det h_u=q_u$ and $q_u$ solves the determinant Hermitian--Einstein equation, the trace of the moment map also vanishes.  Therefore
\begin{equation}\label{eq:full-mu-zero}
  \mu(\cD''_u,h_u)=0.
\end{equation}
Under the NAH numerical conditions \eqref{eq:NAH-condition}, the Hitchin--Simpson Chern--Weil identity upgrades \eqref{eq:full-mu-zero} to flatness of
\[
  \nabla_{h_u}=D_{h_u}+\theta_u+\theta_u^{\dagger_{h_u}}.
\]
Thus $h_u$ is harmonic in the sense of \cref{def:harmonic-metric}.

\begin{remark}[Where the numerical conditions enter]
The implicit-function argument itself uses only the normalized moment-map equation and invertibility of the trace-free Jacobi operator.  The condition $\nu_1(E)=0$ is used to solve the determinant equation with zero Einstein constant.  The condition $\nu_2(E)=0$ enters in the Chern--Weil identity that forces the $L^2$ norm of the remaining curvature terms to vanish.  Stronger hypotheses such as vanishing rational Chern classes are of course sufficient, but they are not required for the local analytic step.  This is the standard compact K\"ahler mechanism; see \cite{Simpson1988,Simpson1992}.
\end{remark}

\subsection{Regularity, uniqueness, and proof of the stable theorem}

The implicit-function theorem gives $s(u)$ in $W^{k+2,2}$.  We now show that the family is jointly smooth.

\begin{proposition}[Joint regularity]\label{prop:joint-regularity}
The metric
\[
  h(u,x)=\kappa(u,x)e^{\widetilde s_u(x)},
  \qquad
  \widetilde s_u=b_u^{-1}s(u)b_u,
\]
constructed above is $C^\infty$ on $V\times X$.
\end{proposition}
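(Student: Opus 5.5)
The plan is a bootstrap built on \cref{prop:parametric-bootstrap}, with the nonlinear equation rewritten as a linear elliptic equation whose coefficients depend on the solution itself. Since $\kappa$ and $b_u$ are jointly smooth by construction, it suffices to show that $u\mapsto s(u)$ is $C^\infty$ into $W^{\ell,2}$ for every $\ell\ge k+2$. Once that holds, Sobolev embedding gives continuity of every mixed derivative $\partial_u^\alpha\partial_x^\gamma s$, exactly as at the end of the proof of \cref{prop:parametric-bootstrap}. Smoothness of $h=\kappa e^{\widetilde s}$ then follows because the exponential and conjugation are smooth functional calculus.

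First I will establish spatial regularity on each slice. For fixed $u$, $h_u$ is a $W^{k+2,2}\subset C^2$ solution of $\mu(\cD''_u,h_u)=0$. In a local holomorphic frame the equation reads $\sqrt{-1}\Lambda\ddbar(H^{-1}\partial H)+\text{(lower order in }H)=0$, which is quasilinear elliptic with principal part $\Delta$ acting on $H$, with coefficients in $H$ and $H^{-1}$. Standard elliptic bootstrapping, using the Banach-algebra property from \eqref{eq:k-assumption}, gives $s(u)\in W^{\ell,2}$ for all $\ell$.

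Next I will obtain uniform smoothness in $u$ at every Sobolev level. The cleanest route is to rerun the implicit-function theorem of \cref{thm:sobolev-family} at level $\ell$ in place of $k$. The map $\cF:V\times W^{\ell+2,2}\to W^{\ell,2}$ is smooth by \cref{lem:F-smooth}, since the Higgs data are smooth into every Sobolev completion. Its vertical derivative $L_{h_0}$ is again an isomorphism: \cref{prop:stable-kernel} holds at every level, because the kernel consists of smooth sections. This yields a smooth solution map $s^{(\ell)}:V_\ell\to W^{\ell+2,2}$. Local uniqueness at level $k$, together with the continuous inclusion $W^{\ell+2,2}\subset W^{k+2,2}$, forces $s^{(\ell)}=s$ on $V_\ell$.

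The main obstacle is that the neighbourhoods $V_\ell$ may shrink as $\ell$ grows. I would handle this pointwise rather than uniformly. Every $u_1\in V$ is itself a point where $\cF(u_1,s(u_1))=0$, and there $D_s\cF$ is the Jacobi operator of the harmonic metric $h_{u_1}$, composed with fixed isomorphisms. That operator is invertible by \cref{prop:stable-kernel}, since the slice is stable. Applying the level-$\ell$ implicit-function theorem about $(u_1,s(u_1))$, which is legitimate because $s(u_1)$ is smooth by the first step, gives smoothness of $s$ into $W^{\ell+2,2}$ near every $u_1$, for every $\ell$. Uniqueness again identifies the solutions.

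As an alternative to this last argument, I can differentiate $\cF(u,s(u))=0$ in $u$. This gives $D_s\cF(u,s(u))\,\partial_u^\alpha s=$ (terms in lower derivatives), and \cref{prop:parametric-bootstrap} applied with the family $P_u=D_s\cF(u,s(u))$ reaches the same conclusion. With smoothness into every $W^{\ell,2}$ in hand, the embedding argument finishes the proof.
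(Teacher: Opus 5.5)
Your proof is correct, but it reaches joint smoothness by a different mechanism from the paper's.

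\textbf{The paper's route.} It differentiates $\cF(u,s(u))=0$ in the parameter, so that $L_{u,s(u)}\partial_u^\alpha s$ equals explicit parameter derivatives plus lower-order terms. It then runs an interleaved induction on $|\alpha|$ and on the $X$-Sobolev order, using the uniform estimate \eqref{eq:uniform-elliptic-estimate}. As the paper notes, that step needs only strong ellipticity and the already-known $W^{k+2,2}$ family, not invertibility.

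\textbf{Your route.} You re-solve instead. Slicewise elliptic regularity puts each $s(u_1)$ in every $W^{\ell+2,2}$. You recentre the implicit-function theorem at $(u_1,s(u_1))$ at every level $\ell$, invert the vertical derivative using stability of the slice at $u_1$, and identify the new solution with $s$ by local uniqueness at level $k$. Because smoothness into $W^{\ell+2,2}$ is a local property, the dependence of the neighbourhoods on $\ell$ and $u_1$ is harmless.

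\textbf{What each buys.} Your route trades uniform estimates in families for invertibility of the Jacobi operator at every point. So it uses stability along the whole family, and it would not carry over to a Fredholm situation with kernel, where the paper's bootstrap still works once a Sobolev family is known. In exchange it avoids family estimates altogether. Combined with the analytic implicit-function theorem, it also gives real-analytic dependence into every $W^{\ell,2}$ directly, with no separate bootstrap.

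\textbf{Two points to make explicit.}
\begin{enumerate}
\item At $u_1\ne u_0$ your ``fixed isomorphisms'' include the differential of the exponential chart, $\sigma\mapsto\frac{1-e^{-\operatorname{ad}X}}{\operatorname{ad}X}\bigl(b_{u_1}^{-1}\sigma b_{u_1}\bigr)$ with $X=\widetilde s_{u_1}$. This map preserves trace and lands in $h_{u_1}$-Hermitian endomorphisms. It is invertible with coefficients smooth in $x$ because $X$ is self-adjoint, so $\operatorname{ad}X$ has real spectrum.
\item Your alternative, as phrased, applies \cref{prop:parametric-bootstrap} to $P_u=D_s\cF(u,s(u))$. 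The coefficients of this operator involve $s(u)$ and its $X$-derivatives, so they are not known in advance to be jointly smooth. That version therefore needs the interleaved induction the paper carries out, not a direct appeal to the proposition.
\end{enumerate}
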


\begin{proof}
The equation $\cF(u,s(u))=0$ is a quasilinear elliptic equation in the $X$-variables.  Its linearization along the solution family is a smooth family of strongly elliptic operators
\[
  L_{u,s(u)}:W^{j+2,2}\to W^{j,2}.
\]
Differentiating in a parameter coordinate $u_i$ gives
\begin{equation}\label{eq:differentiate-family-equation}
  L_{u,s(u)}\,\partial_{u_i}s
  =-\partial_{u_i}^{\mathrm{exp}}\cF(u,s(u)),
\end{equation}
where the right side denotes the explicit parameter derivative with $s$ fixed.  The coefficients are smooth in $u$ and $x$.  Uniform elliptic estimates on a relatively compact sub-neighbourhood show that $\partial_{u_i}s$ gains two $X$-derivatives.  Repeating for arbitrary mixed parameter derivatives and increasing the Sobolev order proves smoothness into every $W^{j,2}$.  Sobolev embedding gives joint smoothness.
More explicitly, after applying a parameter multi-index $\alpha$, the
highest derivative appears only through
$L_{u,s(u)}\partial_u^\alpha s$; all remaining terms are finite sums of
products of coefficient derivatives and lower parameter derivatives of
$s$.  An induction on $|\alpha|$, followed by an induction on the
$X$-Sobolev order, therefore places every $\partial_u^\alpha s$ in
every $W^{j,2}$ locally uniformly in $u$.  Stability ensures that the
linearized operator remains invertible after shrinking, although the
regularity estimate itself only requires strong ellipticity and the
already controlled $L^2$ norm.  Taking $j$ successively larger than
$\dim_\R X/2+|\gamma|$ makes
$\partial_u^\alpha\partial_x^\gamma s$ continuous by Sobolev embedding.
The background factors $\kappa_u$ and $b_u$ are already jointly smooth,
and matrix exponentiation is smooth in each Sobolev algebra.  Hence the
displayed formula for $h$ is jointly $C^\infty$.
\end{proof}

Existence has been proved after choosing a local trivialization and a
background metric.  To make the result intrinsic and force local
solutions to agree on overlaps, we combine stability with determinant
normalization to prove uniqueness.

\begin{proposition}[Uniqueness]\label{prop:normalized-uniqueness}
Let $(E,\cD'')$ be stable.  If $h_1$ and $h_2$ are harmonic metrics with
\[
  \det h_1=\det h_2,
\]
then $h_1=h_2$.
\end{proposition}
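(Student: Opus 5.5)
The plan is to reduce to the classical statement that harmonic metrics on a stable Higgs bundle are unique up to a positive constant, and then use the determinant to fix the constant. Write $h_2=h_1 f$ with $f=e^{s}$ positive and $h_1$-self-adjoint, $s=s^{\dagger_{h_1}}$. Since $\det h_2=\det h_1$, we have $\det f=1$, so $\tr s=0$. The main task is to show $s$ is constant and scalar. Together with $\tr s=0$, this gives $s=0$.

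For the core step I would use the standard convexity argument along the geodesic $h_t=h_1e^{ts}$, $t\in[0,1]$. Both endpoints satisfy $\mu(\cD'',h_t)=0$. Consider
\[
  g(t)=\int_X\ip{\mu(\cD'',h_t)}{s}\,\frac{\omega^n}{n!},
\]
where the pairing is taken via the trace. Then $g(0)=g(1)=0$. Differentiating with \cref{lem:variation-Dprime} at each $t$, as in \eqref{eq:metric-linearization-raw} and the energy computation of \cref{cor:energy-identity}, gives
\[
  g'(t)=\norm{\cD''s}^2_{L^2(h_t,\omega)}\ge0 .
\]
This needs some care: at non-harmonic $t$ the anticommutator term does not vanish. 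The clean way is to pair against $s$ and use $[s,\mu]$-invariance of the trace, since $s$ commutes with $e^{ts}$. This is exactly where I expect the main obstacle, namely verifying that the derivative of $\tr(\mu(h_t)s)$ is the nonnegative quantity $\abs{\cD''s}^2_{h_t}$ after integration by parts, with the Higgs commutator contributing $\abs{[\theta,s]}^2$ with the right sign. If that computation becomes messy, I would fall back on Simpson's Donaldson-functional convexity or his $C^0$ estimate $\Delta\log\tr(f)\le 2\abs{\mu(h_1)}+2\abs{\mu(h_2)}$ from \cite{Simpson1988}. With both moment maps zero, that estimate makes $\tr f$ subharmonic, hence constant on compact $X$, and then forces $\cD''f=0$.

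Granting $g'\ge0$ and $g(0)=g(1)=0$, we get $g'\equiv0$, so $\cD''s=0$. Thus $s$ is a Hermitian Higgs endomorphism. By \cref{prop:stable-kernel}, more precisely the simplicity argument in its proof, $s=\lambda\Id_E$ with $\lambda\in\R$ constant. Then $\tr s=r\lambda=0$ gives $\lambda=0$, so $f=\Id$ and $h_1=h_2$. In the fallback route the same conclusion follows: $f$ is a Higgs endomorphism, hence $f=c\Id$ with $c>0$, and $\det f=c^r=1$ forces $c=1$. Finally, since the statement is slicewise, the family version follows by applying this on each $u$-slice.
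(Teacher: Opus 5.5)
Your proposal is correct and is essentially the paper's proof. The paper argues via the Higgs Donaldson functional $\mathcal M$ along the same geodesic $h_t=h_1e^{ts}$, and your $g(t)$ is exactly $\mathcal M'(t)$, so $g(0)=g(1)=0$ together with $g'(t)=\|\cD''s\|^2_{L^2(h_t)}\ge0$ is the same convexity step, followed by the same simplicity and trace/determinant conclusion. Your observation that the non-harmonic curvature term contributes $\tr\bigl(s\,[\mu(h_t),s]\bigr)=0$ supplies the detail the paper leaves to the cited convexity formula; it works because $s$ is $h_t$-self-adjoint and commutes with $e^{ts}$. The fallback is therefore unnecessary, and as sketched it would not suffice: the inequality for $\Delta\log\tr f$ alone does not force $\cD''f=0$, and you would need Simpson's identity for $\Delta\tr f$.
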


\begin{proof}
Write
\[
 h_2(v,w)=h_1(e^s v,w)
\]
for the unique $h_1$-self-adjoint endomorphism $s$.  Join the two
metrics by the geodesic $h_t=h_1e^{ts}$.  The convexity formula for the
Higgs Donaldson functional gives, with our moment-map normalization,
\[
 \frac{d^2}{dt^2}\mathcal M(h_t)
 =c_\omega\,\|\cD''s\|^2_{L^2(h_t)}\ge0,
\]
where $c_\omega>0$ is the harmless convention constant.  Its first
variation is the $L^2$ pairing of $s$ with the moment map.  Both endpoint
metrics solve the moment-map equation, so the derivative of
$\mathcal M(h_t)$ vanishes at $t=0$ and $t=1$.  A convex function whose
derivative has the same value at both endpoints is affine; hence the
displayed nonnegative second derivative vanishes.  It follows that
$\cD''s=0$.

Thus $s$ is a Hermitian Higgs endomorphism.  Stability implies
simplicity, as in \cref{prop:stable-kernel}, so $s=c\Id_E$ for a real
constant $c$.  Taking determinants in $h_2=h_1e^s$ gives
\[
  \det h_2=e^{rc}\det h_1.
\]
The prescribed determinants are equal, so $e^{rc}=1$.  Since $c$ is
real, $c=0$, and therefore $h_2=h_1$.
\end{proof}

\begin{corollary}[Global gluing]\label{cor:metrics-glue}
Let $q$ be a smooth Hermitian--Einstein metric on $\det E$ over $U\times X$, and let $\{V_\alpha\}$ be a parameter cover on which the construction above produces $q$-normalized harmonic metrics.  Then the local metrics agree on overlaps and glue to a unique $q$-normalized harmonic metric on all of $E$.
\end{corollary}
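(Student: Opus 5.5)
The argument combines pointwise uniqueness on each slice, a sheaf-type gluing, and the local regularity results already proved.

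\textbf{Agreement on overlaps.} Let $h^\alpha$ and $h^\beta$ be the local $q$-normalized harmonic metrics on $E|_{V_\alpha\times X}$ and $E|_{V_\beta\times X}$. Fix $u\in V_\alpha\cap V_\beta$. The slice $(E_u,\cD''_u)$ is stable, both $h^\alpha_u$ and $h^\beta_u$ are harmonic for it, and
\[
  \det h^\alpha_u=q_u=\det h^\beta_u .
\]
\Cref{prop:normalized-uniqueness} therefore gives $h^\alpha_u=h^\beta_u$. Since $u$ is arbitrary, $h^\alpha=h^\beta$ pointwise on $(V_\alpha\cap V_\beta)\times X$. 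Two points need a brief check. First, the local metrics are genuine metrics on the bundle $E$, not only on the trivialized model $E_0$. So although the local constructions may use different parallel-transport trivializations and different backgrounds $\kappa$, the comparison is made intrinsically on $E_u$ and the choices play no role. Second, \cref{prop:normalized-uniqueness} must apply to the regularity we have. This holds because \cref{prop:joint-regularity} makes each local metric jointly smooth, hence smooth on every slice.

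\textbf{Definition of the global metric.} The $V_\alpha$ cover $U$, so we define $h$ on $U\times X$ by $h|_{V_\alpha\times X}=h^\alpha$. This is well defined by the overlap agreement. It is smooth because smoothness is a local property and each $h^\alpha$ is smooth by \cref{prop:joint-regularity}. It is slicewise harmonic and satisfies $\det h=q$, since both properties hold slicewise and are inherited from the $h^\alpha$.

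\textbf{Uniqueness.} Any other $q$-normalized harmonic metric $h'$ on $E$ agrees with $h$ on each slice, again by \cref{prop:normalized-uniqueness}.

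\textbf{Where care is needed.} The only substantive point is the hypothesis that such a cover exists. For the full theorem one also needs that every $u_0$ has such a neighbourhood, which requires a normalized harmonic metric $h_0$ with $\det h_0=q_{u_0}$ on the central slice. This comes from classical Hitchin--Simpson existence combined with the scalar rescaling argument preceding \cref{def:integral-normalization}, which uses that $q_{u_0}$ is Hermitian--Einstein. The corollary itself only assumes the cover, so beyond this remark no further analysis is required.
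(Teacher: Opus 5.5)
Your proposal is correct and follows essentially the same route as the paper: you apply \cref{prop:normalized-uniqueness} slicewise to get equality on overlaps, then glue by the sheaf property of smooth sections, checking harmonicity and $\det h=q$ slicewise. Your extra observation, that any global $q$-normalized harmonic metric must coincide with the glued one, is a harmless strengthening of the uniqueness the paper obtains from gluing.
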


\begin{proof}
Fix an overlap $V_\alpha\cap V_\beta$.  For each parameter value $u$ in the overlap, the two local constructions give harmonic metrics on the same stable Higgs fibre and both have determinant $q_u$.  By \cref{prop:normalized-uniqueness}, the two metrics coincide on $\{u\}\times X$.  Since this holds for every $u$, the two smooth sections of the bundle of positive Hermitian forms agree pointwise on the whole overlap.

Thus the local metrics satisfy the ordinary cocycle condition by equality, not merely up to gauge.  The sheaf property for smooth sections therefore produces a unique smooth metric on $E$ over $U\times X$.  Harmonicity is checked slicewise and is preserved under restriction, so the glued metric is harmonic on every fibre.
\end{proof}

By \cref{prop:smooth-det}, the required global determinant normalization is always available for the stable families under consideration.  Thus global gluing is unconditional here; it is not an additional hypothesis on the family.

An arbitrarily chosen collection of local harmonic metrics need not already have equal determinants on overlaps.  This is only a mismatch of representatives of the scalar torsor, not an obstruction attached to the family.  Given the global $q$, each local metric has a unique smooth positive scalar rescaling whose determinant is $q$; the rescaled metrics then agree by \cref{prop:normalized-uniqueness}.  Consequently there is no stable family which is locally harmonic but fails to lie in the prestack-level essential image merely because its first chosen local normalizations do not glue.

All ingredients are now in place, and the main stable theorem is obtained by assembling the determinant equation, the fixed-space implicit-function argument, elliptic regularity, and uniqueness.

\begin{proof}[Proof of \cref{thm:intro-smooth-metrics}]
Choose a smooth background metric on $\det E$.  By \cref{prop:smooth-det}, the determinant family admits a Hermitian--Einstein metric $q$ on all of $U\times X$.  For each $u_0\in U$, the ordinary Hitchin--Simpson correspondence gives a harmonic metric $h_0$ on the stable fibre $(E_{u_0},\cD''_{u_0})$, unique up to positive scalar; rescale it so that $\det h_0=q_{u_0}$.

To justify the last normalization, both $\det h_0$ and $q_{u_0}$
solve the degree-zero Hermitian--Einstein equation on the same
holomorphic determinant line.  The logarithm of their ratio has zero
Laplacian and is therefore constant on connected $X$.  A unique
positive scalar rescaling of $h_0$ makes the two determinant metrics
equal.

Choose a local smooth trivialization of the underlying family near
$u_0$ and a background metric $\kappa$ satisfying
\eqref{eq:kappa-central-harmonic}.  The trace-free moment-map equation
is \eqref{eq:stable-F}.  Its metric derivative at $(u_0,0)$ is
$L_{h_0}$, an isomorphism by stability and
\cref{prop:stable-kernel}.  The Banach implicit-function theorem gives
a unique smooth Sobolev solution $s(u)$ after the parameter
neighbourhood is shrunk.

The solution has determinant $q_u$ by construction.  The trace of the
Hitchin--Simpson moment map is the curvature contraction of the
determinant Chern connection, because traces of commutators vanish.
The Hermitian--Einstein equation for $q_u$ therefore restores the
missing central equation and gives $\mu(\cD''_u,h_u)=0$.  The standard
Hitchin--Simpson Chern--Weil identity expresses the remaining curvature
energy of $\nabla_{h_u}$ as the characteristic-number combination in
\eqref{eq:NAH-condition}, once the contracted moment map is zero.  That
combination vanishes by hypothesis, so every nonnegative curvature
term vanishes and $\nabla_{h_u}$ is flat.  Thus the Sobolev solution is
harmonic.  \Cref{prop:joint-regularity} gives joint smoothness.
Applying this construction near every point of $U$ produces a parameter
cover of local $q$-normalized metrics, and
\cref{cor:metrics-glue} glues them to one smooth harmonic metric on all
of $E$.

For the equivalent integral normalization, fix a global smooth background metric $\kappa$ on $E$ and let $h^q$ denote the global $q$-normalized solution just constructed.  The function $u\mapsto N_\kappa(h^q)_u$ is smooth, so
\[
  h^\kappa_u=\exp\!\bigl(-N_\kappa(h^q)_u\bigr)h^q_u
\]
is a global smooth harmonic family satisfying $N_\kappa(h^\kappa)_u=0$.  Any other integrally normalized harmonic family differs slicewise from $h^\kappa$ by a positive scalar, and the transformation law $N_\kappa(ch)=N_\kappa(h)+\log c$ forces that scalar to be one.

For real-analytic parameter dependence under the local hypotheses stated in \cref{thm:intro-smooth-metrics}, \cref{prop:smooth-det} and the analytic Banach implicit-function theorem give real-analytic dependence in a fixed Sobolev topology.  Elliptic bootstrapping promotes this to all Sobolev orders.  Uniqueness after normalization is \cref{prop:normalized-uniqueness}.
\end{proof}

Nothing in the proof assumes that $U$ is one-dimensional, contractible,
or a global trivializing chart.  The only local reduction is the smooth
identification of the underlying bundles over a parameter
neighbourhood, and mixed parameter derivatives are controlled by
repeated differentiation of \eqref{eq:differentiate-family-equation}.
Nontrivial monodromy may obstruct a global bundle trivialization, but
the global determinant construction and normalized overlap uniqueness
produce intrinsic metric sections on $U\times X$.

\subsection{Quantitative control and spectral degeneration}

The implicit-function proof also gives a local estimate, but the varying determinant background must be included among the coefficients unless the determinant is fixed.  Let $u_0=0$ in a coordinate ball and let $p_u$ denote the full coefficient datum entering the fixed-Banach-space equation.  After shrinking the ball,
\begin{equation}\label{eq:quantitative-s-estimate}
  \norm{s(u)}_{W^{k+2,2}}
  \le C\norm{p_u-p_0}_{\mathcal P}
\end{equation}
for a local Banach norm $\mathcal P$ strong enough to control the coefficients occurring in \eqref{eq:stable-F}.  In a fixed-determinant family one may take $p_u$ to be the Higgs operator alone.  The constant $C$ depends on the inverse spectral gap of $L_{h_0}$ and on a bounded neighbourhood of the coefficient data.

\begin{proposition}[Local Lipschitz estimate]\label{prop:local-Lipschitz}
After shrinking the parameter neighbourhood, there is $C>0$ such that
\[
  \norm{s(u)-s(v)}_{W^{k+2,2}}
  \le C\norm{p_u-p_v}_{\mathcal P^{k+1}}
\]
for all $u,v$ in the neighbourhood.  Here $p_u$ denotes the full local coefficient datum entering the fixed-Banach-space equation---the Higgs operator together with the determinant background produced by the Poisson construction---and $\mathcal P^{k+1}$ is a fixed local Banach norm strong enough to control the coefficient maps appearing in \eqref{eq:stable-F}.  In a fixed-determinant family one may take $p_u=\cD''_u$.
\end{proposition}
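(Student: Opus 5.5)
The plan is to treat the fixed-Banach-space equation as a map $\Psi(p,s)$ of the coefficient datum $p$ and the trace-free variable $s$. Then the estimate follows from the quantitative implicit-function theorem: a uniform bound on the inverse of $D_s\Psi$ together with the mean value inequality in $p$.

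Concretely, $p$ consists of the Higgs operator in $\cA^{k+1}$ and the determinant background $\kappa$, with $b$ determined from $\kappa$. I will set
\[
  \Psi(p,s)=\mathcal J_{p,s}\bigl(\mu_0(\cD'',h_{p,s})\bigr),
\]
so that $\cF(u,s)=\Psi(p_u,s)$. By the argument of \cref{lem:F-smooth}, $\Psi$ is $C^\infty$, in fact real analytic, on a neighbourhood $\mathcal N\times B_\delta$ of $(p_0,0)$ in $\mathcal P^{k+1}\times\cS^0_{k+2}$. The norm $\mathcal P^{k+1}$ should be taken as $W^{k+1,2}$ on the operator coefficients and a sufficiently strong Sobolev norm on $\kappa$, say $W^{k+2,2}$, since $\kappa$ is differentiated twice in the curvature.

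The first step is uniform invertibility. We have $D_s\Psi(p_0,0)=L_{h_0}$, and this is an isomorphism by \cref{prop:stable-kernel}. Continuity of $D_s\Psi$ in operator norm, together with the Neumann series \eqref{eq:inverse-neumann}, then gives the following: after shrinking $\mathcal N$ and $\delta$, the derivative $D_s\Psi(p,s)$ is invertible with $\norm{D_s\Psi(p,s)^{-1}}\le 2\norm{L_{h_0}^{-1}}$. We also obtain a bound $\norm{D_p\Psi(p,s)}\le M$ on the same convex neighbourhood, since smooth maps have locally bounded derivatives. The ball may be taken convex, so the segments used below stay inside it.

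The second step is the estimate itself. Shrink the parameter neighbourhood so that $p_u\in\mathcal N$ and $s(u)\in B_\delta$ for all $u$; this is possible by continuity of $u\mapsto p_u$ (\cref{prop:smooth-det}) and of $u\mapsto s(u)$ (\cref{thm:sobolev-family}). For two parameters $u,v$ we have $\Psi(p_u,s(u))=\Psi(p_v,s(v))=0$. Set
\[
  A=\int_0^1 D_s\Psi\bigl(p_v,\,s(v)+t(s(u)-s(v))\bigr)\,dt .
\]
Then
\[
  A\,(s(u)-s(v))=-\bigl(\Psi(p_u,s(u))-\Psi(p_v,s(u))\bigr),
\]
and the right-hand side has norm at most $M\norm{p_u-p_v}$.

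The averaged operator $A$ is within operator-norm distance $\epsilon$ of $L_{h_0}$, where $\epsilon$ is made small by shrinking $\mathcal N$ and $\delta$. So $A$ is invertible with $\norm{A^{-1}}\le 2\norm{L_{h_0}^{-1}}$. This gives
\[
  \norm{s(u)-s(v)}_{W^{k+2,2}}\le C\norm{p_u-p_v}_{\mathcal P^{k+1}},
  \qquad C=2M\norm{L_{h_0}^{-1}}.
\]
The constant $C$ depends only on the inverse spectral gap and the coefficient neighbourhood, as asserted. Taking $v=u_0$ recovers \eqref{eq:quantitative-s-estimate}. In a fixed-determinant family, $\kappa$ and $b$ are constant, so $p_u=\cD''_u$ suffices.

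The main obstacle is making the $p$-dependence of $\Psi$ genuinely smooth on a Banach neighbourhood, rather than only along the parameter curve. This concerns in particular the functional-calculus maps $\kappa\mapsto b=r^{1/2}$ and the conjugations $\mathcal J$. I would handle these by the holomorphic functional calculus in the Banach algebra $W^{k+2,2}$, which is analytic on the open cone of positive invertible elements. A remaining point to check is that $\mathcal P^{k+1}$ is strong enough to control the second derivatives of $\kappa$ entering the curvature.
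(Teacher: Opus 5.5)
Your proof is correct and follows the paper's argument essentially verbatim. Both use the mean-value formula in the metric variable, with the averaged operator made uniformly invertible by a Neumann series near $L_{h_0}$, together with a bound on $D_p$ over a convex coefficient neighbourhood, giving $C=C_0C_1$. The only differences are cosmetic: you freeze the coefficient at $p_v$ rather than $p_u$ in the averaged operator, and you spell out the Banach-neighbourhood smoothness in $p$ (including the functional calculus for $b$ and the $W^{k+2,2}$ norm on the background) that the paper takes for granted.
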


\begin{proof}
Subtract the equations
\[
  \cF(p_u,s(u))=0,
  \qquad
  \cF(p_v,s(v))=0.
\]
Split their difference as
\begin{align*}
0={}&\cF(p_u,s(u))-\cF(p_u,s(v))\\
   &+\cF(p_u,s(v))-\cF(p_v,s(v)).
\end{align*}
The Banach-space mean-value formula writes the first line as
\[
 A_{u,v}(s(u)-s(v)),
 \qquad
 A_{u,v}=\int_0^1D_s\cF
   \bigl(p_u,s(v)+t(s(u)-s(v))\bigr)\,dt.
\]
After shrinking the neighbourhood, every integrand is a small
operator-norm perturbation of $L_{h_0}$.  The Neumann-series argument
therefore makes $A_{u,v}$ invertible with
$\|A_{u,v}^{-1}\|\le C_0$ uniformly in $u,v$.  Applying the mean-value
formula to the coefficient term gives
\[
 \|\cF(p_u,s(v))-\cF(p_v,s(v))\|_{W^{k,2}}
 \le C_1\|p_u-p_v\|_{\mathcal P^{k+1}},
\]
because $D_p\cF$ is bounded on the same closed local neighbourhood.
Solving the preceding identity with $A_{u,v}^{-1}$ yields the claimed
estimate with $C=C_0C_1$.  The determinant background depends smoothly
on the determinant coefficients by \cref{prop:smooth-det}; including it
in $p$ is what makes the coefficient estimate complete.
\end{proof}

The estimate also indicates how the stable theory can degenerate near
the polystable boundary: as a stabilizer develops, the smallest positive
eigenvalue of the Jacobi operator may collapse, and the constants in the
implicit-function argument deteriorate with that loss of spectral gap.

Let
\[
  \lambda_1(u)>0
\]
be the smallest positive eigenvalue of $L_{h_u}$ on trace-free Hermitian endomorphisms.  On a compact subset of the stable parameter locus, $\lambda_1$ is bounded below.  If a family approaches a polystable object with non-scalar automorphisms, one expects $\lambda_1(u)$ to tend to zero along suitable degenerations.  The norm of the Green operator
\[
  G_{h_u}=L_{h_u}^{-1}
\]
then grows like $\lambda_1(u)^{-1}$.  The first-variation formula developed later makes this mechanism explicit.

\begin{remark}
This spectral-gap viewpoint is one reason that a global smoothness theorem on the entire polystable locus should not be expected without stratification or a stack of choices.  Even if harmonic metrics exist fibrewise, the choice of inverse to the Jacobi operator is unstable when the endomorphism algebra jumps.
\end{remark}

\section{Smoothness of the non-Abelian Hodge transform along plots}\label{sec:stack-smoothness}

\subsection{Diffeological plots and the harmonic metric stack}

We recall only the portion of the diffeological formalism needed here.  A diffeological space is a set equipped with a specified class of maps from open subsets of Euclidean spaces, called plots, satisfying the usual covering, locality, and smooth reparametrization axioms.  A diffeological groupoid presents a stack over the site of smooth manifolds.  In \cite{AzamRayan2026}, the groupoids of smooth families of Higgs bundles and flat bundles were shown to define diffeological stacks.

For the present paper, the operative principle is concrete: a map from a smooth manifold $U$ into $\MdiffDol(X)$ is represented locally by a smooth family of Higgs bundles over $U\times X$, and a smooth map $f:V\to U$ acts by pullback.  Therefore a transformation is smooth if it takes every such family to a smooth family, compatibly with reparametrization and descent.

Let
\[
  \MdiffDol^{\st,0}(X)\subset\MdiffDol(X)
\]
denote the full substack of stable families satisfying the vanishing conditions used above.  We similarly write
\[
  \MdiffdR^{\irr}(X)
\]
for irreducible flat families.

The plotwise theorem becomes stack-theoretic only after the normalization and its descent behavior are incorporated.  We therefore introduce the corresponding harmonic metric groupoid before defining the transform itself.

A normalization must be included if one wants a genuine section rather than a torsor under positive scalar functions.

\begin{definition}[Normalized harmonic metric stack]\label{def:normalized-metric-stack}
For a smooth manifold $U$, an object of
\[
  \MetHar^{\det}(X)(U)
\]
is a tuple
\[
  (E,\cD'',q,h)
\]
where $(E,\cD'')$ is a smooth Higgs family, $q$ is a smooth Hermitian--Einstein metric of degree zero on $\det E$, and $h$ is a smooth harmonic metric on $E$ with $\det h=q$.  Morphisms are isomorphisms of Higgs families preserving the metrics and determinant data.
\end{definition}

There are forgetful maps
\[
  \MetHar^{\det}(X)
  \longrightarrow
  \MdiffDol(X)
\]
and
\[
  \MetHar^{\det}(X)
  \longrightarrow
  \MdiffdR(X),
\]
the latter sending $(E,\cD'',q,h)$ to $(E,\nabla_h)$.

\begin{proposition}[Global lifts over the stable locus]\label{prop:metric-stack-section}
The forgetful map
\[
  \MetHar^{\det}(X)\to\MdiffDol^{\st,0}(X)
\]
is essentially surjective on objects over every smooth parameter manifold: a stable family over $U\times X$ admits a lift over the same $U$, without refining the parameter cover.  Once a Hermitian--Einstein determinant metric is fixed, the harmonic metric on a fixed underlying stable Higgs family is unique.  The choice of determinant metric is not canonical, so this objectwise lifting statement does not assert an unnormalized functorial section of prestacks.

No assertion of trivial isotropy is intended: a stable Higgs object can still have scalar unitary automorphisms preserving its harmonic metric.  Thus the correct statement is uniqueness of the normalized metric on the fixed Higgs object, not uniqueness of arrows in the ambient groupoid.
\end{proposition}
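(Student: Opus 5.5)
The plan is to assemble \cref{prop:metric-stack-section} from results already proved, treating existence and uniqueness separately.

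\emph{Essential surjectivity.} Let $(E,\cD'')$ be an object of $\MdiffDol^{\st,0}(X)(U)$, so every slice is stable and satisfies \eqref{eq:NAH-condition}. The condition $\nu_1(E_u)=0$ says that $\det E$, with its induced Dolbeault operator, is a smooth family of degree-zero holomorphic line bundles. Choose any smooth background metric on $\det E$ and apply \cref{prop:smooth-det} to obtain a global smooth Hermitian--Einstein metric $q$ on $\det E$ over all of $U\times X$. Next apply \cref{thm:intro-smooth-metrics}, in the form proved by local implicit-function solutions followed by \cref{cor:metrics-glue}, to get a smooth harmonic metric $h$ on $E$ over all of $U\times X$ with $\det h=q$. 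Then $(E,\cD'',q,h)$ is an object of $\MetHar^{\det}(X)(U)$ whose image under the forgetful map is $(E,\cD'')$ itself, via the identity morphism.

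The point to stress is that no refinement of the parameter cover is needed. This holds because $q$ is constructed globally and \cref{cor:metrics-glue} glues the local solutions by equality rather than merely up to gauge. So the lift exists over the given $U$, not only locally.

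\emph{Uniqueness of the normalized metric.} Fix $q$, and suppose $h$ and $h'$ are two harmonic metrics on the same family with $\det h=\det h'=q$. For each $u$, \cref{prop:normalized-uniqueness} applied to the stable slice gives $h_u=h'_u$, so $h=h'$.

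\emph{Caveats.} Noncanonicity of $q$ follows from the uniqueness clause in \cref{prop:smooth-det}. The normalized solution is unique only relative to a chosen background $q^0$, and replacing $q$ by $e^{c(u)}q$ for any smooth function $c$ on $U$ gives another admissible normalization. Hence the assignment depends on a choice and does not define an unnormalized functorial section; I would record this as a remark rather than prove anything more. For isotropy, scalar multiplication by $e^{i\phi}$ with constant $\phi$ commutes with $\cD''$ and preserves $h$, $q$ and $\det h$, so nontrivial automorphisms exist and arrows in the groupoid are not unique.

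\emph{Main obstacle.} I expect the only delicate point to be confirming that the global determinant metric and the gluing really avoid passing to a cover. All the analytic work already lies in the cited results, so this is a matter of checking that their hypotheses (global $q$, stability of every slice) are met, not of new estimates.
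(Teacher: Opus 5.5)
Your proposal is correct and follows essentially the same route as the paper: a global $q$ from \cref{prop:smooth-det}, a global $q$-normalized harmonic metric from \cref{thm:intro-smooth-metrics} (glued via \cref{cor:metrics-glue}), fibrewise uniqueness from \cref{prop:normalized-uniqueness}, and scalar unitary automorphisms as the isotropy that survives. The paper also notes two things you omit: merely isomorphic Higgs objects are handled by transporting one metric along the isomorphism, and pullback compatibility comes from \cref{prop:pullback-solutions}; neither is needed for the statement as posed.
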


\begin{proof}
Let a plot of $\MdiffDol^{\st,0}(X)$ be represented by a stable Higgs family $(E,\cD'')$ over $U\times X$.  By \cref{prop:smooth-det}, $\det E$ admits a Hermitian--Einstein metric $q$ over all of $U\times X$, and \cref{thm:intro-smooth-metrics} supplies a global smooth harmonic metric $h$ with $\det h=q$.  Hence the original plot, without passing to a cover, lifts to the object $(E,\cD'',q,h)$ of $\MetHar^{\det}(X)$.

Suppose two lifts lie over the same Higgs family and determinant datum.  Fibrewise, \cref{prop:normalized-uniqueness} identifies their metrics.  Since both metrics vary smoothly, the fibrewise equality is equality of the smooth family data.  If the two underlying Higgs objects are merely isomorphic, pull one metric back along the isomorphism and apply the same argument.  Compatibility with pullback in the parameter follows from \cref{prop:pullback-solutions}.

This proves uniqueness of the normalized metric on a fixed Higgs object.  It does not eliminate automorphisms of that object: scalar unitary Higgs automorphisms still preserve the metric.  Thus the conclusion concerns global objectwise lifting, not triviality of isotropy or a canonical choice of normalization in the groupoid.
\end{proof}

\subsection{Construction and comparison of the stable transform}

Let $(E,\cD'')$ be a stable family over $U$.  Choose a global normalized smooth harmonic metric $h$ and define
\begin{equation}\label{eq:NAH-family-transform}
  \operatorname{NAH}_U(E,\cD'')=(E,\nabla_h),
  \qquad
  \nabla_h=\cD''+\cD'_h.
\end{equation}
There is an elementary point that removes a possible ambiguity from this definition.  If $h'=e^{c(u)}h$ where $c(u)$ is constant along the $X$-slice, then the relative Chern connection in the $X$-directions is unchanged and so is $\theta^{\dagger_h}$.  Hence
\begin{equation}\label{eq:scalar-independence-flat}
  \nabla_{h'}=\nabla_h.
\end{equation}
Thus determinant normalization is indispensable for producing a unique smooth metric section and for differentiating that section, but it is not needed to make the associated flat family canonical.  Different global normalizations therefore yield exactly the same flat connection, not merely centrally gauge-equivalent ones.

\begin{proposition}[Scalar independence]\label{prop:scalar-independence}
Let $h$ be harmonic on a stable Higgs family and let $c:U\to\R$ be smooth.  Then $e^{c\circ\operatorname{pr}_U}h$ is harmonic and determines the same relative flat connection on $U\times X$.
\end{proposition}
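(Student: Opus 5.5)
The plan is a direct local computation in a frame, showing that every ingredient of $\nabla_h=D_h+\theta+\theta^{\dagger_h}$ is unchanged when $h$ is replaced by $h'=e^{c\circ\operatorname{pr}_U}h$. Harmonicity of $h'$ will then follow at once, because it is defined as flatness of this connection (\cref{def:harmonic-metric}).

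Work in a local smooth frame of $E$ and let $H$ be the matrix of $h$, so that $h'$ has matrix $H'=e^{c(u)}H$. In \eqref{eq:chern-matrix} only $X$-directional derivatives occur. The scalar $e^{c(u)}$ is constant along each slice, so $\partial_{X/U}H'=e^{c(u)}\partial_{X/U}H$, and therefore
\[
  H'^{-1}\partial_{X/U}H'=H^{-1}\partial_{X/U}H .
\]
Next, the term $H'^{-1}(A^{0,1})^\dagger H'$ equals $H^{-1}(A^{0,1})^\dagger H$, since the scalar factor commutes with every matrix and cancels. Hence $\partial_{E,h'}=\partial_{E,h}$ as relative operators.

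For the Higgs adjoint, the defining identity $h'(\theta v,w)=h'(v,\theta^{\dagger_{h'}}w)$ is just $e^{c}$ times the identity for $h$. Pointwise the adjoint is unchanged by a positive scalar rescaling of the metric, so $\theta^{\dagger_{h'}}=\theta^{\dagger_h}$. Combining these gives $\cD'_{h'}=\cD'_h$, and hence $\nabla_{h'}=\nabla_h$ on $U\times X$, which is \eqref{eq:scalar-independence-flat}.

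Since the two connections are literally equal, the curvature of $\nabla_{h'}$ vanishes on each slice exactly when that of $\nabla_h$ does. So $h'$ is harmonic, and $\mu(\cD'',h')=0$ as well. Smoothness of $h'$ on $U\times X$ is immediate because $c$ is smooth. Both computations are tensorial, so the frame-wise identities glue to global statements.

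There is no real obstacle. The only point to state explicitly is that the relative operators differentiate solely in the $X$-direction; this is exactly what allows a $u$-dependent scalar to pass through $\partial_{X/U}$. For a genuinely $x$-dependent scalar this fails: the Chern connection would change by $\partial c$, and harmonicity would be lost.
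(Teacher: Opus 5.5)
Your proof is correct and follows the same route as the paper: because the factor $e^{c(u)}$ is constant along each $X$-slice, the relative Chern $(1,0)$-operator and the Higgs adjoint are unchanged, so $\cD'_{h'}=\cD'_h$ and $\nabla_{h'}=\nabla_h$ is flat. The only difference is cosmetic. You work in a smooth frame via \eqref{eq:chern-matrix} and check that the $(A^{0,1})^\dagger$ term is also unchanged, whereas the paper uses a local holomorphic frame, where only $h^{-1}\partial_{X/U}h$ appears.
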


\begin{proof}
Set $h'=e^{c\circ\operatorname{pr}_U}h$.  The relative differential acts only in the $X$-directions, so
\[
  \partial_{X/U}(c\circ\operatorname{pr}_U)=0,
  \qquad
  \ddbar_{X/U}(c\circ\operatorname{pr}_U)=0.
\]
In a local holomorphic frame, the $(1,0)$ part of the Chern connection is $h^{-1}\partial_{X/U}h$.  Hence
\[
  (h')^{-1}\partial_{X/U}h'
  =e^{-c}h^{-1}\partial_{X/U}(e^ch)
  =h^{-1}\partial_{X/U}h.
\]
Thus the relative Chern connection is unchanged.  The Higgs adjoint is also unchanged, because scalar rescaling multiplies both sides of its defining Hermitian identity by the same factor.  Therefore
\[
  \cD'_{h'}=\cD'_h,
  \qquad
  \nabla_{h'}=\cD''+\cD'_{h'}=\nabla_h.
\]
The latter connection is flat because $h$ is harmonic, so $h'$ is harmonic as well and determines exactly the same relative flat connection.
\end{proof}

\begin{theorem}[Smooth stable transform]\label{thm:smooth-stable-transform}
The construction \eqref{eq:NAH-family-transform} defines a smooth morphism of diffeological stacks
\begin{equation}\label{eq:smooth-stack-map}
  \operatorname{NAH}^{\st}:
  \MdiffDol^{\st,0}(X)
  \longrightarrow
  \MdiffdR^{\irr}(X).
\end{equation}
On point fibres, this is the classical non-Abelian Hodge correspondence from stable Higgs bundles to irreducible flat bundles.  On every plot it is represented by a jointly smooth family of flat connections.
\end{theorem}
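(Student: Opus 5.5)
The plan is to build the morphism plotwise and then check that it respects pullback and gauge isomorphisms, so that it descends to the stacks. Fix a plot of $\MdiffDol^{\st,0}(X)$, represented by a stable Higgs family $(E,\cD'')$ over $U\times X$. By \cref{prop:metric-stack-section}, this plot lifts over the same $U$ to an object $(E,\cD'',q,h)$ of $\MetHar^{\det}(X)$. We then set $\operatorname{NAH}_U(E,\cD'')=(E,\nabla_h)$.

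\textbf{Smoothness and flatness.} Joint smoothness of $\nabla_h$ follows from the formula $\nabla_h=D_h+\theta+\theta^{\dagger_h}$ and the local matrix expression \eqref{eq:chern-matrix}: these are algebraic-differential operations in the $X$-direction applied to the jointly smooth $H$ and to the coefficients of $\cD''$. Flatness on each slice is the harmonicity already proved in \cref{thm:intro-smooth-metrics}. Hence $(E,\nabla_h)$ is a smooth flat family, that is, an object of $\MdiffdR(X)(U)$.

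\textbf{Well-definedness.} The only choice made is $q$. By \cref{prop:smooth-det}, two Hermitian--Einstein determinant metrics differ by a positive function constant on each slice, say $e^{f(u)}$. The normalized harmonic metrics then differ by $e^{f(u)/r}$, by \cref{prop:normalized-uniqueness}. \Cref{prop:scalar-independence} then gives literally the same $\nabla_h$. So the object is independent of all choices.

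\textbf{Functoriality.} For a smooth $f:V\to U$, \cref{prop:pullback-solutions} shows that $f_X^*h$ is a normalized harmonic metric for the pulled-back family, with determinant $f_X^*q$. Uniqueness then gives $\operatorname{NAH}_V(f_X^*E)=f_X^*\operatorname{NAH}_U(E)$ on the nose.

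For an isomorphism of Higgs families $g:(E,\cD'')\to(E',\cD''')$, the metric $g\cdot h$ is harmonic and normalized for the transported determinant metric. Gauge equivariance of $\nabla_h$ then shows that $g$ intertwines $\nabla_h$ and $\nabla_{g\cdot h}$. The assignment on morphisms is therefore $g\mapsto g$, which is evidently compatible with composition. Since everything is determined by unique normalized objects and descent data are isomorphisms of Higgs families, the construction extends to the stackification. Concretely, locally defined transforms glue because on overlaps they agree by the same uniqueness.

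\textbf{Point fibres and irreducibility.} On a point $U=\mathrm{pt}$, this is Simpson's construction, hence the classical correspondence. It remains to show the image lands in $\MdiffdR^{\irr}(X)$, which I expect to be the main point needing care. The plan is the standard argument:
\begin{itemize}
\item a $\nabla_h$-invariant subbundle $F$ of a harmonic bundle has $h$-orthogonal complement also $\nabla_h$-invariant, since the monodromy is unitary for the harmonic-bundle pairing of $F$ with $F^\perp$ by the Bochner argument;
\item its second fundamental form vanishes, so $F$ is $\cD''$-invariant;
\item $F$ has degree zero, which contradicts stability unless $F$ is $0$ or $E$.
\end{itemize}
Alternatively, one can quote simplicity plus the fact that a flat bundle with a harmonic metric is semisimple, so it is irreducible exactly when $\End_{\nabla}$ is scalar. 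Since $\End_\nabla=\End_{\cD''}$ for harmonic bundles, this reduces to \cref{prop:stable-kernel}. This is fibrewise, so it holds for families.
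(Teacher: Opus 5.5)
Your proposal is correct and follows essentially the same route as the paper: a global normalized harmonic metric from \cref{thm:intro-smooth-metrics}, well-definedness via \cref{prop:scalar-independence}, and pullback compatibility via \cref{prop:pullback-solutions} and normalized uniqueness. You also use the fibrewise correspondence between stable Higgs bundles and irreducible flat bundles, and you give more detail on morphisms and irreducibility than the paper, which simply quotes the classical fact. One wording slip: in your first bullet, the monodromy of a harmonic flat bundle is not unitary in general; invariance of $F^\perp$ comes from the integration-by-parts vanishing of the second fundamental form (Corlette's semisimplicity), and your alternative argument via semisimplicity, $\End_\nabla=\End_{\cD''}$, and simplicity already settles irreducibility.
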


\begin{proof}
Let $U\to\MdiffDol^{\st,0}(X)$ be a plot, represented by $(E,\cD'')$ over $U\times X$.  By \cref{thm:intro-smooth-metrics}, it admits a global smooth normalized harmonic metric.  Formula \eqref{eq:NAH-family-transform} then gives a smooth family of connection one-forms because the relative Chern operator and the Higgs adjoint depend smoothly on the metric and family data.  The resulting connection is flat slicewise by the NAH numerical conditions and the Chern--Weil identity discussed in \cref{sec:numerical-conditions}.

Two normalized choices agree when the determinant normalization agrees; without matching normalizations, they differ by an $X$-constant positive scalar and hence give the same connection by \cref{prop:scalar-independence}.  Therefore the flat family is canonical even though the auxiliary metric normalization is not.

If $f:V\to U$ is smooth, \cref{prop:pullback-solutions} implies that the normalized metric for the pulled-back family is $f_X^*h$ after pulling back the determinant data.  Hence
\[
  \operatorname{NAH}_V(f_X^*E,f_X^*\cD'')
  =f_X^*\operatorname{NAH}_U(E,\cD'').
\]
The construction respects isomorphisms and descent, and stable harmonic bundles correspond fibrewise to irreducible local systems.  It therefore defines the stated smooth morphism of stacks.
\end{proof}

The stable transform agrees with the extension-generated equivalence of
our earlier paper, while retaining additional analytic information in
the smoothly varying metric.  That equivalence was constructed by
passing from the harmonic dg-prestack to finite iterated extension
completions and then stackifying.  On the stable locus neither operation
is needed to obtain the object: by \cref{thm:intro-smooth-metrics}, a
stable family over $U$ is already the Dolbeault image of a harmonic
family over the same $U$.

\begin{corollary}[Agreement of the two constructions]\label{cor:agreement}
The restriction of the equivalence
\[
  \MHDol(X)\simeq\MHdR(X)
\]
from \cite{AzamRayan2026} to $\MdiffDol^{\st,0}(X)$ is naturally isomorphic to the smooth transform \eqref{eq:smooth-stack-map}.
\end{corollary}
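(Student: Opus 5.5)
The plan is to compare the two functors plot by plot and then invoke descent, using the harmonic lift of \cref{prop:metric-stack-section} as the common intermediary.

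\textbf{Step 1: both functors factor through the harmonic prestack.} Recall from \cite{AzamRayan2026} how the equivalence $\MHDol(X)\simeq\MHdR(X)$ is built. On objects that lie in the essential image of the Dolbeault forgetful map from the harmonic dg-prestack, it is induced by the span
\[
\MdiffDol\leftarrow \text{Harm}\rightarrow\MdiffdR,
\]
sending a harmonic family $(E,\cD'',h)$ to $(E,\nabla_h)$. Extension completion and stackification then extend this span functorially. By \cref{thm:intro-smooth-metrics} and \cref{prop:metric-stack-section}, every stable plot $(E,\cD'')$ over $U$ lifts without refinement to $(E,\cD'',q,h)$. Hence its image under the earlier equivalence is computed, before any completion, by $(E,\nabla_h)$. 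This is exactly $\operatorname{NAH}_U(E,\cD'')$ in \eqref{eq:NAH-family-transform}. I would check that the unit maps into the completion and into the stackification are identities, or canonical isomorphisms, on objects already in the image. This holds because the completion is a full subcategory containing the image and stackification is fully faithful on the prestack level up to descent.

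\textbf{Step 2: naturality.} Let $\phi:(E,\cD'')\to(E',\cD''')$ be an isomorphism of stable families over $U$. The earlier equivalence acts on $\phi$ through the harmonic prestack. There $\phi$ must be replaced by a morphism of harmonic families, so I need $\phi^*h'=h$ (or to correct $\phi$). Set $\tilde h=\phi^*h'$. It is harmonic for $(E,\cD'')$, so by \cref{prop:normalized-uniqueness} applied fibrewise it differs from $h$ by an $X$-constant positive scalar $e^{c(u)}$. By \cref{prop:scalar-independence} this does not change $\nabla$, so $\phi$ intertwines $\nabla_h$ and $\nabla_{h'}$ as an underlying bundle map. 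Thus the comparison isomorphism can be taken to be the identity on underlying bundles, and naturality in $\phi$ is automatic. Compatibility with pullback along $f:V\to U$ follows from \cref{prop:pullback-solutions}, applied on both sides.

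\textbf{Step 3: descent.} Both functors are morphisms of stacks, and they agree naturally on the prestack of global plots. They therefore agree after stackification, by the universal property.

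\textbf{Main obstacle.} The hardest point is Step 1: pinning down precisely how the earlier equivalence acts on objects of the harmonic image, namely that no choice made during extension completion alters the output on stable objects. My first attempt would be to argue that a stable object is simple, so it admits only the trivial relative harmonic filtration of length one. Its completion data therefore reduces canonically to the harmonic lift. Any two harmonic lifts differ by scalars, and the scalar-independence argument above absorbs this ambiguity.
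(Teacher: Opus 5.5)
Your proposal is correct and follows essentially the paper's own argument: lift the stable plot to a harmonic object over the same $U$ by \cref{thm:intro-smooth-metrics}, note that the earlier equivalence then just applies the de Rham forgetful functor to that object and returns $(E,\nabla_h)$, absorb the normalization choices via \cref{prop:scalar-independence}, and pass through pullback and descent. Your Step~2 makes naturality in morphisms more explicit than the paper does. The worry in your final paragraph is not really needed, since the earlier equivalence is a functor and its value on an object of the harmonic image does not depend on a chosen extension presentation; if you do argue it, note that it is stability rather than mere simplicity that excludes nontrivial relative harmonic filtrations.
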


\begin{proof}
The equivalence of \cite{AzamRayan2026} is obtained from the harmonic mediator by applying the Dolbeault and de Rham forgetful functors, then closing under finite extensions and descent.  On the stable locus considered here, \cref{thm:intro-smooth-metrics} shows that a family is already globally represented by a smooth harmonic object over its original parameter manifold; neither extension completion nor stackification is needed to produce that object.

Choose such a harmonic representative $(E,\cD'',h)$ over $U$.  The present transform sends it to $(E,\nabla_h)$.  The earlier construction applies the de Rham forgetful functor to the same harmonic object and therefore gives the same flat family before stackification.  Even if different scalar normalizations are used, \cref{prop:scalar-independence} shows that the induced flat connections agree.  These identifications commute with pullback and descent, so they assemble into the claimed natural isomorphism.
\end{proof}

\subsection{The inverse transform, analytic plots, and naturality}

The inverse direction is most naturally stated for irreducible flat families.  The distinction matters: a flat object can have only scalar endomorphisms without being semisimple, whereas the harmonic-metric correspondence is formulated for reductive representations.  Irreducibility gives reductivity and corresponds to stability on the Higgs side.

\begin{theorem}[Smooth inverse on the irreducible locus]\label{thm:smooth-inverse}
Let $(E,\nabla_u)$ be a smooth family of irreducible flat bundles over $U\times X$.  Then $E$ admits a global smooth family of harmonic reductions.  After fixing a global harmonic metric on the determinant flat line, the reduction is unique and determines a smooth family of stable Higgs bundles.  The resulting construction is inverse to \eqref{eq:smooth-stack-map}.
\end{theorem}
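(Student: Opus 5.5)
The plan is to mirror the stable-Higgs argument of \cref{sec:smooth-metrics}, with Corlette's harmonic-map equation replacing the Hitchin--Simpson equation. Fix $u_0\in U$ and trivialize $E$ near $u_0$ by $U$-direction parallel transport, as in \cref{sec:analytic-setup}. The family then becomes a smooth map $u\mapsto\nabla_u$ into a Sobolev affine space of connections on $E_0$. For a metric $h$ write $\nabla_u=D_{h}+\Psi_{h}$, with $D_h$ unitary and $\Psi_h$ self-adjoint. Harmonicity is Corlette's equation
\[
  D_h^*\Psi_h=0 .
\]
First handle the determinant flat line. Harmonic metrics on a rank-one flat bundle solve a scalar Poisson equation, as in \cref{prop:smooth-det}. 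This gives a global smooth harmonic determinant metric $q$, unique after integral normalization, and we fix it.

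Next set $h_{u,s}=\kappa_ue^{\widetilde s_u}$, with $\det\kappa_u=q_u$, $\kappa_{u_0}=h_0$ the harmonic metric of the irreducible fibre, and $s$ trace free. Use the transport $b_u$ and the isometries $\mathcal J_{u,s}$ exactly as in \eqref{eq:transported-s}--\eqref{eq:stable-F}. This gives a smooth Banach map
\[
  \cF(u,s)=\mathcal J_{u,s}\bigl((D_{h_{u,s}}^*\Psi_{h_{u,s}})_0\bigr)
\]
from $V\times W^{k+2,2}(\Herm^0\End E_0)$ to $W^{k,2}(\Herm^0\End E_0)$; smoothness follows as in \cref{lem:F-smooth}. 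Its $s$-linearization at a harmonic point is the Jacobi operator of the harmonic-map energy. Varying the metric changes $\Psi$ by $\tfrac12 D_hs$ and $D$ by $\tfrac12[\Psi,s]$. The resulting operator has the form
\[
  \tfrac12\bigl(D_h^*D_hs-\Lambda\text{-type terms } [\Psi,[\Psi,s]]\bigr),
\]
and it equals $\tfrac12\nabla_h^*\nabla_h s$ restricted to Hermitian sections, up to sign conventions. It is self-adjoint and elliptic, and its energy identity gives
\[
  \langle L s,s\rangle=\tfrac12\bigl(\|D_hs\|^2+\|[\Psi,s]\|^2\bigr).
\]
Hence its kernel consists of $\nabla_h$-parallel Hermitian endomorphisms. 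Irreducibility makes these scalar, so $L$ is an isomorphism on the trace-free slice. The trace part of the equation is the determinant equation, already solved by $q$.

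Then the implicit-function theorem gives a local smooth Sobolev solution. \Cref{prop:parametric-bootstrap} and the argument of \cref{prop:joint-regularity} give joint smoothness. Uniqueness under $\det h=q$ comes from geodesic convexity of the energy on the space of metrics: the second derivative is $\|\nabla_h s\|^2$, so as in \cref{prop:normalized-uniqueness} we get $s=c\Id$, and then $c=0$. Local solutions therefore glue, as in \cref{cor:metrics-glue}, to a global smooth family of harmonic reductions.

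From a harmonic reduction, define $\cD''=D_h^{0,1}+\Psi_h^{1,0}$. By Corlette/Simpson, on the compact Kähler manifold $X$ this is integrable slicewise, and it is smooth in $u$ because it is algebraic in $h$ and $\nabla$. The fibres are stable because irreducible harmonic bundles correspond to stable Higgs bundles. The Higgs family satisfies the NAH numerical conditions, since the underlying bundle carries a flat connection. Reconstructing $\nabla_h=\cD''+\cD'_h$ returns $\nabla_u$. Conversely, \cref{thm:smooth-stable-transform} followed by this construction uses the same harmonic metric, by uniqueness on both sides. Pullback functoriality is checked as in \cref{prop:pullback-solutions}, which gives mutually inverse stack morphisms.

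The main obstacle is the linearization step. It requires the correct computation of the Jacobi operator of Corlette's equation, including the cross terms from the varying splitting $D_h+\Psi_h$. It also requires showing that at a harmonic point the operator coincides, on Hermitian sections, with a nonnegative Laplacian whose kernel is exactly the parallel endomorphisms. I would first do this via the Kähler-free identity
\[
  \langle Ls,s\rangle=\|\nabla_h s\|^2/2 ,
\]
obtained by differentiating the energy functional twice. That computation is the same one that underlies the convexity used in the uniqueness step.
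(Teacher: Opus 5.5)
Your proposal is correct and follows essentially the same route as the paper. Both arguments proceed through the same steps:
\begin{enumerate}[label=(\roman*)]
\item a global determinant Poisson solve;
\item a trace-free implicit-function argument for Corlette's equation, whose Jacobi operator has quadratic form $\|D_{h_0}s\|^2+\|[\Psi_{h_0},s]\|^2$ and whose kernel is removed by irreducibility together with the determinant normalization;
\item the elliptic bootstrap for joint smoothness;
\item gluing by normalized uniqueness;
\item decomposition into $(D_h^{0,1},\Psi_h^{1,0})$.
\end{enumerate}
Your one addition is that you prove the normalized uniqueness via geodesic convexity of the energy, with second derivative $\tfrac12\|\nabla_h s\|^2$; the paper invokes this uniqueness without proof.
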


\begin{proof}
We spell out the analytic reduction because this inverse statement is not a formal consequence of choosing pointwise inverses.  First choose a global smooth background metric on the determinant flat line.  The rank-one harmonic-reduction equation is a scalar Poisson equation on $X$: rescaling the background by $e^f$ changes the coclosedness defect of the self-adjoint part of the flat connection by a fixed scalar Laplacian applied to $f$.  Solving on mean-zero functions with the fixed Green operator, exactly as in \cref{prop:smooth-det}, gives a global smooth harmonic determinant metric $q$.

Now fix $u_0\in U$, let $\nabla_0$ be the irreducible flat connection there, and choose its Corlette harmonic metric $h_0$ normalized by $\det h_0=q_{u_0}$.  Relative to a metric $h$, write
\[
  \nabla_u=D_{h,u}+\Psi_{h,u},
\]
where $D_{h,u}$ is unitary and $\Psi_{h,u}$ is self-adjoint.  The harmonic-reduction equation is
\[
  D_{h,u}^{*}\Psi_{h,u}=0.
\]
After locally identifying the underlying smooth bundles, parametrizing metrics by trace-free Hermitian logarithms, and conjugating the moving Hermitian target to a fixed one exactly as in \cref{sec:analytic-setup}, this becomes a smooth nonlinear map between Sobolev spaces.

Its vertical derivative at $(\nabla_0,h_0)$ is the standard symmetric-space Jacobi operator.  With the usual normalization its quadratic form is
\[
  \ip{J_{h_0}s}{s}_{L^2}
  =\norm{D_{h_0}s}_{L^2}^2
   +\norm{[\Psi_{h_0},s]}_{L^2}^2.
\]
Hence the kernel consists precisely of self-adjoint endomorphisms parallel for $\nabla_0$.  Irreducibility and Schur's lemma reduce these to real scalars, and determinant normalization removes the scalar direction.  The trace-free operator is therefore self-adjoint elliptic and invertible.  The Banach implicit-function theorem gives a smooth Sobolev family of $q$-normalized harmonic reductions near $u_0$; the parameter-dependent elliptic bootstrap gives joint smoothness on the product.  Repeating around every point of $U$ and using normalized uniqueness makes the local reductions agree on overlaps, so they glue to a global smooth reduction on $E$.

Decomposing $\nabla_u$ with respect to the varying harmonic metric and the fixed complex structure of $X$ gives
\[
  \ddbar_{E,u}=D_{h,u}^{0,1},
  \qquad
  \theta_u=\Psi_{h,u}^{1,0}.
\]
The harmonic-bundle identities imply $\ddbar_{E,u}^2=0$,
$\ddbar_{E,u}\theta_u=0$, and $\theta_u\wedge\theta_u=0$.
Irreducibility corresponds to stability on this locus.  Starting with a
stable Higgs family, choose the determinant normalization compatibly in
the two constructions.  Normalized uniqueness then returns the same
harmonic metric, so decomposition of the flat connection recovers the
original $\ddbar_E$ and $\theta$.  If different determinant
normalizations are chosen, the two harmonic metrics differ by an
$X$-constant positive scalar, and the Chern operator and Higgs adjoint
are still unchanged by \cref{prop:scalar-independence}.  The reverse
composition similarly recovers the original flat connection.  Hence
the two stack morphisms are inverse independently of the auxiliary
normalization.
\end{proof}

\begin{remark}
The analytic construction is local in the parameter and globalizes while every fibre remains irreducible.  The theorem does not assert that a family crossing from irreducible to reducible monodromy admits a single smooth normalized harmonic reduction; the Jacobi kernel can enlarge exactly at such a stabilizer jump.
\end{remark}

Diffeological smoothness is the natural global language for arbitrary
smooth families, but the same fixed-space implicit-function argument
gives the following stronger local statement on analytic plots.

\begin{corollary}[Analytic plots]\label{cor:analytic-plots}
Suppose a plot $U\to\MdiffDol^{\st,0}(X)$ is represented in local coordinates by Higgs data depending real analytically on $u$.  Choose the determinant metric by the analytic Poisson construction of \cref{prop:smooth-det} and the determinant-preserving background by \eqref{eq:analytic-background-prescribed-det}.  Then the normalized harmonic metric and the resulting flat connection depend real analytically on $u$ as maps into each Sobolev completion.
\end{corollary}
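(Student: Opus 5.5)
The plan is to reduce the corollary to the analytic clauses already contained in \cref{prop:smooth-det} and \cref{thm:sobolev-family}, and then to track analyticity through the formula for the flat connection. The argument is local, so fix $u_0$ in a parameter chart. Choose a real-analytic local identification of the underlying bundle with $\pi_X^*E_0$, in which the Higgs operator $u\mapsto\cD''_u$ is a real-analytic map into the ambient affine space $\cA^{k+1}$. Since the curvature of a fixed smooth background metric on $\det E_0$ depends analytically on the analytic $(0,1)$-coefficient, the Poisson source in \eqref{eq:det-poisson} is an analytic map into $W^{k,2}_0(X)$. Applying the fixed bounded inverse $\Delta^{-1}$, \cref{prop:smooth-det} gives an analytic Hermitian--Einstein determinant metric $q_u$, rescaled by an analytic scalar so that $q_{u_0}=\det h_0$. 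The background \eqref{eq:analytic-background-prescribed-det} is then analytic, because the $r$th root of a positive function near a positive value is analytic in the Sobolev algebra.

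Next I check that the transport $b_u=r_u^{1/2}$ is analytic. Here $r_u=(q_u/\det h_0)^{1/r}\Id$ is scalar, so $b_u$ is itself a scalar analytic function and conjugation by it is trivial. Every ingredient of $\cF$ in \eqref{eq:stable-F} is therefore analytic in $(u,s)$, and \cref{lem:F-smooth} applies. The analytic Banach implicit-function theorem, with invertible vertical derivative $L_{h_0}$ from \cref{prop:stable-kernel}, makes $s(u)$ analytic into $W^{k+2,2}$. This is exactly the analytic clause of \cref{thm:sobolev-family}. The metric $h_u=\kappa_ue^{s(u)}$ is then analytic, because the exponential is analytic in the Banach algebra.

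To pass to every Sobolev order, I note that the same construction at any level $k'\ge k$ yields an analytic solution at that level. By local uniqueness, at least on a possibly smaller neighbourhood, it coincides with the level-$k$ solution. This is the main obstacle: the neighbourhood produced by the implicit-function theorem could a priori shrink as $k'$ grows. I would handle it as in \cref{prop:parametric-bootstrap}. The function $s(u)$ is smooth into every $W^{k',2}$ on a fixed neighbourhood by \cref{prop:joint-regularity}. Near any $u_1$ in that neighbourhood the level-$k'$ implicit-function theorem applies, since $L_{h_{u_1}}$ is still invertible by stability, and gives analyticity near $u_1$. Analyticity is a local property, so $s$ is analytic into every $W^{k',2}$ on the whole original neighbourhood.

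Finally, $\nabla_{h_u}=\cD''_u+\partial_{E,h_u}+\theta_u^{\dagger_{h_u}}$ is built by \eqref{eq:chern-matrix} from $H_u$, $H_u^{-1}$, $\partial H_u$, $A^{0,1}_u$ and $\theta_u$ using products, inversion and one derivative. It is therefore analytic into $W^{k'-1,2}$ for every $k'$, and hence into every Sobolev completion. By \cref{prop:scalar-independence} the result does not depend on the analytic determinant normalization chosen.
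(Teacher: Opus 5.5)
Your proof is correct. For the determinant Poisson step, the analytic background \eqref{eq:analytic-background-prescribed-det}, the analytic implicit-function step at level $k$, and the passage to the flat connection, it is the paper's argument. Your remark that $\kappa_u$ is a scalar multiple of $h_0$, so that $b_u$ is scalar and the transport $s\mapsto b_u^{-1}sb_u$ is trivial, is a small simplification the paper does not record.

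The genuine difference is the upgrade to every Sobolev order. The paper differentiates the elliptic equation in $u$ and invokes the uniform estimates of \cref{prop:joint-regularity}. That mechanism controls each finite-order parameter derivative, so on its own it yields smoothness; extracting analyticity from it would need Cauchy-type factorial bounds, which are not written out.

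You instead use the already-proved joint smoothness, so that $u\mapsto s(u)$ is continuous into $W^{k'+2,2}$ on one fixed neighbourhood. You then re-apply the analytic implicit-function theorem at level $k'$ around each $u_1$, and conclude by local uniqueness and the locality of analyticity. This gets analyticity directly from the analytic implicit-function theorem and avoids any shrinking of neighbourhoods as $k'$ grows. It is essentially the mechanism of the analytic clause of \cref{prop:parametric-bootstrap}. The price is that you need the smooth theory first, and invertibility of the linearization at every point rather than only at $u_0$.

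One detail should be stated precisely. At $(u_1,s(u_1))$ the vertical derivative is not literally $L_{h_{u_1}}$. It is $L_{h_{u_1}}$ precomposed with the derivative of the exponential chart, and then followed by the conjugation $\mathcal J_{u_1,s(u_1)}$. That chart derivative is the pointwise invertible zeroth-order operator $(1-e^{-\operatorname{ad}s(u_1)})/\operatorname{ad}s(u_1)$, which carries trace-free $h_0$-Hermitian sections to trace-free $h_{u_1}$-Hermitian ones. Since $s(u_1)$ is smooth, these extra factors are isomorphisms at every Sobolev level, so invertibility does reduce to stability as you claim.
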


\begin{proof}
Use the fixed-domain and fixed-target identifications of \cref{sec:smooth-metrics}.  With the analytic determinant metric supplied by \cref{prop:smooth-det} and the determinant-preserving analytic background in \eqref{eq:analytic-background-prescribed-det}, every coefficient entering the nonlinear moment map depends real analytically on the parameter.  The operations on the logarithmic metric variable---exponentiation, inversion, multiplication, taking the Chern operator, and trace-free projection---are analytic between the chosen Sobolev Banach spaces.  Hence the map
\[
  (u,s)\longmapsto\cF(u,s)
\]
is real analytic near the base solution.

Its vertical derivative is the stable Jacobi operator, which is invertible on the normalized trace-free slice by \cref{prop:stable-kernel}.  The analytic Banach implicit-function theorem gives a real-analytic solution map $u\mapsto s(u)$ into $W^{k+2,2}$.  To improve the Sobolev order, differentiate the elliptic equation with respect to the parameter.  Each derivative satisfies a linear elliptic equation whose right-hand side is analytic in lower derivatives already controlled.  The uniform estimates used in \cref{prop:joint-regularity} therefore give analytic dependence into $W^{j,2}$ for every $j$.

Finally, the relative Chern connection and Higgs adjoint are obtained from the analytic family data and the analytic metric by algebraic operations and one $X$-derivative.  Thus the associated flat connection is real analytic in the parameter as a map into every Sobolev completion in which it is defined.
\end{proof}

This is compatible with the familiar real-analytic nature of the classical correspondence on smooth finite-dimensional moduli loci.  The point here is that no finite-dimensional moduli chart is assumed in advance: the analytic dependence is proved directly for the given family.

Finally, uniqueness makes the construction compatible with restriction, products of parameter spaces, and smooth pullback.  These functorialities are what allow the local analytic theorem to define a morphism of family stacks.

Suppose $U=U_1\times U_2$ and the family is pulled back from $U_1$.  Uniqueness implies that the harmonic metric is also pulled back from $U_1$.  More generally, restriction to an embedded submanifold, a curve in parameter space, or a coordinate slice commutes with the transform.  Thus all directional derivatives computed in the next section are intrinsic to the plot and are independent of the auxiliary extension of a tangent vector to a parameter field.

\begin{proposition}[Curve detection]\label{prop:curve-detection}
Let $u_0\in U$ and $v\in T_{u_0}U$.  If $\gamma_1,\gamma_2:(-\eps,\eps)\to U$ are smooth curves with
\[
  \gamma_i(0)=u_0,
  \qquad
  \dot\gamma_i(0)=v,
\]
then the first derivatives at $0$ of the normalized harmonic metrics pulled back along $\gamma_1$ and $\gamma_2$ coincide after the canonical identification of the central fibre.
\end{proposition}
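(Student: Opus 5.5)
The plan is to reduce the statement to the chain rule for a smooth map defined on all of $U$. By \cref{thm:intro-smooth-metrics}, the family over $U$ carries a global smooth normalized harmonic metric $h$ on $E\to U\times X$, with $\det h=q$ for a fixed global Hermitian--Einstein determinant metric $q$. By \cref{prop:pullback-solutions}, the pullback of $h$ along $\gamma_i\times\Id_X$ is harmonic for the pulled-back family and has determinant $\gamma_i^*q$. By \cref{prop:normalized-uniqueness}, applied slicewise, it is therefore \emph{the} normalized harmonic metric of the pulled-back family, provided the determinant data along each curve is taken to be the pullback of $q$. This convention is implicit in the statement: pulled-back determinant data, or equivalently the integral normalization with a pulled-back background $\kappa$. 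So $h^{\gamma_i}_t=h_{\gamma_i(t)}$ for both curves.

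Next I localize. Near $u_0$, I fix the smooth identification $E|_{B\times X}\cong\pi_X^*E_0$ from \cref{sec:analytic-setup}, with $E_0=E_{u_0}$. In this trivialization $u\mapsto h_u$ is a smooth map $B\to W^{j,2}(X,\Herm(E_0))$ for every $j$. This follows from \cref{prop:joint-regularity}, or directly from the joint smoothness of $h$ on $B\times X$. The canonical identification of the central fibre is the identity on $E_0$, since both curves pass through $u_0$ and the trivialization restricts to the identity there. The chain rule in Banach spaces then gives
\[
\frac{d}{dt}\Big|_{t=0} h_{\gamma_i(t)} = Dh_{u_0}(\dot\gamma_i(0)) = Dh_{u_0}(v),
\]
which is independent of $i$. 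Equivalently, $\partial_t h(\gamma_i(t),x)|_{t=0}=d_uh(u_0,x)\cdot v$ pointwise on $X$, using only joint $C^1$ regularity.

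The only point needing care is independence of the trivialization. A different local identification differs by a smooth family of gauge transformations $g_u$ with $g_{u_0}=\Id$. Under it the metric derivative changes by a term linear in $\dot g$, namely $-(\dot g^\dagger h_0+h_0\dot g)$ with $\dot g=Dg_{u_0}(v)$. This term again depends only on $v$, so the two curves still agree, now in the new identification. I would phrase the conclusion intrinsically: the $1$-jets at $t=0$ of the sections $t\mapsto h_{\gamma_i(t)}$ of the pulled-back bundle of Hermitian forms coincide. This is immediate because both are pullbacks of one smooth section on $U\times X$ along curves with the same $1$-jet.

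I expect the main (mild) obstacle to be the normalization bookkeeping, not the analysis. If someone were to normalize along each curve with an independently chosen Hermitian--Einstein determinant metric, the derivatives could differ by the scalar term $\tau/r$. I would either state explicitly that the determinant datum is pulled back from $U$, or note that otherwise only the trace-free part $s_0$ is curve-independent. The trace-free part is what the later first-variation formula \eqref{eq:intro-first-variation} concerns, and by \cref{prop:scalar-independence} the flat connection is unaffected in either case.
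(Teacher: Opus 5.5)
Your proposal is correct and takes essentially the paper's route: in a fixed local trivialization the normalized metric is a smooth Banach-valued map, the chain rule gives $dH_{u_0}(v)$ for both curves, and a change of trivialization adds only terms depending on $g(u_0)$ and $dg_{u_0}(v)$. Your extra bookkeeping is a sound refinement of what the paper leaves implicit. This includes identifying the pulled-back metric via \cref{prop:pullback-solutions} and \cref{prop:normalized-uniqueness}, and noting that an independently chosen determinant normalization along each curve would change only an $X$-constant multiple of $\Id_E$.
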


\begin{proof}
After the local fixed-bundle identification used in the parametric theorem, the normalized harmonic metric is a smooth Banach-valued map
\[
  H:U\longrightarrow W^{k+2,2}(\Herm^+\End E_0)
\]
near $u_0$.  Pullback along $\gamma_i$ gives $H\circ\gamma_i$.  The Banach-space chain rule yields
\[
  \left.\frac{d}{dt}\right|_0H(\gamma_i(t))
  =dH_{u_0}(\dot\gamma_i(0)).
\]
Both curves have tangent vector $v$, so the two derivatives are equal.  A change of local trivialization transports both derivatives through the same identification of the central fibre; consequently the equality is intrinsic after that canonical identification.

Indeed, if the second trivialization differs by a smooth gauge
$g:U\to\Ggauge^{\C}$, then its metric-valued map is obtained from $H$
by the smooth gauge action.  Differentiating that action adds terms
depending only on $g(u_0)$ and $dg_{u_0}(v)$, not on the second jet of
the chosen curve.  The same terms therefore occur for $\gamma_1$ and
$\gamma_2$, so equality of the derivatives is preserved under the
change of trivialization.
\end{proof}

Consequently the differential of the non-Abelian Hodge transform along a diffeological plot is well defined once the gauge-local representative is interpreted as above.

\section{First variation of the harmonic metric}\label{sec:first-variation}

\subsection{Differentiating the harmonic equation and the Green operator}

Let
\[
  t\longmapsto (\cD''_t,h_t)
\]
be a smooth one-parameter family through a stable harmonic bundle $(\cD'',h)$ at $t=0$.  Choose a smooth identification of the underlying bundles and set
\begin{equation}\label{eq:eta-s-def}
  \eta=\dot{\cD}''_0,
  \qquad
  \dot h_0(v,w)=h(s_{\mathrm{tot}}v,w),
\end{equation}
where $s_{\mathrm{tot}}=s_{\mathrm{tot}}^{\dagger_h}$.  Unless the determinant data are fixed, $s_{\mathrm{tot}}$ need not be trace-free.  We therefore decompose
\begin{equation}\label{eq:metric-trace-split}
  s_{\mathrm{tot}}=s_0+\frac{\tau}{r}\Id_E,
  \qquad
  \tr(s_0)=0,
  \qquad
  \tau=\tr(s_{\mathrm{tot}}).
\end{equation}
If $q_t=\det h_t$, then
\begin{equation}\label{eq:tau-det}
  \tau=\left.\frac{d}{dt}\right|_{0}
  \log\frac{q_t}{q_0}.
\end{equation}
Thus $\tau$ is determined by the differentiated rank-one determinant equation of \cref{prop:smooth-det}.

Differentiating the trace-free moment-map equation and using \cref{prop:full-linearization} gives
\begin{equation}\label{eq:first-variation-PDE}
  L_hs_0=-\cS_h(\eta).
\end{equation}
The scalar determinant variation does not enter this equation because multiplication of the metric by an arbitrary positive scalar function on $X$ changes the Chern curvature only by a central term and leaves the Higgs adjoint unchanged; the trace-free projection removes that central variation.

\begin{theorem}[First variation: fixed determinant]\label{thm:first-variation-metric}
At a stable harmonic bundle, in a fixed-determinant direction the logarithmic metric variation is trace-free and is given by
\begin{equation}\label{eq:first-variation-Green}
  s_0=-G_h\cS_h(\eta),
  \qquad
  G_h=L_h^{-1}:
  W^{k,2}(\Herm^0\End E)\to W^{k+2,2}(\Herm^0\End E).
\end{equation}
For a tangent vector arising from a smooth plot, the right side depends only on that tangent vector and not on the chosen representing curve.
\end{theorem}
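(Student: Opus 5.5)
The plan is to differentiate the fixed-Banach-space equation $\cF(u,s(u))=0$ of \cref{thm:sobolev-family} along the given curve and then invert the Jacobi operator using \cref{prop:stable-kernel}. Work in the local identification of \cref{sec:analytic-setup}. Fix the determinant family, so that $q_t=q_0$ and \eqref{eq:tau-det} gives $\tau=0$. Then $s_{\mathrm{tot}}=s_0$ is trace-free, which settles the first assertion.

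For the differentiation, write the normalized metric family as $h_t=h e^{\sigma(t)}$ with $\sigma(t)\in W^{k+2,2}(\Herm^0_h\End E)$ and $\sigma(0)=0$. By \cref{thm:sobolev-family} (or its smooth version on plots), $t\mapsto\sigma(t)$ is $C^1$ into $W^{k+2,2}$, and $\dot\sigma(0)=s_0$. The universal map $\Phi$ of \eqref{eq:universal-moment-map} is smooth on the ambient space by \cref{lem:F-smooth}. Since $\Phi(\cD''_t,\sigma(t))=0$ for all $t$, the chain rule together with \cref{prop:full-linearization} gives
\[
  0=D\Phi_{(\cD'',0)}(\eta,s_0)=\cS_h(\eta)+L_hs_0 .
\]
This is \eqref{eq:first-variation-PDE}. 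Two points need checking here. The conjugating identification $\mathcal I_s$ contributes nothing because the base point is a zero. The background $\kappa_u$ and the transport $b_u$ are constant in $t$ to first order in trace-free directions: in the fixed-determinant setting one may take $\kappa_u=h_0$ and $b_u=\Id$, so no extra term appears. If a moving background were used instead, I would absorb its derivative into $s_0$, since the metric itself is intrinsic.

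Next, \cref{prop:stable-kernel} says $L_h\colon W^{k+2,2}(\Herm^0)\to W^{k,2}(\Herm^0)$ is an isomorphism. Because $\cS_h(\eta)$ is trace-free and Hermitian (as noted after \cref{def:source-operator}), applying $G_h=L_h^{-1}$ yields $s_0=-G_h\cS_h(\eta)$. Regularity is automatic: $\eta$ is smooth, so $\cS_h(\eta)\in W^{k,2}$ for every $k$, and the solutions obtained at different Sobolev levels agree by injectivity.

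For independence of the representing curve, note that $\eta=d(\cD''_\bullet)_{u_0}(v)$ depends only on $v$, because the plot gives a smooth map into the affine coefficient space and the chain rule applies. Hence the right side $-G_h\cS_h(\eta)$ depends only on $v$. Alternatively, \cref{prop:curve-detection} gives the same conclusion directly for the left side.

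The step I expect to be the main obstacle is the bookkeeping of gauge and trivialization. Changing the local smooth identification alters $\eta$ by $\cD''\xi$ and the metric derivative by $s_\xi$. \cref{lem:source-gauge} shows that these changes are compatible, so the formula is covariant. What must be stated carefully is that $s_0$ is meaningful only after the central fibre has been canonically identified, exactly as in \cref{prop:curve-detection}.
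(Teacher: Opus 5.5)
Your proof is correct and follows the paper's argument: you differentiate the trace-free moment-map equation using \cref{prop:full-linearization} to get $L_hs_0=-\cS_h(\eta)$, invert $L_h$ on the trace-free slice by \cref{prop:stable-kernel}, and get curve-independence from the chain rule. Two small differences: you prove independence directly for the right-hand side (since $\eta$ depends only on $v$, and $G_h,\cS_h$ are fixed at the base point), whereas the paper proves it for the metric derivative via \cref{prop:curve-detection}; and your explicit check that one may take $\kappa_u=h_0$, $b_u=\Id$ in the fixed-determinant setting fills in a point the paper leaves implicit.
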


\begin{proof}
Let $t\mapsto(\cD''_t,h_t)$ be a one-parameter family representing the chosen plot direction, with $h_0=h$, and let $s_0$ be the logarithmic derivative of $h_t$.  Fixed determinant gives $\tr(s_0)=0$.  Differentiating the trace-free moment-map equation at $t=0$ separates the variation into the metric part and the variation of the Higgs data.  By \cref{prop:full-linearization}, this is exactly
\[
  L_hs_0+\cS_h(\eta)=0,
\]
which is \eqref{eq:first-variation-PDE}.

On trace-free Hermitian endomorphisms, $L_h$ is invertible by \cref{prop:stable-kernel}.  Applying its inverse therefore gives
\[
  s_0=-L_h^{-1}\cS_h(\eta)=-G_h\cS_h(\eta).
\]
The source lies in the trace-free Hermitian target because it is the derivative of the trace-free moment map.

For a tangent vector arising from a smooth plot, the normalized metric section is a smooth map from the parameter manifold into the Sobolev space of metrics.  Its derivative depends only on the tangent vector, by \cref{prop:curve-detection}.  Since the Green-operator expression is equal to that derivative, it is independent of the particular tangent curve used to represent the vector.
\end{proof}

\begin{proposition}[First variation with varying determinant]\label{prop:first-variation-general-GL}
For a general $\mathrm{GL}_r$ family, the total logarithmic metric variation is
\begin{equation}\label{eq:first-variation-general-GL}
  s_{\mathrm{tot}}
  =-G_h\cS_h(\eta)+\frac{\tau}{r}\Id_E,
\end{equation}
where $\tau$ is the unique normalized solution obtained by differentiating the scalar determinant Poisson equation \eqref{eq:det-poisson}.  Equivalently, if $q_t=q^0_t e^{f_t}$ is the normalized Hermitian--Einstein determinant metric, then $\tau=\dot f_0$ plus the explicitly known logarithmic variation of $q^0_t$.
\end{proposition}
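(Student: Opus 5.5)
The plan is to reduce the general case to the fixed-determinant case by splitting the logarithmic variation into trace and trace-free parts, as in \eqref{eq:metric-trace-split}.

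\textbf{Trace-free part.} Take the normalized family $h_t$ produced by \cref{thm:intro-smooth-metrics}, with $\det h_t=q_t$ the global Hermitian--Einstein determinant metric of \cref{prop:smooth-det}. Write $h_t=h\,e^{\sigma_t}$ in a local smooth gauge, so that $\dot\sigma_0=s_{\mathrm{tot}}$. Then factor
\[
  h_t=\bigl(e^{\tau_t/r}h\bigr)e^{\sigma_t^0}
\]
to first order, where $\tau_t$ is the scalar function $\log(q_t/q_0)$ and $\sigma^0_t$ is trace free. The key observation is the one made after \eqref{eq:first-variation-PDE}. Multiplying $h$ by a positive function $e^{\tau/r}$ on $X$ has two effects:
\begin{itemize}
\item it changes $F_{D_h}$ by the central term $\frac1r\ddbar\partial\tau\,\Id$;
\item it leaves $\theta^{\dagger_h}$ unchanged.
\end{itemize}
Hence $\mu_0$ is unaffected at first order. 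I would verify this with \cref{lem:variation-Dprime} applied to $s=\frac{\tau}{r}\Id$: the derivative $\cD'_h(\frac{\tau}{r}\Id)=\frac1r\partial\tau\,\Id$ is central, so $\sqrt{-1}\Lambda\cD''$ of it is central and is killed by $\operatorname{pr}_0$. Differentiating $\mu_0(\cD''_t,h_t)=0$ via \cref{prop:full-linearization} then gives $L_h s_0=-\cS_h(\eta)$. \Cref{prop:stable-kernel} inverts this to $s_0=-G_h\cS_h(\eta)$.

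\textbf{Trace part.} Since $\tr(s_{\mathrm{tot}})=\left.\frac{d}{dt}\right|_0\log\det h_t$, we have $\tau=\left.\frac{d}{dt}\right|_0\log(q_t/q_0)$, which is \eqref{eq:tau-det}. Writing $q_t=q^0_te^{f_t}$ splits this as
\[
  \tau=\dot f_0+\left.\frac{d}{dt}\right|_0\log\frac{q^0_t}{q^0_0}.
\]
The second term is explicitly known from the chosen background. To show that $\dot f_0$ is the unique normalized solution of the differentiated Poisson equation, I would use that, on the chart, $u\mapsto f_u=\Delta^{-1}(-2\sqrt{-1}\Lambda F_{q^0_u})$ is the composition of a smooth map into $W^{k,2}_0$ with a fixed bounded inverse. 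Differentiating under $\Delta^{-1}$ gives
\[
  \dot f_0=\Delta^{-1}\Bigl(-2\sqrt{-1}\Lambda\,\left.\tfrac{d}{dt}\right|_0F_{q^0_t}\Bigr),
\]
and the mean-zero condition supplies uniqueness. Adding the two pieces yields \eqref{eq:first-variation-general-GL}.

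\textbf{Main obstacle.} The delicate step is checking that the scalar direction decouples, so that no cross term $\cS_h$-type contribution comes from $\tau$. Here I must use that $\tau$ is a function on $X$, not a constant, and that the conjugation $\mathcal J$ contributes nothing at a zero of $\mu_0$. Curve-independence then follows as in \cref{thm:first-variation-metric} by \cref{prop:curve-detection}, since $q_t$ is also a smooth section pulled back along the plot by \cref{prop:pullback-solutions}.
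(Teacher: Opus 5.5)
Your proposal is correct and follows essentially the same route as the paper. You split $s_{\mathrm{tot}}=s_0+\frac{\tau}{r}\Id_E$, note that the scalar direction is removed by the trace-free projection so that $L_hs_0=-\cS_h(\eta)$ and hence $s_0=-G_h\cS_h(\eta)$, and then obtain $\tau$ from \eqref{eq:tau-det} by differentiating the determinant Poisson equation, with uniqueness coming from invertibility of the Laplacian on mean-zero functions. Your explicit check via \cref{lem:variation-Dprime} that $\cD'_h\bigl(\frac{\tau}{r}\Id_E\bigr)=\frac1r\partial\tau\,\Id_E$ contributes only a central term is a helpful detail that the paper asserts without computing, rather than a change of argument.
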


\begin{proof}
Decompose the logarithmic metric variation into its trace-free and central parts:
\[
  s_{\mathrm{tot}}=s_0+\frac{\tau}{r}\Id_E,
  \qquad \tr(s_0)=0.
\]
After trace-free projection, the central variation drops out of the moment-map equation.  Hence \cref{thm:first-variation-metric} gives
\[
  s_0=-G_h\cS_h(\eta).
\]

It remains to determine $\tau$.  Taking determinants of the family relation defining $s_{\mathrm{tot}}$ gives \eqref{eq:tau-det}.  Write the normalized determinant metric as $q_t=q_t^0e^{f_t}$.  The scalar function $f_t$ solves \eqref{eq:det-poisson} together with the chosen normalization.  Differentiating that equation gives a linear scalar Poisson equation for $\dot f_0$.  On the normalized mean-zero subspace the scalar Laplacian is invertible, so $\dot f_0$ is uniquely determined; the remaining contribution to $\tau$ is the explicitly known logarithmic variation of $q_t^0$.

Combining the uniquely determined trace-free and central components yields
\[
  s_{\mathrm{tot}}
  =-G_h\cS_h(\eta)+\frac{\tau}{r}\Id_E,
\]
as claimed.
\end{proof}

On a stable family, $L_{h_u}$ is invertible on the normalized
trace-free subspace, so the linearized metric correction can be written
with its Green operator.  Parameter-dependent elliptic theory shows
that this inverse itself varies smoothly.

\begin{proposition}[Smooth Green operator]\label{prop:smooth-green}
Let $(E,\cD'',h)$ be a smooth family of normalized stable harmonic bundles over $U$.  Then
\[
  u\longmapsto G_{h_u}
\]
is smooth as a map into
\[
  \mathcal L\bigl(W^{k,2},W^{k+2,2}\bigr)
\]
locally on $U$.  It is real analytic for real-analytic family data.
\end{proposition}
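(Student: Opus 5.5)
The plan is to reduce the claim to smoothness of the operator family $u\mapsto L_{h_u}$ between fixed Banach spaces, and then use that inversion is real analytic on the open set of isomorphisms. First fix $u_0$ and work in the local trivialization $E|_{V\times X}\cong\pi_X^*E_0$ of \cref{sec:analytic-setup}. The spaces $\Herm^0_{h_u}(\End E_0)$ move with $u$, so I transport them to the fixed spaces $\cT^0_k$ and $\cS^0_{k+2}$ by the isometries $c_u=c_{u,s(u)}:(E_0,h_u)\to(E_0,h_0)$ of \cref{sec:smooth-metrics}. I then consider
\[
  \widehat L_u:=\mathcal J_{u,s(u)}\circ L_{h_u}\circ\mathcal J_{u,s(u)}^{-1}
  :W^{k+2,2}(\Herm^0_{h_0}\End E_0)\longrightarrow W^{k,2}(\Herm^0_{h_0}\End E_0).
\]
The statement about $G_{h_u}$ is to be read through this identification, since conjugation by the smooth families $c_u^{\pm1}$ is a smooth family of bounded operators at every Sobolev level. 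By \cref{prop:joint-regularity}, $c_u$ is jointly smooth, and by \cref{cor:analytic-plots} it is real analytic in every Sobolev topology for analytic data.

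Next I show that $u\mapsto\widehat L_u$ is smooth, respectively analytic, into $\mathcal L(W^{k+2,2},W^{k,2})$. There are two ways to see this. One is to observe that $\widehat L_u=D_s\cF(u,s(u))$. This holds because $\mathcal F(u,s(u))=0$, so the derivative of the conjugating factor drops out exactly as in \cref{prop:full-linearization}. Since $\cF$ is $C^\infty$ (analytic) by \cref{lem:F-smooth}, its vertical derivative is $C^\infty$ (analytic) into the operator space, and composing with the smooth (analytic) map $u\mapsto(u,s(u))$ from \cref{thm:sobolev-family} gives the claim. The other way is to write $L_{h_u}s=\sqrt{-1}\Lambda\cD''_u\cD'_{h_u}s$ explicitly. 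Its coefficients are built from $\ddbar_{E,u}$, $\theta_u$ and $h_u$ using at most one derivative of coefficients lying in $W^{k+1,2}$. The Banach-algebra property for $k>n+1$ then makes the coefficient-to-operator map bounded multilinear, hence analytic. I would use the first route and keep the second as a check on mapping properties.

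Invertibility comes from \cref{prop:stable-kernel}: each $\widehat L_u$ is an isomorphism, because every fibre is stable and conjugation preserves trace-free Hermitian endomorphisms. Inversion $\mathrm{inv}:\mathrm{Iso}(W^{k+2,2},W^{k,2})\to\mathcal L(W^{k,2},W^{k+2,2})$ is real analytic on this open set by the Neumann series \eqref{eq:inverse-neumann}. So $u\mapsto\widehat L_u^{-1}$ is smooth, respectively analytic, and so is $G_{h_u}=\mathcal J^{-1}\widehat L_u^{-1}\mathcal J$. This also gives local uniform bounds on $\|G_{h_u}\|$ on relatively compact neighbourhoods.

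The main obstacle I expect is the bookkeeping of the moving target space. One must check that the identification $D_s\cF(u,s(u))=\widehat L_u$ holds at every $u$, not only at $u_0$. That requires a zero of the full moment map, which gives trace-free part zero in the fixed-determinant chart, so the commutator term vanishes. One must also check that the change of base point in the exponential chart, from $h_u$ to $h_0e^{\cdot}$, contributes an invertible smooth factor. Concretely, $D_s\cF(u,s(u))$ equals $\widehat L_u$ composed with the derivative of $\sigma\mapsto$ (log of $h_{u,s(u)+\sigma}$ relative to $h_u$). That derivative is an isomorphism depending analytically on $s(u)$, so it does not affect the conclusion. Finally, the result is independent of the trivialization, since a change of trivialization conjugates everything by a smooth gauge family.
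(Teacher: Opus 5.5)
Your argument is correct. Its skeleton is the paper's: show that $u\mapsto L_{h_u}$ is smooth (analytic) in $\mathcal L(W^{k+2,2},W^{k,2})$, get invertibility at each $u$ from \cref{prop:stable-kernel}, and conclude by analyticity of inversion on the open set of isomorphisms.

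The difference is in how you get smoothness of the operator family.

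\textbf{The paper's route.} It uses what you call your second route. The coefficients of $L_{h_u}$ are built from the Higgs data and $h_u$ with finitely many $X$-derivatives. Joint smoothness of $h$ together with \cref{lem:operator-norm-smoothness} then gives operator-norm smoothness directly, with no chart bookkeeping.

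\textbf{Your primary route.} You read the operator off $D_s\cF(u,s(u))$ along the implicit-function branch. This reuses the mapping properties already proved in \cref{lem:F-smooth} and avoids writing out coefficients. As you notice in your last paragraph, however, it only gives $D_s\cF(u,s(u))=\mathcal J_{u,s(u)}\circ L_{h_u}\circ T_u$. Here $T_u$ is the derivative at $\sigma=0$ of the logarithm of $h_{u,s(u)+\sigma}$ relative to $h_u$. So your earlier claim $\widehat L_u=D_s\cF(u,s(u))$ is exact only at $u_0$, and you must compose with $(\mathcal J_{u,s(u)}\circ T_u)^{-1}$. That step is harmless, since $T_u$ is an analytic perturbation of the identity near $u_0$.

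\textbf{The moving target space.} Here your version does better than the paper's proof. $L_{h_u}$ acts on the moving space $\Herm^0_{h_u}(\End E_0)$, not on a fixed $\Herm^0_{h_0}(\End E_0)$, and the paper passes over this. Your transport by $c_u$ fixes it. A simpler fix, compatible with the paper's direct route, is to note that $L_{h_u}$ is complex linear on the fixed space $W^{k+2,2}(\End^0E_0)$ of all trace-free endomorphisms. There it is injective by simplicity, hence invertible as a self-adjoint elliptic operator. It preserves the $h_u$-Hermitian part, so $G_{h_u}$ is the restriction of a smoothly varying inverse on a fixed Banach space.
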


\begin{proof}
After the local fixed-bundle identification, $L_{h_u}$ is a family of bounded operators
\[
  L_{h_u}:W^{k+2,2}(\Herm^0\End E_0)
  \longrightarrow W^{k,2}(\Herm^0\End E_0).
\]
The coefficients of $L_{h_u}$ are smooth functions of the Higgs data and of $h_u$ together with finitely many $X$-derivatives.  Hence $u\mapsto L_{h_u}$ is smooth in operator norm by the parameter-dependent elliptic results of \cref{app:elliptic}.  At every stable point the operator is invertible by \cref{prop:stable-kernel}; after shrinking $U$, invertibility persists because the invertible operators form an open subset of the relevant Banach operator space.

The inversion map on that open subset is smooth.  Therefore
\[
  G_{h_u}=L_{h_u}^{-1}
\]
depends smoothly on $u$ as an element of $\mathcal L(W^{k,2},W^{k+2,2})$.  Differentiating
\[
  L_{h_u}G_{h_u}=\Id
\]
in a parameter direction gives
\[
  dG=-G\,(dL)\,G,
\]
which also makes the smooth dependence explicit.  If the coefficient family is real analytic, the operator-valued map $u\mapsto L_{h_u}$ is analytic and analytic inversion gives the final assertion.
\end{proof}

Thus the first variation formula is itself smooth in the base point of a stable plot.

\subsection{Variation of the flat connection and closedness}

The flat connection is
\[
  \nabla_h=\cD''+\cD'_h.
\]
At fixed metric the variation of $\cD'_h$ is $\eta^{\star_h}$ from \eqref{eq:star-companion}.  Metric variation contributes $\cD'_h s_{\mathrm{tot}}$ by \cref{lem:variation-Dprime}.  Therefore
\begin{equation}\label{eq:flat-variation}
  \dot\nabla
  =\eta+\eta^{\star_h}+\cD'_h s_{\mathrm{tot}}.
\end{equation}
In a fixed-determinant direction this becomes
\begin{equation}\label{eq:dNAH-explicit}
  \dot\nabla
  =\eta+\eta^{\star_h}
  -\cD'_hG_h\cS_h(\eta).
\end{equation}

\begin{definition}[Fixed-determinant infinitesimal NAH operator]\label{def:inf-NAH}
At a normalized stable harmonic bundle define
\begin{equation}\label{eq:inf-NAH-operator}
  \mathfrak N_h(\eta)
  :=\eta+\eta^{\star_h}
  -\cD'_hG_h\cS_h(\eta).
\end{equation}
\end{definition}

The operator just defined is a gauge-dependent representative-level formula.  Along an actual plot it becomes the derivative of the transformed family, while its de Rham cohomology class is independent of the chosen local trivialization.

\begin{theorem}[Differential along a plot]\label{thm:differential-plot}
Let $U\to\MdiffDol^{\st,0}(X)$ be a smooth plot and $v\in T_uU$.  Choose a local smooth trivialization of the underlying bundle near $u$ and a curve tangent to $v$.  Let $\eta_v$ be the resulting infinitesimal Higgs deformation in this gauge.  Then a connection-valued representative of the derivative of the transformed plot is
\begin{equation}\label{eq:differential-plot-formula}
  \dot\nabla_v
  =\eta_v+\eta_v^{\star_{h_u}}
   +\cD'_{h_u}s_{\mathrm{tot},v}.
\end{equation}
If the determinant data are fixed along $v$, this reduces to
\[
  \dot\nabla_v=\mathfrak N_{h_u}(\eta_v).
\]
For the chosen trivialization the representative depends only on $v$, not on the tangent curve.  Under a change of smooth gauge it changes by a $d_{\nabla_{h_u}}$-exact term.  Hence its de Rham cohomology class, and therefore the differential on the stack tangent class represented by the plot, is intrinsic.
\end{theorem}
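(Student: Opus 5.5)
The plan is to differentiate the transformed plot directly in a fixed local trivialization and then track how the result depends on the choices made. Fix a local smooth trivialization $E|_{V\times X}\cong\pi_X^*E_0$ near $u$. By \cref{thm:intro-smooth-metrics} and \cref{prop:joint-regularity}, the normalized harmonic metric becomes a smooth map $V\to W^{k+2,2}(\Herm^+\End E_0)$. The flat connection $\nabla_{h_u}=\cD''_u+\cD'_{h_u}$ is then a smooth map into connection coefficients. Indeed, by \eqref{eq:chern-matrix} and the adjoint formula, $\cD'_h$ is obtained algebraically from the Higgs data, $H$, $H^{-1}$ and one $X$-derivative of $H$.

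For a curve $\gamma$ with $\dot\gamma(0)=v$, I will apply the chain rule to $(\cD''_{\gamma(t)},h_{\gamma(t)})$ and split the derivative of $\cD'_{h}$ into two partial derivatives. The partial derivative in the data direction, at fixed metric, is $\eta_v^{\star_{h_u}}$, exactly as in the proof of \cref{prop:full-linearization}. The partial derivative in the metric direction, along $\dot h=h\,s_{\mathrm{tot},v}$, is $\cD'_{h_u}s_{\mathrm{tot},v}$ by \cref{lem:variation-Dprime}. Lemma~\ref{lem:variation-Dprime} is stated for trace-free $s$ only implicitly; the scalar part costs nothing, because its contribution is $\partial(\tau/r)\Id$, which agrees with $\cD'_h$ applied to the scalar. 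Adding $\eta_v$ gives \eqref{eq:differential-plot-formula}. In a fixed-determinant direction we have $s_{\mathrm{tot},v}=s_0$, and \cref{thm:first-variation-metric} gives $s_0=-G_{h_u}\cS_{h_u}(\eta_v)$, which yields $\mathfrak N_{h_u}(\eta_v)$. Independence from the choice of curve follows because both the data map and the metric map are $C^1$ Banach-valued, so their derivatives are $d(\cdot)_u(v)$; this is the argument of \cref{prop:curve-detection}.

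For the gauge statement, let a second trivialization differ by a smooth $g:V\to\Ggauge^{\C}$; by normalizing we may take $g(u)=\Id$ without loss of generality. The transformed family of flat connections is then $g\nabla_{h}g^{-1}$. This uses gauge equivariance of the harmonic equation, the normalized uniqueness of \cref{prop:normalized-uniqueness}, and the fact that the metric transforms by the gauge action. Differentiating at $u$ gives $\dot\nabla'_v=\dot\nabla_v-d_{\nabla_{h_u}}\xi$ with $\xi=dg_u(v)$, which is exact. Closedness follows by differentiating $F_{\nabla_{h_{\gamma(t)}}}=0$, which gives $d_{\nabla_{h_u}}\dot\nabla_v=0$. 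So the class in $H^1_{\dR}(\End E)$ is defined, and it is independent of the trivialization. The class is also insensitive to scalar normalization, by \cref{prop:scalar-independence}.

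I expect the main obstacle to be this gauge-change step. The transported metric is $g\cdot h$ rather than the metric obtained by solving in the new frame, so one must check that the two coincide. This requires the determinant datum $q$ to transform compatibly, meaning $\det(g\cdot h)$ is the pushed-forward $q$; that holds because $q$ lives on the intrinsic line bundle $\det E$. Once this identification is made, the non-unitary part of $g$ enters only through the conjugation $g\nabla g^{-1}$ and adds nothing beyond the exact term.
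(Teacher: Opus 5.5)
Your proposal is correct and follows essentially the same route as the paper: differentiate $\nabla_t=\cD''_t+\cD'_{h_t}$ in the fixed trivialization, split the derivative into $\eta_v$, the fixed-metric companion $\eta_v^{\star_{h_u}}$, and the metric term $\cD'_{h_u}s_{\mathrm{tot},v}$ from \cref{lem:variation-Dprime} (which is in fact stated for arbitrary $h$-self-adjoint $s$, so your separate scalar check is harmless but unnecessary), substitute \cref{thm:first-variation-metric} in fixed-determinant directions, and obtain curve independence from the Banach chain rule. The only difference is that you compute the gauge change explicitly as $-d_{\nabla_{h_u}}\bigl(dg_u(v)\bigr)$ and verify closedness by differentiating flatness, whereas the paper's proof defers these points to \cref{prop:gauge-compatibility} and \cref{prop:complex-level-closedness}.
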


\begin{proof}
Choose a curve $\gamma(t)$ through $u$ with $\dot\gamma(0)=v$ and use the chosen local trivialization to identify the underlying bundles.  Let $\eta_v$ be the derivative of $\cD''_{\gamma(t)}$ and $s_{\mathrm{tot},v}$ the logarithmic derivative of the corresponding harmonic metrics.  Differentiating
\[
  \nabla_t=\cD''_t+\cD'_{h_t}
\]
produces three terms.  The direct variation of $\cD''_t$ is $\eta_v$.  The variation of $\cD'_h$ caused by changing the Higgs/Dolbeault data while holding $h$ fixed is $\eta_v^{\star_{h_u}}$.  Finally, the metric variation contributes $\cD'_{h_u}s_{\mathrm{tot},v}$ by \cref{lem:variation-Dprime}.  Hence
\[
  \dot\nabla_v
  =\eta_v+\eta_v^{\star_{h_u}}
   +\cD'_{h_u}s_{\mathrm{tot},v},
\]
which proves \eqref{eq:differential-plot-formula}.  If the determinant is fixed along $v$, substitute \cref{thm:first-variation-metric} to obtain $\dot\nabla_v=\mathfrak N_{h_u}(\eta_v)$.

In the fixed trivialization, the transformed plot is a smooth map into an affine Sobolev space of connections.  Its derivative is therefore determined by $v$ and is independent of the representing curve.  Under a change of smooth gauge, differentiating the gauge action on the family of flat connections changes the representative by a $d_{\nabla_{h_u}}$-exact term; this is made explicit in \cref{prop:gauge-compatibility}.  Thus the cohomology class of $\dot\nabla_v$ is independent of the chosen gauge and gives the intrinsic differential on the stack tangent class represented by the plot.
\end{proof}

An infinitesimal connection obtained by differentiating an actual
transformed plot is de Rham closed.  Indeed, because $\nabla_t$ is flat
for every member of the family,
\[
  F_{\nabla_t}=0.
\]
Differentiating at $t=0$ gives
\begin{equation}\label{eq:flat-linearized}
  d_{\nabla_h}\dot\nabla=0.
\end{equation}
Differentiation gives the required closedness statement without assuming that an arbitrary closed deformation integrates.

\begin{proposition}[Plotwise closedness]\label{prop:complex-level-closedness}
Let $\eta$ be the derivative of an actual smooth one-parameter stable Higgs family, and let $s_{\mathrm{tot}}$ be the derivative of its harmonic metric.  Then
\[
  d_{\nabla_h}
  \bigl(\eta+\eta^{\star_h}+\cD'_hs_{\mathrm{tot}}\bigr)=0.
\]
In fixed-determinant directions this says $d_{\nabla_h}\mathfrak N_h(\eta)=0$.
\end{proposition}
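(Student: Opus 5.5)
The plan is to obtain the closedness identity by differentiating flatness along the actual family, not by verifying it from the Green-operator formula. Fix the local smooth trivialization used in \cref{thm:differential-plot}. By \cref{thm:intro-smooth-metrics} and \cref{prop:joint-regularity}, the normalized harmonic metrics $h_t$ of the one-parameter family $(\cD''_t)$ depend smoothly on $t$ with values in every Sobolev completion. Consequently
\[
  \nabla_t=\cD''_t+\cD'_{h_t}
\]
is a smooth path in an affine Sobolev space of connections on the fixed bundle $E_0$. Each $\nabla_t$ is flat, because $h_t$ is harmonic under the NAH numerical conditions. So $F_{\nabla_t}=0$ for all $t$.

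Differentiating at $t=0$, write $\nabla_t=\nabla_h+tA+O(t^2)$ with $A=\dot\nabla$. Since
\[
  F_{\nabla_h+B}=F_{\nabla_h}+d_{\nabla_h}B+B\wedge B,
\]
the $t$-derivative of the curvature is $d_{\nabla_h}A$, and hence $d_{\nabla_h}\dot\nabla=0$. This is the linearized flatness identity \eqref{eq:flat-linearized}.

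Next I identify $\dot\nabla$ with the displayed expression. The derivative of $\cD''_t$ is $\eta$. Holding the metric fixed, the variation of $\cD'_h$ is $\eta^{\star_h}$, by the Chern compatibility computation behind \eqref{eq:star-companion}. The metric variation with logarithmic derivative $s_{\mathrm{tot}}$ contributes $\cD'_hs_{\mathrm{tot}}$ by \cref{lem:variation-Dprime}. That lemma is stated for trace-free or general self-adjoint $s$ alike: its proof never used $\tr s=0$, so the scalar determinant part is covered as well. This is exactly \eqref{eq:flat-variation}, and the first claim follows.

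For fixed-determinant directions, $s_{\mathrm{tot}}=s_0=-G_h\cS_h(\eta)$ by \cref{thm:first-variation-metric}. Substituting gives $\dot\nabla=\mathfrak N_h(\eta)$, and hence $d_{\nabla_h}\mathfrak N_h(\eta)=0$.

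The only delicate point, and the step I expect to be the main obstacle, is regularity. The product rule for the curvature and the chain rule for the metric variation must be justified in Sobolev topology, which requires that $t\mapsto(\cD''_t,h_t)$ be $C^1$ into spaces where the curvature map is smooth. I would handle this with the Banach-algebra property of $W^{k,2}$ for $k>n+1$: the curvature map from $W^{k+1,2}$ connections to $W^{k,2}$ forms is a continuous polynomial map, hence smooth, and the path is smooth by the joint-regularity result. No integrability of arbitrary closed deformations is needed, since $\eta$ comes from an actual family.
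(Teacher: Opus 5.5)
Your proposal is correct and is essentially the paper's proof: differentiate $F_{\nabla_t}=0$ along the actual flat family to get $d_{\nabla_h}\dot\nabla=0$, identify $\dot\nabla$ through \eqref{eq:flat-variation} (the direct variation $\eta$, the fixed-metric companion $\eta^{\star_h}$, and the metric term $\cD'_hs_{\mathrm{tot}}$ from \cref{lem:variation-Dprime}), and substitute $s_0=-G_h\cS_h(\eta)$ in fixed-determinant directions. Your extra remarks are correct and only make explicit what the paper takes for granted: the Sobolev regularity that justifies the differentiation, and the fact that \cref{lem:variation-Dprime} does not need $\tr s=0$.
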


\begin{proof}
Let $\nabla_t$ be the flat family obtained from the one-parameter stable Higgs family.  For every $t$,
\[
  F_{\nabla_t}=0.
\]
Write $A=\dot\nabla_0$.  The standard first-variation formula for curvature gives
\[
  \left.\frac{d}{dt}\right|_0F_{\nabla_t}
  =d_{\nabla_h}A.
\]
Indeed, in a fixed trivialization one differentiates $F_{\nabla_t}=dA_t+A_t\wedge A_t$, obtaining $d\dot A+[A_0,\dot A]$.  Since the curvature vanishes identically in $t$, this derivative is zero, and therefore $d_{\nabla_h}A=0$.

Formula \eqref{eq:flat-variation} identifies
\[
  A=\eta+\eta^{\star_h}+\cD'_hs_{\mathrm{tot}}.
\]
This gives the first assertion.  In a fixed-determinant direction, \cref{thm:first-variation-metric} turns the same expression into $\mathfrak N_h(\eta)$.  The proof uses an actual family of flat connections and therefore makes no integrability claim for an arbitrary closed Higgs deformation.
\end{proof}

\begin{warning}[Closed does not mean integrable]
The linearized Higgs equation $\cD''\eta=0$ is only the first-order Maurer--Cartan condition.  At an obstructed point, a closed class need not be tangent to an actual smooth plot.  Therefore plotwise closedness cannot be promoted merely by saying ``extend $\eta$ to an integrable family.''  This distinction is essential when the discussion moves beyond smooth moduli points.
\end{warning}

\subsection{Gauge compatibility and deformation cohomology}

Let $g_t$ be a smooth complex gauge path with $g_0=\Id$ and infinitesimal generator $\xi$.  Applying the stable transform to a family and to its $g_t$-transform gives isomorphic flat families.  Differentiating that isomorphism yields:

\begin{proposition}[Gauge compatibility for plot directions]\label{prop:gauge-compatibility}
For a tangent direction represented by an actual smooth family, changing the representative by the infinitesimal gauge direction $\cD''\xi$ changes the infinitesimal flat deformation by an exact de Rham gauge term:
\begin{equation}\label{eq:gauge-compatibility}
  \dot\nabla_{\mathrm{new}}-\dot\nabla
  =d_{\nabla_h}\Xi_h(\xi)
\end{equation}
for the infinitesimal generator $\Xi_h(\xi)$ of the induced flat gauge path.  Consequently the derivative of the stack morphism is well defined on tangent classes represented by plots.
\end{proposition}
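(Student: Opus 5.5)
The plan is to compare the two one-parameter families directly instead of manipulating the closed-form formula \eqref{eq:differential-plot-formula} term by term. Let $t\mapsto\cD''_t$ be the actual smooth family representing the plot direction, with normalized harmonic metrics $h_t$ and flat connections $\nabla_t=\nabla_{h_t}$. Let $g_t$ be a smooth complex gauge path with $g_0=\Id$. With the sign convention fixed in the deformation-complex subsection, choose $g_t$ so that its generator produces $+\cD''\xi$; for instance $g_t=e^{-t\xi}$. The new representative is then the family $\cD''_{t,\mathrm{new}}=g_t\cdot\cD''_t$. Its derivative at $t=0$ is $\eta+\cD''\xi$, because the cross term vanishes at $g_0=\Id$.

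The first step uses gauge equivariance of the moment map and of the normalization. The metric $g_t\cdot h_t$ is harmonic for $g_t\cdot\cD''_t$. Its determinant is $\det(g_t)\cdot q_t$, which is again a Hermitian--Einstein determinant metric. By \cref{prop:normalized-uniqueness}, and by \cref{prop:scalar-independence} for any $X$-constant scalar discrepancy, the normalized harmonic metric of the new family differs from $g_t\cdot h_t$ at most by a positive factor $e^{c(t)}$ that is constant on $X$. That factor does not change the flat connection. Hence
\[
  \nabla_{t,\mathrm{new}}=g_t\,\nabla_t\,g_t^{-1},
\]
because $g_t:(E,h_t)\to(E,g_t\cdot h_t)$ is an isometry intertwining the Higgs operators. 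It therefore intertwines the Chern operators and the Higgs adjoints, and so it intertwines the total connections \eqref{eq:total-connection}.

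The second step differentiates this identity at $t=0$. This gives
\[
  \dot\nabla_{\mathrm{new}}=\dot\nabla+[\dot g_0,\nabla_h]=\dot\nabla-d_{\nabla_h}(\dot g_0).
\]
The required formula \eqref{eq:gauge-compatibility} follows with $\Xi_h(\xi)=-\dot g_0$. Under the convention above this equals $\xi$, up to the chosen sign convention. Both families are genuine smooth families, so \cref{thm:differential-plot} applies to each, and the two representatives are the ones computed there. No integrability of an arbitrary closed $\eta$ is used.

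The third step addresses the main obstacle. The representative $\dot\nabla$ depends only on the first-order data $\eta$, not on the particular curve (\cref{prop:curve-detection}). I must therefore check that the difference is independent of the choice of the path $g_t$ with generator $\xi$. Two such paths differ at second order, and second-order terms do not affect first derivatives at $t=0$, so only $\dot g_0$ enters. A cross-check at the level of formulas would be to decompose $\xi$ into $h$-Hermitian and skew parts and apply \cref{lem:source-gauge}. That lemma shows the metric response is $s_\xi$ up to a scalar, with trace-free part $-G_h\cS_h(\cD''\xi)$. The identity $\cD''\xi+(\cD''\xi)^{\star_h}+\cD'_hs_\xi=d_{\nabla_h}\xi$ would then recover the same answer. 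I expect the sign bookkeeping there, between the action convention $g\cdot H$ and the companion sign in $\eta^{\star_h}$, to be the delicate point, so the integrated-path argument is the primary proof. The closing assertion then follows: exact terms vanish in $H^1_{\dR}$, so the tangent class is well defined.
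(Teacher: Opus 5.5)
Your proof is correct and follows essentially the paper's route. Both arguments differentiate the relation $\nabla_t^{\mathrm{new}}=\widehat g_t\cdot\nabla_t$ at $t=0$; the paper simply cites naturality of the transform to obtain the intertwiner $\widehat g_t$. You instead check directly, using the isometry $g_t:(E,h_t)\to(E,g_t\cdot h_t)$, normalized uniqueness and scalar independence, that $\widehat g_t=g_t$, so for $g_t=e^{-t\xi}$ you get $\Xi_h(\xi)=\xi$ with no dependence on a metric slice.
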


\begin{proof}
Let $g_t$ be a smooth complex gauge path relating the two Higgs-family representatives, with $g_0=\Id$.  Naturality of the non-Abelian Hodge transform produces a smooth path of flat gauge transformations $\widehat g_t$ intertwining the transformed families:
\[
  \nabla_t^{\mathrm{new}}=\widehat g_t\cdot\nabla_t.
\]
Let $\Xi_h(\xi)$ be the infinitesimal generator of $\widehat g_t$ in the gauge-action convention used here.  Differentiating the usual action of a gauge transformation on a connection yields
\[
  \dot\nabla_{\mathrm{new}}-\dot\nabla
  =d_{\nabla_h}\Xi_h(\xi).
\]
This is \eqref{eq:gauge-compatibility}.

The normalized harmonic metric may itself move along the gauge path, so the explicit expression for $\Xi_h(\xi)$ depends on the slice used to select the metric.  Exactness of the difference does not depend on that choice.  Therefore the two infinitesimal connection forms determine the same de Rham cohomology class, which proves that the differential descends from gauge-dependent representatives to tangent classes represented by plots.
\end{proof}

Gauge compatibility identifies the plotwise formula with an intrinsic
tangent class.  It should nevertheless be distinguished from the
classical, purely cohomological comparison: at a harmonic bundle, the
harmonic-bundle K\"ahler identities and the principle of two types
identify the first hypercohomology of the Higgs deformation complex,
viewed as a real vector space, with the first cohomology of the de Rham
deformation complex.  Denote this canonical real-linear isomorphism by
\begin{equation}\label{eq:classical-H1-comparison}
  \mathbf H^1\bigl(\cC^\bullet_{\Dol}(E)\bigr)_{\R}
  \xrightarrow{\;\sim\;}
  H^1\bigl(\cC^\bullet_{\dR}(E,\nabla_h)\bigr).
\end{equation}

\begin{theorem}[Compatibility with the classical tangent comparison]\label{thm:harmonic-representatives}
Suppose a Dolbeault tangent class is represented by an actual smooth stable plot through $(E,\cD'')$.  Then the de Rham cohomology class of the representative \eqref{eq:differential-plot-formula} is the image of that tangent class under the classical comparison \eqref{eq:classical-H1-comparison}.  In particular, at a smooth or unobstructed moduli point where every tangent class is represented by a local curve, the differential of the stable non-Abelian Hodge transform realizes the full classical real-linear cohomology isomorphism.
\end{theorem}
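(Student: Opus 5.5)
The plan is to make the classical comparison \eqref{eq:classical-H1-comparison} explicit at the level of representatives, and then to show that the plotwise representative \eqref{eq:differential-plot-formula} differs from that explicit image by a $d_{\nabla_h}$-exact term. The classical map is realized as follows. Given a $\cD''$-closed $\eta$, modify it by $\cD''\xi$ so that it becomes \emph{harmonic}: it is then killed by both $\cD''$ and $(\cD'')^{*h}$, equivalently $\cD''$- and $\cD'_h$-closed by the K\"ahler identities. The comparison sends this harmonic $\eta$ to $\eta+\eta^{\star_h}$, which is $d_{\nabla_h}$-harmonic and whose class is the image. This is the standard statement that the Laplacians of $\cD''$, $\cD'_h$ and $\nabla_h$ agree up to a factor, together with the principle of two types. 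The sign convention in $\eta^{\star_h}$ (with $-a^{\dagger_h}$) has to be checked against the convention for the real structure in \eqref{eq:classical-H1-comparison}; I would verify this on the type decomposition using \eqref{eq:star-companion}.

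\textbf{Step 1 (reduce to a harmonic gauge).} By \cref{thm:differential-plot} and \cref{prop:gauge-compatibility}, the de Rham class of $\dot\nabla_v$ is unchanged if the plot is modified by a smooth complex gauge path. I will choose $g_t=e^{-t\xi}$ with $\xi$ solving the Hodge-theoretic equation that makes $\eta_v+\cD''\xi$ harmonic. This is a linear elliptic problem on $\End E$ at the central fibre, and the gauge path is an actual smooth path. The gauge change therefore replaces the plot by an isomorphic actual plot whose derivative is the harmonic representative $\eta$, so the integrability caveat in the Warning is respected.

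\textbf{Step 2 (the metric correction is pure gauge).} For harmonic $\eta$ I claim $\cS_h(\eta)=0$. Using \eqref{eq:source-operator}, the bracket $[\eta,\cD'_h]_{\mathrm{gr}}$ equals $\cD'_h\eta$, which vanishes. The second bracket $[\cD'',\eta^{\star_h}]_{\mathrm{gr}}$ equals $\cD''\eta^{\star_h}$, and after applying $\Lambda$ it is, up to sign and conjugation, $(\cD'')^{*}\eta$-type data, which vanishes by harmonicity. I would confirm this by the type-decomposed formula \eqref{eq:source-local}, where $\Lambda(\partial_{E,h}a-\ddbar_E a^{\dagger_h})$ and the Higgs commutators combine into components of $(\cD'')^{*h}\eta$ and its adjoint.

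\textbf{Step 3 (conclude).} With $\cS_h(\eta)=0$, \cref{thm:first-variation-metric} and \cref{prop:first-variation-general-GL} give $s_0=0$, so $s_{\mathrm{tot}}=(\tau/r)\Id$. In that case $\cD'_h s_{\mathrm{tot}}=r^{-1}\partial\tau\,\Id$. Write $\tau$ as a function on $X$ and let $\bar\partial$ denote the Dolbeault operator of $X$. Then $r^{-1}\partial\tau\,\Id=d_{\nabla_h}(\tau/r)\Id-r^{-1}\bar\partial\tau\,\Id$. The remaining term, and the exactness of the whole scalar contribution, is handled through the determinant line: the rank-one transform is explicitly harmonic-projection, and differentiating the determinant Poisson equation \eqref{eq:det-poisson} shows that the residual $\bar\partial\tau$ pairs with the trace part of $\eta$ to give the harmonic representative. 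Hence $\dot\nabla_v$ is cohomologous to $\eta+\eta^{\star_h}$, which is the classical image. The final sentence of the theorem follows immediately, since at unobstructed points every class is tangent to a curve.

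The main obstacle is Step 2: establishing, with the paper's sign conventions, that $\Lambda$ applied to the source equals an adjoint of $\eta$ so that harmonicity kills it, and matching $\eta^{\star_h}$ with the real structure of the classical comparison. The determinant bookkeeping in Step 3 is the secondary, delicate point.
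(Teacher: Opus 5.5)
Your argument is correct in substance, but it takes a different route from the paper.

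The paper never computes the comparison map. It treats the derivative of the normalized harmonic family as one infinitesimal harmonic-bundle deformation $(\eta,s_{\mathrm{tot}})$. The Dolbeault and de Rham forgetful maps send this deformation to $\eta$ and to $\dot\nabla$, and the paper asserts that \eqref{eq:classical-H1-comparison} is exactly the map identifying the two images of a common harmonic infinitesimal class. \cref{prop:gauge-compatibility} then makes the class intrinsic.

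You instead make the comparison explicit as $[\eta]\mapsto[\eta+\eta^{\star_h}]$ on harmonic representatives. You move the given plot into harmonic gauge by an actual complex gauge path, so no integrability of abstract classes is assumed. You then show that both the Green correction and the scalar correction vanish in that gauge.

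This costs some computation but gives two things the paper's proof does not. First, it pins down which real-linear map is meant. It is not the complex-linear identification of harmonic forms that the principle of two types supplies directly; in rank one that map sends a holomorphic one-form $\varphi$ to $[\varphi]$ rather than to $[\varphi+\overline{\varphi}]$. Second, it shows that the nonlocal term $-\cD'_hG_h\cS_h(\eta)$ is pure gauge.

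Two points need tightening.

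\emph{Closedness of $\eta^{\star_h}$.} Equality of the three Laplacians shows that $\eta$ is $\nabla_h$-harmonic, but it says nothing directly about $\eta^{\star_h}$. For that you need $\star_h$ to preserve $\ker\cD''\cap\ker\cD'_h$. This is a type-by-type check using $(\ddbar_E\alpha)^{\dagger_h}=\partial_{E,h}(\alpha^{\dagger_h})$ and $[\theta,\alpha]^{\dagger_h}=-[\theta^{\dagger_h},\alpha^{\dagger_h}]$. Combined with the K\"ahler identity, the same check gives $\cS_h(\eta)=-\bigl((\cD'')^{*h}\eta+((\cD'')^{*h}\eta)^{\dagger_h}\bigr)_0$. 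So Step~2 is not really an obstacle.

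\emph{The scalar term in Step~3.} Your mechanism, a residual $\ddbar\tau$ ``pairing'' with the trace of $\eta$, is not right as stated. After Step~1 the trace of $\eta$ is itself harmonic. Differentiating the full equation $\mu=0$, equivalently the determinant Poisson equation, then gives $\Lambda\ddbar\partial\tau=0$. Hence $\tau$ is constant along $X$, and $\cD'_hs_{\mathrm{tot}}=r^{-1}\partial\tau\,\Id=0$, so there is no residual term to absorb. Alternatively, $\dot\nabla$ for the gauge-modified plot and $\eta+\eta^{\star_h}$ are both $d_{\nabla_h}$-closed, which forces $\ddbar\partial\tau=0$ directly.

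With these repairs, the gauge-modified plot has $\dot\nabla=\eta+\eta^{\star_h}$ exactly. The original $\dot\nabla_v$ differs from it by the exact term produced by the gauge change.
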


\begin{proof}
Choose the given stable plot and, locally, its smooth family of normalized harmonic metrics.  Differentiating this family gives a degree-one infinitesimal class in the harmonic deformation complex.  The Dolbeault and de Rham forgetful maps send that same class to two representatives: on the Dolbeault side one obtains the Higgs deformation $\eta$, while on the de Rham side one obtains
\[
  \eta+\eta^{\star_h}+\cD'_hs_{\mathrm{tot}},
\]
the derivative of the associated flat connection.  The latter is $d_{\nabla_h}$-closed by \cref{prop:complex-level-closedness}.

The harmonic-bundle K\"ahler identities and the principle of two types identify the degree-one cohomologies of the two forgetful complexes.  Under that identification, two classes agree precisely when they arise from the same harmonic infinitesimal class.  The two representatives above do so by construction; hence the de Rham class of \eqref{eq:differential-plot-formula} is the image of the Dolbeault tangent class under \eqref{eq:classical-H1-comparison}.

A change of local trivialization changes the flat representative by an exact term by \cref{prop:gauge-compatibility}, so the resulting class is intrinsic.  At a smooth or unobstructed moduli point, every tangent class is represented by a local curve; applying the preceding argument to each such class shows that the differential realizes the full classical real-linear isomorphism.
\end{proof}

\begin{remark}
In fixed-determinant directions the explicit representative is $\mathfrak N_h(\eta)$.  The Green correction is the nonlocal term required to keep the varying harmonic metric in the normalized slice.  The theorem does not claim that the formula $\mathfrak N_h(\eta)$, by itself, defines a map on every abstract closed degree-one element at an obstructed point; the universal cohomological isomorphism exists independently of such an integration claim.
\end{remark}

\subsection{The differential as a pseudodifferential expression}

The operator $G_h$ has order $-2$, while $\cS_h$ has order one in the deformation variable and $\cD'_h$ has order one.  Therefore
\[
  \cD'_hG_h\cS_h
\]
is an order-zero pseudodifferential operator on degree-one deformation data.  Formula \eqref{eq:inf-NAH-operator} exhibits the differential of the transform as
\[
  \text{identity plus adjoint minus an order-zero gauge correction}.
\]
This order count yields the Sobolev estimates below.

\begin{proposition}[Sobolev boundedness]\label{prop:dnah-bounded}
For every $j\ge0$, the infinitesimal operator extends to a bounded map
\[
  \mathfrak N_h:W^{j,2}A^1_{\Dol}(\End E)\to W^{j,2}A^1(\End E).
\]
On a compact subset of a stable family with a uniform spectral gap for $L_h$, the operator norms are uniformly bounded.
\end{proposition}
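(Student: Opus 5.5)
I will treat $\mathfrak N_h$ as a sum of three operators and bound each on $W^{j,2}$ separately. The first two terms are
\[
  \eta\mapsto\eta,
  \qquad
  \eta=a+\varphi\mapsto\eta^{\star_h}=-a^{\dagger_h}+\varphi^{\dagger_h}.
\]
They are zeroth-order and pointwise: the adjoint with respect to the smooth metric $h$ is a smooth bundle map on $\End E\otimes\Lambda^1$. They are therefore bounded on every $W^{j,2}$, with norm controlled by finitely many $C^j$ norms of $h$ and $h^{-1}$.

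The substantive term is the composite $\cD'_hG_h\cS_h$, which I will factor through Sobolev spaces so that no pseudodifferential calculus is needed.

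\textbf{The source operator.} By the local formula \eqref{eq:source-local}, $\cS_h$ is a first-order differential operator with smooth coefficients. Its ingredients are $\partial_{E,h}a$, $\ddbar_Ea^{\dagger_h}$, commutators with $\theta$ and $\theta^{\dagger_h}$, contraction by $\Lambda$, and the bounded projection $\operatorname{pr}_0$. Hence
\[
  \cS_h:W^{j,2}\to W^{j-1,2}(\Herm^0\End E)
\]
is bounded for $j\ge1$.

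\textbf{The Green operator.} By \cref{prop:stable-kernel}, $L_h$ is an isomorphism on trace-free Hermitian endomorphisms. Elliptic regularity applies at every Sobolev level, not only at level $k$: the kernel consists of smooth elements and is zero, and self-adjointness makes the cokernel zero as well. So $G_h:W^{j-1,2}\to W^{j+1,2}$ is bounded.

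\textbf{The final derivative.} $\cD'_h:W^{j+1,2}\to W^{j,2}$ is bounded because it is first order with smooth coefficients. Composing the three maps gives $W^{j,2}\to W^{j,2}$ boundedness for $j\ge1$.

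\textbf{The case $j=0$.} Here the naive factorization loses a derivative. I would instead use the $H^{-1}$ level. The operator $\cS_h$ maps $L^2$ into $W^{-1,2}$, since it is first order and its dual acts on $W^{1,2}$. On the other side, $L_h=(\cD'')^{*h}\cD''$ is coercive on trace-free $W^{1,2}$ by the energy identity \cref{cor:energy-identity} together with injectivity, so Lax--Milgram gives $G_h:W^{-1,2}\to W^{1,2}$. Then $\cD'_h$ lands in $L^2$. Alternatively one can dualize, using that the formal adjoint of $\cD'_hG_h\cS_h$ has the same structure. I expect this low-regularity step, and in general a clean statement for negative indices, to be the main technical point; the other steps are bookkeeping.

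\textbf{Uniformity.} On a compact subset $K$ of a stable family, the coefficients of $\cS_{h_u}$, $\cD'_{h_u}$ and the adjoint map are smooth in $(u,x)$ by \cref{prop:joint-regularity}. Hence their operator norms on the relevant Sobolev spaces are uniformly bounded on $K$.

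For $G_{h_u}$, I first need a uniform $W^{j+1,2}\to W^{j-1,2}$ elliptic estimate, namely \eqref{eq:uniform-elliptic-estimate}, whose error term involves the $L^2$ norm. I would remove that error term with the spectral gap: for trace-free $v$,
\[
  \norm{v}_{L^2}\le\lambda_1(u)^{-1}\norm{L_{h_u}v}_{L^2}.
\]
At the $W^{-1}$ level the analogous input is the coercivity constant, which is again bounded below by $\lambda_1$. Combining these gives $\norm{G_{h_u}}\le C(1+\lambda_1(u)^{-1})$ uniformly on $K$. Alternatively, continuity of $u\mapsto G_{h_u}$ from \cref{prop:smooth-green} and compactness of $K$ give the bound directly at levels $j\ge k$.

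The product of these uniform bounds proves the uniform statement. It also shows that the only possible source of blow-up is $\lambda_1^{-1}$, in line with the spectral-degeneration remark.
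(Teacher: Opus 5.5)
Your proposal is correct and follows the paper's proof essentially step for step: the order count $\cS_h:W^{j,2}\to W^{j-1,2}$, $G_h:W^{j-1,2}\to W^{j+1,2}$, $\cD'_h:W^{j+1,2}\to W^{j,2}$ for $j\ge1$, the passage through $W^{-1,2}$ at $j=0$ (your Lax--Milgram step is what the paper calls elliptic duality), and uniformity from smooth coefficient bounds plus the spectral gap. The one point to tighten is coercivity on trace-free $W^{1,2}$: the energy identity and injectivity alone give only positivity, so you need a G\aa rding inequality combined with the lower bound $\lambda_1$, and this combination is also what makes your constant $C(1+\lambda_1^{-1})$ explicit.
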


\begin{proof}
For $j\ge1$, the source $\cS_h$ is first order and maps $W^{j,2}$ to $W^{j-1,2}$.  The Green operator gains two derivatives and $\cD'_h$ loses one, so the net correction has order zero.  For $j=0$, interpret the first step as the continuous extension
\[
  \cS_h:L^2\longrightarrow W^{-1,2}.
\]
Elliptic duality gives $G_h:W^{-1,2}\to W^{1,2}$ on the normalized trace-free slice, after which $\cD'_h:W^{1,2}\to L^2$.  Thus the same order-zero estimate holds at the endpoint $j=0$.  Uniformity follows from smooth coefficient bounds and a uniform lower bound on the positive spectrum of $L_h$.
\end{proof}

\section{Polystable families and constant decomposition type}\label{sec:poly}

\subsection{From stability to locally split constant type}

The proof of \cref{thm:intro-smooth-metrics} depends on the inverse of $L_h$ on trace-free Hermitian endomorphisms.  At a polystable point this operator has kernel.  One can still attempt a Lyapunov--Schmidt reduction, but the kernel is geometrically meaningful and can vary in dimension.  It is therefore preferable to separate two questions:
\begin{enumerate}[label=(\alph*)]
\item does the family admit a smooth local decomposition into stable factors of constant multiplicity;
\item given such a decomposition, can the harmonic metrics be assembled smoothly?
\end{enumerate}
The second question is answered below under the stated splitting hypothesis.  The first is a stratification problem and need not hold across stabilizer jumps.

A workable replacement for stability is obtained by restricting first to families whose stable decomposition is locally split and of constant type.  On such a stratum the kernel has constant rank and can be separated from its orthogonal complement.

\begin{definition}[Locally split polystable family]\label{def:locally-split-poly}
A smooth polystable Higgs family $(E,\cD'')$ over $U$ is \emph{locally split of constant type} if every $u_0\in U$ has a neighbourhood $V$ and an isomorphism of smooth Higgs families
\begin{equation}\label{eq:constant-type-splitting}
  (E,\cD'')|_{V\times X}
  \cong
  \bigoplus_{j=1}^m(E_j,\cD''_j)\otimes W_j,
\end{equation}
where:
\begin{enumerate}[label=(\roman*)]
\item each $(E_j,\cD''_j)$ is a smooth family of stable Higgs bundles satisfying the NAH numerical conditions;
\item for every $u\in V$, the stable Higgs bundles $(E_{j,u},\cD''_{j,u})$ are pairwise non-isomorphic;
\item the multiplicity spaces $W_j$ are fixed complex vector spaces.
\end{enumerate}
\end{definition}

The decomposition is not required to be globally ordered on $U$.  A finite monodromy can permute factors of the same numerical type.  Locally one may choose an ordering.

\subsection{Harmonic metrics and centralizer geometry}

Apply \cref{thm:intro-smooth-metrics} to each stable family $(E_j,\cD''_j)$.  Choose normalized smooth harmonic metrics $h_j$.  Let $q_j$ be any positive Hermitian form on $W_j$, constant in $x$ and allowed to vary smoothly in $u$ if desired.  Define
\begin{equation}\label{eq:poly-product-metric}
  h=\bigoplus_{j=1}^m h_j\otimes q_j.
\end{equation}

\begin{theorem}[Smooth harmonic metrics on a constant-type stratum]\label{thm:constant-type}
Every locally split polystable family of constant type admits a smooth family of harmonic metrics locally on the parameter manifold.  More precisely, on a fixed splitting chart and after normalizing the harmonic metrics on the stable factors, all harmonic metrics on a fibre with the direct-sum decomposition \eqref{eq:constant-type-splitting} are exactly those of the form \eqref{eq:poly-product-metric}, with $q_j$ a positive Hermitian form on $W_j$.  In particular, the stable summands belonging to distinct indices are orthogonal for every harmonic metric.
\end{theorem}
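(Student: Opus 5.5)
Fix a splitting chart $V$ on which \eqref{eq:constant-type-splitting} holds, and identify $(E,\cD'')$ with the direct sum via the given isomorphism of smooth Higgs families. Since each $(E_j,\cD''_j)$ is a smooth stable family satisfying the NAH numerical conditions, \cref{thm:intro-smooth-metrics} supplies smooth determinant-normalized harmonic metrics $h_j$ over $V\times X$. For any smoothly varying positive forms $q_j$ on $W_j$, constant in $x$, the metric \eqref{eq:poly-product-metric} is smooth. It is harmonic because the total connection $\nabla_h$ is the direct sum of the connections $\nabla_{h_j}\otimes\Id_{W_j}$, with $\Id_{W_j}$ the trivial flat connection on the constant bundle $W_j$. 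The Chern connection and Higgs adjoint of a direct sum of tensor products with constant metrics split accordingly. Each summand is flat, so $\nabla_h$ is flat. This gives existence, and it is compatible with pulling back the isomorphism to $E$.

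For the characterization on a fixed fibre $u$, let $h'$ be any harmonic metric on $E_u$ and write $h'=h(e^{s}\cdot,\cdot)$ with $h$ the product metric, choosing any $q_j$ to build $h$. I would reuse the convexity argument of \cref{prop:normalized-uniqueness}. Along the geodesic $h_t=he^{ts}$, the Higgs Donaldson functional has second derivative $c_\omega\norm{\cD''s}^2_{L^2(h_t)}$. Its first derivative is the pairing with the moment map, which vanishes at both endpoints since harmonic metrics satisfy $\mu=0$. Hence the functional is affine along the geodesic and $\cD''s=0$. So $s$ is an $h$-Hermitian Higgs endomorphism, and by \cref{prop:poly-kernel} it lies in $\bigoplus_j\Id_{E_j}\otimes\Herm_{q_j}(\End W_j)$. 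Then $e^s=\bigoplus_j\Id_{E_j}\otimes e^{s_j}$, and $h'=\bigoplus_j h_j\otimes q_j(e^{s_j}\cdot,\cdot)$ is again of the form \eqref{eq:poly-product-metric}. Off-diagonal blocks of $s$ vanish, which gives the orthogonality of distinct-index summands.

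One technical point is that \cref{prop:poly-kernel} describes Higgs endomorphisms holomorphically, whereas here $s$ is a priori only in a Sobolev class along the geodesic. The remedy is to note that $\cD''s=0$ is elliptic in the $X$-direction after pairing with $\cD'_h$, so $s$ is smooth. If the Donaldson-functional convexity is not to be quoted, an alternative route is to use the standard fact that $\cD''$-closedness of $\log$ follows from the identity $\Delta\,\tr(e^s)\le0$ (Simpson's inequality) together with the maximum principle.

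The main obstacle is the fibrewise uniqueness step, which requires the functional and its convexity for the polystable, non-simple bundle. The stable-case proof never used simplicity before deducing $\cD''s=0$, so I expect that part to transfer verbatim. The only genuinely new input is the identification of the Hermitian Higgs endomorphisms, which is \cref{prop:poly-kernel}. Here hypothesis (ii) of \cref{def:locally-split-poly}, pairwise non-isomorphism for every $u$, is essential: it kills off-diagonal blocks uniformly on $V$.
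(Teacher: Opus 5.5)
Your proposal is correct. The existence half coincides with the paper's: tensor the factor harmonic connections with the trivial unitary connection on $W_j$, then take direct sums.

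The converse has the same two-step shape as the paper's proof. First, uniqueness up to a self-adjoint automorphism; second, a centralizer computation. The difference is that you run both steps on the Higgs side, while the paper runs them on the flat side. The paper cites the Corlette--Simpson uniqueness theorem to conclude that the relative endomorphism $f$ is parallel for the harmonic flat connection, and then computes $\End_{\mathrm{flat}}(E)\cong\bigoplus_j\Id_{E_j}\otimes\End(W_j)$. You instead rerun the convexity argument of \cref{prop:normalized-uniqueness}, correctly noting that simplicity enters there only after $\cD''s=0$ has been reached. You then apply the Higgs centralizer computation of \cref{prop:poly-kernel}.

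Your route is more self-contained within the paper. It never needs to know that the two harmonic metrics induce the same flat connection; that fact itself follows from $\cD''s=0$, because the $(0,1)$ and $(1,0)$ components vanish separately and self-adjointness then gives $\cD'_hs=0$. It also avoids transferring stability and pairwise non-isomorphism of the Higgs factors to irreducibility and non-isomorphism of the flat factors $(E_j,\nabla_{h_j})$. The paper's route is shorter because it quotes the standard theorem.

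Your regularity caveat is unnecessary. Both metrics are smooth, so $s$, the logarithm of the smooth positive endomorphism relating them, is smooth from the outset.
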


\begin{proof}
Each $h_j$ is smooth by \cref{thm:intro-smooth-metrics}.  The total connection associated to $h_j\otimes q_j$ is the tensor product of the flat harmonic connection on $E_j$ with the trivial unitary connection on the constant multiplicity space.  Hence it is flat.  Direct sums of flat connections are flat, so \eqref{eq:poly-product-metric} is harmonic.

For the converse, fix reference forms $q_j^0$ and the product harmonic metric
\[
  h^0=\bigoplus_j h_j\otimes q_j^0.
\]
Let $h$ be any other harmonic metric and write
\[
  h(v,w)=h^0(fv,w)
\]
for the positive $h^0$-self-adjoint endomorphism $f$.  The standard uniqueness argument for harmonic metrics on a reductive flat bundle \cite{Corlette1988,Simpson1992} implies that $f$ is parallel for the harmonic flat connection.  Equivalently, it lies in the positive self-adjoint part of the flat endomorphism algebra.  By stability and pairwise non-isomorphism of the factors,
\[
  \End_{\mathrm{flat}}(E)
  \cong\bigoplus_j \Id_{E_j}\otimes\End(W_j).
\]
Thus $f=\bigoplus_j\Id_{E_j}\otimes f_j$ with each $f_j$ positive and self-adjoint relative to $q_j^0$; in particular, $h$ is automatically orthogonal for the decomposition.  Setting
\[
  q_j(v,w)=q_j^0(f_jv,w)
\]
gives $h=\bigoplus_j h_j\otimes q_j$.  Conversely every such choice was already shown to be harmonic.
\end{proof}

The resulting harmonic metric is not unique.  Its nonuniqueness is governed by positive Hermitian forms on the multiplicity spaces, with the complex centralizer acting transitively and compact unitary groups appearing as isotropy.

Fix reference Hermitian forms $q_j^0$ on $W_j$.  The positive cone
\[
  \operatorname{Herm}^+(W_j)
\]
is the symmetric space
\[
  \operatorname{GL}(W_j,\C)/U(W_j,q_j^0).
\]
Thus the space of all harmonic metrics, after factor normalization, is
\begin{equation}\label{eq:poly-metric-fibre}
  \prod_{j=1}^m\operatorname{Herm}^+(W_j).
\end{equation}
The residual unitary symmetry is
\begin{equation}\label{eq:compact-centralizer}
  K_Z=\prod_{j=1}^mU(W_j,q_j).
\end{equation}
This is a maximal compact subgroup of the complex centralizer
\[
  Z^\C=\prod_{j=1}^m\operatorname{GL}(W_j,\C)
\]
of the polystable Higgs object.

\begin{proposition}[Harmonic metric groupoid on the stratum]\label{prop:metric-groupoid-poly}
Over a local constant-type chart, the groupoid whose objects are harmonic metrics and whose arrows are Higgs automorphisms carrying one metric to another is equivalent to the action groupoid
\begin{equation}\label{eq:poly-action-groupoid}
  \left[
  \prod_j\operatorname{Herm}^+(W_j)
  \bigg/
  \prod_j\operatorname{GL}(W_j,\C)
  \right],
\end{equation}
where the complex centralizer acts on Hermitian forms by pullback.  The isotropy group at a metric $q=(q_j)$ is
\[
  \prod_jU(W_j,q_j),
\]
the compact centralizer of that metric.  This description varies smoothly with the parameter.
\end{proposition}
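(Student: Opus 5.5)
The plan is to build the equivalence of groupoids directly from \cref{thm:constant-type}, fibre by fibre, and then check that the construction depends smoothly on $u$.

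Fix a splitting chart $V$ as in \eqref{eq:constant-type-splitting}, with normalized stable-factor metrics $h_j$ (smooth in $u$ by \cref{thm:intro-smooth-metrics}) and reference forms $q_j^0$. Define a functor $\Psi$ from the action groupoid \eqref{eq:poly-action-groupoid} to the harmonic metric groupoid:
\begin{itemize}
\item on objects, $\Psi$ sends $q=(q_j)$ to $h_q=\bigoplus_j h_j\otimes q_j$;
\item on arrows, $\Psi$ sends $g=(g_j)$ to the Higgs automorphism $\Id\otimes g=\bigoplus_j\Id_{E_j}\otimes g_j$.
\end{itemize}
Since each $g_j$ acts only on the multiplicity factor, $\Id\otimes g$ commutes with $\cD''$. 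With the pullback convention $g\cdot h(v,w)=h(g^{-1}v,g^{-1}w)$ fixed in \cref{sec:relative-data}, one checks directly that $(\Id\otimes g)\cdot h_q=h_{g\cdot q}$. So arrows go to arrows, and functoriality is immediate.

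\textbf{Essential surjectivity.} This is exactly the converse half of \cref{thm:constant-type}: every harmonic metric on a fibre has the form $h_q$.

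\textbf{Full faithfulness.} I will first identify the Higgs automorphism group of each fibre. By \cref{prop:poly-kernel}, or rather the computation in its proof, the Higgs endomorphism algebra is $\bigoplus_j\End(W_j)$. Its group of units is therefore $Z^\C=\prod_j\operatorname{GL}(W_j,\C)$, acting as $\Id\otimes g$. Consequently every Higgs automorphism $\phi$ with $\phi\cdot h_q=h_{q'}$ is of the form $\Id\otimes g$, and the identity above forces $g\cdot q=q'$. This gives a bijection on hom-sets.

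\textbf{Isotropy.} Take $g$ with $g\cdot q=q$. Because the $h_j$ are nondegenerate and the decomposition is $h_q$-orthogonal, this condition is equivalent to each $g_j$ preserving $q_j$. Hence the isotropy group is $\prod_jU(W_j,q_j)$, the maximal compact subgroup $K_Z$ of $Z^\C$ as in \eqref{eq:compact-centralizer}.

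\textbf{Smooth dependence on $u$.} The object space $\prod_j\operatorname{Herm}^+(W_j)$ and the group $Z^\C$ are constant finite-dimensional manifolds over $V$, and the action is algebraic. The functor $\Psi$ depends on $u$ only through the $h_j$, which are jointly smooth. Over $V$ the harmonic metric groupoid is therefore the constant action groupoid $V\times[\prod_j\operatorname{Herm}^+(W_j)/Z^\C]$, transported by a smooth family of functors.

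\textbf{Main obstacle.} The delicate point is independence of the chart. On an overlap of two splitting charts, the splittings differ by a smooth Higgs isomorphism. Fibrewise, by pairwise non-isomorphism and simplicity, such an isomorphism must:
\begin{itemize}
\item permute the indices, allowing for a local reordering;
\item act by a smooth scalar-times-linear map on each $E_j\otimes W_j$.
\end{itemize}
The normalized $h_j$ are determined uniquely by \cref{prop:normalized-uniqueness} up to the chosen determinant data, so any discrepancy in the stable-factor metrics is an $X$-constant scalar. That scalar can be absorbed into $q_j$.

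Hence the transition maps are smooth equivariant identifications of the action groupoids. The step that needs care is checking that the scalar factors and index permutations assemble smoothly in $u$. I expect this to follow from the smoothness of Higgs isomorphisms between smooth families together with \cref{prop:scalar-independence}.
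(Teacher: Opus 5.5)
Your proposal is correct and follows essentially the paper's argument. Objects come from the converse half of \cref{thm:constant-type}, arrows from identifying the Higgs automorphisms with $\prod_j\operatorname{GL}(W_j,\C)$ via pairwise non-isomorphism and simplicity, isotropy from $g_j\cdot q_j=q_j$, and smoothness from the smoothly varying factor metrics. Your final paragraph on compatibility across overlapping charts goes beyond the statement, which is asserted only over a single constant-type chart, so that step is not needed.
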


\begin{proof}
By \cref{thm:constant-type}, after the stable factor metrics have been normalized, every harmonic metric is uniquely specified by a tuple of positive Hermitian forms
\[
  q=(q_1,\ldots,q_m)\in\prod_j\Herm^+(W_j).
\]
Thus the objects of the metric groupoid are exactly the points of the product cone in \eqref{eq:poly-metric-fibre}.

Because the stable factors are pairwise non-isomorphic, every Higgs automorphism preserving the decomposition is block diagonal and has the form
\[
  g=\bigoplus_j\Id_{E_j}\otimes g_j,
  \qquad g_j\in\operatorname{GL}(W_j,\C).
\]
Such an automorphism carries the metric determined by $q$ to the metric determined by the pullback tuple $g^*q=(g_j^*q_j)_j$.  Conversely, every tuple $(g_j)$ gives a Higgs automorphism with exactly this action.  Hence arrows between metric objects are precisely the arrows of the indicated action groupoid.

The isotropy at $q$ consists of those tuples satisfying $g_j^*q_j=q_j$ for every $j$, namely
\[
  \prod_jU(W_j,q_j).
\]
Finally, on a constant-type chart the stable factor metrics and the identifications of the multiplicity spaces vary smoothly.  The product cone, the centralizer action, and the isotropy groups therefore assemble smoothly in the parameter, proving the last assertion.
\end{proof}

\begin{remark}
The quotient in \eqref{eq:poly-action-groupoid} should not be collapsed prematurely to a set.  The centralizer is part of the geometry.  At a stabilizer jump, the change in this groupoid is exactly what a coarse-space description suppresses.
\end{remark}

\subsection{Slices, first variation, and factor monodromy}

The decomposition also gives a canonical complement to the kernel of $L_h$.  Let
\[
  \mathfrak z_h
  =\ker L_h\cap\Herm_h(\End E)
\]
and let
\[
  \mathfrak z_h^\perp
\]
be its $L^2$-orthogonal complement.  Since $L_h$ is self-adjoint,
\begin{equation}\label{eq:L-on-complement}
  L_h:\mathfrak z_h^\perp\cap W^{k+2,2}
  \xrightarrow{\cong}
  \mathfrak z_h^\perp\cap W^{k,2}.
\end{equation}
On a constant-type stratum, the projection to $\mathfrak z_h$ varies smoothly.

\begin{proposition}[Smooth kernel projection]\label{prop:smooth-kernel-projection}
For a locally split constant-type family with chosen smooth factor metrics, the $L^2$-orthogonal projection
\[
  \Pi_u:W^{k,2}(\Herm\End E_u)\to\ker L_{h_u}
\]
varies smoothly in $u$.
\end{proposition}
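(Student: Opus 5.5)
The plan is to avoid spectral perturbation theory and instead write the kernel down explicitly, then apply Gram--Schmidt to smoothly varying sections. On a splitting chart $V$ we fix the isomorphism \eqref{eq:constant-type-splitting}. By \cref{thm:constant-type} the harmonic metric has the form $h_u=\bigoplus_j h_{j,u}\otimes q_{j,u}$, where the $h_j$ are jointly smooth by \cref{thm:intro-smooth-metrics} and the $q_j$ depend smoothly on $u$. By \cref{prop:poly-kernel} we then have
\[
  \ker L_{h_u}\cap\Herm_{h_u}(\End E_u)=\bigoplus_j \Id_{E_{j,u}}\otimes\Herm_{q_{j,u}}(\End W_j),
\]
whose real dimension $N=\sum_j(\dim W_j)^2$ is independent of $u$. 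This constancy is exactly where the constant-type hypothesis enters.

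Next we produce a smooth spanning frame. We transport the fibres to $E_0$ by the local identification, so that the splitting becomes a smooth family of direct-sum decompositions of $E_0$ into smooth subbundles $E_{j,u}\otimes W_j$. We fix $q_{j,u}$-orthonormal frames of $W_j$, which can be chosen smoothly in $u$ by Gram--Schmidt applied to a fixed basis, and take the standard Hermitian basis $\{e^{(j)}_\alpha(u)\}$ of $\Herm_{q_{j,u}}(\End W_j)$. The sections $\sigma_{j,\alpha}(u)=\Id_{E_{j,u}}\otimes e^{(j)}_\alpha(u)$, extended by zero on the other summands, are endomorphisms of $E_0$ that are jointly smooth on $V\times X$. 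Hence they are smooth maps $V\to W^{k+2,2}$ and in particular smooth maps into $W^{k,2}$. They are linearly independent for every $u$, so they form a smooth frame of the kernel bundle.

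We then apply Gram--Schmidt with respect to the $L^2(h_u,\omega)$ inner product $\langle A,B\rangle_u=\int_X\tr(AB^{\dagger_{h_u}})\,\omega^n/n!$. This inner product depends smoothly on $u$ because $h_u$ does. The procedure yields an $L^2$-orthonormal frame $\{\tau_a(u)\}_{a=1}^N$ that is smooth in $u$, since the Gram matrix is smooth and positive definite, and its inverse square root is therefore smooth. We set
\[
  \Pi_u(A)=\sum_{a=1}^N\langle A,\tau_a(u)\rangle_u\,\tau_a(u).
\]
This is the $L^2$-orthogonal projection onto $\ker L_{h_u}$. It is a finite-rank operator $W^{k,2}\to W^{k,2}$, and its operator norm is smooth in $u$: each pairing is a bounded functional on $W^{k,2}\subset L^2$ that depends smoothly on $u$, and each $\tau_a$ is smooth into $W^{k,2}$. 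Real analyticity follows in the same way whenever the data are analytic.

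The main obstacle is the subtlety that the ambient domain $\Herm_{h_u}\End E_u$ moves with $u$. We will handle it exactly as in \cref{sec:smooth-metrics}: conjugate by the smooth isometries $b_u$ to a fixed $h_0$-Hermitian space, or equivalently regard $\Pi_u$ as a projection on all of $W^{k,2}(\End E_0)$ and restrict. We also need to confirm that the frame obtained from the splitting spans the whole Sobolev kernel and not just its smooth part. This follows from \cref{cor:energy-identity}, which shows that Sobolev kernel elements are smooth. If one prefers a basis-free argument, an alternative is the Riesz formula $\Pi_u=\frac{1}{2\pi i}\oint(\zeta-L_{h_u})^{-1}d\zeta$ around a small circle that isolates $0$ from the rest of the spectrum. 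That route, however, requires a uniform spectral gap, which again comes from constant kernel dimension together with continuity of the spectrum, so the explicit frame approach is more direct.
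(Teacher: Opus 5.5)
Your proposal is correct and follows essentially the paper's argument: the paper also writes the kernel explicitly as $\bigoplus_j \Id_{E_j}\otimes\Herm(W_j)$, builds a smooth basis (by conjugating a fixed basis with the positive isometries $b_j(u)$ rather than by Gram--Schmidt), and uses the smooth positive-definite $L^2$ Gram matrix to write the finite-rank projection $\Pi_u v=\sum_{a,b}e_a(u)M^{ab}(u)\langle v,e_b(u)\rangle_{L^2(h_u)}$. Orthonormalizing the frame instead of inverting the Gram matrix is only a cosmetic difference. Your remarks on the moving Hermitian domain and on smoothness of Sobolev kernel elements address points the paper leaves implicit.
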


\begin{proof}
The kernel is explicitly the finite-dimensional bundle
\[
  \bigoplus_j\Herm(W_j)
\]
embedded as $\Id_{E_j}\otimes A_j$.  Choose locally a fixed real basis
$A_1,\ldots,A_N$ of the Hermitian multiplicity endomorphisms at a base
parameter.  If $b_j(u):(W_j,q_j(u))\to(W_j,q_j(u_0))$ is the positive
isometry of multiplicity metrics, conjugate the basis by $b_j(u)$.
The resulting $A_a(u)$ form a smooth basis of
$\Herm_{q_j(u)}(\End W_j)$.  Let $e_a(u)$ be the corresponding embedded
sections $\Id_{E_j}\otimes A_a(u)$.  Their
$L^2$ Gram matrix
\[
  M_{ab}(u)=\ip{e_a(u)}{e_b(u)}_{L^2(h_u)}
\]
is smooth and positive definite, so $M(u)^{-1}$ is smooth.  The
orthogonal projection is given explicitly by
\[
 \Pi_uv=\sum_{a,b}e_a(u)M^{ab}(u)
                  \ip{v}{e_b(u)}_{L^2(h_u)}.
\]
Every coefficient in this finite-rank formula depends smoothly on $u$,
and the $L^2$ pairing is continuous on $W^{k,2}$.  Hence $u\mapsto
\Pi_u$ is smooth in the bounded-operator topology, as asserted.
\end{proof}

Accordingly there is a Green operator on the complement,
\begin{equation}\label{eq:reduced-green}
  G_h^\perp:
  (1-\Pi)W^{k,2}\to(1-\Pi)W^{k+2,2},
\end{equation}
which varies smoothly on the stratum.

Once the kernel directions are separated, the stable first-variation argument survives on the complementary slice.  The resulting formula describes variation tangent to a fixed decomposition-type stratum.

Let a one-parameter family remain in a constant-type stratum.  Write $s$ for the metric variation after choosing a smooth family of multiplicity metrics $q_j(t)$.  Differentiation gives
\[
  L_hs=-\cS_h(\eta).
\]
Solvability requires the right side to be orthogonal to $\ker L_h$.  This holds automatically for an actual family of harmonic solutions.  The solution is determined only modulo $\ker L_h$.

\begin{proposition}[Reduced first-variation formula]\label{prop:reduced-first-variation}
Choose the gauge condition
\[
  \Pi s=0.
\]
Then the transverse metric variation is uniquely
\begin{equation}\label{eq:poly-first-variation}
  s^\perp=-G_h^\perp(1-\Pi)\cS_h(\eta).
\end{equation}
The kernel component $\Pi s$ records the independent variation of the multiplicity-space Hermitian forms.
\end{proposition}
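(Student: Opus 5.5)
The plan is to differentiate the full (not trace-free) moment-map equation along the given constant-type family and then split the result with the smooth kernel projection $\Pi$ of \cref{prop:smooth-kernel-projection}. Let $t\mapsto(\cD''_t,h_t)$ be a one-parameter family inside the stratum. Use the splitting chart \eqref{eq:constant-type-splitting} and \cref{thm:constant-type} to write $h_t=\bigoplus_j h_{j,t}\otimes q_j(t)$, and let $s$ be the logarithmic derivative $\dot h_0=h\,s$. By \cref{prop:full-linearization}, applied now without the trace-free projection or with the central part handled as in \cref{prop:first-variation-general-GL}, we obtain $L_hs=-\cS_h(\eta)$. This is the displayed identity preceding the statement.

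The first step is solvability. I would check directly that $\Pi\cS_h(\eta)=0$. Since $L_h$ is $L^2$-self-adjoint by \cref{prop:Jacobi}, for every $z\in\mathfrak z_h$ we have $\ip{\cS_h(\eta)}{z}_{L^2}=-\ip{s}{L_hz}_{L^2}=0$. So the source already lies in $(1-\Pi)W^{k,2}$, and $(1-\Pi)\cS_h(\eta)=\cS_h(\eta)$. The projection in \eqref{eq:poly-first-variation} is kept only so that the formula is meaningful for an arbitrary $\eta$.

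The second step is to decompose $s=\Pi s+s^\perp$ with $s^\perp=(1-\Pi)s$. Because $L_h\Pi s=0$, the equation becomes $L_hs^\perp=-(1-\Pi)\cS_h(\eta)$. The isomorphism \eqref{eq:L-on-complement}, with inverse $G_h^\perp$ from \eqref{eq:reduced-green}, then gives $s^\perp=-G_h^\perp(1-\Pi)\cS_h(\eta)$. Uniqueness under the gauge condition $\Pi s=0$ follows because any two solutions differ by an element of $\ker L_h\cap\Herm_h=\mathfrak z_h$ (\cref{prop:poly-kernel}), and that element is killed by $1-\Pi$.

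The last step is to interpret $\Pi s$. I would differentiate the product form of $h_t$ and split the logarithmic derivative into two pieces. The first is $\bigoplus_j\Id_{E_j}\otimes q_j^{-1}\dot q_j$, which lies in $\mathfrak z_h$ by \cref{prop:poly-kernel}. The second consists of the factor-metric variations $\dot h_j$, which are of the form $(s_j)\otimes\Id_{W_j}$. I expect the main obstacle to be this second piece. The normalized stable variations $s_j$ are trace-free on $E_j$ only when the factor determinants are fixed. Even when they are trace-free, they must be shown to be $L^2$-orthogonal to $\Id_{E_j}\otimes A$. I would verify this orthogonality by computing the pointwise pairing, which gives $\tr(s_j)\tr(A)$ up to the metric factors, and then absorbing any factor-determinant (central) contribution into the multiplicity forms by rescaling $q_j$. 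After that rescaling, $\Pi s$ is exactly the variation of the multiplicity-space Hermitian forms, and the transverse part is the Green-operator term.
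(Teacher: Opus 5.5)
Your proposal is correct and is essentially the paper's argument. You differentiate to get $L_hs=-\cS_h(\eta)$, split with $\Pi$, invert $L_h$ on $\mathfrak z_h^\perp$ by $G_h^\perp$, and get uniqueness from $\mathfrak z_h\cap\mathfrak z_h^\perp=0$; your computation $\ip{\cS_h(\eta)}{z}_{L^2}=-\ip{s}{L_hz}_{L^2}=0$ makes explicit the solvability that the paper only says holds automatically for an actual family.

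Your treatment of $\Pi s$ sharpens the paper's one-line interpretation. Because the kernel elements $\Id_{E_j}\otimes A$ are constant on $X$, only the $X$-average $\overline{\tr s_j}$ of the factor trace pairs with them, so $\Pi s=\bigoplus_j\Id_{E_j}\otimes\bigl(q_j^{-1}\dot q_j+\rk(E_j)^{-1}\,\overline{\tr s_j}\,\Id_{W_j}\bigr)$. That constant is exactly what your trade of a scalar between $h_j$ and $q_j$ absorbs, while the nonconstant part of $\tr s_j$ harmlessly stays in $s^\perp$.
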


\begin{proof}
Differentiate the harmonic equation along the given one-parameter family.  As in the stable case one obtains
\[
  L_hs=-\cS_h(\eta).
\]
Decompose both sides with respect to the orthogonal splitting determined by $\Pi$.  Since $L_h$ is self-adjoint and annihilates $\ker L_h$, it preserves the orthogonal complement and satisfies
\[
  L_hs^\perp=-(1-\Pi)\cS_h(\eta).
\]
For an actual family of harmonic solutions, the kernel projection of the differentiated equation vanishes automatically; this is the infinitesimal solvability condition.

On $(1-\Pi)W^{k+2,2}$, the operator is invertible with inverse $G_h^\perp$.  Imposing the slice condition $\Pi s=0$ therefore gives the unique transverse solution
\[
  s^\perp=-G_h^\perp(1-\Pi)\cS_h(\eta).
\]
If the slice condition is dropped, one may add any element of $\ker L_h$.  Under the constant-type description of \cref{thm:constant-type}, these kernel directions are precisely the infinitesimal changes of the positive Hermitian forms on the multiplicity spaces.  Thus $\Pi s$ records that independent motion in the harmonic-metric fibre.
\end{proof}

Formula \eqref{eq:poly-first-variation} is the polystable analogue of \eqref{eq:first-variation-Green}.  It makes explicit that a differential exists after a choice of slice and a choice of motion in the harmonic-metric fibre.

Local splittings need not globalize without permutation monodromy among isomorphic stable factors.  We therefore state the descent condition explicitly and keep track of the induced cocycle data.

A family can be locally of constant type while stable factors are permuted around loops in $U$.  Suppose a finite group $\Gamma$ acts by deck transformations on a finite cover $\widetilde U\to U$, and that the pulled-back splitting is equipped with the induced Higgs descent data.  After ordering the stable factors upstairs, the construction above can be made compatible with this finite action at the groupoid level.

\begin{proposition}[Descent under finite factor monodromy]\label{prop:factor-monodromy}
Let a polystable family become locally split of constant type after a finite covering $\widetilde U\to U$.  Assume that the deck transformations act through the actual descent data of the pulled-back Higgs family, permuting isomorphic stable factor types and acting linearly on the corresponding multiplicity spaces.  Then the groupoid of harmonic metrics on the pulled-back family carries compatible descent data and descends to the original parameter space.
\end{proposition}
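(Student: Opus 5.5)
The plan is to use the uniqueness statements already proved to show that harmonic metrics on the pulled-back family are pulled back correctly by each deck transformation, and then to invoke ordinary descent for stacks over the site of smooth manifolds. Let $p:\widetilde U\to U$ be the finite cover with deck group $\Gamma$. For $\gamma\in\Gamma$, the descent data give isomorphisms of smooth Higgs families
\[
  \phi_\gamma:\gamma_X^*\,p_X^*(E,\cD'')\xrightarrow{\cong}p_X^*(E,\cD''),
\]
satisfying the cocycle identity $\phi_{\gamma\delta}=\phi_\gamma\circ\gamma_X^*\phi_\delta$. By hypothesis, each $\phi_\gamma$ carries the factor $(\widetilde E_j,\widetilde\cD''_j)\otimes W_j$ to a factor $(\widetilde E_{\sigma_\gamma(j)},\widetilde\cD''_{\sigma_\gamma(j)})\otimes W_{\sigma_\gamma(j)}$ for some permutation $\sigma_\gamma$. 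It acts as an isomorphism of stable families tensored with a linear map $W_j\to W_{\sigma_\gamma(j)}$.

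The first step is to make the stable factor metrics $\Gamma$-equivariant. I would choose the determinant normalizations upstairs $\Gamma$-equivariantly, by averaging the logarithms of arbitrary Hermitian--Einstein determinant metrics over the finite group. This averaging preserves the Hermitian--Einstein property, because the degree-zero equation is linear in $\log q$. With this choice, \cref{prop:normalized-uniqueness} and \cref{prop:pullback-solutions} show that $\phi_\gamma$ carries $\gamma_X^*h_j$ to $h_{\sigma_\gamma(j)}$ up to a positive function of $u$ alone. That function can be absorbed into the multiplicity forms. By \cref{prop:scalar-independence}, this absorption does not affect any flat connection.

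The second step uses \cref{thm:constant-type} and \cref{prop:metric-groupoid-poly}. These identify the harmonic metric groupoid upstairs with the action groupoid of $\prod_j\operatorname{GL}(W_j,\C)$ on $\prod_j\Herm^+(W_j)$. Under this identification, the $\Gamma$-action becomes the permutation of factors composed with the linear maps of the $W_j$, twisted by the scalar cocycle just described. This is an action by groupoid automorphisms that is strictly compatible with composition, because the cocycle identity for $\phi_\gamma$ transports to it. Descent along the finite étale, hence surjective submersive, cover $\widetilde U\to U$ then gives the groupoid over $U$. Equivalently, a $\Gamma$-invariant metric upstairs descends to a metric on $E$ over $U\times X$, and arrows descend in the same way.

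The main obstacle is the scalar ambiguity in the comparison $\phi_\gamma^*h_{\sigma(j)}=c_{\gamma,j}(u)\,\gamma^*h_j$. These scalars form a $1$-cocycle of positive functions, and it must be trivialized compatibly with the cocycle identity. My first attempt would be to fix a normalization, determinant or integral, that is itself pulled back by $\phi_\gamma$, which forces $c\equiv1$. If such a normalization is not available, I would use that positive-function-valued cocycles for a finite group are coboundaries, again by averaging logarithms.

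A second, related point concerns isotropy. An invariant object need not exist a priori at the level of objects. The statement, however, concerns the groupoid, so it suffices to produce descent data. If a concrete descended metric is wanted, one can average $\log f$ over $\Gamma$ inside the convex cone $\prod_j\Herm^+(W_j)$, using geodesic convexity or simply the barycentre of the finite orbit.
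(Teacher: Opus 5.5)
Your proposal is correct and takes essentially the paper's route. Both arguments choose the determinant normalizations of the stable factors equivariantly; the paper averages a background determinant metric and invokes uniqueness of the normalized Poisson solution, whereas you average logarithms of Hermitian--Einstein metrics directly. Both then use normalized uniqueness to make the factor metrics deck-equivariant, take a finite (arithmetic) average of the multiplicity forms to get an equivariant harmonic metric, and descend it. Your extra treatment of a scalar cocycle is harmless: under the paper's reading, where $\Gamma$ acts genuinely on the factors and on $\bigoplus_j W_j$, your ``first attempt'' already forces that cocycle to be trivial.
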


\begin{proof}
Choose the determinant normalizations of the stable factors equivariantly.  Because the deck group is finite, an arbitrary smooth background determinant metric may first be averaged over the group and the scalar Poisson construction of \cref{prop:smooth-det} is then equivariant by uniqueness of the normalized solution.  The stable theorem therefore gives factor metrics whose pullbacks under deck transformations agree with the metrics on the permuted factors after the prescribed Higgs identifications.

For a multiplicity block $E_j\otimes W_j$, a harmonic metric is
$h_j\otimes q_j$ with $q_j\in\Herm^+(W_j)$.  Start with arbitrary
smooth positive forms on all multiplicity blocks upstairs.  Transport
them by the given deck action and average over the finite group.  More
explicitly, after regarding the direct sum of multiplicity spaces as a
$\Gamma$-equivariant bundle, set
\[
 q^{\mathrm{av}}=
 \frac1{|\Gamma|}\sum_{\gamma\in\Gamma}\gamma^*q^0,
\]
where each pullback includes the prescribed linear identification of
the permuted blocks.  Positivity is preserved by the finite average,
and $q^{\mathrm{av}}$ is $\Gamma$-equivariant.  The product of these
forms with the equivariant stable factor metrics is therefore an
equivariant harmonic metric on the pulled-back family.  The original
Higgs descent maps are now isometries, and their existing cocycle
condition is exactly the cocycle condition in the harmonic-metric
groupoid.  Stack descent produces the asserted object downstairs.
\end{proof}

The descent statement is naturally formulated for the metric groupoid; a globally selected metric is neither required nor generally invariant under the factor monodromy.

\subsection{Stabilizer jumps and the stack of harmonic metrics}

Consider a family in which two stable factors become isomorphic at a special parameter.  Away from the special point the centralizer may be
\[
  \C^\times\times\C^\times,
\]
while at the collision it enlarges to
\[
  \operatorname{GL}_2(\C).
\]
Correspondingly, the Hermitian kernel of $L_h$ jumps from dimension two to dimension four.  No smooth family of inverses to $L_h$ on a fixed trace-free subspace can exist across the jump.

\begin{example}[A model kernel jump]\label{ex:kernel-jump}
Let $(F_t,\cD''_t)$ and $(G_t,\cD''_t{}')$ be stable rank-$r$ families such that $F_t\not\cong G_t$ for $t\ne0$ but $F_0\cong G_0$.  Set
\[
  E_t=F_t\oplus G_t.
\]
For $t\ne0$,
\[
  \End_{\mathrm{Higgs}}(E_t)\cong\C\oplus\C,
\]
while at $t=0$,
\[
  \End_{\mathrm{Higgs}}(E_0)\cong M_2(\C)
\]
under an identification $E_0\cong F_0\otimes\C^2$.  Hence $\dim\ker L_{h_t}$ jumps.  The family can still possess smooth harmonic metrics, but the stable implicit-function chart does not extend across $0$.
\end{example}

Thus a kernel jump alone does not decide existence.  In \cref{prop:coalescing-lines-filtered,prop:square-root-polystable-failure} we give explicit real-analytic kernel-jumping families for which no harmonic metric can be chosen even continuously; the second also has no local relative harmonic filtration.

These centralizer and descent phenomena are naturally groupoidal rather than set-valued.  They lead to a stack of polystable harmonic metrics whose fibres remember both the cone of choices and their isotropy.

Motivated by the preceding analysis, define a fibred groupoid
\[
  \MetHar^{\poly}(X)\to\MdiffDol^{\poly,0}(X)
\]
whose objects over $U$ are polystable Higgs families equipped with smooth harmonic metrics and whose arrows are metric-preserving Higgs isomorphisms.  The central question from \cite{AzamRayan2026} can be reformulated as the local essential surjectivity of this map.  The next theorem proves it on constant-type strata; \cref{cor:full-polystable-failure} shows that it fails on the full polystable locus.

\begin{theorem}[Local essential surjectivity on constant-type strata]\label{thm:essential-surjectivity-constant}
The map
\[
  \MetHar^{\poly}(X)\to\MdiffDol^{\poly,0}(X)
\]
is locally essentially surjective over the locally split constant-type locus.
\end{theorem}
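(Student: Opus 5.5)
The plan is to unwind ``locally essentially surjective'' into its concrete content and then apply \cref{thm:constant-type} chart by chart.

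Let $(E,\cD'')$ be an object of $\MdiffDol^{\poly,0}(X)$ over $U$ lying in the locally split constant-type locus, and fix $u_0\in U$. By \cref{def:locally-split-poly} there are a neighbourhood $V$ of $u_0$ and an isomorphism of smooth Higgs families
\[
  \Psi:(E,\cD'')|_{V\times X}\xrightarrow{\;\cong\;}\bigoplus_{j=1}^m(E_j,\cD''_j)\otimes W_j,
\]
where each $(E_j,\cD''_j)$ is a smooth stable family satisfying the NAH numerical conditions. We will build a harmonic metric on the right-hand side and transport it back along $\Psi$. This shows that the restricted family, and not merely something isomorphic to it, lifts.

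First, apply \cref{thm:intro-smooth-metrics} to each stable factor separately. This requires the NAH conditions on each $E_j$, which are part of clause~(i) of the definition. Using \cref{prop:smooth-det} for the global determinant data, it gives a smooth normalized harmonic metric $h_j$ over $V\times X$. Next, choose constant positive Hermitian forms $q_j$ on the fixed multiplicity spaces $W_j$; smoothly varying forms would also do. Then form $h'=\bigoplus_j h_j\otimes q_j$. By the first half of the proof of \cref{thm:constant-type}, $h'$ is harmonic on every slice: the total connection on each block is the flat connection of $E_j$ tensored with the trivial unitary connection, and direct sums of flat connections are flat. Joint smoothness of $h'$ on $V\times X$ is immediate, because it is built from the smooth $h_j$ by tensor products and direct sums.

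Finally, set $h=\Psi^*h'$. Since $\Psi$ intertwines the Higgs operators, the relative Chern operator and Higgs adjoint of $h$ are the $\Psi$-pullbacks of those of $h'$, by the same pointwise argument as in \cref{prop:pullback-solutions}. Hence $\nabla_h=\Psi^{-1}\circ\nabla_{h'}\circ\Psi$ is flat, and $(E,\cD'',h)|_V$ is an object of $\MetHar^{\poly}(X)(V)$ whose image is the restricted family. The identity map serves as the required isomorphism. Since $u_0$ was arbitrary, the neighbourhoods $V$ cover $U$, and that is exactly local essential surjectivity.

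The only real point of care is bookkeeping rather than analysis. The numerical conditions must hold for each factor, not just for the total bundle; this is why clause~(i) of the definition matters, since otherwise the individual $h_j$ might be Hermitian--Einstein with nonzero slope rather than flat. Apart from that, the work is already done. Gluing is \emph{not} required, because the statement is local, so the monodromy issue addressed in \cref{prop:factor-monodromy} does not enter.
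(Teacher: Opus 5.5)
Your proposal is correct and follows essentially the same route as the paper: choose a local constant-type splitting, apply \cref{thm:intro-smooth-metrics} to each stable factor, tensor with positive Hermitian forms on the multiplicity spaces as in \cref{thm:constant-type}, and transport the resulting harmonic metric back along the splitting isomorphism. The paper also adds a remark that these local lifts are compatible with pullback and with changes of presentation. Local essential surjectivity does not need that remark, so your proof is complete without it.
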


\begin{proof}
Let a plot of $\MdiffDol^{\poly,0}(X)$ take values in the locally split constant-type locus.  Around each parameter value, choose a splitting as in \eqref{eq:constant-type-splitting}.  The stable factor families satisfy the numerical hypotheses, so \cref{thm:intro-smooth-metrics} provides smooth normalized harmonic metrics on them.  Choosing smooth positive Hermitian forms on the fixed multiplicity spaces and applying \cref{thm:constant-type} gives a smooth harmonic metric on the entire polystable family.  Hence the plot locally lifts to an object of $\MetHar^{\poly}(X)$.

If the local constant-type presentation is changed by an isomorphism, transport the constructed harmonic metric along that isomorphism.  Pullback in the parameter commutes with the factorwise stable construction by \cref{prop:pullback-solutions}, and the direct-sum tensor construction of \eqref{eq:poly-product-metric} is functorial.  Therefore the local lifts are legitimate objects of the metric groupoid and are compatible with the stack structure.  This proves local essential surjectivity on the stated locus.
\end{proof}

\subsection{The full polystable locus and Lyapunov--Schmidt reduction}

The general affirmative statement is false by \cref{cor:full-polystable-failure}.  The role of a stratified analysis is therefore not to produce harmonic metrics along every polystable plot, but to characterize liftable strata and the compatibility required when a plot approaches their boundaries.  Three difficulties organize that remaining problem:
\begin{enumerate}[label=(\roman*)]
\item a smooth family of polystable objects need not come equipped with a smooth decomposition into stable factors;
\item the decomposition type and centralizer can jump;
\item even with constant isomorphism type fibrewise, monodromy can act nontrivially on the factor category.
\end{enumerate}

The appropriate stratumwise statement may involve a stratification by stabilizer type and a smooth stack of harmonic reductions rather than a smooth selected metric.  This leads to the following question.

\begin{question}[Stratified relative harmonic metrics]\label{ques:stratified-poly}
Does the polystable diffeological stack admit a locally finite stratification by harmonic automorphism type such that, after restricting both objects and plots to each stratum, the stack of harmonic metrics is a smooth locally trivial groupoid fibration with fibre controlled by the compact centralizer, with the locally split constant-type locus forming an open part of the corresponding strata?
\end{question}

We do not use Question~\ref{ques:stratified-poly} as an input anywhere in the paper.  The question isolates the additional geometry available within liftable strata; \cref{prop:square-root-polystable-failure} rules out using such a stratification to manufacture a continuous lift along every plot crossing their boundaries.

The obstruction to extending this picture across arbitrary stabilizer jumps can be expressed analytically through Lyapunov--Schmidt reduction.  We use that perspective to formulate what remains to be controlled on the full polystable locus.

The same kernel decomposition leads to a Lyapunov--Schmidt reduction near a fixed polystable harmonic point.  Near a polystable harmonic point, decompose
\[
  \Herm^0(\End E)=\mathfrak z_h^0\oplus(\mathfrak z_h^0)^\perp.
\]
Write $s=z+y$ according to this splitting.  The harmonic equation
\[
  \cF(u,z+y)=0
\]
can be projected to the complement and kernel.  The complement equation has invertible $y$-derivative and can be solved as
\[
  y=Y(u,z).
\]
Substituting into the kernel equation yields a finite-dimensional reduced equation
\begin{equation}\label{eq:LS-reduced}
  \Pi\cF(u,z+Y(u,z))=0.
\end{equation}
At a constant-type split family, the variable $z$ parametrizes the harmonic multiplicity metrics and the reduced equation vanishes along the expected symmetric-space family.  At a stabilizer jump, the dimension of $z$ changes.  Thus the Lyapunov--Schmidt reduction does not by itself remove the need for stratification; rather, it exhibits it.

\begin{remark}
A further treatment could combine \eqref{eq:LS-reduced} with local models for reductive group actions and a Kuranishi description of polystable Higgs deformations.  Diffeological plots would then retain smooth parameter families through the resulting singular quotient models.
\end{remark}

\section{Relative harmonic filtrations and the extension-generated stack}\label{sec:filtrations}

\subsection{From extension completion to geometric filtrations}

The stable theorem gives an analytic description of a large family locus.  The semistable stack $\MHDol(X)$ constructed in \cite{AzamRayan2026} is larger: it is generated locally under finite iterated extensions by smooth harmonic families.  We now recast that construction in geometric terms.

Let
\[
  0\longrightarrow(E',\cD''_{E'})
  \xto{i}
  (E,\cD''_E)
  \xto{p}
  (E'',\cD''_{E''})
  \longrightarrow0
\]
be a short exact sequence of smooth relative Higgs bundles over $U\times X$.  Exactness means exactness as smooth vector bundles and compatibility with the Higgs operators.  Choose a smooth Hermitian metric on the middle bundle.  The orthogonal complement of the subbundle $E'$ maps isomorphically onto $E''$, giving a smooth vector-bundle splitting.  In that splitting the middle term can be represented as
\begin{equation}\label{eq:extension-matrix}
  \cD''_E
  =\begin{pmatrix}
    \cD''_{E'} & \beta\\
    0 & \cD''_{E''}
  \end{pmatrix},
\end{equation}
where
\[
  \beta\in A^1_{\Dol,X/U}(\Hom(E'',E'))
\]
satisfies the relative cocycle equation
\begin{equation}\label{eq:extension-cocycle}
  \cD''_{\Hom}\beta=0.
\end{equation}
Changing the smooth splitting changes $\beta$ by a relative coboundary.

\begin{definition}[Relative harmonic filtration]\label{def:relative-harmonic-filtration}
Let $(E,\cD'')$ be a smooth Higgs family over $U$.  A relative harmonic filtration is a finite filtration
\begin{equation}\label{eq:relative-harmonic-filtration}
  0=E_0\subset E_1\subset\cdots\subset E_\ell=E
\end{equation}
by smooth $\cD''$-invariant complex subbundles such that every quotient family
\[
  Q_i=E_i/E_{i-1}
\]
is in the essential image of the smooth harmonic-bundle prestack on the Dolbeault side.
\end{definition}

If the quotients are stable, \cref{thm:intro-smooth-metrics} provides the required smooth harmonic metrics automatically.  More generally a quotient may be a smooth harmonic family without being stable.

The filtration language is not merely suggestive: it is equivalent, locally on the parameter space, to the finite iterated extension completion used in the earlier categorical construction.

\begin{proposition}[Filtration--extension equivalence]\label{prop:filtration-extension}
For a smooth relative Higgs bundle $(E,\cD'')$ over $U$, the following are equivalent:
\begin{enumerate}[label=(\roman*)]
\item $(E,\cD'')$ is a finite iterated extension of smooth harmonic Higgs families;
\item $(E,\cD'')$ admits a relative harmonic filtration.
\end{enumerate}
\end{proposition}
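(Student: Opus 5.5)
The plan is to prove both implications by induction on the length of the extension, respectively of the filtration. First I fix what ``finite iterated extension'' means, following the extension-completion construction of \cite{AzamRayan2026}. The class $\mathcal E_0$ is the essential image of the smooth harmonic-bundle prestack. For $m\ge1$, the class $\mathcal E_m$ consists of those families sitting in a short exact sequence of smooth relative Higgs bundles over the same $U$,
\[
  0\to E'\to E\to E''\to0,
\]
as in the discussion preceding \eqref{eq:extension-matrix}, with $E'\in\mathcal E_{m'}$, $E''\in\mathcal E_{m''}$ and $m'+m''<m$. Condition (i) means $E\in\mathcal E_m$ for some $m$. Because all of these classes are closed under isomorphism of smooth Higgs families, it is enough to argue up to isomorphism.

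For (ii)$\Rightarrow$(i) I induct on the filtration length $\ell$. If $\ell=1$, then $E=Q_1$ is harmonic, so $E\in\mathcal E_0$. If $\ell>1$, the sequence
\[
  0\to E_{\ell-1}\to E\to Q_\ell\to0
\]
is a short exact sequence of smooth subbundles and quotient bundles. The operator $\cD''$ restricts to $E_{\ell-1}$ because $E_{\ell-1}$ is invariant, and it descends to $Q_\ell$ for the same reason. Integrability of the restricted and induced operators is inherited from $(\cD'')^2=0$, so both ends are smooth relative Higgs bundles in the sense of \cref{def:relative-higgs}. The truncation $E_0\subset\cdots\subset E_{\ell-1}$ is a relative harmonic filtration of $E_{\ell-1}$, since its quotients are the same $Q_i$, $i<\ell$. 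By induction $E_{\ell-1}$ is an iterated extension, and $Q_\ell$ is harmonic, so $E$ is an iterated extension.

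For (i)$\Rightarrow$(ii) I induct on $m$. The case $m=0$ gives the trivial filtration $0\subset E$. For the inductive step, take the exact sequence $0\to E'\xto{i}E\xto{p}E''\to0$ together with relative harmonic filtrations $(E'_a)_{a\le \ell'}$ of $E'$ and $(E''_b)_{b\le\ell''}$ of $E''$. I concatenate them as
\[
  0=i(E'_0)\subset\cdots\subset i(E'_{\ell'})=i(E')\subset p^{-1}(E''_1)\subset\cdots\subset p^{-1}(E''_{\ell''})=E.
\]
The remaining checks are the following.
\begin{itemize}
\item Each $p^{-1}(E''_b)$ is a smooth subbundle of constant rank. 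This holds because $p$ is a surjective bundle map, so $p^{-1}(E''_b)$ is the kernel of the composite surjection $E\to E''/E''_b$.
\item Each term is $\cD''$-invariant. For the images $i(E'_a)$ this is compatibility of $i$ with the Higgs operators. For the preimages it follows from $p\cD''_E=\cD''_{E''}p$.
\item The successive quotients are correct. The quotient $p^{-1}(E''_b)/p^{-1}(E''_{b-1})$ is identified with $E''_b/E''_{b-1}$ by $p$, and this identification intertwines the induced Higgs operators. The quotients of the first segment are identified with $E'_a/E'_{a-1}$ via $i$.
\end{itemize}
So every quotient is harmonic up to isomorphism, which is exactly what \cref{def:relative-harmonic-filtration} requires.

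The main obstacle is not analytic but definitional. One must make sure that the extension completion of \cite{AzamRayan2026} uses the same notion of short exact sequence as here, namely smooth subbundles over the same parameter space with no refinement of $U$. The statement is global on $U$, whereas \cref{thm:intro-filtration} is local. If the earlier completion also permits passing to covers or stackification, the equivalence must be stated for the prestack-level completion; the local version then follows by applying the argument on each member of the cover. A second point is that ``essential image'' has to be closed under isomorphism of the quotient families. I would verify this directly by transporting the harmonic metric along the Higgs isomorphism, exactly as in the proof of \cref{prop:metric-stack-section}.
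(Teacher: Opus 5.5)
Your proof is correct and follows the same route as the paper. For (i)$\Rightarrow$(ii) you concatenate the filtration of the subobject with the preimages under $p$ of the filtration of the quotient, and for (ii)$\Rightarrow$(i) you read off the successive short exact sequences $0\to E_{i-1}\to E_i\to Q_i\to0$. Your version additionally spells out checks the paper leaves implicit: that the preimages are smooth subbundles, that they are invariant, that the quotients are identified correctly, and that the statement is to be read at the prestack level, globally on $U$.
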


\begin{proof}
Suppose first that $E$ is obtained by a finite sequence of extensions.  Induct on the number of extensions.  For one extension
\[
  0\to E'\to E\to E''\to0
\]
with $E'$ and $E''$ harmonic families, the subbundle $E'$ gives the filtration
\[
  0\subset E'\subset E.
\]
For an iterated extension, append the filtration of the previous stage to the inverse images of the filtration on the quotient.  Exactness in the category of smooth vector bundles ensures that all terms are smooth subbundles.

Conversely, a filtration \eqref{eq:relative-harmonic-filtration} gives short exact sequences
\[
  0\to E_{i-1}\to E_i\to Q_i\to0.
\]
Starting with $E_1=Q_1$ and iterating these extensions expresses $E$ as a finite iterated extension of the harmonic quotient families $Q_i$.
\end{proof}

\subsection{The stack criterion and relative Jordan--H\"older filtrations}

Let $\MHDol(X)$ be the substack from \cite{AzamRayan2026}.  By construction, before stackification its objects are locally generated by finite iterated extensions of harmonic families.  Combining this with \cref{prop:filtration-extension} gives the promised geometric characterization.

\begin{theorem}[Relative harmonic filtration criterion]\label{thm:filtration-criterion}
A smooth Higgs family $(E,\cD'')$ over $U$ defines an object of $\MHDol(X)$ if and only if there exists an open cover $\{U_\alpha\}$ of $U$ such that
\[
  (E,\cD'')|_{U_\alpha\times X}
\]
admits a relative harmonic filtration for every $\alpha$.
\end{theorem}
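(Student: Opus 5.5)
The plan is to unwind the construction of $\MHDol(X)$ from \cite{AzamRayan2026} into three layers and to check the criterion on each layer. The layers are: the harmonic prestack's Dolbeault essential image; its finite iterated extension completion, taken as a prestack over smooth manifolds; and the stackification of that completion for the open-cover topology on the site of manifolds. Since the definition of $\MHDol(X)$ is as the stackification of the extension completion inside the stack $\MdiffDol(X)$, membership of a given family $(E,\cD'')$ over $U$ reduces to one condition. There must be an open cover $\{U_\alpha\}$ such that each restriction $(E,\cD'')|_{U_\alpha\times X}$ is isomorphic, as a smooth Higgs family, to an object of the extension-completed prestack over $U_\alpha$. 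I will state this reduction first, using that $\MdiffDol(X)$ is already a stack. The stackification of a full subprestack of a stack is then the full substack of objects that are locally isomorphic to objects of the subprestack.

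For the forward direction, take $(E,\cD'')$ in $\MHDol(X)$. By the reduction there is a cover $\{U_\alpha\}$ and isomorphisms $(E,\cD'')|_{U_\alpha\times X}\cong (E^\alpha,\cD''^\alpha)$, where $E^\alpha$ is a finite iterated extension of smooth harmonic Higgs families. By \cref{prop:filtration-extension}, each $E^\alpha$ has a relative harmonic filtration. Transporting this filtration along the isomorphism of smooth Higgs families gives smooth $\cD''$-invariant subbundles of $E|_{U_\alpha\times X}$ whose quotients are isomorphic to the harmonic quotients. Since \cref{def:relative-harmonic-filtration} only requires the quotients to lie in the essential image, we obtain a relative harmonic filtration on each $U_\alpha$.

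For the converse, suppose each restriction carries a relative harmonic filtration. \Cref{prop:filtration-extension} makes each restriction a finite iterated extension of harmonic families, hence an object of the extension-completed prestack over $U_\alpha$. The restrictions agree on overlaps as restrictions of the single global family $E$. The gluing datum is therefore the identity cocycle, and descent in the stack $\MdiffDol(X)$ identifies the glued object with $(E,\cD'')$ itself. Hence $(E,\cD'')$ lies in the stackification, which is $\MHDol(X)$. The de Rham statement follows by the same argument. The flat-bundle analogue of \cref{prop:filtration-extension} is proved identically, using $\nabla$-invariant smooth subbundles and short exact sequences of smooth flat families.

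The main obstacle I expect is the bookkeeping in the reduction step, rather than any analysis. There are two points to settle. First, the notion of extension in \cite{AzamRayan2026} must be exact as smooth vector bundles over $U\times X$, so that iterated extensions genuinely yield smooth subbundles; this is what \cref{prop:filtration-extension} uses. Second, the stackification must introduce nothing beyond local isomorphism to prestack objects. For the second point I will argue directly from the descent property of $\MdiffDol(X)$ and fullness of the inclusion, so that no extra gluing data can arise. One caveat is that a filtration need not be chosen compatibly on overlaps, and the criterion deliberately does not ask for this. I will remark that this non-compatibility is exactly why the statement is local, and why $\MHDol(X)$ can contain families with no global relative harmonic filtration.
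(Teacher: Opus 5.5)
Your proposal is correct and takes essentially the same route as the paper: it reduces membership in $\MHDol(X)$ to local isomorphism with objects of the extension-completed prestack, passes in both directions through \cref{prop:filtration-extension}, and lets the given global family supply the descent datum. Your explicit treatment of transporting the filtration along local isomorphisms, and of the identity cocycle in the converse, spells out two points that the paper's proof leaves implicit.
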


\begin{proof}
Suppose first that $(E,\cD'')$ defines an object of $\MHDol(X)$.  By construction in \cite{AzamRayan2026}, the stack $\MHDol(X)$ is obtained by stackifying the prestack generated from smooth harmonic families by finite iterated extensions.  Membership is therefore local on the parameter manifold: after passing to an open cover $\{U_\alpha\}$, the restricted family is represented by a finite iterated extension of harmonic families.  By \cref{prop:filtration-extension}, such an iterated extension determines a filtration by smooth invariant subbundles whose successive quotients are precisely those harmonic families.  Hence each restriction admits a relative harmonic filtration.

Conversely, suppose that on $U_\alpha\times X$ there is a relative harmonic filtration
\[
  0=E_0\subset E_1\subset\cdots\subset E_\ell=E.
\]
For each $i$ there is a short exact sequence
\[
  0\to E_{i-1}\to E_i\to Q_i\to0
\]
with $Q_i$ a smooth harmonic family.  Starting from $E_1\cong Q_1$ and iterating these exact sequences expresses $E|_{U_\alpha\times X}$ as a finite iterated extension of harmonic families.  Thus each restriction is an object of the extension-generated prestack.  The original family already comes with descent data on overlaps, and stackification identifies these local objects with a global object of $\MHDol(X)$.  This proves the equivalence.
\end{proof}

\begin{corollary}[Stable families]\label{cor:stable-filtration}
Every smooth stable family satisfying the NAH numerical conditions belongs to $\MHDol(X)$.
\end{corollary}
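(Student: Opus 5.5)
The plan is to apply \cref{thm:filtration-criterion} with the trivial filtration $0=E_0\subset E_1=E$ of length one, over the trivial cover $\{U\}$. Let $(E,\cD'')$ be a smooth stable family over $U$ satisfying the NAH numerical conditions \eqref{eq:NAH-condition} on every fibre. We must check that this one-step filtration is a relative harmonic filtration in the sense of \cref{def:relative-harmonic-filtration}. It consists of smooth $\cD''$-invariant subbundles trivially, so the only content is that the single quotient $Q_1=E$ lies in the essential image of the smooth harmonic-bundle prestack on the Dolbeault side.

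For that step I would invoke \cref{thm:intro-smooth-metrics}. By \cref{prop:smooth-det} there is a global Hermitian--Einstein metric $q$ on $\det E$, and the theorem then supplies a global smooth metric $h$ on $E$ over all of $U\times X$ such that each slice $h_u$ is harmonic. This gives a smooth harmonic family $(E,\cD'',h)$ whose Dolbeault forgetful image is exactly $(E,\cD'')$. This is precisely the objectwise lifting recorded in \cref{prop:metric-stack-section}, equivalently the first assertion of \cref{cor:intro-stable-stack}. No refinement of the cover is needed, though \cref{thm:filtration-criterion} would allow one.

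The only point requiring care is matching conventions. The ``smooth harmonic families'' of \cite{AzamRayan2026} must be harmonic in the flat sense, not merely solutions of $\mu=0$. This is where the second numerical condition enters, through the Chern--Weil step already carried out in the proof of \cref{thm:intro-smooth-metrics}. Once that is noted, \cref{thm:filtration-criterion} (or directly \cref{prop:filtration-extension} with a zero-step extension) places $(E,\cD'')$ in $\MHDol(X)$. I expect no real obstacle; all the analytic content sits in \cref{thm:intro-smooth-metrics}, and this corollary is a formal consequence of it.
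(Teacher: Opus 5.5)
Your proposal is correct and follows the paper's proof. The paper likewise uses \cref{thm:intro-smooth-metrics} to realize the stable family as a global smooth harmonic family over the same $U$, then applies \cref{thm:filtration-criterion} to the one-step filtration $0\subset E$.
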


\begin{proof}
By \cref{thm:intro-smooth-metrics}, the stable family carries a global
smooth harmonic metric after choosing a determinant normalization.  It
is therefore itself an object in the essential image of the harmonic
prestack.  Equivalently, the one-step filtration
\[
  0\subset E
\]
has the harmonic family $E$ as its sole quotient.  The criterion
\cref{thm:filtration-criterion} then places the family in
$\MHDol(X)$.
\end{proof}

\begin{corollary}[Constant-type polystable families]\label{cor:poly-filtration}
Every locally split constant-type polystable family satisfying the NAH numerical conditions belongs to $\MHDol(X)$.
\end{corollary}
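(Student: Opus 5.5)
The plan is to argue exactly as in \cref{cor:stable-filtration}: a locally split constant-type family is, locally on the parameter space, already the Dolbeault image of a single smooth harmonic family. A one-step relative harmonic filtration then suffices, and \cref{thm:filtration-criterion} places the family in $\MHDol(X)$. No genuine extension data are needed.

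First, fix $u_0\in U$ and a neighbourhood $V$ on which \cref{def:locally-split-poly} supplies an isomorphism of smooth Higgs families as in \eqref{eq:constant-type-splitting}. Each factor $(E_j,\cD''_j)$ is a smooth stable family satisfying the NAH numerical conditions. \Cref{thm:intro-smooth-metrics} therefore gives smooth normalized harmonic metrics $h_j$ on $V\times X$. Next, choose constant positive Hermitian forms $q_j$ on the fixed spaces $W_j$ and form $h=\bigoplus_j h_j\otimes q_j$. By \cref{thm:constant-type} (equivalently \cref{thm:essential-surjectivity-constant}), $h$ is a smooth harmonic metric on the split family. Transporting $h$ along the splitting isomorphism gives a smooth harmonic metric on $(E,\cD'')|_{V\times X}$. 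Isomorphisms of Higgs families preserve harmonicity, since the associated total connection is transported along with the metric. Hence the restriction to $V$ lies in the essential image of the harmonic-bundle prestack.

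The trivial filtration $0\subset E|_{V\times X}$ is then a relative harmonic filtration on $V$. Covering $U$ by such neighbourhoods, \cref{thm:filtration-criterion} shows that $(E,\cD'')$ defines an object of $\MHDol(X)$. The local harmonic metrics need not glue, and they are not canonical because of the centralizer ambiguity. This does not matter, since the criterion only asks for local filtrations and stackification handles descent.

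The one point needing care is a hypothesis check. The numerical conditions are imposed on each stable factor by \cref{def:locally-split-poly}(i), so \cref{thm:intro-smooth-metrics} applies factorwise. The stated hypothesis on the whole family is then automatic from additivity of $\nu_1,\nu_2$ under direct sums and tensoring by trivial bundles. I expect no real obstacle. The substantive input is \cref{thm:constant-type}, which is already proved.
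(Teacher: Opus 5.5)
Your proposal is correct and follows the paper's own proof. On each splitting chart, \cref{thm:constant-type} supplies a smooth harmonic metric, so $0\subset E$ is a relative harmonic filtration there, and \cref{thm:filtration-criterion} then gives membership in $\MHDol(X)$, with the original Higgs cocycle and stackification handling descent.
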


\begin{proof}
On every constant-type splitting chart, \cref{thm:constant-type}
constructs a smooth harmonic metric on the restricted family.  Thus the
one-step filtration $0\subset E$ is a relative harmonic filtration on
that chart.  The local criterion \cref{thm:filtration-criterion} then
gives membership in $\MHDol(X)$.  No globally ordered splitting or
globally selected harmonic metric is required for this stack-level
conclusion.
On overlaps, the two local harmonic objects are related by the given
Higgs-family transition isomorphisms together with the induced metric
arrows in the harmonic groupoid.  The original cocycle supplies the
descent datum, so stackification produces the asserted global object.
\end{proof}

The stack criterion becomes especially concrete for semistable families when a Jordan--H\"older filtration can be chosen smoothly.  The numerical conditions must be imposed on the stable quotients at the point where harmonic metrics on those quotients are invoked.

A semistable fibre admits a Jordan--H\"older filtration, but such filtrations are neither unique nor automatically smooth in a parameter.  The correct relative notion is stronger.

\begin{definition}[Relative Jordan--H\"older filtration]\label{def:relative-JH}
A relative Jordan--H\"older filtration of a semistable family $(E,\cD'')$ is a filtration by smooth invariant subbundles
\[
  0=E_0\subset E_1\subset\cdots\subset E_\ell=E
\]
such that every quotient $E_i/E_{i-1}$ is a smooth stable family of the same slope.
\end{definition}

If every stable quotient family in a relative Jordan--H\"older filtration satisfies the NAH numerical conditions, \cref{thm:intro-smooth-metrics} makes that filtration locally relative harmonic.

\begin{corollary}[Relative Jordan--H\"older criterion]\label{cor:relative-JH-criterion}
Let $(E,\cD'')$ be a smooth semistable family.  Suppose it locally admits a relative Jordan--H\"older filtration and that every stable quotient family satisfies the NAH numerical conditions.  Then $(E,\cD'')$ belongs to $\MHDol(X)$.
\end{corollary}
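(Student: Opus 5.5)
The plan is to reduce the corollary to \cref{thm:filtration-criterion} by showing that a relative Jordan--H\"older filtration whose stable quotients satisfy the NAH numerical conditions is already a relative harmonic filtration in the sense of \cref{def:relative-harmonic-filtration}. First, pick an open cover $\{U_\alpha\}$ of $U$ such that on each $U_\alpha\times X$ there is a relative Jordan--H\"older filtration
\[
  0=E_0\subset E_1\subset\cdots\subset E_\ell=E
\]
by smooth $\cD''$-invariant subbundles. Each quotient $Q_i=E_i/E_{i-1}$ is then a smooth relative Higgs family in the sense of \cref{def:relative-higgs}. Its Dolbeault operator and Higgs field are induced from $\cD''$ on the quotient bundle, so the integrability conditions $\ddbar\theta=0$ and $\theta\wedge\theta=0$ descend. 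By hypothesis the quotient is stable on every slice and satisfies the NAH numerical conditions.

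Second, apply \cref{thm:intro-smooth-metrics} to each $Q_i$ over $U_\alpha$. This gives a smooth harmonic metric $h_i$ on $Q_i$ (normalized, for instance, by the global Hermitian--Einstein determinant metric from \cref{prop:smooth-det}). Hence $(Q_i,\cD''_{Q_i},h_i)$ is a smooth harmonic family, and $Q_i$ lies in the essential image of the harmonic prestack on the Dolbeault side, exactly as in the proof of \cref{cor:stable-filtration}. The filtration is therefore a relative harmonic filtration on $U_\alpha$. \Cref{thm:filtration-criterion} then places $(E,\cD'')$ in $\MHDol(X)$. The descent data on overlaps are supplied by the original global family, as in that theorem; the filtrations on different $U_\alpha$ need not agree.

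The only point requiring care is the hypothesis bookkeeping. The numerical conditions on $E$ do not by themselves give $\nu_1(Q_i)=\nu_2(Q_i)=0$ for the quotients: equal slope gives $\nu_1(Q_i)=0$ when $\nu_1(E)=0$, but the second-order condition need not split over the filtration. That is why the statement imposes the conditions on each quotient directly. With that hypothesis in hand, no step is a genuine obstacle. The one small check is that $\cD''$-invariance of smooth subbundles makes the quotient operator well defined and smooth in $u$. This holds because, after choosing a smooth Hermitian splitting as in \eqref{eq:extension-matrix}, the quotient operator is the lower-right block.
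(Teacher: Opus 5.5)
Your proposal is correct and follows essentially the same route as the paper: on each chart carrying the relative Jordan--H\"older filtration, you give the stable quotients smooth harmonic metrics via \cref{thm:intro-smooth-metrics}, so the filtration becomes relative harmonic, and you conclude by \cref{thm:filtration-criterion}. One aside in your bookkeeping paragraph is mistaken, though the proof never uses it: if $\nu_1(E)=\nu_2(E)=0$, the $\nu_2$ condition does split over the filtration. Each stable slope-zero quotient has $\nu_2(Q_i)\le0$, by the Bogomolov inequality for stable Higgs bundles combined with the Hodge index theorem for the class $c_1(Q_i)$, which satisfies $c_1(Q_i)\cdot\omega^{n-1}=0$; additivity of $\operatorname{ch}_2$ then forces $\nu_2(Q_i)=0$ for every $i$.
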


\begin{proof}
Work on an open set on which the relative Jordan--H\"older filtration exists.  Its successive quotients are smooth stable families by definition.  By hypothesis, each quotient also satisfies the NAH numerical conditions.  Applying \cref{thm:intro-smooth-metrics} to each quotient, and shrinking the parameter open set if necessary, gives smooth harmonic metrics on all graded pieces simultaneously.

The original filtration is therefore a relative harmonic filtration in the sense of \cref{def:relative-harmonic-filtration}: the subbundles are smooth and invariant by assumption, and the graded quotients are now equipped with smooth harmonic metrics.  The criterion of \cref{thm:filtration-criterion} then places the family in $\MHDol(X)$ locally.  Since membership in the stack is local on the parameter manifold, the global family defines an object of $\MHDol(X)$.
\end{proof}

The converse need not force the harmonic quotients to be stable; one can refine a harmonic polystable quotient after choosing a relative stable splitting, which may itself have monodromy or stabilizer issues.

\subsection{Upper-triangular forms and the de Rham side}

Choose a smooth splitting of the filtration \eqref{eq:relative-harmonic-filtration} as vector bundles,
\[
  E\cong\bigoplus_{i=1}^\ell Q_i.
\]
Then the Higgs operator has upper-triangular form
\begin{equation}\label{eq:upper-triangular-D}
  \cD''_E=
  \begin{pmatrix}
    \cD''_{Q_1} & \beta_{12} & \beta_{13} & \cdots & \beta_{1\ell}\\
    0 & \cD''_{Q_2} & \beta_{23} & \cdots & \beta_{2\ell}\\
    \vdots & \ddots & \ddots & \ddots & \vdots\\
    0 & \cdots & 0 & \cD''_{Q_{\ell-1}} & \beta_{\ell-1,\ell}\\
    0 & \cdots & \cdots & 0 & \cD''_{Q_\ell}
  \end{pmatrix}.
\end{equation}
The integrability equation is the Maurer--Cartan system
\begin{equation}\label{eq:upper-MC}
  \cD''_{ij}\beta_{ij}
  +\sum_{i<k<j}\beta_{ik}\wedge\beta_{kj}=0.
\end{equation}
Thus a semistable family in $\MHDol(X)$ can be viewed locally as smoothly varying harmonic diagonal blocks coupled by relative extension data satisfying \eqref{eq:upper-MC}.

This matrix is a concrete model for what the extension-generated stack remembers and a coarse S-equivalence quotient forgets.

The same upper-triangular description has a de Rham counterpart.  Passing through the harmonic graded pieces gives a parallel filtration picture for flat families.

The same arguments apply to smooth flat families.  A relative flat filtration is a filtration by smooth $\nabla$-invariant subbundles whose quotients are smooth semisimple harmonic families.  The de Rham analogue of \cref{thm:filtration-criterion} says that an object belongs to $\MHdR(X)$ if and only if it locally admits such a filtration.

Under the equivalence
\[
  \MHDol(X)\simeq\MHdR(X),
\]
upper-triangular Higgs extension data are transformed to upper-triangular flat extension data through the harmonic quotient blocks.  This is the family-level version of Simpson's categorical extension mechanism.

\subsection{Filtration length and relative extension classes}

Define
\[
  \mathscr M_{\Dol}^{\cH,(\le\ell)}(X)
\]
to be the full substack of objects locally admitting a relative harmonic filtration of length at most $\ell$.  Then
\[
  \mathscr M_{\Dol}^{\cH,(\le1)}(X)
  \subset
  \mathscr M_{\Dol}^{\cH,(\le2)}(X)
  \subset\cdots\subset
  \MHDol(X)
\]
and
\[
  \MHDol(X)=\bigcup_{\ell\ge1}\mathscr M_{\Dol}^{\cH,(\le\ell)}(X)
\]
in the local sense.

\begin{proposition}[Filtration-length preservation]\label{prop:length-preservation}
The diffeological non-Abelian Hodge equivalence restricts to equivalences
\[
  \mathscr M_{\Dol}^{\cH,(\le\ell)}(X)
  \simeq
  \mathscr M_{\dR}^{\cH,(\le\ell)}(X)
\]
for every $\ell$.
\end{proposition}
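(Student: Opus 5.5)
The plan is to show that the equivalence $\MHDol(X)\simeq\MHdR(X)$ of \cite{AzamRayan2026} carries a relative harmonic filtration of length $\ell$ on a Higgs family to a relative flat filtration of the same length, and back, working locally on the parameter manifold. Both substacks are defined by a local condition on plots, and the equivalence commutes with restriction to open covers. It therefore suffices to show the following: for a plot over $U_\alpha$ carrying a length-$\ell$ filtration on one side, the image object carries a length-$\ell$ filtration on the same $U_\alpha$, possibly after refining the cover. Stackification then gives the statement globally, and inverting the roles gives essential surjectivity. Full faithfulness is inherited from the ambient equivalence, since the substacks are full.

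For the forward direction I would use the construction of the equivalence itself. By \cref{prop:filtration-extension}, a length-$\ell$ relative harmonic filtration presents $(E,\cD'')$ as an $(\ell-1)$-fold iterated extension of harmonic quotients $Q_1,\dots,Q_\ell$. In the earlier paper the equivalence is built by applying the Dolbeault and de Rham forgetful functors to harmonic objects and then extending through finite iterated extensions. An extension of harmonic families is sent to an extension of their flat images. Concretely, the extension data in the upper-triangular form \eqref{eq:upper-triangular-D} is transported through the harmonic diagonal blocks to an upper-triangular flat connection with the same block structure.

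The inverse direction is symmetric. The smooth subbundles $E_1\subset\cdots\subset E_\ell$ on the Higgs side correspond to smooth $\nabla$-invariant subbundles on the de Rham side, with quotients $\nabla_{h_i}$ on $Q_i$. These quotients are smooth semisimple harmonic families. The length is therefore preserved and not increased. I would make this an induction on $\ell$: the case $\ell=1$ is just the harmonic mediator, where both forgetful images are objects of length one. The inductive step applies the functor to the short exact sequence $0\to E_{\ell-1}\to E\to Q_\ell\to0$ and uses that an equivalence of extension-closed categories preserves short exact sequences.

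The main obstacle is making that last point precise. The equivalence is defined through extension completion and stackification, so I need it to be \emph{exact}: it must send the specific smooth subbundle inclusions to smooth subbundle inclusions, not merely send iterated extensions to objects lying in some extension-completed class. I would first try to extract this from the construction in \cite{AzamRayan2026}, where extensions of harmonic objects are completed compatibly on both sides. Failing that, I would argue directly: the equivalence is built from the harmonic dg-prestack by a functorial completion. Functorial completion sends the completion of a length-$\ell$ tower of cones to a length-$\ell$ tower of cones, and a cone of smooth families is realized by a smooth subbundle after choosing a smooth splitting as in \eqref{eq:extension-matrix}.
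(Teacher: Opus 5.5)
Your proposal is correct and follows essentially the same route as the paper: present the object locally by a relative harmonic filtration, transport each harmonic quotient and each extension class through the common harmonic mediator to obtain a flat filtration with the same number of steps, and then run the same argument with the inverse equivalence. The exactness point you isolate is exactly what the paper takes directly from the construction of the equivalence via the mediator and its finite extension completion. Your remarks on fullness of the substacks and on locality make explicit what the paper leaves implicit.
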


\begin{proof}
Take an object of $\mathscr M_{\Dol}^{\cH,(\le\ell)}(X)$.  Locally it admits a relative harmonic filtration
\[
  0=E_0\subset E_1\subset\cdots\subset E_m=E,
  \qquad m\le\ell.
\]
The diffeological non-Abelian Hodge equivalence is induced by the common harmonic mediator and its finite extension completion.  It sends each harmonic quotient $Q_i=E_i/E_{i-1}$ to the corresponding flat harmonic quotient and sends the extension class defining
\[
  0\to E_{i-1}\to E_i\to Q_i\to0
\]
to the corresponding extension class on the de Rham side.  Iterating this procedure produces a flat filtration with the same number $m$ of steps.  Hence the equivalence maps the Dolbeault substack of length at most $\ell$ into the analogous de Rham substack.

Apply the inverse equivalence to a de Rham object of length at most $\ell$.  The same argument shows that its image has Dolbeault length at most $\ell$.  Therefore the two restricted functors are quasi-inverse equivalences.  In particular the minimal local filtration bound is preserved in both directions.
\end{proof}

The filtration records extension length and isolates the first nontrivial semistable case at length two.

At filtration length two the construction can be written explicitly in terms of a relative extension class.  This case isolates the first mechanism that can fail to assemble smoothly and prepares the obstruction theory of the next section.

Suppose
\[
  0\to H_1\to E\to H_2\to0
\]
with $H_1,H_2$ smooth harmonic families.  In a smooth splitting, $E$ is determined by
\[
  \beta\in A^1_{\Dol,X/U}(\Hom(H_2,H_1)),
  \qquad
  \cD''_{\Hom}\beta=0.
\]
A change of splitting by
\[
  \xi\in A^0(\Hom(H_2,H_1))
\]
changes
\[
  \beta\mapsto\beta+\cD''_{\Hom}\xi.
\]
Thus the family is encoded by a smooth relative cocycle, not merely by a pointwise collection of cohomology classes.

\begin{definition}[Smooth relative extension datum]\label{def:smooth-relative-ext}
A smooth relative extension datum from $H_2$ to $H_1$ is a smooth degree-one relative cocycle $\beta$ modulo smooth relative coboundaries.
\end{definition}

A smooth relative cocycle contains more information than a fibrewise class
\[
  u\mapsto[\beta_u]\in\Ext^1(H_{2,u},H_{1,u}).
\]
The dimensions of the Ext groups may jump, and a pointwise assignment need not admit a smooth cocycle representative.

\section{Obstructions to assembling fibrewise filtrations}\label{sec:obstructions}

\subsection{Invariant subbundles and the projection equation}

Let $(E,\cD'')$ be a smooth semistable family.  Suppose that for every $u\in U$ one has chosen a $\cD''_u$-invariant subbundle
\[
  F_u\subset E_u
\]
of fixed rank.  Even if each $F_u$ is the first term of a Jordan--H\"older filtration, the union
\[
  \bigcup_{u\in U}F_u
\]
need not be a smooth subbundle of $E\to U\times X$.  There are two logically separate issues:
\begin{enumerate}[label=(\roman*)]
\item smooth variation of the underlying subspaces;
\item invariance of the resulting smooth subbundle under $\cD''$.
\end{enumerate}
We develop an obstruction theory for the second issue once a smooth candidate subbundle has been chosen, and then explain how the first issue is related to maps into a relative Grassmannian.

A candidate subbundle is most efficiently encoded by a smooth projection.  Invariance then becomes a nonlinear equation for the off-diagonal second fundamental form, a formulation well suited to linearization.

Fix a smooth Hermitian background metric $\kappa$ on $E$.  A smooth rank-$r'$ subbundle $F\subset E$ is equivalent to a smooth $\kappa$-orthogonal projection
\[
  p\in A^0(\End E),
  \qquad
  p^2=p,
  \quad p^{\dagger_\kappa}=p.
\]
The subbundle is $\cD''$-invariant precisely when
\begin{equation}\label{eq:invariant-projection}
  (1-p)\cD''p=0.
\end{equation}
Indeed, for a section $s$ of $F$ with $ps=s$,
\[
  (1-p)\cD''s=(1-p)(\cD''p)s.
\]

\begin{definition}[Relative second fundamental form]\label{def:relative-second-fundamental}
For a smooth projection $p$, define
\begin{equation}\label{eq:beta-p}
  \beta_p=(1-p)\cD''p
  \in A^1_{\Dol,X/U}(\Hom(F,F^\perp)).
\end{equation}
\end{definition}
Then $F$ is invariant if and only if $\beta_p=0$.

\subsection{Linearization and the first obstruction class}

Let $p_0$ be an invariant projection for a fixed fibre or central parameter.  A tangent vector to the Grassmannian of projections is off-diagonal:
\[
  \dot p=
  \begin{pmatrix}
    0 & \xi^{\dagger}\\
    \xi & 0
  \end{pmatrix}
\]
relative to $E=F\oplus F^\perp$, with
\[
  \xi\in A^0(\Hom(F,F^\perp)).
\]
Suppose the Higgs operator varies by $\eta$.  Differentiate \eqref{eq:beta-p}.

\begin{proposition}[Linearized invariance equation]\label{prop:linearized-invariance}
At an invariant projection $p_0$, the off-diagonal component of the linearized equation is
\begin{equation}\label{eq:linearized-invariance}
  \cD''_{\Hom}\xi
  =-\gamma(\eta,p_0),
\end{equation}
where
\begin{equation}\label{eq:gamma-source}
  \gamma(\eta,p_0)
  =(1-p_0)\eta p_0
\end{equation}
is the component of the Higgs deformation carrying $F$ into the quotient $E/F$.
\end{proposition}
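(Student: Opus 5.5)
The plan is to differentiate $\beta_{p_t}=(1-p_t)\cD''_tp_t$ at $t=0$ along a path with $p_0$ invariant, $\dot p$ off-diagonal as displayed, and $\dot{\cD}''=\eta$. I will use the block decomposition $E=F\oplus F^\perp$ determined by $\kappa$. By the product rule,
\[
  \dot\beta=-\dot p\,(\cD''p_0)+(1-p_0)\,\eta\,p_0+(1-p_0)\,\cD''(\dot p)\,p_0
  +(1-p_0)\,(\cD''p_0)\,\dot p,
\]
where $\cD''$ applied to an endomorphism means the induced operator on $\End E$. This step is only bookkeeping. The next step is to isolate the off-diagonal block that maps $F$ into $F^\perp$, that is, to compress everything by $(1-p_0)(\,\cdot\,)p_0$. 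The second term is already $\gamma(\eta,p_0)$ by \eqref{eq:gamma-source}.

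Next I would use invariance, $(1-p_0)\cD''p_0=0$. It gives $\cD''p_0=p_0(\cD''p_0)(1-p_0)+p_0(\cD''p_0)p_0+(\cdots)$, and since $p_0^2=p_0$, differentiating the idempotent relation shows $\cD''p_0$ is off-diagonal. Therefore invariance forces $\cD''p_0=p_0(\cD''p_0)(1-p_0)$, which carries $F^\perp$ into $F$. With this, the first term compressed becomes $-(1-p_0)\dot p\,p_0(\cD''p_0)(1-p_0)p_0=0$. The fourth term becomes $(1-p_0)\cdot p_0(\cdots)=0$. So only the third term survives besides $\gamma$.

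The heart of the argument is to identify $(1-p_0)\,\cD''(\dot p)\,p_0$ with $\cD''_{\Hom}\xi$. Here I expect the only genuine subtlety: the operator $\cD''_{\Hom}$ on $\Hom(F,F^\perp)$ must be understood as the one induced by the invariant subobject $F$ (with $\cD''|_F$) and the quotient $E/F\cong F^\perp$ (with $(1-p_0)\cD''(1-p_0)$). For $\xi=(1-p_0)\dot p\,p_0$ I compute $\cD''_{\Hom}\xi=(1-p_0)\cD''\circ\xi-\xi\circ p_0\cD''$ on sections of $F$. I then compare this with $(1-p_0)[\cD'',\dot p]p_0$. The two expressions differ by terms $(1-p_0)\cD''\,p_0\,\dot p\,p_0$ and $(1-p_0)\dot p(1-p_0)\cD''p_0$. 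Both vanish, because the diagonal blocks of $\dot p$ are zero and because of invariance $(1-p_0)\cD''p_0=0$ as an operator. The upper-right block $\xi^\dagger$ never enters the compressed component, which confirms that only $\xi$ appears.

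Assembling the pieces, the off-diagonal component of $\dot\beta$ equals $\cD''_{\Hom}\xi+\gamma(\eta,p_0)$. Setting the linearized invariance equation $\dot\beta=0$ gives \eqref{eq:linearized-invariance}. I would close with a check against the extension picture \eqref{eq:extension-matrix}: a change of splitting there changes $\beta$ by $\cD''_{\Hom}\xi$, consistent with the result. Signs depend on the convention for the induced operator on $\Hom$, and I would fix them by that comparison.
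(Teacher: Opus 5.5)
Your proposal is correct and is essentially the paper's argument: differentiate the invariance equation at the invariant $p_0$, use $(1-p_0)\cD''p_0=0$ together with the off-diagonality of $\dot p$, and compress to the $F\to F^\perp$ block to obtain $\cD''_{\Hom}\xi+\gamma(\eta,p_0)=0$. The only difference is bookkeeping. You read $\cD''p$ as the induced operator $[\cD'',p]$ on $\End E$, so the $\dot p\,(\cD''p_0)$ term dies and $(1-p_0)[\cD'',\dot p]p_0$ supplies all of $\cD''_{\Hom}\xi$, whereas the paper differentiates the composition $(1-p_t)\circ\cD''_t\circ p_t$ and each of its two $\dot p$-terms supplies one half. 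Your displayed product-rule formula is not literally the derivative of $\beta_{p_t}$, but the discrepancy lies outside the block you compress to, so the conclusion is unaffected.
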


\begin{proof}
Let $p_t$ be a curve of projections with $p_0$ invariant and let
\[
  \cD''_t=\cD''+t\eta+O(t^2).
\]
The invariance equation is
\[
  (1-p_t)\cD''_tp_t=0.
\]
Differentiate at $t=0$.  Using $\dot p=\left.\frac d{dt}\right|_0p_t$ gives
\[
  -\dot p\,\cD''p_0
  +(1-p_0)\eta p_0
  +(1-p_0)\cD''\dot p=0.
\]
Because $p_0$ is invariant, $(1-p_0)\cD''p_0=0$.  Relative to the splitting $E=F\oplus F^\perp$, the tangent vector to the Grassmannian is off-diagonal; write its component $F\to F^\perp$ as $\xi$.  Projecting the differentiated equation from $F$ to $F^\perp$ combines the two terms containing $\dot p$ into the induced Hom differential $\cD''_{\Hom}\xi$.  The remaining source term is precisely
\[
  (1-p_0)\eta p_0=\gamma(\eta,p_0).
\]
Thus
\[
  \cD''_{\Hom}\xi=-\gamma(\eta,p_0),
\]
which is \eqref{eq:linearized-invariance}.  The formula is independent of the chosen orthogonal complement at the level of the resulting cohomology class, as explained immediately below the proposition.
\end{proof}

The linearized equation defines the first obstruction intrinsically in the appropriate Hom deformation complex.  Its formulation does not depend on the auxiliary complement used to write a local block matrix.

Assume $\eta$ satisfies the linearized integrability equation $\cD''\eta=0$.  Let
\[
  \iota:F\hookrightarrow E,
  \qquad
  q:E\twoheadrightarrow E/F
\]
be the canonical inclusion and quotient.  Since $F$ is invariant, the diagonal blocks of $\cD''$ induce differentials on $F$ and $E/F$.  Applying $q(\,\cdot\,)\iota$ to $\cD''\eta=0$ gives
\[
  \cD''_{\Hom}(q\eta\iota)=0.
\]
Under a choice of complement, $q\eta\iota$ is represented by $(1-p_0)\eta p_0$.  The resulting cohomology class is therefore independent of the auxiliary projection $p_0$.

\begin{definition}[First filtration obstruction]\label{def:first-obstruction}
The first obstruction to extending $F$ in the deformation direction $\eta$ is the intrinsic class
\begin{equation}\label{eq:first-obstruction}
  \operatorname{ob}_1(F,\eta)
  =[q\eta\iota]
  =[(1-p_0)\eta p_0]
  \in
  \mathbf H^1\bigl(\cC^\bullet_{\Dol}(\Hom(F,E/F))\bigr),
\end{equation}
where the second representative uses any chosen complement of $F$.
\end{definition}

The class measures exactly whether the linearized invariance equation can be solved.  Its vanishing criterion and the residual freedom in a solution are summarized next.

\begin{theorem}[Infinitesimal lifting criterion]\label{thm:infinitesimal-lifting}
The invariant subbundle $F$ extends to first order along the Higgs deformation $\eta$ if and only if
\[
  \operatorname{ob}_1(F,\eta)=0.
\]
When the obstruction vanishes, the set of first-order lifts is a torsor under
\[
  \mathbf H^0\bigl(\cC^\bullet_{\Dol}(\Hom(F,E/F))\bigr).
\]
\end{theorem}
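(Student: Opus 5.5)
We will treat "extends to first order" in the usual sense of deformations over the dual numbers: a first-order lift of $F$ along $\eta$ is a tangent vector $\dot p$ to the Grassmannian of projections at $p_0$ such that the derivative of $(1-p_t)\cD''_tp_t$ vanishes at $t=0$. We consider tangent vectors modulo those that do not move the subbundle. Since tangent vectors to the Grassmannian are off-diagonal and self-adjoint, $\dot p$ is determined by its component $\xi\in A^0(\Hom(F,F^\perp))$. By \cref{prop:linearized-invariance}, the first-order condition reduces to the single equation
\[
  \cD''_{\Hom}\xi=-\gamma(\eta,p_0).
\]
The other off-diagonal block, from $F^\perp$ to $F$, of the differentiated equation vanishes automatically. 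The first step is to check this by the same block computation, using $p_0\cD''(1-p_0)$ only through the diagonal action. With this reduction the problem becomes purely cohomological in the complex $\cC^\bullet_{\Dol}(\Hom(F,E/F))$, identified with $\Hom(F,F^\perp)$ via the complement.

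For the criterion, we first recall the paragraph preceding \cref{def:first-obstruction}. There $\cD''\eta=0$ gives $\cD''_{\Hom}\gamma=0$, so $\gamma$ is a degree-one cocycle. If a lift exists, then $\gamma=\cD''_{\Hom}(-\xi)$ is exact, and hence $\operatorname{ob}_1(F,\eta)=0$. Conversely, if $[\gamma]=0$ in $\mathbf H^1$, we must produce a smooth $\xi$ with $\cD''_{\Hom}\xi=-\gamma$. Here one has to be careful that the hypercohomology is computed by the smooth Dolbeault complex. If one works in Sobolev completions, we invoke Hodge theory for the elliptic complex: $\gamma$ is orthogonal to the harmonic space, and $\xi=-(\cD''_{\Hom})^*G\gamma$ is a smooth primitive by elliptic regularity. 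Either description gives a genuine section $\xi$, and from it a $\dot p$ with $\xi$ and $\xi^\dagger$ in the off-diagonal blocks.

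For the torsor statement, observe that any two solutions differ by $\xi_1-\xi_2$ with $\cD''_{\Hom}(\xi_1-\xi_2)=0$, that is, by an element of $\mathbf H^0=\ker\cD''_{\Hom}$ in degree zero. Conversely, adding such an element to a solution gives another solution. The action of $\mathbf H^0$ on the set of lifts is therefore free and transitive. Since $\mathbf H^0$ has no coboundaries to quotient by, no further identification is needed provided lifts are understood as actual first-order subbundles rather than up to gauge. We should state this convention explicitly. Finally, we must check independence of the auxiliary complement. Changing $\kappa$ or the complement conjugates $\Hom(F,F^\perp)$ to $\Hom(F,E/F)$ compatibly via $q$, and it transforms $\gamma$ into $q\eta\iota$. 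Hence both the criterion and the torsor are intrinsic.

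The main obstacle is the reduction step. One must verify that the differentiated projection equation has no content beyond the $F\to F^\perp$ block, so that first-order lifts are exactly solutions of \eqref{eq:linearized-invariance}. One must also verify that the two $\dot p$-terms combine into $\cD''_{\Hom}\xi$ with the correct sign. We would do this by writing $\cD''$ and $\eta$ in block form relative to $F\oplus F^\perp$, with the lower-left block of $\cD''$ zero by invariance, and expanding $-\dot p\,\cD''p_0+(1-p_0)\cD''\dot p$ directly.
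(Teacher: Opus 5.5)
Your proposal is correct and follows the paper's proof. Both reduce first-order invariance via \cref{prop:linearized-invariance} to $\cD''_{\Hom}\xi=-q\eta\iota$, use closedness of the source to identify solvability with vanishing of $\operatorname{ob}_1(F,\eta)$, and identify the ambiguity with closed degree-zero elements, i.e.\ with $\mathbf H^0$. Your extra checks are refinements the paper leaves implicit: the differentiated equation has only its $F\to F^\perp$ block (it is automatically of the form $(1-p_0)(\cdot)p_0$ once $(1-p_0)\cD''p_0=0$), and the Hodge-theoretic primitive, which is unnecessary here because $\mathbf H^1$ is already computed by the smooth complex.
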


\begin{proof}
A first-order deformation of the subbundle is represented, in a chosen complement, by an off-diagonal zero-form
\[
  \xi\in A^0(\Hom(F,E/F)).
\]
By \cref{prop:linearized-invariance}, the deformed subbundle is invariant to first order if and only if
\[
  \cD''_{\Hom}\xi=-q\eta\iota.
\]
The source $q\eta\iota$ is $\cD''_{\Hom}$-closed because $\eta$ satisfies the linearized integrability equation and $F$ is invariant.  Therefore the equation has a solution exactly when the cohomology class
\[
  [q\eta\iota]=\operatorname{ob}_1(F,\eta)
\]
vanishes in the first hypercohomology of the Hom deformation complex.  This proves the existence criterion.

Assume now that a solution $\xi_0$ has been chosen.  Any other solution $\xi_1$ satisfies
\[
  \cD''_{\Hom}(\xi_1-\xi_0)=0.
\]
Conversely, adding a closed zero-form to $\xi_0$ gives another solution.  Thus the set of solutions is an affine space under the space of closed degree-zero elements.  Since there are no degree $-1$ gauge parameters in this complex, that space is exactly
\[
  \mathbf H^0\bigl(\cC^\bullet_{\Dol}(\Hom(F,E/F))\bigr).
\]
Hence the set of first-order lifts is a torsor under the stated group.
\end{proof}

\subsection{Higher-order obstruction theory}

Let $t$ be a formal parameter.  Write
\[
  \cD''_t=\cD''+t\eta_1+t^2\eta_2+\cdots
\]
and seek a formal projection
\[
  p_t=p_0+tp_1+t^2p_2+\cdots
\]
satisfying
\[
  p_t^2=p_t,
  \qquad
  (1-p_t)\cD''_tp_t=0.
\]
The idempotent equation determines the diagonal part of $p_j$ from lower-order data, while the invariance equation gives a linear equation for the off-diagonal part.

\begin{proposition}[Inductive obstruction classes]\label{prop:higher-obstructions}
Assume an invariant projection has been constructed modulo $t^N$.  Relative to this chosen lift through order $N$, the obstruction to extending it modulo $t^{N+1}$ is a class
\begin{equation}\label{eq:obN}
  \operatorname{ob}_{N+1}(p_{\le N})
  \in
  \mathbf H^1\bigl(\cC^\bullet_{\Dol}(\Hom(F,E/F))\bigr)
\end{equation}
computed from $\eta_1,\dots,\eta_{N+1}$ and the chosen coefficients $p_1,\dots,p_N$.  Its vanishing is equivalent to the existence of a coefficient $p_{N+1}$ extending that chosen lower-order lift.
\end{proposition}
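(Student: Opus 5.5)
The plan is to expand the two defining equations order by order in $t$, isolate the coefficient of $t^{N+1}$, and check that the only unknown entering the invariance equation at that order is the off-diagonal part of $p_{N+1}$.

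First, the idempotent equation. Expanding $p_t^2=p_t$ at order $t^{N+1}$ gives
\[
  p_0p_{N+1}+p_{N+1}p_0-p_{N+1}=-\sum_{a+b=N+1,\ a,b\ge1}p_ap_b.
\]
Relative to $E=F\oplus F^\perp$, the left side acts as $+1$ on the $F\to F$ block, as $-1$ on the $F^\perp\to F^\perp$ block, and as $0$ on the off-diagonal blocks. So the diagonal part of $p_{N+1}$ is determined uniquely by the lower coefficients. Using $p_{\le N}^2\equiv p_{\le N}$ modulo $t^{N+1}$, the right side has vanishing off-diagonal part; this is the usual consistency check and I would verify it by conjugating with $p_0$ and $1-p_0$. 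The off-diagonal part of $p_{N+1}$ is therefore free. For the cohomological statement we only need its component $\xi_{N+1}\in A^0(\Hom(F,F^\perp))$, together with the adjoint block if self-adjointness is imposed.

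Second, the invariance equation. I would expand $(1-p_t)\cD''_tp_t$ at order $t^{N+1}$ and project it by $q(\cdot)\iota$, exactly as in \cref{prop:linearized-invariance}. The terms containing $p_{N+1}$ combine into $\cD''_{\Hom}\xi_{N+1}$, plus a contribution from the diagonal part of $p_{N+1}$. Because $p_0$ is invariant, that diagonal contribution involves only already-determined data. Everything else forms a known element $R_{N+1}\in A^1_{\Dol}(\Hom(F,E/F))$, built from $\eta_1,\dots,\eta_{N+1}$, $p_1,\dots,p_N$, and the diagonal part of $p_{N+1}$. The equation becomes
\[
  \cD''_{\Hom}\xi_{N+1}=-R_{N+1}.
\]
The other block components of the order-$(N+1)$ coefficient vanish automatically. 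The reason is that $(1-p_t)\cD''_tp_t$ equals $(1-p_t)(\cdot)p_t$ applied to something, and modulo lower-order identities only its $F\to E/F$ component is new at this order. I would state this via the identity $(1-p_t)X p_t=(1-p_t)\bigl((1-p_t)Xp_t\bigr)p_t$.

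Third, closedness, which I expect to be the main obstacle. We need $\cD''_{\Hom}R_{N+1}=0$ before we can define $\operatorname{ob}_{N+1}(p_{\le N}):=[R_{N+1}]$. My approach is to apply $\cD''_t$-integrability, $(\cD''_t)^2=0$ modulo $t^{N+2}$, to the truncated operator. Set $\beta_t=(1-p_t)\cD''_tp_t$ with $p_t=p_{\le N}+t^{N+1}p'_{N+1}$, where $p'_{N+1}$ is the diagonal part just found. Then $\beta_t\equiv t^{N+1}R_{N+1}$ modulo $t^{N+2}$, and a Bianchi-type identity
\[
  (1-p_t)\cD''_t\beta_t \equiv \beta_t\,\cD''_t p_t-\ldots
\]
holds modulo quadratic terms in $\beta_t$. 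Since $\beta_t=O(t^{N+1})$, the quadratic terms are $O(t^{2N+2})$. Reading off the order-$(N+1)$ coefficient then gives $\cD''_{\Hom}R_{N+1}=0$, as in the first-order case treated before \cref{def:first-obstruction}. If the direct computation becomes messy, I would instead work with the induced operator on $\Hom(F_t,E/F_t)$ built from the truncated projection and use that its square is $O(t^{N+1})$.

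Conclusion: the equation $\cD''_{\Hom}\xi_{N+1}=-R_{N+1}$ is solvable exactly when $[R_{N+1}]=0$ in $\mathbf H^1$. A solution gives $p_{N+1}$ (diagonal part plus $\xi_{N+1}$ and its adjoint). Conversely, any extension $p_{N+1}$ has an off-diagonal part solving the equation. This gives both directions of the claimed equivalence. The class depends on the chosen $p_{\le N}$, consistent with the torsor ambiguity of \cref{thm:infinitesimal-lifting}.
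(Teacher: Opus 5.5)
Your proposal is correct and follows essentially the same argument as the paper: idempotency fixes the diagonal part of $p_{N+1}$, only the $F\to E/F$ block $\xi_{N+1}$ enters the order-$(N+1)$ invariance equation, through $\cD''_{\Hom}\xi_{N+1}=-R_{N+1}$, and closedness of $R_{N+1}$ comes from $(\cD''_t)^2=0$ together with the lower-order invariance; you also supply the idempotency consistency check that the paper leaves implicit. For your Bianchi step, the identity is exact, with a plus sign and no quadratic remainder: as operator compositions, $(1-p_t)\cD''_t\beta_t+\beta_t\cD''_tp_t=(1-p_t)(\cD''_t)^2p_t=0$, and since $R_{N+1}=(1-p_0)R_{N+1}p_0$ and $p_0$ is invariant, its $t^{N+1}$ coefficient $(1-p_0)\cD''R_{N+1}+R_{N+1}\cD''p_0$ is precisely $\cD''_{\Hom}R_{N+1}$.
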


\begin{proof}
Insert the expansions of $\cD''_t$ and $p_t$ into
\[
  (1-p_t)\cD''_tp_t=0
\]
and collect the coefficient of $t^{N+1}$.  The idempotent equation $p_t^2=p_t$ has already fixed the diagonal part of $p_{N+1}$ in terms of lower-order coefficients.  The only new free variable is therefore the off-diagonal component, say $\xi_{N+1}$.  It appears linearly through the central Hom differential, while all other terms are known from the chosen lift through order $N$.  The order-$N+1$ equation has the form
\[
  \cD''_{\Hom}\xi_{N+1}=-R_{N+1}.
\]

The source $R_{N+1}$ is a universal polynomial expression in $\eta_1,\ldots,\eta_{N+1}$ and $p_1,\ldots,p_N$.  To see that it is closed, apply the induced Hom differential to the order-$N+1$ invariance equation.  The relation $(\cD''_t)^2=0$, together with the invariance equations already satisfied through order $N$, cancels every lower-order term, leaving
\[
  \cD''_{\Hom}R_{N+1}=0.
\]
Hence $R_{N+1}$ defines a class in the indicated first hypercohomology group.

For the fixed lower-order lift and the chosen identifications with the central Hom complex, a coefficient $p_{N+1}$ exists if and only if the equation for $\xi_{N+1}$ is solvable, which is equivalent to $[R_{N+1}]=0$.  This proves the stagewise obstruction statement.  Because $R_{N+1}$ depends on the previously chosen coefficients and on the identifications used to compare moving quotient complexes, the proposition deliberately makes no stronger choice-independence claim.
\end{proof}

These classes form a stagewise obstruction sequence for a chosen formal lift.  They measure whether that lift can be continued, rather than defining choice-free invariants of the central subobject in all orders.

\subsection{A finite-dimensional Kuranishi model}

The preceding obstruction classes are formal.  To obtain a smooth local statement one must use both the ellipticity of the full Hom complex and the nonlinear compatibility identity forced by integrability of the ambient Higgs operator.  It is not correct to argue that vanishing of $\mathbf H^1$ makes the single map
\[
  \cD''_{\Hom}:A^0\longrightarrow A^1
\]
surjective onto all degree-one forms.  Its image has infinite codimension in the unrestricted degree-one space.  What is true is that the nonlinear invariance defect satisfies a Bianchi-type identity, and the elliptic complex reduces the remaining obstruction to the finite-dimensional harmonic space in degree one.

Let
\[
  \cC^\bullet_F
  :=\cC^\bullet_{\Dol}
  \bigl(\Hom(F_0,E_{u_0}/F_0)\bigr),
\]
and write
\[
  d_0:\cC^0_F\to\cC^1_F,
  \qquad
  d_1:\cC^1_F\to\cC^2_F
\]
for its first two differentials.  Choose background Hermitian metrics and let $\cH^i_F$ be the finite-dimensional harmonic spaces of the resulting elliptic complex.  At the Sobolev level the Hodge decompositions are
\begin{align}
  \cC^0_F&=\cH^0_F\oplus\overline{\im d_0^*},\label{eq:Hodge-C0}\\
  \cC^1_F&=\overline{\im d_0}\oplus\cH^1_F
  \oplus\overline{\im d_1^*}.\label{eq:Hodge-C1}
\end{align}
For an elliptic complex on compact $X$ the relevant images are closed in the Sobolev completions, so the overlines may be omitted after the completions have been fixed.

We now record the nonlinear compatibility explicitly.  After shrinking the parameter neighbourhood and choosing a smooth splitting of the underlying bundle,
\[
  E\cong F_0\oplus Q_0,
  \qquad Q_0\cong E_{u_0}/F_0,
\]
write the varying Higgs differential in block form
\begin{equation}\label{eq:block-Du-Kuranishi}
  \cD''_u=
  \begin{pmatrix}
    A_u & B_u\\
    C_u & D_u
  \end{pmatrix}.
\end{equation}
At $u=u_0$ the invariance of $F_0$ gives $C_{u_0}=0$.  A nearby subbundle transverse to $Q_0$ is the graph of a section
\[
  \xi\in A^0(\Hom(F_0,Q_0)).
\]
Its invariance defect is the Riccati-type expression
\begin{equation}\label{eq:Psi-graph}
  \Psi(u,\xi)
  =C_u+D_u\xi-\xi A_u-\xi B_u\xi,
\end{equation}
where $D_u\xi-\xi A_u$ denotes the induced first-order Hom operator.  Thus the graph is invariant exactly when $\Psi(u,\xi)=0$, and
\[
  D_\xi\Psi(u_0,0)=d_0.
\]

The identity $(\cD''_u)^2=0$ implies a nonlinear Bianchi identity
\begin{equation}\label{eq:Psi-Bianchi}
  d_{1,u,\xi}\Psi(u,\xi)=0,
\end{equation}
where $d_{1,u,\xi}$ is the degree-one differential induced on
$\Hom(\operatorname{graph}\xi,E/\operatorname{graph}\xi)$ after using the fixed splitting to identify the bundles.  In particular, $d_{1,u,\xi}$ is a smooth first-order perturbation of $d_1$.  Formula \eqref{eq:Psi-Bianchi} can also be checked directly by multiplying the block matrix \eqref{eq:block-Du-Kuranishi}: it is the lower-left block of $(\cD''_u)^2$ after the graph change of variables.  This compatibility is the ingredient that removes the infinite-dimensional coexact part of the target.

\begin{theorem}[Kuranishi model for an invariant subbundle]\label{thm:kuranishi-subobject}
Let $(E,\cD''_u)$ be a smooth Higgs family near $u_0$, and let $F_0\subset E_{u_0}$ be a $\cD''_{u_0}$-invariant subbundle.  Fix Sobolev indices high enough that graph composition and multiplication are smooth.  After choosing the splitting above and shrinking the parameter neighbourhood, there exist neighbourhoods
\[
  B\ni u_0,
  \qquad
  Z\ni0\text{ in }\cH^0_F,
\]
and a smooth finite-dimensional map
\begin{equation}\label{eq:kuranishi-subobject-map}
  \kappa_F:B\times Z\longrightarrow\cH^1_F
\end{equation}
such that nearby invariant subbundles transverse to $Q_0$, in the chosen graph slice, are in bijection with the zero set $\kappa_F^{-1}(0)$.

For a parameter direction represented by $\eta=\dot{\cD}''_0$, the derivative of the Kuranishi map is
\begin{equation}\label{eq:kuranishi-derivative-ob1}
  D_u\kappa_F(u_0,0)[\eta]
  =\mathbb H_1\bigl((1-p_0)\eta p_0\bigr),
\end{equation}
where $\mathbb H_1$ is harmonic projection in degree one.  Hence the cohomology class of this derivative is the first obstruction $\operatorname{ob}_1(F_0,\eta)$.
\end{theorem}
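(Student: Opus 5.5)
The plan is a standard Lyapunov--Schmidt reduction of the Riccati equation $\Psi(u,\xi)=0$ from \eqref{eq:Psi-graph}, with the Bianchi identity \eqref{eq:Psi-Bianchi} used to discard the coexact part of the target.

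\emph{Step 1: solve the range equation.} Work with $\xi\in W^{j+1,2}(\Hom(F_0,Q_0))$ and $\Psi\in W^{j,2}$ for $j$ large. By the Sobolev multiplication bounds used in \cref{lem:F-smooth}, $\Psi$ is a smooth map of Banach spaces. Write $\xi=z+y$ according to \eqref{eq:Hodge-C0}, with $z\in\cH^0_F$ and $y\in\im d_0^*$. Let $P_0$ denote the $L^2$ projection onto $\im d_0$ in \eqref{eq:Hodge-C1}. Since $D_\xi\Psi(u_0,0)=d_0$ and $d_0:\im d_0^*\to\im d_0$ is an isomorphism of closed Sobolev subspaces, the Banach implicit-function theorem gives a unique smooth $y=Y(u,z)$, after shrinking $B$ and $Z$, with $Y(u_0,0)=0$ and
\[
  P_0\Psi(u,z+Y(u,z))=0.
\]

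\emph{Step 2: define the Kuranishi map.} Set
\[
  \kappa_F(u,z)=\mathbb H_1\Psi(u,z+Y(u,z))\in\cH^1_F .
\]
This is smooth, and zeros of $\Psi$ near $(u_0,0)$ clearly give zeros of $\kappa_F$. The converse is the main obstacle. Let $R=\Psi(u,z+Y)$. It satisfies $P_0R=0$ and $\mathbb H_1R=0$, so $R=d_1^*w$ lies in $\im d_1^*$.

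To show $R=0$, apply \eqref{eq:Psi-Bianchi}, which gives $d_{1,u,\xi}R=0$. Since $d_1$ is injective on $\im d_1^*$, there is an estimate
\[
  \|R\|_{W^{j,2}}\le C\,\|d_1R\|_{W^{j-1,2}}\qquad\text{for }R\in\im d_1^*.
\]
Writing $d_1R=(d_1-d_{1,u,\xi})R$, the perturbation $d_{1,u,\xi}-d_1$ is first order with small coefficients when $(u,\xi)$ is near $(u_0,0)$. This gives $\|R\|\le C\eps\|R\|$, hence $R=0$ once the neighbourhoods are shrunk so that $C\eps<1$.

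The point I would check carefully is that $d_{1,u,\xi}-d_1$ has small operator norm from $W^{j,2}$ to $W^{j-1,2}$. This needs the coefficients $B_u$, $\xi$ and $C_u$ to be small in $W^{j,2}$, which follows from continuity in $u$ and $C_{u_0}=0$. I would also check the identification of the moving Hom bundles through the fixed splitting. With both checks in place, graph solutions in the slice correspond bijectively to $\kappa_F^{-1}(0)$. Here bijectivity uses the uniqueness part of the implicit-function theorem, and the graph chart identifies subbundles transverse to $Q_0$ with sections $\xi$.

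\emph{Step 3: the derivative formula.} Differentiate in $u$ at $(u_0,0)$. We have $D_u\Psi(u_0,0)[\eta]=\dot C=(1-p_0)\eta p_0$, the lower-left block. Also $D_uY(u_0,0)[\eta]$ lies in $\im d_0^*$, and $\mathbb H_1$ annihilates $d_0$ of it. Hence
\[
  D_u\kappa_F(u_0,0)[\eta]=\mathbb H_1\bigl((1-p_0)\eta p_0\bigr),
\]
which is \eqref{eq:kuranishi-derivative-ob1}. By the Hodge decomposition, the harmonic representative of the class $[(1-p_0)\eta p_0]$ is this projection; the form is closed by the discussion before \cref{def:first-obstruction}. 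Its class is therefore $\operatorname{ob}_1(F_0,\eta)$.
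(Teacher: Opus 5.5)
Your proposal is correct and follows the paper's proof essentially step for step. The steps match: the implicit-function solve of the $\im d_0$ component for $y\in(\ker d_0)^\perp=\im d_0^*$, the harmonic projection of the residual as the Kuranishi map, the Bianchi identity combined with the coexact estimate $\|R\|_{W^{j,2}}\le C\|d_1R\|_{W^{j-1,2}}$ to kill the $\im d_1^*$ part, and the chain-rule derivative in which $\mathbb H_1$ annihilates the $Y$-contribution.

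One small correction at the point you flagged. The operator $d_{1,u,\xi}-d_1$ is the zeroth-order operator built from $A_u-A_{u_0}$, $D_u-D_{u_0}$, $B_u\xi$ and $\xi B_u$. So what must be small is $\xi$ together with the variation of the diagonal blocks. The block $B_u$ only needs to be bounded; it carries the extension data and is generally not small. The block $C_u$ does not enter this operator at all.
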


\begin{proof}
Let $P_{\mathrm{ex}}$ and $P_{\mathrm{har}}$ denote the projections in \eqref{eq:Hodge-C1} onto $\im d_0$ and $\cH^1_F$, respectively.  Decompose the graph variable as
\[
  \xi=z+y,
  \qquad
  z\in\cH^0_F,
  \quad
  y\in(\ker d_0)^\perp.
\]
The derivative at the origin of
\[
  (u,z,y)\longmapsto P_{\mathrm{ex}}\Psi(u,z+y)
\]
in the $y$-direction is
\[
  d_0:(\ker d_0)^\perp\cap W^{k+1,2}
  \xrightarrow{\;\cong\;}
  \im d_0\cap W^{k,2}.
\]
The inverse is bounded by the Green operator of the degree-zero Laplacian $d_0^*d_0$.  The Banach implicit-function theorem therefore gives a unique smooth map
\[
  y=Y(u,z)
\]
for which
\begin{equation}\label{eq:exact-projection-solved}
  P_{\mathrm{ex}}\Psi(u,z+Y(u,z))=0.
\end{equation}
Define
\begin{equation}\label{eq:kappa-def}
  \kappa_F(u,z)
  :=P_{\mathrm{har}}\Psi(u,z+Y(u,z)).
\end{equation}
This is finite-dimensional and smooth.

It remains to justify that $\kappa_F=0$ is equivalent to the full equation, rather than only to the vanishing of two projections.  Under \eqref{eq:exact-projection-solved} and $\kappa_F=0$, the Hodge decomposition places the residual defect in $\im d_1^*$:
\[
  \Psi=d_1^*\beta.
\]
The coexact summand is
\[
  \overline{\im d_1^*}=(\ker d_1)^\perp.
\]
On this subspace the elliptic estimate for the complex gives
\begin{equation}\label{eq:coexact-estimate}
  \norm{\Psi}_{W^{k,2}}
  \le C\norm{d_1\Psi}_{W^{k-1,2}}.
\end{equation}
The nonlinear Bianchi identity \eqref{eq:Psi-Bianchi} may be written
\[
  d_1\Psi=-(d_{1,u,\xi}-d_1)\Psi.
\]
Because $d_{1,u,\xi}-d_1$ has coefficients tending to zero with $(u-u_0,\xi)$, shrinking $B\times Z$ makes its operator norm in \eqref{eq:coexact-estimate} smaller than $C^{-1}$.  Hence $\Psi=0$.  The converse is immediate.  Therefore the zero set of \eqref{eq:kappa-def} parametrizes exactly the nearby invariant graphs in the chosen slice.

Finally, differentiate at $(u_0,0)$.  The derivative of $\Psi$ in the parameter direction is the lower-left block $(1-p_0)\eta p_0$.  The derivative contributed by $Y$ lies in $\im d_0$ and is killed by $P_{\mathrm{har}}$.  This gives \eqref{eq:kuranishi-derivative-ob1}.
\end{proof}

\begin{corollary}[Smooth lifting when the obstruction space vanishes]\label{thm:smooth-lifting-subobject}
Under the hypotheses of \cref{thm:kuranishi-subobject}, suppose
\begin{equation}\label{eq:H1-vanishing}
  \mathbf H^1\bigl(\cC^\bullet_F\bigr)=0.
\end{equation}
Then, after shrinking the parameter neighbourhood, $F_0$ extends to a smooth family of $\cD''_u$-invariant subbundles of the same rank.  The family need not be unique when $\mathbf H^0(\cC^\bullet_F)\ne0$.
\end{corollary}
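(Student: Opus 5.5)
The plan is to read the corollary directly off the Kuranishi model of \cref{thm:kuranishi-subobject}. The complex $\cC^\bullet_F$ is elliptic on compact $X$, so its degree-one hypercohomology is represented by the harmonic space. The hypothesis \eqref{eq:H1-vanishing} therefore gives $\cH^1_F=0$. The Kuranishi map \eqref{eq:kuranishi-subobject-map} then takes values in the zero vector space and vanishes identically on $B\times Z$. By the bijection in \cref{thm:kuranishi-subobject}, every point $(u,z)\in B\times Z$ then corresponds to an invariant graph. In particular, taking $z=0$ gives, for each $u\in B$, a section
\[
  \xi(u)=Y(u,0)
\]
with $\Psi(u,\xi(u))=0$, and $\xi(u_0)=Y(u_0,0)=0$ by uniqueness in the implicit-function step.

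Next I would check that the resulting subbundles form a smooth family. By the implicit-function theorem, $u\mapsto Y(u,0)$ is a smooth map into $W^{k+1,2}(\Hom(F_0,Q_0))$. I would promote this to joint smoothness on $B\times X$ exactly as in \cref{prop:joint-regularity} and \cref{prop:parametric-bootstrap}. The defining equations are $P_{\mathrm{ex}}\Psi=0$ together with the gauge condition $y\in(\ker d_0)^\perp$. This gauge condition can be written as $d_0^*$-type constraints; more precisely, one can note that $\Psi=0$ and $y\perp\ker d_0$ give an elliptic first-order Riccati system whose principal part is $d_0\oplus d_0^*$-type. Differentiating in $u$ then gives linear elliptic equations with smooth coefficients, and bootstrapping gives every Sobolev order locally uniformly. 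Sobolev embedding then yields joint smoothness.

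The graph of $\xi(u)$ is then a smooth subbundle $F\subset E|_{B\times X}$ of rank $\rk F_0$. It is invariant under each $\cD''_u$ because $\Psi(u,\xi(u))=0$, and it restricts to $F_0$ at $u_0$. This is the main obstacle: making sure the regularity bootstrap is legitimate for the gauge-fixed Riccati equation, since $\Psi$ alone is not elliptic in $\xi$. The fix is the Coulomb-type slice condition already built into the decomposition $\xi=z+y$. Regularity in $X$ at fixed $u$ follows from ellipticity of $d_0^*d_0$ applied to the second-order equation obtained from $d_0^*\Psi=0$ on the slice.

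For non-uniqueness, I would observe that when $\mathbf H^0(\cC^\bullet_F)\neq0$ the space $Z\subset\cH^0_F$ is a positive-dimensional ball. Since $\kappa_F\equiv0$, every constant choice $z\in Z$, or any smooth map $u\mapsto z(u)\in Z$, gives another smooth invariant family $\operatorname{graph}(z(u)+Y(u,z(u)))$. These are distinct graphs for distinct $z$ by the bijection in \cref{thm:kuranishi-subobject}, which shows that the extension need not be unique.
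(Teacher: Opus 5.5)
Your proof is correct and has the same skeleton as the paper's: $\mathbf H^1(\cC^\bullet_F)=0$ gives $\cH^1_F=0$, hence $\kappa_F\equiv0$, and $z=0$ yields $\xi(u)=Y(u,0)$. You also note that $Y(u_0,0)=0$ by uniqueness, so the family really restricts to $F_0$; the paper leaves this implicit.

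The routes differ only in the joint-smoothness step. The paper reruns the implicit-function construction on $W^{j+1,2}\to W^{j,2}$ for each $j$, and identifies the new solution with $Y(u,0)$ by uniqueness in the lower-level slice. This avoids analysing the nonlinear operator. However, the level-$j$ neighbourhood may a priori shrink with $j$, so obtaining every Sobolev order on one fixed neighbourhood needs an extra re-centring argument. Your elliptic bootstrap works on a single neighbourhood with uniform constants, in the spirit of the proof of \cref{thm:Cd-stable-metrics}. It is also easier than you suggest, because the principal operator is the fixed Laplacian $d_0^*d_0$, independent of $u$.

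Your explanation of why the bootstrap is legitimate misattributes the mechanism, although the equation you finally use is the right one. The unknown $\xi$ is a zero-form. With $d_{0,u}\xi=D_u\xi-\xi A_u$, the map $\xi\mapsto\Psi(u,\xi)$ has principal part $d_0$, since the blocks $B_u,C_u$ are of order zero, and $d_0$ has injective symbol. So $\Psi$ is already overdetermined elliptic, and there is nothing gauge-like to fix. The complex $\cC^\bullet_F$ has no degree $-1$ term, and $y\perp\ker d_0$ is orthogonality to the finite-dimensional space $\cH^0_F$, not a Coulomb-type differential condition. No ``$d_0\oplus d_0^*$'' principal part arises. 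What actually drives the regularity is that the equation defining $Y$, namely $P_{\mathrm{ex}}\Psi=0$, is literally $d_0^*\Psi=0$, i.e.
\[
  d_0^*d_0\xi=-d_0^*C_u-d_0^*\bigl((d_{0,u}-d_0)\xi\bigr)+d_0^*(\xi B_u\xi).
\]
This is a semilinear second-order elliptic equation with first-order nonlinearity, and the slice condition contributes nothing to it.

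Separately, in your nonuniqueness remark a constant $z\neq0$ does not give an extension of $F_0$, because its fibre at $u_0$ is $\operatorname{graph}(z)$. Since $z\in\ker d_0$ and $Y\in(\ker d_0)^\perp$, the condition $z(u_0)+Y(u_0,z(u_0))=0$ forces $z(u_0)=0$. Take instead $z(u)=\phi(u)z_1$ with $0\neq z_1\in\cH^0_F$, $\phi(u_0)=0$ and $\phi\not\equiv0$. This gives distinct extensions of $F_0$ whenever the parameter space is positive-dimensional.
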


\begin{proof}
Condition \eqref{eq:H1-vanishing} is equivalent to $\cH^1_F=0$.  The Kuranishi map therefore has zero target.  Taking $z=0$ gives a solution
\[
  \xi(u)=Y(u,0)
\]
for every $u$ sufficiently close to $u_0$.  To obtain joint smoothness, repeat the implicit-function construction at each higher Sobolev level.  The coefficients of \eqref{eq:Psi-graph} are smooth, so the same linearized isomorphism persists after increasing the Sobolev index.  The higher-regularity solution agrees with the original one by uniqueness in the lower-level graph slice.  Hence $u\mapsto\xi(u)$ is smooth into every Sobolev completion, and Sobolev embedding yields a jointly smooth graph subbundle over the parameter neighbourhood times $X$.  Nonzero $\cH^0_F$ records the expected local nonuniqueness.
\end{proof}

\begin{remark}
The role of the Bianchi identity is essential.  Without \eqref{eq:Psi-Bianchi}, the coexact summand $\im d_1^*$ in \eqref{eq:Hodge-C1} would remain an infinite-dimensional equation, and the claimed finite-dimensional reduction would be false.  This is why the full elliptic Hom complex, rather than only the first differential, is the correct deformation object.
\end{remark}

Apply \cref{thm:smooth-lifting-subobject} successively to the terms of a Jordan--H\"older filtration.

\begin{corollary}[Acyclic criterion for relative Jordan--H\"older filtrations]\label{cor:acyclic-JH}
Let
\[
  0=F_0\subset F_1\subset\cdots\subset F_\ell=E_{u_0}
\]
be a Jordan--H\"older filtration of the central fibre.  Suppose that at each stage
\[
  \mathbf H^1\bigl(
  \cC^\bullet_{\Dol}(\Hom(F_i/F_{i-1},E_{u_0}/F_i))
  \bigr)=0
\]
for the corresponding quotient lifting problem.  Then, after shrinking the parameter neighbourhood, the filtration extends to a relative filtration by smooth invariant subbundles.  If the resulting quotient families remain stable and satisfy the NAH numerical conditions, this is a relative harmonic filtration and the family belongs to $\MHDol(X)$ locally.
\end{corollary}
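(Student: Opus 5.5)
We argue by induction on the filtration index, applying \cref{thm:smooth-lifting-subobject} once per stage and passing to a quotient family before the next stage. First, $F_1\subset E_{u_0}$ is a $\cD''_{u_0}$-invariant subbundle. The hypothesis at $i=1$ is $\mathbf H^1(\cC^\bullet_{\Dol}(\Hom(F_1,E_{u_0}/F_1)))=0$, which is exactly \eqref{eq:H1-vanishing} for $F_1$. \Cref{thm:smooth-lifting-subobject} then gives, after shrinking the parameter neighbourhood to $B_1$, a smooth family of invariant subbundles $\mathcal E_1\subset E|_{B_1\times X}$ with $\mathcal E_1|_{u_0}=F_1$.

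Next we form the quotient family $E^{(1)}=E/\mathcal E_1$ over $B_1\times X$. Because $\mathcal E_1$ is a smooth invariant subbundle, $\cD''$ descends to a smooth integrable Higgs operator on $E^{(1)}$. A $\kappa$-orthogonal complement identifies $E^{(1)}$ with a smooth subbundle, so it is again a smooth relative Higgs bundle, with central fibre $E_{u_0}/F_1$. The image $F_2/F_1\subset E_{u_0}/F_1$ is invariant. To lift it, \cref{thm:smooth-lifting-subobject} applied to the family $E^{(1)}$ needs vanishing of $\mathbf H^1$ for $\Hom(F_2/F_1,(E_{u_0}/F_1)/(F_2/F_1))=\Hom(F_2/F_1,E_{u_0}/F_2)$. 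This is precisely the stage-$2$ hypothesis. We obtain $\overline{\mathcal E}_2\subset E^{(1)}$ over a smaller $B_2$ and define $\mathcal E_2$ as its preimage under $E\to E^{(1)}$. The preimage of a smooth subbundle under a smooth surjection of bundles is a smooth subbundle containing $\mathcal E_1$, and it is invariant because both $\mathcal E_1$ and $\overline{\mathcal E}_2$ are invariant. Iterating $\ell-1$ times, and shrinking finitely many times, gives $0\subset\mathcal E_1\subset\cdots\subset\mathcal E_\ell=E$, each term extending $F_i$. The final stage is automatic, since $F_\ell=E_{u_0}$. The graded pieces $\mathcal E_i/\mathcal E_{i-1}$ are smooth Higgs families with central fibres $F_i/F_{i-1}$.

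For the last assertion, if these quotient families stay stable and satisfy the NAH numerical conditions, the filtration is a relative Jordan--H\"older filtration. \Cref{cor:relative-JH-criterion} (equivalently, \cref{thm:intro-smooth-metrics} applied to each graded piece, followed by \cref{thm:filtration-criterion}) then shows the filtration is relative harmonic and that the family lies in $\MHDol(X)$ locally.

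The main point to check is that the stage-$i$ hypothesis matches the obstruction space of \cref{thm:smooth-lifting-subobject} for the quotient family. That theorem uses the central-fibre complex $\cC^\bullet_F$ of the family it is applied to. For $E^{(i-1)}=E/\mathcal E_{i-1}$, the central fibre is $E_{u_0}/F_{i-1}$ with the induced Higgs operator, and the subobject is $F_i/F_{i-1}$. The relevant complex is therefore the Dolbeault complex of $\Hom(F_i/F_{i-1},E_{u_0}/F_i)$ with the induced quotient differentials, which is the stated group. We must also confirm that forming smooth quotients preserves the hypotheses of \cref{thm:kuranishi-subobject}: a smooth family, and an invariant central subbundle, not merely a subsheaf. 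The latter holds because the $F_i$ of the Jordan--H\"older filtration are taken to be subbundles, as in the statement.
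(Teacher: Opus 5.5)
Your proposal is correct and is essentially the paper's own proof: it inducts through the quotient families $E/\mathcal E_{i-1}$, applies \cref{thm:smooth-lifting-subobject} at each stage, pulls the lifted subbundle back to $E$, and finishes with \cref{thm:intro-smooth-metrics} on the graded pieces together with \cref{thm:filtration-criterion}. You also check explicitly that the stage-$i$ hypothesis is exactly the obstruction space $\cC^\bullet_F$ of the quotient lifting problem; the paper leaves this step implicit.
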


\begin{proof}
We argue inductively.  The first step applies \cref{thm:smooth-lifting-subobject} to $F_1\subset E_{u_0}$, with quotient $E_{u_0}/F_1$, and produces a smooth invariant family $\mathcal F_1\subset E$ after shrinking the base.  Suppose smooth invariant families
\[
  0=\mathcal F_0\subset\mathcal F_1\subset\cdots\subset\mathcal F_{i-1}
\]
have already been constructed.  Form the smooth relative quotient $E/\mathcal F_{i-1}$.  The central fibre contains the invariant subbundle $F_i/F_{i-1}$.  By hypothesis, the first hypercohomology of the Hom complex governing this quotient lifting problem vanishes.  Applying \cref{thm:smooth-lifting-subobject} in the quotient yields a smooth invariant subbundle near $u_0$; its inverse image in $E$ is $\mathcal F_i$.  Since only finitely many stages occur, the parameter neighbourhood may be shrunk once so that all steps are simultaneously defined.

The successive quotients $\mathcal F_i/\mathcal F_{i-1}$ are smooth Higgs families.  Under the final stability and numerical hypotheses, \cref{thm:intro-smooth-metrics} supplies local smooth harmonic metrics on them.  The resulting filtration is therefore relative harmonic in the sense of \cref{def:relative-harmonic-filtration}, and \cref{thm:filtration-criterion} gives local membership in $\MHDol(X)$.
\end{proof}

\subsection{Lifting filtrations and the geometry of constant type}

The same lifting problem can be viewed geometrically as a section problem for the relative Grassmannian.  This viewpoint clarifies why the projection equation is coordinate independent and how local Kuranishi charts fit together.

A rank-$r'$ smooth subbundle of $E\to U\times X$ is a smooth section of the relative Grassmannian bundle
\[
  \operatorname{Gr}_{r'}(E)\to U\times X.
\]
Thus a fibrewise collection $F_u$ assembles smoothly exactly when the induced map
\[
  U\times X\to\operatorname{Gr}_{r'}(E)
\]
is smooth.  In many moduli problems one knows only that $F_u$ exists abstractly, with no canonical choice.  The set of possible $F_u$ can itself jump or carry monodromy.  This is the zeroth-order obstruction preceding the cohomological obstruction \eqref{eq:first-obstruction}.

\begin{definition}[Filtration-choice space]\label{def:filtration-choice-space}
For a family $(E,\cD'')$, let $\mathscr F_{r'}(E,\cD'')$ be the fibred space over $U$ whose fibre at $u$ is the set, or more naturally groupoid, of rank-$r'$ invariant Higgs subbundles of $E_u$ with prescribed slope and numerical type.
\end{definition}

A relative subbundle is a smooth section of this fibred object.  The geometry of $\mathscr F_{r'}$ can be singular.  Our obstruction classes describe the infinitesimal failure of local sections once a central choice is fixed.

When the Jordan--H\"older type is constant, the preceding machinery suggests a tractable regime in which successive subbundles may be lifted inductively.  We state precisely what the present argument proves and what additional surjectivity hypotheses are still required.

It is tempting to conjecture that constant Jordan--H\"older type is sufficient for a relative filtration.  This requires care: constancy of the isomorphism classes of graded factors does not by itself provide smooth subbundles or rule out monodromy.

\begin{assumption}[Strongly constant Jordan--H\"older type]\label{ass:strong-JH}
A semistable family has strongly constant Jordan--H\"older type on $U$ if:
\begin{enumerate}[label=(\roman*)]
\item the stable graded factors form smooth stable families after a finite cover of $U$;
\item their multiplicities are constant;
\item the successive invariant-subobject choice spaces admit compatible
local smooth sections that form a filtration;
\item the extension classes admit smooth relative cocycle representatives.
\end{enumerate}
\end{assumption}

These hypotheses are stronger than mere fibrewise constancy, but they isolate a regime in which the geometric filtration criterion can be applied without further obstruction theory.

\begin{proposition}[Strong constant type implies membership]\label{prop:strong-constant-membership}
Let a semistable family satisfy \cref{ass:strong-JH}.  Assume in addition that every stable graded-factor family satisfies the NAH numerical conditions.  Then the family belongs locally to $\MHDol(X)$.
\end{proposition}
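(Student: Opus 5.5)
The argument reduces to the relative harmonic filtration criterion, \cref{thm:filtration-criterion}. Membership in $\MHDol(X)$ is local on the parameter manifold, so it suffices to produce, near each $u_0\in U$, a relative harmonic filtration of the restricted family. The finite cover in \cref{ass:strong-JH}(i) causes no difficulty. Over a small contractible neighbourhood $V$ of $u_0$ the finite cover is trivial, so the stable graded-factor families are defined on $V$ itself. Equivalently, one works upstairs and uses the local nature of the criterion.

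On such a $V$, hypothesis (iii) supplies a filtration
\[
  0=\mathcal F_0\subset\mathcal F_1\subset\cdots\subset\mathcal F_\ell=E|_{V\times X}
\]
by smooth $\cD''$-invariant subbundles, obtained as compatible local smooth sections of the successive choice spaces $\mathscr F_{r'}$. The quotients $Q_i=\mathcal F_i/\mathcal F_{i-1}$ are then smooth Higgs families, as in the proof of \cref{cor:acyclic-JH}. The key point to check is that each $Q_i$ is fibrewise isomorphic to the corresponding stable graded factor. The filtration restricts on each fibre to a Jordan--H\"older filtration, so each $Q_{i,u}$ is stable. Since the filtration length is constant and the multiplicities are constant by (ii), the quotients match the stable factor families after a possible reordering, which is locally constant.

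With this in hand, each $Q_i$ is a smooth stable family. By hypothesis its factor satisfies the NAH numerical conditions, so \cref{thm:intro-smooth-metrics} gives a smooth harmonic metric on $Q_i$. Hence $(\mathcal F_i)$ is a relative Jordan--H\"older filtration in the sense of \cref{def:relative-JH}, and \cref{cor:relative-JH-criterion} (equivalently \cref{thm:filtration-criterion}) places $E|_{V\times X}$ in $\MHDol(X)$. Hypothesis (iv) is consistent with this: in a smooth splitting the off-diagonal blocks $\beta_{ij}$ of \eqref{eq:upper-triangular-D} are automatically smooth relative cocycles solving \eqref{eq:upper-MC}. So (iv) is not needed beyond what (iii) already gives, and I will only remark on this.

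The main obstacle is the identification step. One must verify that the invariant subbundles given by (iii) really have stable quotients, rather than merely semistable ones. This needs the choice spaces in (iii) to be read as Jordan--H\"older choices, meaning invariant subobjects of the prescribed slope and numerical type as in \cref{def:filtration-choice-space}. Under that reading, stability of the quotients follows fibrewise by the Jordan--H\"older property, and the rest is formal.
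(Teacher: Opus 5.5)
Your proposal is correct and follows essentially the paper's route: take the smooth invariant subbundles supplied by (iii), apply \cref{thm:intro-smooth-metrics} to the finitely many stable graded quotients (shrinking the neighbourhood once), and conclude with \cref{thm:filtration-criterion}. The one difference is that the paper uses (iv) to identify the successive quotients with the stable factor families of (i)--(ii). You instead obtain stability and the numerical conditions for the quotients fibrewise from the Jordan--H\"older reading of the choice spaces, and you rightly note that any smooth splitting of the filtration already yields smooth relative cocycles, so (iv) adds nothing to your version.
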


\begin{proof}
Work on a neighbourhood on which the local sections in
\cref{ass:strong-JH}(iii) have been chosen.  They give smooth invariant
subbundles of the prescribed ranks.  The smooth relative cocycle data
in (iv) identify the successive quotients and extension maps, so these
subbundles form a relative filtration whose graded pieces are the
stable factor families supplied by (i)--(ii).  By the additional
numerical hypothesis, \cref{thm:intro-smooth-metrics} gives a smooth
harmonic metric on every graded factor; since there are finitely many
factors, the neighbourhood may be shrunk once for all of them.  The
filtration is therefore relative harmonic, and
\cref{thm:filtration-criterion} gives local membership in
$\MHDol(X)$.
\end{proof}

The proposition isolates the geometric input required by this route to membership in $\MHDol(X)$: smooth relative subobjects, smooth extension cocycles, and harmonic graded factors.

These results lead to a sharper form of the family-level question left open in our earlier paper: one must determine whether constant type alone forces the needed local lifting conditions, or whether further obstruction strata occur.

Although the universal form of Question 5.3.2 of \cite{AzamRayan2026} will be disproved by \cref{prop:square-root-polystable-failure}, the filtration criterion decomposes the characterization of its positive locus into concrete subproblems.

\begin{problem}[Semistable family problem]\label{prob:semistable-family}
Determine conditions under which a smooth family of semistable Higgs bundles admits local relative harmonic filtrations.  In particular:
\begin{enumerate}[label=(\alph*)]
\item characterize the local smoothness of the invariant-subobject choice spaces;
\item compute the obstruction classes \eqref{eq:first-obstruction} and \eqref{eq:obN};
\item determine whether a suitable stratification by Jordan--H\"older and stabilizer type forces the obstructions to vanish;
\item compare these conditions with fibrewise quasi-fullness properties of the harmonic forgetful dg-functor.
\end{enumerate}
\end{problem}

This formulation shows that the remaining semistable issue is not simply the existence of harmonic metrics.  It is a relative subobject and extension problem.

\section{Finite regularity and singular parameter spaces}\label{sec:finite-regularity}

Question~5.3.4 of \cite{AzamRayan2026} asked what remains of the family-level correspondence when the parameter dependence is weakened from smooth to $C^d$, and it suggested that one should eventually allow families over $C^\infty$-schemes.  The analytic constructions above give a more definite answer than the earlier paper could provide.  The decisive point is that the moment-map equation was placed on the ambient affine Sobolev space \eqref{eq:universal-moment-map}, rather than on the integrable Higgs locus itself.  The latter can be singular, but the implicit-function theorem does not need it to be a Banach manifold.

We first formulate the mixed regularity used below.  The definition is local and is arranged so that parameter regularity can be read as regularity of maps into the Fr\'echet space of smooth coefficients on $X$.

\subsection{Mixed \texorpdfstring{$C^d$}{Cd} parameter regularity and stable harmonic metrics}

\begin{definition}[Mixed $C^d_UC^\infty_X$ family]\label{def:mixed-Cd-family}
Let $d\in\{0,1,2,\ldots,\infty\}$ and let $U$ be a $C^d$ manifold.  A vector-bundle family $E\to U\times X$ is of class $C^d_UC^\infty_X$ if, for every $u_0\in U$, there is a neighbourhood $V$ and an isomorphism of complex bundles
\[
  E|_{V\times X}\cong \pi_X^*E_{u_0}
\]
whose local matrix coefficients, and those of its inverse, are smooth in the $X$-variables and $C^d$ in $u$ with values in $C^\infty(X)$.  Relative tensor fields are required to have the same mixed regularity in such a family trivialization.  For $d=0$ this means continuity into $C^\infty(X)$ with its usual Fr\'echet topology.  A Higgs family $(E,\cD'')$ is a $C^d_UC^\infty_X$ family if $E$ has this local family-triviality and the coefficients of $\cD''$ have the same regularity, with every slice integrable.
\end{definition}

Equivalently, in a local family trivialization every coefficient map is $C^d$ into each Sobolev completion.  The local product-triviality in the definition is the bundle-theoretic hypothesis needed to place the nonlinear equation on fixed Banach spaces.  In the smooth case it is obtained by the parameter-direction connection used in \cref{sec:analytic-setup}.  For finite $d$ it should be viewed as part of the chosen model of a mixed-regularity family; extending the theorem to a broader sheaf-theoretic category without this explicit triviality is a separate bundle-regularity question.  Apart from that point, the definition matches the picture proposed in Question~5.3.4 of \cite{AzamRayan2026}: all $X$-derivatives vary with the prescribed parameter regularity, and the relative operators do not differentiate along $U$.

\begin{theorem}[Finite-regularity stable harmonic metrics]\label{thm:Cd-stable-metrics}
Let $d\in\{0,1,2,\ldots,\infty\}$, let $U$ be a $C^d$ manifold, and let
\[
  (E,\cD'')\longrightarrow U\times X
\]
be a $C^d_UC^\infty_X$ family of stable Higgs bundles satisfying the NAH numerical conditions fibrewise.  Then the following hold globally over $U$.
\begin{enumerate}[label=(\roman*)]
\item The determinant family admits a Hermitian--Einstein metric $q$ of class $C^d_UC^\infty_X$ on all of $\det E$.
\item There is a harmonic metric $h$ on all of $E$ of class $C^d_UC^\infty_X$ with $\det h=q$.
\item Once $q$ is prescribed, $h$ is unique.
\end{enumerate}
For $d=0$, the conclusion means that
\[
  u\longmapsto h_u
\]
is continuous as a map into the Fr\'echet space of smooth Hermitian metrics on the fixed fibre bundle after local trivialization.
\end{theorem}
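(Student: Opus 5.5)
The plan is to rerun the smooth proof of \cref{thm:intro-smooth-metrics} with the parameter manifold replaced by a $C^d$ one. The key observation is that every analytic step solves an equation on fixed Banach spaces in which $u$ enters only through coefficient maps. It is therefore enough to check that the $C^\infty$ Banach implicit-function theorem has a $C^d$ version: if $\cF$ is $C^\infty$ in $s$ and $C^d$ jointly, then the solution map is $C^d$. For $d=0$ we use the continuous-parameter form, which requires $D_s\cF$ to be continuous in $(u,s)$. Work in a local family trivialization provided by \cref{def:mixed-Cd-family}. The coefficient map $u\mapsto\cD''_u$ is then $C^d$ into the ambient affine space $\cA^{k+1}$ for every $k$. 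The universal map $\Phi$ of \eqref{eq:universal-moment-map} is real analytic on $\cA^{k+1}\times\cS^0_{k+2}$ by \cref{lem:F-smooth}, and it is defined on the ambient space with no integrability imposed. Hence $\cF(u,s)=\Phi(\cD''_u,s)$ is a composition of a $C^d$ map with an analytic one.

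\textbf{Determinant and background metric.} For (i), repeat \cref{prop:smooth-det}. Choose a $C^d_UC^\infty_X$ background metric $q^0$ on $\det E$, glued globally by a partition of unity on the $C^d$ manifold $U$; $C^d$ partitions of unity suffice, and positivity is preserved by convexity. The source $-2\sqrt{-1}\Lambda F_{q^0_u}$ is then $C^d$ into $W^{k,2}_0$ for every $k$. The fixed bounded inverse $\Delta^{-1}$ preserves this, so $f$ is $C^d$ into every Sobolev space, hence into $C^\infty(X)$. Uniqueness of the mean-zero solution makes the construction global. Next construct the background $\kappa$ as in \eqref{eq:analytic-background-prescribed-det}: $\kappa_u=(q_u/\det h_0)^{1/r}h_0$, with $q$ renormalized by a $u$-dependent constant so that $q_{u_0}=\det h_0$. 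This avoids bump functions and is visibly $C^d$. The isometries $b_u$ and $\mathcal J_{u,s}$ are obtained by analytic functional calculus, so $\cF$ from \eqref{eq:stable-F} is $C^d$ jointly and $C^\infty$ in $s$, with $D_s\cF(u_0,0)=L_{h_0}$ invertible by \cref{prop:stable-kernel}.

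\textbf{Local solution and regularity.} For (ii), apply the $C^d$ implicit-function theorem to get $s:V\to W^{k+2,2}$ of class $C^d$. The case $d=0$ needs a short separate argument: uniform invertibility of $D_s\cF$ near $(u_0,0)$ and a contraction-mapping argument give a continuous solution map. To upgrade from one Sobolev level to all levels, avoid the differentiation argument of \cref{prop:joint-regularity}, since for small $d$ it would require too many parameter derivatives. Instead, solve the same equation at every higher level $k'\ge k$, using the same central solution $h_0$, which is smooth. By uniqueness in the $W^{k+2,2}$ neighbourhood, the level-$k'$ solutions coincide with the level-$k$ solution on a shrinking neighbourhood $V_{k'}$.

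This is the step I expect to be the main obstacle: the neighbourhoods $V_{k'}$ might shrink to $\{u_0\}$ as $k'\to\infty$. To get around this, I plan to argue pointwise. Each $u\in V$ is itself a smooth stable point with smooth solution $h_u$. Rerunning the implicit-function argument at level $k'$ around $u$ shows that $s$ is $C^d$ into $W^{k'+2,2}$ near $u$, for every $k'$ separately. This is what ``$C^d$ into the Fr\'echet space $C^\infty(X)$'' means locally. Alternatively, one can use an elliptic estimate applied to differences, namely the linearized equation as in \cref{prop:local-Lipschitz}, which bootstraps the continuity modulus of $s$ one Sobolev level at a time on a fixed neighbourhood. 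Flatness follows from the Chern--Weil identity as before.

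\textbf{Uniqueness and gluing.} For (iii), use \cref{prop:normalized-uniqueness}; it is purely fibrewise, so it applies unchanged. The local $q$-normalized solutions then agree on overlaps and glue as in \cref{cor:metrics-glue}. The glued metric is of class $C^d_UC^\infty_X$ because this class is local on $U$.
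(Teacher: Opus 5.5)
Your proof is correct, but it upgrades the Sobolev regularity by a different route from the paper.

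\textbf{Setting up the equation.} The paper makes the full coefficient datum $p=(\cD'',q)$ the external Banach variable. The ordinary smooth implicit-function theorem then gives a parameter-free smooth solution operator $\Sigma_k$ as in \eqref{eq:Cd-solution-operator}, and $s=\Sigma_{k_0}\circ p$ is $C^d$ simply by composition. Your $C^d$ implicit-function theorem with parameters (contraction mapping when $d=0$) amounts to the same thing. The paper's formulation is the one reused for ambiently smooth singular bases in \cref{thm:ambient-singular-stable}.

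\textbf{Higher Sobolev regularity.} The paper bootstraps on one fixed neighbourhood. For $d\ge1$ it differentiates the equation at most $d$ times in $u$ and gains $X$-regularity of each $\partial_u^\alpha s$ from the uniform elliptic estimate \eqref{eq:uniform-elliptic-estimate}. For $d=0$ it uses a mean-value difference argument.

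You instead re-apply the implicit-function theorem at every $u_1\in V$ and every level $k'$. This works because $h_{u_1}$ is smooth and $L_{h_{u_1}}$ is invertible by stability at $u_1$. You then identify the new local solution with $s$, either by \cref{prop:normalized-uniqueness} or by the level-$k$ uniqueness ball, using $W^{k'+2,2}\hookrightarrow W^{k+2,2}$. Since being $C^d$ into a fixed Banach space is a local property, $s$ is $C^d$ into every $W^{j,2}$ on all of $V$. This removes your shrinking-neighbourhood worry without intersecting any neighbourhoods.

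\textbf{What each route buys.} Your route is qualitative and treats all $d$ uniformly, with no estimates and no parameter differentiation. The paper's route gives control on a single neighbourhood and uses only uniform strong ellipticity for the regularity gain, not invertibility at every point.

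Your stated reason for avoiding differentiation is mistaken. With $C^d$ data one differentiates only $d$ times, and the extra $X$-regularity comes from elliptic estimates rather than from further parameter derivatives (cf.\ \cref{rem:no-loss-Cd}).

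\textbf{A small fix.} To arrange $\kappa_{u_0}=h_0$, rescale $h_0$ by a constant so that $\det h_0=q_{u_0}$, rather than renormalizing $q$. Otherwise the local solutions are not all normalized by the same global $q$, and the gluing step of \cref{cor:metrics-glue} does not apply as you state it.
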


\begin{proof}
We separate the determinant equation from the trace-free equation, as in the smooth proof.  A $C^d$ partition of unity (a continuous partition when $d=0$) applied to local metrics gives a global positive background metric on $\det E$ of class $C^d_UC^\infty_X$.  The scalar source in the determinant Poisson equation \eqref{eq:det-poisson} is globally defined and, in every family chart, is $C^d$ as a map into $C^\infty(X)$.  Composition with the fixed inverse Laplacian on mean-zero functions gives a global determinant metric $q$ of the same mixed regularity.

Now fix $u_0\in U$ and choose a mixed-regularity local trivialization of the family over a coordinate neighbourhood $V$ of $u_0$.  By \cref{def:mixed-Cd-family}, the coefficient maps
\[
  u\longmapsto \cD''_u
\]
are $C^d$ into every Sobolev affine space used in \cref{sec:analytic-setup}.

We next solve the trace-free equation.  The proof of \cref{thm:sobolev-family} can be written with the full local coefficient datum
\[
  p=(\cD'',q)
\]
as the parameter.  More precisely, after fixing the base harmonic metric $h_0$, the constructions \eqref{eq:transported-s}--\eqref{eq:stable-F} define, on a neighbourhood of the base datum $p_0$, a smooth Banach map
\begin{equation}\label{eq:Cd-full-moment-map}
  \widehat\Phi_k:\cP_k\times \cS^0_{k+2}\longrightarrow \cT^0_k,
\end{equation}
where $\cP_k$ is a local Banach space in which the Higgs coefficients
have $W^{k+1,2}$ regularity and the determinant backgrounds have
$W^{k+2,2}$ regularity.  The extra derivative on the Higgs coefficient
is required by the curvature term, as explained after
\eqref{eq:ambient-Higgs-affine-space}; the determinant background enters
through the metric chart and is taken at the metric regularity.  The
integrability equation is not imposed in the definition of $\cP_k$.
At $(p_0,0)$,
\[
  D_s\widehat\Phi_k(p_0,0)=L_{h_0},
\]
which is an isomorphism by \cref{prop:stable-kernel}.  The Banach implicit-function theorem therefore produces a smooth local solution operator
\begin{equation}\label{eq:Cd-solution-operator}
  \Sigma_k:\cO_k\subset\cP_k\longrightarrow \cS^0_{k+2}
\end{equation}
with
\[
  \widehat\Phi_k(p,\Sigma_k(p))=0.
\]
This is the same implicit-function argument as before, but with the coefficient datum itself, rather than a preassigned smooth manifold map $u\mapsto p_u$, used as the external variable.

Choose one Sobolev index $k_0$ satisfying \eqref{eq:k-assumption}.  After shrinking $V$ so that the coefficient datum stays in the implicit-function neighbourhood $\cO_{k_0}$, the mixed family gives a $C^d$ map
\[
  p:V\longrightarrow\cO_{k_0}\subset\cP_{k_0},
\]
and hence
\[
  s(u)=\Sigma_{k_0}(p_u)
\]
is $C^d$ as a map to $W^{k_0+2,2}$.  At this stage we do not intersect a countable family of implicit-function neighbourhoods.  Instead we bootstrap on one fixed, slightly smaller neighbourhood.

The solution satisfies a quasilinear elliptic equation in the $X$-variables whose coefficients are $C^d$ in $u$ with values in $C^\infty(X)$.  Ordinary elliptic regularity first makes every $s(u)$ smooth in $x$.  To control the parameter dependence at higher $X$-orders, use the uniform elliptic estimate \eqref{eq:uniform-elliptic-estimate} for the linearization
\[
  L_{u,s(u)}=D_s\widehat\Phi_{k_0}(p_u,s(u)).
\]
For $d\ge1$, differentiating the equation once in a parameter direction gives
\[
  L_{u,s(u)}(\partial_{u_i}s)
  =-D_p\widehat\Phi_{k_0}(p_u,s(u))\,\partial_{u_i}p.
\]
Interpreting the same differential expression in higher Sobolev scales, the right-hand side has arbitrarily high $X$-regularity because the coefficient datum does.  Uniform elliptic estimates therefore improve the $X$-regularity of $\partial_{u_i}s$.  Repeated differentiation gives the same conclusion for every mixed parameter derivative of order at most $d$; at each step the right-hand side involves only coefficient derivatives and lower parameter derivatives of $s$ already controlled.  This is the finite-regularity version of the bootstrap in \cref{prop:parametric-bootstrap,prop:joint-regularity}.

For $d=0$, no differentiation in $u$ is available or needed.  Subtract the equations at $u$ and $v$ and use the Banach-space mean-value formula in the metric variable to write
\[
  \overline L_{u,v}\bigl(s(u)-s(v)\bigr)=R_{u,v},
\]
where $\overline L_{u,v}$ is the average of the metric linearizations along the segment joining $s(v)$ to $s(u)$ and $R_{u,v}$ records the change of the coefficient datum.  After shrinking the neighbourhood, $\overline L_{u,v}$ remains uniformly strongly elliptic, while $R_{u,v}\to0$ in each Sobolev norm for which the already established lower-order terms are controlled.  The uniform elliptic estimate applied inductively therefore gives continuity of $s$ into every $W^{j,2}$.

Consequently, on one fixed neighbourhood of $u_0$, the map $u\mapsto s(u)$ is $C^d$ into every Sobolev space.  Sobolev embedding at arbitrarily high order gives the mixed regularity asserted in the theorem.  In particular, for every $\ell$ the map is $C^d$ into $C^\ell(X)$.  Hence $s$, and therefore
\[
  h_u=\kappa_u\exp(b_u^{-1}s(u)b_u),
\]
is of class $C^d_UC^\infty_X$.

The trace-free moment-map equation together with the determinant equation gives the full Hermitian--Yang--Mills--Higgs equation.  The fibrewise NAH numerical conditions then imply flatness of the associated harmonic connection exactly as in the proof of \cref{thm:intro-smooth-metrics}.  Repeating the local construction near every point of $U$ gives $q$-normalized mixed-regularity metrics on a cover.  Uniqueness for prescribed determinant is fibrewise \cref{prop:normalized-uniqueness}; it forces these local solutions to agree pointwise on overlaps.  They therefore glue to the asserted harmonic metric on all of $E$.
\end{proof}

\begin{remark}[Why finite parameter regularity is not lost]\label{rem:no-loss-Cd}
The equation differentiates only in the $X$-direction.  Parameter derivatives enter solely through the coefficient map $u\mapsto p_u$ and composition with the smooth solution operator \eqref{eq:Cd-solution-operator}.  Thus the argument does not consume one parameter derivative at each elliptic step.  This is the analytic counterpart of the observation in Question~5.3.4 of \cite{AzamRayan2026} that the partial Dolbeault and exterior differentials do not differentiate along $U$.
\end{remark}

The theorem gives a direct finite-regularity analogue of the stable plotwise transform.  To state it without rebuilding all of the abstract differential formalism, let $\mathscr M_{\Dol,d}^{\st,0}(X)$ and $\mathscr M_{\dR,d}^{\irr}(X)$ denote the stackifications, on the open-cover site of $C^d$ manifolds, of the evident prestacks of mixed $C^d_UC^\infty_X$ stable Higgs families satisfying the numerical conditions and mixed $C^d_UC^\infty_X$ irreducible flat families, respectively.  These are the finite-regularity versions anticipated in \cite[Question~5.3.4]{AzamRayan2026}.

\begin{corollary}[Finite-regularity stable transform]\label{cor:Cd-stable-transform}
For every $d\in\{0,1,2,\ldots,\infty\}$, the harmonic construction defines a natural transformation
\[
  \operatorname{NAH}^{\st}_d:
  \mathscr M_{\Dol,d}^{\st,0}(X)
  \longrightarrow
  \mathscr M_{\dR,d}^{\irr}(X).
\]
For $d=\infty$ it agrees with the smooth stable transform of \cref{thm:smooth-stable-transform}.
\end{corollary}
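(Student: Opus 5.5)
The proof mirrors \cref{thm:smooth-stable-transform}, with \cref{thm:Cd-stable-metrics} in place of \cref{thm:intro-smooth-metrics}. Let a $C^d$ manifold $U$ carry a mixed $C^d_UC^\infty_X$ stable family $(E,\cD'')$ satisfying the numerical conditions. By \cref{thm:Cd-stable-metrics}, choose a global $C^d_UC^\infty_X$ Hermitian--Einstein determinant metric $q$ and the unique $q$-normalized harmonic metric $h$. Define $\operatorname{NAH}_{d,U}(E,\cD'')=(E,\nabla_h)$ with $\nabla_h=\cD''+\cD'_h$, as in \eqref{eq:NAH-family-transform}. The first step is to check that $\nabla_h$ is again a mixed $C^d_UC^\infty_X$ family. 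In a family trivialization, the Chern formula \eqref{eq:chern-matrix} and the Higgs adjoint are algebraic in $H$, $H^{-1}$, $\partial_{X/U}H$ and the coefficients of $\cD''$. They involve only $X$-derivatives, so each coefficient is a composite of $C^d$ maps into the Fr\'echet algebra $C^\infty(X)$, and products and inversion are smooth there. Flatness on every slice follows from the numerical conditions, as in the smooth proof. Irreducibility of each slice follows from the fibrewise classical correspondence, since stable harmonic bundles give irreducible local systems.

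The second step is independence of choices. By \cref{prop:scalar-independence}, whose proof uses only that $c$ is constant along $X$ and so holds verbatim for continuous $c$, a different determinant normalization changes $h$ by an $X$-constant positive scalar and leaves $\nabla_h$ unchanged. The flat family is therefore canonical. Next comes naturality on the prestack. For a $C^d$ map $f:V\to U$, the pullback identity \eqref{eq:pullback-moment} and \cref{prop:pullback-solutions} show that $f_X^*h$ is harmonic and $f_X^*q$-normalized. The argument is purely pointwise, so it holds for $C^d$ maps. By uniqueness in \cref{thm:Cd-stable-metrics}(iii), $f_X^*h$ is the metric attached to the pulled-back family, hence $\operatorname{NAH}_{d,V}(f_X^*E)=f_X^*\operatorname{NAH}_{d,U}(E)$. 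For an isomorphism $g$ of Higgs families, $g$ carries the normalized harmonic metric to the normalized harmonic metric, using $\det g$ on $q$. It is then an isometry intertwining $\nabla_h$, and so gives an arrow of flat families. Composition is respected by uniqueness. This yields a morphism of prestacks on the open-cover site of $C^d$ manifolds, and the universal property of stackification extends it to the stacks.

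For $d=\infty$, the mixed families are exactly smooth families. The construction is then literally the one in \cref{thm:smooth-stable-transform}, and the two morphisms agree on the defining prestacks. Since both are induced from the same prestack map, they agree after stackification.

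I expect the main obstacle to be the first step at the endpoint $d=0$, together with arrows in the stackified setting. One must ensure that the Chern operator, which loses one $X$-derivative, is still continuous into $C^\infty(X)$; this holds because $h$ is continuous into every $C^\ell(X)$. One must also make sure that any fixed local trivialization used to measure regularity is compatible with the arrows, which are only $C^d$ in $u$. I would handle this by working throughout in the Fr\'echet topology of \cref{def:mixed-Cd-family}, where all the operations involved are continuous.
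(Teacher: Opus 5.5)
Your proposal is correct and follows the paper's own proof essentially step for step: \cref{thm:Cd-stable-metrics} supplies the global normalized metric, the Chern and adjoint formulas use only $X$-derivatives and algebraic operations and so preserve $C^d_UC^\infty_X$ regularity, and pullback compatibility, compatibility with arrows and independence of normalization come from normalized uniqueness together with \cref{prop:scalar-independence}, with the case $d=\infty$ identified with \cref{thm:smooth-stable-transform}. Your extra remarks on irreducibility of the slices and on composition with smooth Fr\'echet operations at $d=0$ make explicit points that the paper's proof leaves implicit.
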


\begin{proof}
By \cref{thm:Cd-stable-metrics}, a stable $C^d_UC^\infty_X$ family has a global harmonic metric of the same mixed regularity.  The operations forming the relative Chern connection and $\theta^{\dagger_h}$ involve $X$-differentiation and smooth algebraic operations on the coefficients of $h$ and $h^{-1}$.  They therefore preserve the mixed regularity.  Hence
\[
  \nabla_h=\cD''+\cD'_h
\]
is a $C^d_UC^\infty_X$ flat family.  Pullback compatibility follows from uniqueness of the normalized metric, exactly as in \cref{prop:pullback-solutions}; scalar independence removes the dependence on the chosen global normalization.  This gives the stated natural transformation.
More explicitly, a morphism of Higgs families transports a harmonic
metric to another harmonic metric on the target.  After determinant
normalization the transported and selected metrics agree; without a
matching normalization they differ by an $X$-constant positive scalar
and still yield the same relative flat connection.  Thus the assignment
respects arrows as well as objects.  The same pullback calculation gives
compatibility with restriction and descent on the $C^d$ site.  For
$d=\infty$ the construction and its uniqueness characterization are
identical to those of \cref{thm:smooth-stable-transform}, so the two
natural transformations agree.
\end{proof}

\subsection{Finite-regularity filtrations and the semistable locus}

The stable theorem answers only part of Question~5.3.4.  For a semistable family the relevant object is not a selected harmonic metric on the total bundle but a relative filtration whose graded pieces are harmonic families.  The filtration criterion of \cref{sec:filtrations} is insensitive to replacing smooth parameter dependence by $C^d$ dependence because its proof uses only invariant subbundles, quotients, finite extensions, and descent.

\begin{definition}[$C^d$ relative harmonic filtration]\label{def:Cd-harmonic-filtration}
Let $(E,\cD'')$ be a $C^d_UC^\infty_X$ Higgs family.  A $C^d$ relative harmonic filtration is a finite filtration
\[
  0=F_0\subset F_1\subset\cdots\subset F_\ell=E
\]
by $C^d_UC^\infty_X$ subbundles invariant under $\cD''$ such that every quotient $F_i/F_{i-1}$ admits a harmonic metric of class $C^d_UC^\infty_X$.
\end{definition}

Let $\mathscr M_{\Dol,d}^{\cH}(X)$ denote the stackification, on the site of $C^d$ manifolds, of the prestack generated from $C^d_UC^\infty_X$ harmonic Higgs families by finite iterated extensions.  This is the finite-regularity extension-generated stack proposed in Question~5.3.4.  We use this definition only for the following local geometric characterization; no new geometricity statement for the ambient $C^d$ moduli stacks is needed.

\begin{theorem}[Finite-regularity filtration criterion]\label{thm:Cd-filtration-criterion}
A $C^d_UC^\infty_X$ Higgs family $(E,\cD'')$ defines an object of $\mathscr M_{\Dol,d}^{\cH}(X)$ if and only if, locally on $U$, it admits a $C^d$ relative harmonic filtration.  The analogous statement holds for $C^d_UC^\infty_X$ flat families.
\end{theorem}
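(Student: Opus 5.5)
The plan is to run the proof of \cref{thm:filtration-criterion} again, line by line, in the $C^d$ category, and to check that every step uses only operations that keep $C^d_UC^\infty_X$ regularity. First I would prove a $C^d$ analogue of \cref{prop:filtration-extension}. The forward direction goes by induction on the number of extensions. For a single extension $0\to E'\to E\to E''\to0$ of $C^d_UC^\infty_X$ families, the subbundle $E'$ gives the two-step filtration, and its quotient $E''$ carries a $C^d$ harmonic metric by hypothesis. For iterated extensions I append the preimages of the filtration on the quotient. Preimages of $C^d_UC^\infty_X$ subbundles under a $C^d_UC^\infty_X$ surjection are again $C^d_UC^\infty_X$ subbundles: in a local family trivialization adapted to the sequence, this is a linear-algebra statement about coefficient maps into $C^\infty(X)$. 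The converse is formal. A $C^d$ relative harmonic filtration gives short exact sequences $0\to F_{i-1}\to F_i\to F_i/F_{i-1}\to 0$ of $C^d_UC^\infty_X$ families with harmonic quotients, and iterating them exhibits $E$ as a finite iterated extension.

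The one regularity input to check is that a $C^d_UC^\infty_X$ subbundle $F\subset E$ has a $C^d_UC^\infty_X$ complement, and that the quotient $E/F$ is again a $C^d_UC^\infty_X$ family in the sense of \cref{def:mixed-Cd-family}, including local product-triviality. For the complement I take a $C^d$ partition of unity on $U$ (continuous if $d=0$), glue local Hermitian metrics, and use the orthogonal projection. This projection is obtained from the local frame coefficients by Gram--Schmidt, i.e.\ by algebraic operations and inverse square roots of positive matrices, so it is $C^d$ into $C^\infty(X)$. Triviality of $F$ and of $F^\perp\cong E/F$ near $u_0$ then follows by composing the projection with the central identification: the map $E_{u_0}\supset F_{u_0}\to F_u$, $v\mapsto p_uv$, is invertible for $u$ near $u_0$, and its coefficients have the same mixed regularity. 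The induced operator on $E/F$ and the extension cocycle $\beta$ in \eqref{eq:extension-matrix} are block entries of $\cD''$ in this splitting. They therefore inherit the mixed regularity, and integrability holds slicewise.

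Next comes the stack-level argument. Membership in $\mathscr M^{\cH}_{\Dol,d}(X)$ is local on $U$ by the definition of stackification on the open-cover site of $C^d$ manifolds. A $C^d$ family that defines an object is thus, on a cover, isomorphic to an iterated extension of $C^d$ harmonic families, and the $C^d$ version of \cref{prop:filtration-extension} gives the local filtration. Here I transport along the isomorphism, which preserves regularity. Conversely, local filtrations produce local objects of the extension-generated prestack. The family's own transition isomorphisms supply descent data, so stackification glues them to a global object.

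For the de Rham side the same argument applies verbatim, with $\nabla$-invariant $C^d_UC^\infty_X$ subbundles and flat quotients that carry $C^d$ harmonic reductions. I expect the main obstacle to be the bundle-regularity step in the second paragraph: showing that subbundles and quotients stay inside the locally product-trivial $C^d_UC^\infty_X$ class for $d=0$, where only continuity into the Fr\'echet space $C^\infty(X)$ is available. I would handle it by working throughout in one fixed Sobolev/Fr\'echet chart and using only continuous algebraic operations, never parameter derivatives.
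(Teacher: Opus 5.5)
Your proposal is correct and follows essentially the same route as the paper. Both arguments rerun the filtration--extension equivalence and the local-membership/stackification argument in the mixed-regularity category, using orthogonal complements for a background metric and the fact that no step differentiates along $U$. Your explicit check that subbundles, preimages, complements and quotients stay locally product-trivial in the $C^d_UC^\infty_X$ class is a useful expansion of a point the paper only asserts.
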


\begin{proof}
Suppose first that the family is locally a finite iterated extension of $C^d_UC^\infty_X$ harmonic families.  Choose a local presentation by successive short exact sequences
\[
  0\to E_{i-1}\to E_i\to Q_i\to0,
\]
with each $Q_i$ harmonic.  In the category of mixed-regularity vector bundles, the injections identify $E_{i-1}$ with $C^d_UC^\infty_X$ subbundles of $E_i$; local complements can be chosen by taking orthogonal complements with respect to a background mixed-regularity Hermitian metric.  The connection or Higgs compatibility in the short exact sequence makes each subbundle invariant.  Pulling the filtration through the iterated extensions gives a $C^d$ relative harmonic filtration of the total family.

Conversely, let
\[
  0=F_0\subset F_1\subset\cdots\subset F_\ell=E
\]
be a $C^d$ relative harmonic filtration.  For every $i$ there is a short exact sequence of mixed-regularity Higgs families
\[
  0\longrightarrow F_{i-1}
  \longrightarrow F_i
  \longrightarrow F_i/F_{i-1}
  \longrightarrow0.
\]
Starting from $F_1\cong F_1/F_0$, which is harmonic, these sequences express $F_i$ inductively as a finite extension of the harmonic quotients.  Hence the total family belongs locally to the prestack generated under finite extensions by harmonic families.  Stackification and descent give membership in $\mathscr M_{\Dol,d}^{\cH}(X)$.

The proof on the de Rham side is identical after replacing Higgs-invariant subbundles by flat-invariant subbundles.  No step differentiates in the parameter direction, so there is no loss of $C^d$ regularity.
\end{proof}

\begin{corollary}[A semistable finite-regularity criterion]\label{cor:Cd-semistable-criterion}
Let $(E,\cD'')$ be a semistable $C^d_UC^\infty_X$ Higgs family.  Suppose that locally on $U$ it admits a filtration by $C^d_UC^\infty_X$ invariant subbundles whose graded quotients are stable and satisfy the NAH numerical conditions.  Then
\[
  (E,\cD'')\in\mathscr M_{\Dol,d}^{\cH}(X)
\]
locally on $U$.
\end{corollary}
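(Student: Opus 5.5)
The plan is to reduce the statement to the finite-regularity filtration criterion, \cref{thm:Cd-filtration-criterion}. That reduction requires showing that the given filtration is a $C^d$ relative harmonic filtration in the sense of \cref{def:Cd-harmonic-filtration}. I will work on an open set $V\subset U$ on which the assumed filtration
\[
  0=F_0\subset F_1\subset\cdots\subset F_\ell=E
\]
by $C^d_UC^\infty_X$ invariant subbundles exists. The subbundles are already invariant and of the right mixed regularity by hypothesis. The only remaining point is that each graded quotient $Q_i=F_i/F_{i-1}$ carries a harmonic metric of class $C^d_UC^\infty_X$.

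First I will verify that each $Q_i$, with its induced Higgs operator, is itself a $C^d_UC^\infty_X$ Higgs family in the sense of \cref{def:mixed-Cd-family}. To do this, choose a mixed-regularity background Hermitian metric on $F_i$; it can be obtained by a $C^d$ partition of unity, or a continuous one when $d=0$. Take the orthogonal complement of $F_{i-1}$ in $F_i$. This complement is a $C^d_UC^\infty_X$ subbundle mapping isomorphically onto $Q_i$, because orthogonal projection is algebraic in the coefficients of the metric and of the projection onto $F_{i-1}$. In a local family trivialization, the induced operator $\cD''_{Q_i}$ is the lower-right diagonal block of $\cD''$ in this splitting. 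Its coefficients therefore inherit the same regularity, and slicewise integrability follows from $(\cD'')^2=0$ together with invariance of $F_{i-1}$.

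The step I expect to be the main obstacle is the \emph{local product-triviality} requirement in \cref{def:mixed-Cd-family}. I must produce $Q_i|_{V\times X}\cong\pi_X^*Q_{i,u_0}$ with $C^d$ coefficients. I would obtain this by composing the given trivialization of $E$ with the $C^d$ family of projections: the map
\[
  Q_{i,u_0}\longrightarrow Q_{i,u},\qquad v\longmapsto p_u v,
\]
where $p_u$ is the orthogonal projection onto the complement at parameter $u$, is invertible for $u$ near $u_0$ and has mixed $C^d$ regularity. Shrinking $V$ if necessary gives the required trivialization.

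With this in hand, each $Q_i$ is a stable $C^d_UC^\infty_X$ family satisfying the NAH numerical conditions. \Cref{thm:Cd-stable-metrics} then gives a harmonic metric of class $C^d_UC^\infty_X$ on $Q_i$ over all of $V$, after choosing a Hermitian--Einstein determinant metric of the same class. No further shrinking is needed, since that theorem is global over the parameter space; in any case there are finitely many quotients. The filtration is therefore a $C^d$ relative harmonic filtration over $V$. Applying \cref{thm:Cd-filtration-criterion} on each such $V$ yields membership in $\mathscr M_{\Dol,d}^{\cH}(X)$ locally on $U$, which is the claim.
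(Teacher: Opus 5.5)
Your proposal is correct and follows the paper's proof: you show that each graded quotient is a stable $C^d_UC^\infty_X$ family, apply \cref{thm:Cd-stable-metrics} to obtain mixed-regularity harmonic metrics on the quotients, recognize the given filtration as a $C^d$ relative harmonic filtration in the sense of \cref{def:Cd-harmonic-filtration}, and conclude by \cref{thm:Cd-filtration-criterion}. You also check the local product-triviality of the quotients explicitly, through orthogonal complements and the map $v\mapsto p_uv$, where the paper only says the regularity is inherited from the subbundle charts. Your remark that no shrinking is needed is consistent with the global form of \cref{thm:Cd-stable-metrics}.
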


\begin{proof}
On a neighbourhood carrying the stated invariant filtration, every
graded quotient is a mixed $C^d_UC^\infty_X$ stable family: quotient
charts and quotient Higgs coefficients inherit the mixed regularity
from the subbundle charts.  Apply \cref{thm:Cd-stable-metrics} to the
finitely many quotients, shrinking the neighbourhood once so that all
their normalized harmonic metrics are defined simultaneously.  Each
metric has the same mixed regularity.  The original invariant
filtration is consequently a $C^d$ relative harmonic filtration in the
sense of \cref{def:Cd-harmonic-filtration}, and
\cref{thm:Cd-filtration-criterion} gives the conclusion.
\end{proof}

This corollary is a genuine partial answer to the semistable part of Question~5.3.4.  It leaves intact the essential assembly problem: a fibrewise Jordan--H\"older filtration need not vary as a $C^d$ relative filtration.  The obstruction classes of \cref{sec:obstructions} remain relevant, now in finite regularity.  In particular, the first obstruction is still represented by the lower-left component of the parameter deformation in the appropriate Hom complex.  What changes is only the regularity class of the parameter map, not the fibrewise elliptic complex on $X$.

\subsection{Reduced singular parameter spaces and parameter-singular metrics}

Finite regularity is one meaning of singular parameter dependence.  A different issue arises when the parameter space itself is singular.  Here the ambient formulation \eqref{eq:universal-moment-map} has a concrete advantage: the solution operator is constructed before imposing the Maurer--Cartan equation.  Thus it can be pulled back along maps from singular subsets without requiring the integrable locus to carry a manifold structure.

We isolate a class for which this argument is immediate and intrinsic.

\begin{definition}[Ambiently smooth reduced parameter space]\label{def:ambient-reduced-space}
A reduced parameter space $S$ is called \emph{locally ambiently smooth} if every $s_0\in S$ has a neighbourhood presented by a subset $S_0\subset V$ of a smooth manifold, equipped with the subset diffeology and the reduced sheaf of smooth functions induced from $V$.  A map
\[
  p:S\longrightarrow\cP
\]
to a Banach or Fr\'echet coefficient space is \emph{ambiently smooth} if, locally after such a presentation, it is the restriction of a smooth map
\[
  \widetilde p:V\longrightarrow\cP.
\]
\end{definition}

The definition covers, for example, reduced zero sets in Euclidean space and many singular differential subspaces.  The ambient extension is not required to satisfy the Higgs integrability equation away from $S$.

\begin{theorem}[Stable harmonic metrics over reduced singular parameter spaces]\label{thm:ambient-singular-stable}
Let $S$ be a locally ambiently smooth reduced parameter space and let $(E,\cD'')$ be a stable Higgs family on $S\times X$ satisfying the NAH numerical conditions fibrewise.  Suppose that, in local trivializations, the full coefficient datum
\[
  p_s=(\cD''_s,q_s)
\]
consisting of the Higgs operator and a determinant normalization is ambiently smooth in the sense of \cref{def:ambient-reduced-space}.  Then, locally on $S$, there is a unique normalized harmonic metric
\[
  s\longmapsto h_s
\]
which is ambiently smooth as a map into $C^\infty$ Hermitian metrics on $X$.

The conclusion is independent of the chosen ambient manifold and of the chosen ambient extension of the coefficient datum.
\end{theorem}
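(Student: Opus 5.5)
The plan is to compose the ambient coefficient extension with the smooth solution operator $\Sigma_k$ from \eqref{eq:Cd-solution-operator}. That operator was built on the ambient Banach space $\cP_k$ without imposing integrability, which is exactly what makes it usable here. Fix $s_0\in S$ with a presentation $S_0\subset V$ and an ambient smooth extension $\widetilde p:V\to\cP$ of the coefficient datum $p_s=(\cD''_s,q_s)$, taken after a local trivialization.

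\emph{Step 1 (base point).} The fibre at $s_0$ is an honest stable Higgs bundle satisfying the NAH numerical conditions. The classical correspondence gives its harmonic metric $h_0$ with $\det h_0=q_{s_0}$. Build the background $\kappa$ from $\widetilde p$ by \eqref{eq:background-prescribed-det}, arranging $\kappa_{s_0}=h_0$ with a bump function on $V$. Stability and \cref{prop:stable-kernel} make $D_s\widehat\Phi_k(p_{s_0},0)=L_{h_0}$ invertible, so $\Sigma_k$ is defined and smooth on a neighbourhood $\cO_k$ of $p_{s_0}$.

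\emph{Step 2 (ambient solution).} Shrink $V$ so that $\widetilde p(V)\subset\cO_k$ and set $\widetilde s=\Sigma_k\circ\widetilde p$, which is smooth on $V$. Run the bootstrap of \cref{prop:parametric-bootstrap,prop:joint-regularity} on $V$; it needs only strong ellipticity of the linearization and smoothness of the coefficients, not integrability. This makes $\widetilde s$ smooth into every $W^{j,2}$, hence smooth into $C^\infty(X)$. Restrict to $S_0$ and define $h_s$ by the formula $h_u=\kappa_u\exp(b_u^{-1}s(u)b_u)$, using $\widetilde s|_{S_0}$. For $s\in S_0$ the fibre is integrable, so the trace-free equation, the determinant Hermitian--Einstein condition, and the Chern--Weil step (as in the proof of \cref{thm:intro-smooth-metrics}) make $h_s$ harmonic. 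Uniqueness once $q$ is prescribed follows fibrewise from \cref{prop:normalized-uniqueness}.

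\emph{Step 3 (independence).} Take a second presentation $S_0'\subset V'$ with extension $\widetilde p'$. Both restrictions to $S$ are normalized harmonic metrics on the same fibres, so they agree pointwise on $S$ by \cref{prop:normalized-uniqueness}. The resulting map $s\mapsto h_s$ therefore does not depend on the presentation. Each presentation exhibits it as the restriction of a smooth ambient map, so ambient smoothness is intrinsic. Changing the trivialization acts by a smooth gauge and preserves ambient smoothness.

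\emph{Main obstacle.} The delicate point is that the regularity bootstrap and the choice of background must run off the integrable locus. One has to check that $\widehat\Phi_k$, $\kappa$, and $b_u$ are defined and smooth for non-integrable ambient data, which holds because \cref{lem:F-smooth} never used integrability. One must also check that the higher-Sobolev solutions agree with the $W^{k+2,2}$ one on a single fixed neighbourhood. For that I would use local uniqueness of $\Sigma_{k_0}$ as in the proof of \cref{thm:Cd-stable-metrics}, rather than intersecting infinitely many implicit-function neighbourhoods.
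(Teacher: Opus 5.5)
Your proposal is correct and follows the paper's proof essentially step for step. Like the paper, you compose the ambient coefficient-space solution operator $\Sigma_k$ with a smooth ambient extension $\widetilde p$ after shrinking $V$ so that $\widetilde p(V)\subset\cO_k$, bootstrap that single Sobolev family once on $V$ (using integrability and the numerical conditions only after restricting to $S_0$), and get independence of all ambient choices from fibrewise normalized uniqueness, \cref{prop:normalized-uniqueness}. The only cosmetic mismatch is your bump-function background, which makes $\kappa$ depend on $v$ rather than on $\widetilde p(v)$ alone: either take $v$ itself as the implicit-function parameter, or use the determinant-built background of \eqref{eq:analytic-background-prescribed-det}, so that $\Sigma_k\circ\widetilde p$ is literally the correct composite.
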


\begin{proof}
Fix $s_0\in S$ and choose an ambient presentation $S_0\subset V$ together with a smooth extension
\[
  \widetilde p:V\to\cP_k
\]
of the local coefficient datum.  At the base stable harmonic point, the vertical derivative of the full moment-map map \eqref{eq:Cd-full-moment-map} is the invertible Jacobi operator.  Hence the implicit-function theorem gives the smooth coefficient-space solution operator \eqref{eq:Cd-solution-operator} on an open neighbourhood $\cO_k$ of $p_{s_0}$.  After shrinking $V$,
\[
  \widetilde p(V)\subset\cO_k,
\]
and therefore
\[
  \widetilde s(v)=\Sigma_k(\widetilde p(v))
\]
is a smooth Sobolev family on $V$.  Restricting to $S_0$ gives a solution of the trace-free moment-map equation for the original family.

The ambient extension need not be integrable at points of $V\setminus S_0$.  This causes no difficulty because the nonlinear moment-map map and its implicit solution operator are defined on the full coefficient space.  Integrability and the NAH numerical conditions are used only after restriction to $S_0$, where they promote the normalized moment-map solution to a harmonic metric.

Starting from this one Sobolev solution family on $V$, the parameter-dependent elliptic bootstrap of \cref{prop:parametric-bootstrap,prop:joint-regularity} gives joint smoothness in $(v,x)$ after shrinking $V$ once.  Thus restriction to $S_0$ is ambiently smooth as a map into $C^\infty$ metrics; no intersection of infinitely many Sobolev neighbourhoods is involved.  If two ambient presentations or two extensions are used, their restrictions to $S_0$ produce normalized harmonic metrics on the same stable fibres with the same determinant.  Fibrewise uniqueness, \cref{prop:normalized-uniqueness}, forces equality.  Thus the restricted metric is independent of all auxiliary ambient choices.
\end{proof}

\begin{corollary}[Singular reduced bases with an ambient extension]\label{cor:singular-subset-stable}
Let $S\subset\R^N$ be a reduced singular subset with the induced smooth structure.  If a stable Higgs family on $S\times X$ is the restriction of smooth ambient coefficient data near every point of $S$, including an ambiently smooth determinant normalization, and satisfies the NAH numerical conditions on $S$, then it admits a locally ambiently smooth normalized harmonic metric.
\end{corollary}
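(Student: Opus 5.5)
The plan is to deduce the corollary directly from \cref{thm:ambient-singular-stable}, by checking that a reduced singular subset $S\subset\R^N$ with the induced smooth structure is a locally ambiently smooth reduced parameter space in the sense of \cref{def:ambient-reduced-space}. For each $s_0\in S$ take a Euclidean ball $V\subset\R^N$ about $s_0$ and set $S_0=S\cap V$. The subset diffeology and the reduced sheaf of restricted smooth functions on $S_0$ are, by definition, the structures induced from $V$. Hence $S_0\subset V$ is an admissible local presentation. The hypothesis that the family is the restriction of smooth ambient coefficient data near every point then supplies, after shrinking $V$, a smooth map $\widetilde p:V\to\cP_k$ whose restriction to $S_0$ is the full local coefficient datum $p_s=(\cD''_s,q_s)$. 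This uses the ambiently smooth determinant normalization $q$ explicitly assumed in the statement.

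The one step needing care is matching coefficient spaces. The ambient smooth data are given as smooth coefficients on $V\times X$ in a local bundle trivialization. I would first fix such a trivialization over $V$, shrinking $V$ to a ball so that the bundle extends trivially. Then \cref{prop:parametric-bootstrap}-style reasoning, or simply the observation recorded after \eqref{eq:k-assumption}, shows that jointly smooth coefficients define a $C^\infty$ map $V\to\cP_k$ into every Sobolev completion. Since \cref{def:ambient-reduced-space} asks only for some smooth extension, and \cref{thm:ambient-singular-stable} states the result is independent of that choice, no compatibility among extensions needs checking.

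With these inputs, \cref{thm:ambient-singular-stable} produces, locally on $S$, a unique normalized harmonic metric that is ambiently smooth into $C^\infty$ Hermitian metrics. Integrability and the NAH numerical conditions are needed only on $S$ itself, exactly as assumed. By fibrewise uniqueness, \cref{prop:normalized-uniqueness}, local solutions agree on overlaps, so the metric is well defined on all of $S$ as a locally ambiently smooth object. I expect no genuine obstacle. The only point to state explicitly is that the determinant datum must be part of the extended coefficient map, since otherwise the background $\kappa$ of \eqref{eq:background-prescribed-det} would not be defined ambiently.
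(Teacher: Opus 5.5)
Your proposal is correct and takes essentially the paper's route: the paper states this corollary without a separate proof, as an immediate specialization of \cref{thm:ambient-singular-stable}. The ingredients are the ones you identify: $S\cap V\subset V$ is an admissible local presentation in the sense of \cref{def:ambient-reduced-space}, and the hypotheses supply the ambiently smooth coefficient datum $(\cD'',q)$. Your final gluing remark is fine provided $q$ is read as one Hermitian--Einstein determinant metric on all of $\det E$. That reading is the intended one and is needed anyway, since on a singular base \cref{prop:smooth-det} is not available to produce such a $q$.
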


The same mechanism gives a semistable statement when the extension data themselves vary ambiently.

\begin{corollary}[Filtered semistable families over reduced singular bases]\label{cor:singular-filtered-semistable}
Let $S$ be a locally ambiently smooth reduced parameter space and let $(E,\cD'')$ be a semistable Higgs family on $S\times X$.  Suppose that locally on $S$ there is a filtration by invariant subbundles
\[
  0=F_0\subset F_1\subset\cdots\subset F_\ell=E
\]
whose coefficient data and subbundle projections are ambiently smooth, and whose graded quotients are stable, satisfy the NAH numerical conditions, and carry ambiently smooth determinant normalizations.  Then the graded quotients admit locally ambiently smooth harmonic metrics.  Consequently the family is, locally on $S$, a finite iterated extension of ambiently smooth harmonic families.
\end{corollary}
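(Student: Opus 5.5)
The plan is to reduce the statement to \cref{thm:ambient-singular-stable}, applied one graded quotient at a time, and then to read off the extension structure exactly as in \cref{prop:filtration-extension}. Fix $s_0\in S$ and an ambient presentation $S_0\subset V$ on which the filtration, the coefficient data and the projections $p_i$ onto $F_i$ (orthogonal for a chosen background metric) are restrictions of smooth ambient maps $\widetilde p_i:V\to\cP$. The first step is to present each graded quotient $Q_i=F_i/F_{i-1}$ as an honest family on a fixed bundle. Near $s_0$ the orthogonal complement $F_{i-1}^{\perp}\cap F_i$ is identified with $Q_i$, and after a local trivialization it becomes a fixed bundle $Q_{i,s_0}$. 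The natural candidate is the isomorphism $(p_i-p_{i-1})|_{s_0}$ composed with the inverse of the restricted projection, via the standard formula $g_i=(p_i-p_{i-1})(p_i-p_{i-1})|_{s_0}+\ldots$ for nearby projections. This depends smoothly on the ambient extensions of the projections. The induced quotient Higgs operator $\cD''_{Q_i}=(p_i-p_{i-1})\cD''(p_i-p_{i-1})$, transported to $Q_{i,s_0}$, is then a polynomial expression in ambiently smooth data, so it is ambiently smooth into every Sobolev coefficient space.

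The second step applies \cref{thm:ambient-singular-stable} to each $Q_i$. Stability and the NAH numerical conditions hold on $S_0$ by hypothesis. The determinant normalizations are given as ambiently smooth, and the full coefficient datum $(\cD''_{Q_i},q_i)$ has just been shown to be ambiently smooth. The point that makes this work is that the ambient transported operator need not be integrable off $S_0$, since \cref{thm:ambient-singular-stable} only uses the coefficient-space solution operator \eqref{eq:Cd-solution-operator}. There are finitely many quotients, so $V$ can be shrunk once so that all the solution operators are defined simultaneously. Uniqueness (\cref{prop:normalized-uniqueness}) shows the resulting metrics are independent of the ambient choices. Transported back through $g_i$, they give ambiently smooth harmonic metrics on the $Q_i$ themselves.

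The third step is formal. The short exact sequences $0\to F_{i-1}\to F_i\to Q_i\to0$ are sequences of ambiently smooth Higgs families over $S_0$, because the splittings are given by the ambiently smooth projections. Iterating them as in the converse half of the proof of \cref{prop:filtration-extension} exhibits $E$ locally as a finite iterated extension of the ambiently smooth harmonic families $Q_i$.

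I expect the main obstacle to be the first step: producing an ambiently smooth identification of the moving quotient bundles with fixed model bundles. If the composition-with-projection formula is awkward, I would instead use the smooth family of projections $\widetilde p_i$ on $V$ directly. Near $s_0$, Kato's transformation function $U_i(v)$ intertwines $\widetilde p_i(s_0)$ with $\widetilde p_i(v)$, depends smoothly on $v$, and makes the quotient data a smooth map on $V$ by construction.
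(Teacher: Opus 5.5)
Your proposal is correct and follows essentially the same route as the paper: apply \cref{thm:ambient-singular-stable} to each of the finitely many stable quotients after shrinking the ambient presentation once, use the ambiently smooth projections to see that the quotient bundles and their induced Higgs coefficients extend to the same ambient presentation, and then iterate the short exact sequences $0\to F_{i-1}\to F_i\to F_i/F_{i-1}\to0$. The only difference is that you spell out the transport of the quotients to fixed model bundles through a transformation function built from the projections, which the paper states in one sentence; for this step your formula only needs to be smooth on $V$ and correct on $S_0$, because the ambient extensions of the $p_i$ need not be idempotent off $S_0$.
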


\begin{proof}
Apply \cref{thm:ambient-singular-stable} to each of the finitely many stable quotient families, shrinking the ambient presentation once so that all harmonic metrics are defined simultaneously.  The invariant filtration already supplies the successive short exact sequences
\[
  0\to F_{i-1}\to F_i\to F_i/F_{i-1}\to0.
\]
The assumed ambiently smooth subbundle projections ensure that every
quotient bundle and its induced Higgs coefficients extend smoothly to
the same ambient presentation.  Hence the quotient metrics supplied by
\cref{thm:ambient-singular-stable} and the short exact sequences all
live in the ambiently smooth category.  Iterating the finitely many
sequences therefore gives the claimed extension there, not merely a
fibrewise collection of extensions.  The conclusion is deliberately
local: the theorem does not assert that an arbitrary fibrewise
Jordan--H\"older filtration assembles ambiently over a singular base.
\end{proof}

The theorem should not be confused with a result for arbitrary nonreduced $C^\infty$-schemes.  In a nilpotent thickening one must decide what a positive Hermitian metric means in the nilpotent directions and how the nonlinear adjoint operation is represented sheaf-theoretically.  The ordinary Banach implicit-function theorem does suggest a formal lifting mechanism once an appropriate functor of points is fixed, but we do not claim such a theorem here.

\begin{problem}[Nonreduced $C^\infty$ parameter schemes]\label{prob:nonreduced-Cinfty}
Develop a notion of harmonic metric for families over nonreduced $C^\infty$-schemes, determine whether the stable coefficient-space solution operator extends functorially to nilpotent thickenings, and compare the resulting deformation theory with the ordinary Higgs deformation complex.
\end{problem}

There is still another meaning of singularity, closer to the boundary phenomenon implicit in Question~5.3.4.  Suppose a family is stable away from a limiting parameter but has a semistable nonpolystable central fibre.  Since an ordinary positive harmonic metric would force that central fibre to be polystable, degeneration of the metric family is not merely possible; in the natural mixed topology it is unavoidable.

\begin{proposition}[Degeneration at a nonpolystable limit]\label{prop:nonpoly-limit-degeneration}
Let $U$ be a $C^0$ manifold with a marked point $0$, and let $(E,\cD'')$ be a $C^0_UC^\infty_X$ Higgs family satisfying the NAH numerical conditions fibrewise.  Assume that $(E_u,\cD''_u)$ is polystable for $u\neq0$, while the central fibre $(E_0,\cD''_0)$ is semistable but not polystable.  Then no choice of harmonic metrics $h_u$ for $u\neq0$ can extend to a positive-definite family
\[
  u\longmapsto h_u
\]
that is continuous at $0$ with values in the Fr\'echet space of smooth Hermitian metrics on $X$.
\end{proposition}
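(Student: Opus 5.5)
The proof will be by contradiction: suppose a family $h_u$ of harmonic metrics for $u\ne0$ extends to a positive-definite family continuous at $0$ in the Fr\'echet topology of smooth Hermitian metrics, and let $h_0$ denote the limit. The aim is to show that $h_0$ is a harmonic metric on the central fibre $(E_0,\cD''_0)$. The standard compact K\"ahler correspondence (Simpson) then makes $(E_0,\cD''_0)$ polystable, since a Higgs bundle admitting a Hermitian--Yang--Mills--Higgs metric is polystable. This contradicts the hypothesis. The argument uses no implicit-function theorem, only continuity of the nonlinear operator under $C^\infty$ convergence.

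First, work in a local $C^0_UC^\infty_X$ trivialization $E|_{V\times X}\cong\pi_X^*E_0$ around $0$, as in \cref{def:mixed-Cd-family}. In this trivialization the coefficients of $\cD''_u$ converge in $C^\infty(X)$ to those of $\cD''_0$, and by assumption the matrices $H_u$ of $h_u$ converge in $C^\infty(X)$ to $H_0$. Positive-definiteness of $h_0$ together with compactness of $X$ gives $H_u^{-1}\to H_0^{-1}$ in $C^\infty$. By the matrix formula \eqref{eq:chern-matrix}, the Chern connection forms, their curvature and the adjoints $\theta_u^{\dagger_{h_u}}$ are polynomial in the coefficients, $H_u^{\pm1}$ and finitely many $X$-derivatives. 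Hence the total connection $\nabla_{h_u}=\cD''_u+\cD'_{h_u}$ converges in $C^\infty$ to $\nabla_{h_0}$, and $F_{\nabla_{h_u}}\to F_{\nabla_{h_0}}$. For $u\ne0$ each $h_u$ is harmonic, so $F_{\nabla_{h_u}}=0$. Therefore $F_{\nabla_{h_0}}=0$, i.e.\ $h_0$ is harmonic for $\cD''_0$ in the sense of \cref{def:harmonic-metric}. Equivalently, $\mu(\cD''_0,h_0)=0$ by continuity of \eqref{eq:HS-equation}.

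Second, deduce polystability of the central fibre. I will do this directly rather than only by citation. Let $F\subset E_0$ be a saturated Higgs subsheaf of slope $0$; by semistability, all destabilizing candidates have slope $\le0$. The Chern--Weil formula for the degree of $F$ with respect to the harmonic metric expresses $\deg F$ as $\int\tr(\pi\,\mu)$ minus the $L^2$ norm of the second fundamental form $\beta=(1-\pi)\cD''\pi$, where $\pi$ is the $h_0$-orthogonal projection. Since $\mu=0$, vanishing of the slope forces $\beta=0$. The orthogonal complement is then also $\cD''_0$-invariant, so $F$ splits off as a Higgs summand. Iterating over a Jordan--H\"older filtration shows that $E_0$ is a direct sum of stable Higgs bundles, i.e.\ polystable, which contradicts the hypothesis. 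The projection formalism of \cref{def:relative-second-fundamental} is exactly the tool for this step.

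The main obstacle is the regularity of the subsheaf projection $\pi$ in the second step: $F$ is only a saturated subsheaf, so $\pi$ is an $L^2_1$ projection smooth off a codimension-two singular set. I would handle this by appealing to Uhlenbeck--Yau/Simpson weakly holomorphic projections, under which the Chern--Weil inequality still holds. Alternatively I would cite directly the standard statement that the existence of a Hermitian--Yang--Mills--Higgs metric implies polystability \cite{Simpson1988}, which is all the proposition needs. A minor point is that the continuity hypothesis is used only at $0$, and the values $h_u$ for $u\ne0$ may be an arbitrary, even discontinuous, choice; the argument above uses only sequential convergence $u_n\to0$, so it covers any such choice.
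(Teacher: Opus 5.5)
Your proof is correct and follows the paper's proof. Both argue by contradiction: assume a continuous positive-definite extension $h_0$, pass to the $C^\infty$ limit in the harmonic equation to obtain $\mu(\cD''_0,h_0)=0$, and invoke the polystability direction of the Hitchin--Simpson correspondence. Your two departures are harmless elaborations of that route. You pass to the limit in the full flatness equation rather than in $\mu$ alone, and you sketch the Chern--Weil second-fundamental-form argument instead of only citing Simpson; the latter makes visible that only $\mu(\cD''_0,h_0)=0$, not the second numerical condition, is needed to force polystability.
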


\begin{proof}
Suppose such an extension existed and denote its value at the central parameter by $h_0$.  The mixed continuity of the Higgs coefficients and of $h_u$ implies convergence in every $C^j(X)$ norm.  The Chern connection, the adjoint Higgs field, and the Hitchin--Simpson moment map depend continuously on these data once two $X$-derivatives of the metric are controlled.  Passing to the limit in the harmonic equation therefore gives
\[
  \mu(\cD''_0,h_0)=0.
\]
Together with the numerical conditions, the usual Hitchin--Simpson correspondence implies that the central Higgs bundle is polystable.  This contradicts the hypothesis.  Hence the harmonic metrics cannot converge in the $C^\infty$ metric topology to a positive-definite limit.  They may diverge, lose positive definiteness, oscillate, or otherwise fail to extend continuously.
\end{proof}

Thus a singular metric in the parameter direction is a natural boundary object for a degeneration toward a nonpolystable semistable fibre.  The proposition does not construct such a boundary metric.  It shows that an ordinary continuous positive harmonic reduction cannot survive the degeneration.  On the extension-generated side, the limiting information is instead retained by harmonic metrics on the graded pieces together with the extension class.

Even when the limiting fibre is polystable, a smooth or finite-regularity Higgs family may have harmonic metrics on the stable locus whose parameter derivatives become unbounded as a stabilizer develops.  The first-variation formula makes a precise spectral criterion available.

\begin{proposition}[Spectral criterion for loss of parameter regularity]\label{prop:spectral-parameter-singularity}
Let $u\mapsto(E_u,\cD''_u)$ be a one-parameter family which is stable for $u\neq0$, and let $h_u$ be normalized harmonic metrics there.  Write
\[
  f_u=\cS_{h_u}(\eta_u),
  \qquad
  s_u=-L_{h_u}^{-1}f_u
\]
for the trace-free first metric variation.  Suppose $e_u$ is an $L^2$-normalized eigenvector of $L_{h_u}$ with eigenvalue $\lambda_u>0$.  Then
\begin{equation}\label{eq:spectral-lower-bound}
  \abs{\ip{s_u}{e_u}_{L^2}}
  =\frac{\abs{\ip{f_u}{e_u}_{L^2}}}{\lambda_u},
  \qquad
  \norm{s_u}_{L^2}
  \ge
  \frac{\abs{\ip{f_u}{e_u}_{L^2}}}{\lambda_u}.
\end{equation}
In particular, if $\lambda_u\to0$ and
\[
  \abs{\ip{f_u}{e_u}_{L^2}}
\]
fails to be $O(\lambda_u)$, then the first derivative of the normalized harmonic metric cannot remain bounded in $L^2$ along that sequence.  If the numerator is bounded below by a positive constant, the derivative grows at least like $\lambda_u^{-1}$.
\end{proposition}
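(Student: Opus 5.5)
The argument is a short spectral computation built on the first-variation formula of \cref{thm:first-variation-metric}. The plan is to pair the defining relation of $s_u$ against the eigenvector $e_u$ and use self-adjointness of $L_{h_u}$ from \cref{prop:Jacobi}. Only $L^2$ pairings on the trace-free Hermitian summand are involved, so no regularity beyond the stable-locus Sobolev framework is needed.

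First, for $u\neq0$ the fibre is stable. By \cref{prop:stable-kernel}, $L_{h_u}$ is invertible on trace-free Hermitian endomorphisms, so $s_u=-L_{h_u}^{-1}f_u$ is well defined and satisfies
\[
  L_{h_u}s_u=-f_u,
\]
which is \eqref{eq:first-variation-PDE} with $h=h_u$. The eigenvector $e_u$ is taken in this same trace-free Hermitian space; it is smooth by elliptic regularity.

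Second, take the real $L^2(h_u,\omega)$ inner product of this equation with $e_u$. Self-adjointness of $L_{h_u}$ (\cref{prop:Jacobi}, integrating by parts on compact $X$ with no boundary terms, as in \cref{cor:energy-identity}) gives
\[
  \ip{L_{h_u}s_u}{e_u}=\ip{s_u}{L_{h_u}e_u}=\lambda_u\ip{s_u}{e_u}.
\]
Hence $\lambda_u\ip{s_u}{e_u}=-\ip{f_u}{e_u}$. Dividing by $\lambda_u>0$ and taking absolute values gives the equality in \eqref{eq:spectral-lower-bound}.

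Third, Cauchy--Schwarz with $\norm{e_u}_{L^2}=1$ gives $\norm{s_u}_{L^2}\ge\abs{\ip{s_u}{e_u}}$, which is the inequality in \eqref{eq:spectral-lower-bound}. The concluding assertions follow directly. If $\abs{\ip{f_u}{e_u}}/\lambda_u$ is unbounded along a sequence with $\lambda_u\to0$, then $\norm{s_u}_{L^2}$ is unbounded. If the numerator is bounded below by $c>0$, then $\norm{s_u}_{L^2}\ge c\lambda_u^{-1}$.

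The only delicate point is matching the statement to the actual metric derivative. By \cref{thm:first-variation-metric} and \cref{prop:first-variation-general-GL}, $s_u$ is exactly the trace-free part of the logarithmic derivative of the normalized harmonic metric. The scalar determinant term is orthogonal to trace-free $e_u$, so it can only increase the $L^2$ norm of the full derivative and does not affect the lower bound.

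The inner products must be the $h_u$-dependent ones for which $L_{h_u}$ is self-adjoint. I will state this explicitly, noting that $h_u$ is uniformly equivalent to a background metric only locally in $u\neq0$. The bound is therefore asserted with respect to $L^2(h_u)$, and no claim is made about uniformity as $u\to0$.
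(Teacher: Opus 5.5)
Your proposal is correct and follows the paper's proof essentially verbatim. You pair $L_{h_u}s_u=-f_u$ with $e_u$, use self-adjointness to get $\lambda_u\ip{s_u}{e_u}_{L^2}=-\ip{f_u}{e_u}_{L^2}$, divide by $\lambda_u>0$, and finish with Cauchy--Schwarz. Your extra remarks are correct refinements that the paper leaves implicit: the pairings are taken in $L^2(h_u)$, and the determinant term $\frac{\tau}{r}\Id_E$ is pointwise orthogonal to the trace-free $e_u$, so it can only increase the norm of the full logarithmic derivative.
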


\begin{proof}
Since $L_{h_u}$ is self-adjoint on the normalized Hermitian slice and
\[
  L_{h_u}s_u=-f_u,
\]
taking the $L^2$ inner product with $e_u$ gives
\[
  \lambda_u\ip{s_u}{e_u}_{L^2}
  =-\ip{f_u}{e_u}_{L^2}.
\]
Self-adjointness moves $L_{h_u}$ from $s_u$ to $e_u$, and the eigenvalue
equation gives the left side of the last display.  Dividing by
$\lambda_u>0$ yields the equality in
\eqref{eq:spectral-lower-bound}.  Cauchy--Schwarz and
$\|e_u\|_{L^2}=1$ give
\[
  |\ip{s_u}{e_u}_{L^2}|\le\|s_u\|_{L^2},
\]
which proves the lower bound.  If the numerator is not
$O(\lambda_u)$, the quotient is unbounded along a subsequence; if it is
bounded below by $c>0$, the quotient is at least
$c\lambda_u^{-1}$.  These are exactly the final two assertions.
\end{proof}

\begin{remark}[Three parameter regimes behind Question~5.3.4]\label{rem:three-parameter-regimes}
The results of this section separate three phenomena that should not be conflated.
\begin{enumerate}[label=(\roman*)]
\item A family may have only finite parameter regularity.  On the stable locus, \cref{thm:Cd-stable-metrics} shows that the normalized harmonic metric has the same $C^d$ regularity.
\item A family may approach a semistable nonpolystable fibre.  Then Proposition~\ref{prop:nonpoly-limit-degeneration} shows that a positive harmonic metric family cannot extend continuously in the $C^0_UC^\infty_X$ topology; genuine degeneration is necessary.
\item A family may approach a polystable stabilizer jump.  Even when the Higgs coefficients are smooth away from the limiting parameter, \cref{prop:spectral-parameter-singularity} shows that the harmonic metric can lose parameter regularity because the inverse Jacobi operator becomes singular in a direction detected by the source.
\end{enumerate}
The third mechanism does not by itself prove existence of a singular harmonic metric across the limiting parameter.  It identifies the precise compatibility condition required for bounded first variation: the source must vanish against the emerging small-eigenvalue directions at a compensating rate.  Taken together, the three regimes explain why finite regularity, nonpolystable degeneration, and stabilizer jumps require different analytic formulations.
\end{remark}

The stabilizer-jump question has a sharp negative example beyond this spectral criterion.  The real-analytic polystable family in \cref{prop:square-root-polystable-failure} has no continuous harmonic metric on any neighbourhood of its central parameter and no continuous relative harmonic filtration there.  It is therefore outside $\mathscr M_{\Dol,d}^{\cH}(X)$ for every $d=0,1,2,\ldots,\infty$.  By contrast, \cref{prop:coalescing-lines-filtered} has no continuous harmonic metric on the total family but does have a smooth relative harmonic filtration.  Thus failure of metric selection and failure of extension-generated membership are genuinely different phenomena.

\section{Heat flow, extension data, and the onset of non-geometricity}\label{sec:heat-flow-geometricity}

The preceding finite-regularity results show that singular behaviour in parameter directions is not an accident of notation.  We now record a complementary interpretation in terms of the Yang--Mills--Higgs heat flow.  This section is deliberately formal: it does not construct a new flow.  Its purpose is to isolate what any such correction would have to remember if it were to extend non-Abelian Hodge theory across the strictly semistable locus without passing to an extension completion.

On stable and polystable objects, the heat-flow picture is geometric.  The complex gauge orbit is closed modulo unitary gauge, a zero of the Hitchin--Simpson moment map exists, and the associated harmonic connection gives the flat object.  On a strictly semistable object, the complex gauge orbit is not closed.  The natural closed orbit in its closure is represented by a polystable associated graded object.  This is the analytic counterpart of S-equivalence, and it is the point at which an ordinary heat-flow limit ceases to see extension data.

\subsection{Closed orbits, polystable shadows, and what the heat flow forgets}

We first put this observation in a form compatible with the stack language used throughout the paper.  Fix a smooth complex bundle $E_0\to X$, and let $\cA^{\ssm}$ denote the space of semistable Higgs operators on $E_0$ satisfying the NAH numerical conditions.  The complex gauge group $\Ggauge^{\C}$ acts on $\cA^{\ssm}$.  For $A\in\cA^{\ssm}$ write $(E_A,\theta_A)$ for the corresponding Higgs bundle.

\begin{definition}[Polystable shadow]\label{def:polystable-shadow}
A \emph{polystable shadow} of a semistable Higgs bundle $(E,\theta)$ is the isomorphism class
\[
  \operatorname{sh}(E,\theta):=\operatorname{gr}_{\mathrm{JH}}(E,\theta)
\]
of a polystable associated graded object for a Jordan--H\"older filtration.  Equivalently, after choosing a representative in an analytic gauge-theoretic model, it is the closed complex-gauge orbit contained in the closure of the orbit of $(E,\theta)$.
\end{definition}

The definition is independent of the chosen Jordan--H\"older filtration up to isomorphism.  It is therefore a geometric object on the coarse semistable quotient, but it is not an object of the original extension problem: the passage
\[
  (E,\theta)\longmapsto \operatorname{gr}_{\mathrm{JH}}(E,\theta)
\]
discards the extension classes that distinguish non-isomorphic semistable objects in the same S-equivalence class.

In the case of Higgs bundles over a compact Riemann surface, Wilkin's convergence theorem for the Yang--Mills--Higgs flow identifies the limiting critical point, up to isomorphism, with the graded object associated to the Harder--Narasimhan--Seshadri filtration \cite{Wilkin2008}.  For a semistable Higgs bundle of slope zero, this is precisely the polystable Jordan--H\"older shadow.  We use this theorem as analytic motivation rather than as an input into the proofs above; the structural point below is independent of the detailed convergence theory.

\begin{proposition}[Ordinary harmonic limits force polystability]\label{prop:ordinary-heat-polystable}
Let $(E,\theta)$ be a semistable Higgs bundle satisfying the NAH numerical conditions.  Suppose there is a positive-definite Hermitian metric $h$ on $E$ such that
\[
  \mu(\cD'',h)=0.
\]
Then $(E,\theta)$ is polystable.  Consequently, no heat-flow correction whose limiting object is an ordinary positive solution of the Hitchin--Simpson equation on the original Higgs bundle can include a strictly semistable nonpolystable object in its domain.
\end{proposition}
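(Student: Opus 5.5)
The plan is to prove the first assertion directly with the standard Chern--Weil and second-fundamental-form argument, and then read off the second assertion as a formal consequence. The first step is to promote $\mu(\cD'',h)=0$ to flatness. By \cref{def:harmonic-metric}, the NAH numerical conditions \eqref{eq:NAH-condition} give that $\nabla_h$ is flat, so $h$ is a harmonic metric.

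The second step is to show that every destabilizing-slope Higgs subsheaf splits off orthogonally. Let $F\subset E$ be a saturated $\theta$-invariant subsheaf with $\nu_1(F)=0$; semistability forces $\le 0$, and equality is the only case that matters. On the open dense set where $F$ is a subbundle, take the $h$-orthogonal projection $p$ and the second fundamental form $\beta_p=(1-p)\cD''p$ as in \cref{def:relative-second-fundamental}. The Chern--Weil formula for the induced metric on $F$ then gives
\[
  \nu_1(F)=\int_X\tr\bigl(p\,\mu(\cD'',h)\bigr)\frac{\omega^n}{n!}-c\norm{\beta_p}_{L^2}^2
\]
for a universal constant $c>0$. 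This is Simpson's formula for Higgs subsheaves, in which the Higgs commutator term contributes $\abs{(1-p)\theta p}^2$ with the correct sign. Since $\mu=0$ and $\nu_1(F)=0$, we get $\beta_p=0$. Consequently $p$ is $\cD''$-parallel, and $p$ is self-adjoint. Its adjoint condition then gives $\cD'_hp=0$ as well, so $p$ is a flat projection and $E=F\oplus F^{\perp}$ as Higgs bundles, with $h$ restricting to harmonic metrics on both summands.

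The third step is an induction on rank. Each summand is semistable of degree zero. The restricted metrics solve $\mu=0$, because the block decomposition makes the moment map block diagonal. Applying the same argument to each summand splits $E$ into stable pieces, so $E$ is polystable. Finally, the consequence about heat-flow corrections is immediate. A limit that is an ordinary positive solution on the original Higgs bundle would put us in the hypothesis just treated, contradicting non-polystability. This is the pointwise counterpart of \cref{prop:nonpoly-limit-degeneration}.

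The main obstacle is the singular set of the saturated subsheaf $F$, which is a subbundle only off codimension two. There the Chern--Weil identity needs Simpson's (or Uhlenbeck--Yau's) $L^2_1$ weakly holomorphic projection argument to justify the integration. I would cite \cite{Simpson1988,UhlenbeckYau1986} for that regularity rather than reprove it. I would also observe that $\beta_p=0$ together with elliptic regularity forces $p$ to be smooth, so that $F$ is in fact a subbundle a posteriori.
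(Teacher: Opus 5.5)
Your strategy (Chern--Weil for degree-zero Higgs subsheaves, orthogonal splitting, induction on rank, then the formal heat-flow consequence) is the ``Chern--Weil equality case'' that the paper cites. However, the key identity in your second step uses the wrong off-diagonal block, and as written the step fails.

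Let $F$ be $\theta$-invariant and saturated, with $h$-orthogonal projection $p$. The quantity $\beta_p=(1-p)\cD''p$ of \cref{def:relative-second-fundamental} is the \emph{invariance defect}. It vanishes identically precisely because $F$ is a Higgs subsheaf; likewise $(1-p)\theta p=0$. Your displayed formula would therefore assert $\nu_1(F)=\int_X\tr(p\,\mu)\,\omega^n/n!$ for every Higgs subsheaf. That is false: take any negative-degree subsheaf of a bundle carrying a harmonic metric. The inference ``$\beta_p=0$, hence $p$ is $\cD''$-parallel'' is then a non sequitur, since $\beta_p=0$ is just the hypothesis that $F$ is invariant.

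The term that actually appears is the full derivative $\cD''p$. Differentiating $p^2=p$ kills the diagonal blocks of $\cD''p$, and invariance kills the lower-left block. Hence $\cD''p=p(\cD''p)(1-p)$, and Simpson's formula reads
\[
  \nu_1(F)=c'\int_X\tr\bigl(p\,\mu(\cD'',h)\bigr)\frac{\omega^n}{n!}
  -c\,\norm{p(\cD''p)(1-p)}_{L^2}^2,
  \qquad c,c'>0.
\]
Here the Higgs contribution comes from $[\theta,p]=-p\theta(1-p)$, not from $(1-p)\theta p$. Pointwise, this norm equals that of $(1-p)\cD'_hp\in A^1(\Hom(F,F^\perp))$, which is the genuine second fundamental form you presumably intended.

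With this correction, $\mu=0$ and $\nu_1(F)=0$ give $\cD''p=0$ directly, and your adjoint argument then gives $\cD'_hp=0$. The remainder goes through: regularity of the weakly holomorphic projection via \cite{Simpson1988,UhlenbeckYau1986}, block-diagonality of $\mu$, the induction, and the heat-flow consequence. The repaired argument uses only $\mu=0$ and $\nu_1(E)=0$. It does not need the flatness on which the paper's phrasing (decomposition of the harmonic bundle by its parallel endomorphism algebra) relies.
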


\begin{proof}
The first statement is the polystability direction of the Hitchin--Simpson correspondence, or equivalently the Chern--Weil equality case in the Hermitian--Yang--Mills--Higgs theorem.  Under the numerical assumptions, a solution of the slope-zero Hitchin--Simpson equation gives a harmonic metric, and the resulting harmonic bundle decomposes according to its parallel endomorphism algebra into stable harmonic summands of the same slope.  Thus the Higgs bundle is polystable.

For the consequence, assume that a corrected heat-flow procedure assigned to $(E,\theta)$ a path of positive metrics whose limit $h_\infty$ remained a positive metric on $E$ and solved the same equation.  The first part would imply that $(E,\theta)$ is polystable.  This contradicts strict semistability when the object is not polystable.  The obstruction is therefore not the absence of a better analytic estimate; it is the incompatibility between a positive harmonic limit on the original object and nonclosed complex-gauge orbit geometry.
\end{proof}

This proposition should be read together with Proposition~\ref{prop:nonpoly-limit-degeneration}.  There the parameter family approaches a nonpolystable central fibre; here a single strictly semistable object is considered.  In both formulations, ordinary positive harmonic metrics cannot be the missing boundary objects.

The next statement formalizes the loss of extension data.  Let $\mathfrak H$ denote the groupoid of polystable harmonic bundles, and let $\mathfrak H_{\mathrm{coarse}}$ be its set of isomorphism classes.  A heat-flow construction whose output is only the limiting harmonic bundle determines a map on isomorphism classes
\[
  \Theta:\{\text{semistable Higgs bundles with NAH numerical conditions}\}/\cong
  \longrightarrow \mathfrak H_{\mathrm{coarse}}
\]
whenever the limiting isomorphism class exists.

\begin{proposition}[Factorization through the polystable shadow]\label{prop:heat-flow-factors-shadow}
Assume that $\Theta(E,\theta)$ is represented by the harmonic metric on the polystable closed orbit in the complex-gauge orbit closure of $(E,\theta)$.  Then $\Theta$ factors as
\[
\begin{tikzcd}
\{\text{semistable Higgs bundles}\}/\cong
  \arrow[r,"\operatorname{sh}"]
  \arrow[dr,"\Theta"']
&
\{\text{polystable Higgs bundles}\}/\cong
  \arrow[d,"\operatorname{NAH}_{\poly}"]
\\
&
\mathfrak H_{\mathrm{coarse}} .
\end{tikzcd}
\]
In particular, if two semistable Higgs bundles have the same Jordan--H\"older graded object but different extension classes, the construction cannot distinguish them.
\end{proposition}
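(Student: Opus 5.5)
The strategy is to check that $\Theta$ is constant on fibres of $\operatorname{sh}$, and that the induced map on polystable classes is the classical correspondence $\operatorname{NAH}_{\poly}$. Since $\operatorname{sh}$ is surjective onto polystable classes (a polystable object is its own Jordan--H\"older graded), the factorization then exists and is unique.

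The first step is to identify the closed orbit. Let $(E,\theta)$ be semistable with the NAH numerical conditions, and let $F_\bullet$ be a Jordan--H\"older filtration. Choosing a smooth splitting $E_0\cong\bigoplus_i Q_i$ puts $\cD''$ in the upper-triangular form \eqref{eq:upper-triangular-D}. The one-parameter complex gauge transformations $g_t=\bigoplus_i t^{i}\Id_{Q_i}$ scale the block $\beta_{ij}$ by $t^{i-j}$, which tends to $0$ for $i<j$ as $t\to0$. Hence the block-diagonal operator $\bigoplus_i\cD''_{Q_i}$, which represents $\operatorname{gr}_{\mathrm{JH}}(E,\theta)$, lies in the closure of the orbit. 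This object is polystable, and by the theorem of Hitchin--Simpson it carries a zero of the moment map, so its orbit is closed. Under the standing uniqueness of closed orbits in a complex-gauge orbit closure, this is the orbit referred to in Definition~\ref{def:polystable-shadow}. Uniqueness of the closed orbit is the point I would cite rather than reprove; in the analytic setting it follows from Kempf--Ness type convexity of the Donaldson functional, as used in Proposition~\ref{prop:normalized-uniqueness}. Consequently the hypothesis on $\Theta$ says that $\Theta(E,\theta)$ is the isomorphism class of the harmonic bundle attached to $\operatorname{gr}_{\mathrm{JH}}(E,\theta)$.

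The second step is well-definedness. By the Jordan--H\"older theorem in the abelian category of semistable slope-zero Higgs bundles, different filtrations give isomorphic graded objects, so $\operatorname{sh}$ is well defined on isomorphism classes. Define $\operatorname{NAH}_{\poly}$ on polystable classes by taking a harmonic metric. Existence is the Hitchin--Simpson theorem. Up to isomorphism the resulting harmonic bundle is independent of the metric, because two harmonic metrics differ by a positive element of the Hermitian centralizer (Theorem~\ref{thm:constant-type}, fibrewise), and such an element can be written as $g^*h$ for an automorphism $g$ of the Higgs bundle. Then $\Theta=\operatorname{NAH}_{\poly}\circ\operatorname{sh}$ holds by the first step, and the diagram commutes.

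The final assertion follows directly from the factorization. If $(E,\theta)$ and $(E',\theta')$ have isomorphic graded objects, then $\operatorname{sh}$ agrees on them, so $\Theta$ agrees on them, whatever their extension classes are.

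I expect the main obstacle to be the identification in the first step: the closure of the orbit must contain only one closed orbit, and that orbit must be exactly the Jordan--H\"older graded object rather than some other polystable degeneration. I would handle this by citing Wilkin's theorem \cite{Wilkin2008} on curves, and in general the uniqueness part of the Hitchin--Simpson correspondence together with S-equivalence. This is why the statement is phrased under the explicit hypothesis on $\Theta$: the proof then only needs well-definedness of $\operatorname{sh}$ and of $\operatorname{NAH}_{\poly}$.
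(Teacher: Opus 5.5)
Your proof is correct and follows the paper's route: the hypothesis identifies $\Theta(E,\theta)$ with the harmonic class of the closed orbit, that orbit is $\operatorname{gr}_{\mathrm{JH}}(E,\theta)$ (independent of the filtration by the Jordan--H\"older theorem), and $\operatorname{NAH}_{\poly}$ completes the factorization; your explicit one-parameter degeneration and your check that $\operatorname{NAH}_{\poly}$ is well defined on isomorphism classes spell out what the paper compresses into a citation. One slip needs fixing: with the action $g\cD''g^{-1}$ and $g_t=\bigoplus_i t^{i}\Id_{Q_i}$, the block $t^{i-j}\beta_{ij}$ with $i<j$ blows up as $t\to0$, so you must let $t\to\infty$, or equivalently use $g_t=\bigoplus_i t^{-i}\Id_{Q_i}$ with $t\to0$.
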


\begin{proof}
By hypothesis the value of $\Theta$ is obtained by first replacing
$(E,\theta)$ by the closed orbit in its complex-gauge orbit closure.
The Jordan--H\"older theorem identifies that closed orbit, up to
isomorphism, with $\operatorname{gr}_{\mathrm{JH}}(E,\theta)$ and also
shows that its isomorphism class is independent of the chosen
Jordan--H\"older filtration.  The ordinary polystable correspondence
then assigns to this graded object its harmonic-bundle isomorphism
class.  Thus
\[
 \Theta(E,\theta)
 =\operatorname{NAH}_{\poly}
   \bigl(\operatorname{sh}(E,\theta)\bigr),
\]
which is the displayed factorization.  Two semistable objects are
S-equivalent precisely when these graded objects are isomorphic.
Consequently extension classes within one S-equivalence class have the
same image, proving the final assertion.
\end{proof}

Thus the heat flow, when observed only through its unrenormalized limiting harmonic object, behaves like the coarse semistable quotient.  It detects the closed orbit and not the path by which a nonclosed orbit approaches it.  This explains why the elliptic example in \cref{sec:examples} can be simultaneously harmless for the extension-generated stack and discontinuous for the coarse correspondence.

\subsection{Filtered and renormalized targets}

The preceding obstruction does not rule out a more refined analytic theory.  It says only that the target cannot be the ordinary groupoid of positive harmonic metrics on the original semistable Higgs bundle.  To preserve extension data, one must enlarge what is remembered at the limit.

Let
\[
  0=E_0\subset E_1\subset\cdots\subset E_\ell=E
\]
be a relative harmonic filtration, and set $Q_i=E_i/E_{i-1}$.  Choose local splittings so that
\begin{equation}\label{eq:upper-triangular-Ddoubleprime-heat}
  \cD''_E=
  \begin{pmatrix}
  \cD''_{Q_1}&\beta_{12}&\beta_{13}&\cdots&\beta_{1\ell}\\
  0&\cD''_{Q_2}&\beta_{23}&\cdots&\beta_{2\ell}\\
  \vdots&\ddots&\ddots&\ddots&\vdots\\
  0&\cdots&0&\cD''_{Q_{\ell-1}}&\beta_{\ell-1,\ell}\\
  0&\cdots&\cdots&0&\cD''_{Q_\ell}
  \end{pmatrix}.
\end{equation}
The diagonal blocks carry harmonic metrics $h_i$.  The off-diagonal
matrix $\boldsymbol\beta=(\beta_{ij})$ represents the relative
extension datum modulo filtered changes of splitting and satisfies the
Maurer--Cartan system \eqref{eq:upper-MC}.  The extension-generated
object is therefore not just the collection of harmonic quotients; it
is the full upper-triangular object formed from those quotients and this
compatible Maurer--Cartan datum.

\begin{definition}[Filtered harmonic shadow]\label{def:filtered-harmonic-shadow}
A \emph{filtered harmonic shadow} of a semistable Higgs family with a chosen relative harmonic filtration is the data
\[
  \operatorname{FHar}(E,\cD'',\mathcal F)
  =\left(\{(Q_i,\cD''_{Q_i},h_i)\}_{i=1}^\ell,
  [\boldsymbol\beta]_{\mathrm{fil}}\right),
\]
where the diagonal terms are harmonic quotient families and
$\boldsymbol\beta=(\beta_{ij})_{i<j}$ is the strictly upper-triangular
degree-one datum satisfying the Maurer--Cartan system
\eqref{eq:upper-MC}.  The bracket $[\boldsymbol\beta]_{\mathrm{fil}}$
means equivalence under smooth filtered changes of splitting.  Only the
adjacent entries are ordinary extension cocycles in isolation; for
$j-i>1$, their differentials contain the quadratic lower-extension
terms in \eqref{eq:upper-MC}.  Isomorphisms are upper-triangular
isomorphisms preserving the induced associated-graded data and this
filtered Maurer--Cartan class.
\end{definition}

This definition is simply the geometric face of the extension completion.  It keeps exactly the information that the polystable shadow forgets.  The ordinary heat-flow limit corresponds to dropping the second component:
\[
  \operatorname{FHar}(E,\cD'',\mathcal F)
  \longmapsto
  \{(Q_i,\cD''_{Q_i},h_i)\}_{i=1}^\ell.
\]
The resulting collection of diagonal harmonic bundles is the polystable shadow.  The extension-generated stack retains the off-diagonal part.

A possible analytic correction of the heat flow would therefore have to produce, not a positive harmonic metric on $E$, but an asymptotic filtered object.  Formally, if $R_t$ is a block-diagonal rescaling
\[
  R_t=\bigoplus_i e^{\alpha_i t}\Id_{Q_i},
\]
then the off-diagonal block from $Q_j$ to $Q_i$ in $R_t^{-1}\cD''_E R_t$ is scaled by
\[
  e^{(\alpha_j-\alpha_i)t}\beta_{ij}.
\]
The unrenormalized limit may kill this term, whereas a suitable choice of weights can keep track of its first nonzero asymptotic order.  This is the analytic analogue of remembering extension classes after passing to the associated graded.

\begin{definition}[Renormalized heat-flow limit, formal version]\label{def:renormalized-heat-limit}
A \emph{renormalized heat-flow limit} for a semistable Higgs bundle is a tuple
\[
  \left(\mathcal F,\{h_i\}_{i=1}^\ell,\{\alpha_i\}_{i=1}^\ell,
  [\boldsymbol\beta^{\mathrm{ren}}]_{\mathrm{fil}}\right)
\]
consisting of a filtration, harmonic metrics on the graded quotients,
weights $\alpha_i$, and retained renormalized limits of the
off-diagonal Higgs-extension components in a weighted block gauge,
subject to the corresponding filtered Maurer--Cartan compatibility.
Two such tuples are identified when they differ by filtered gauge
transformations and the induced change of Maurer--Cartan representative.
\end{definition}

We do not assert here that the Yang--Mills--Higgs flow canonically produces such a tuple.  The definition records what must be added if a heat-flow picture is to recover the same semistable family information as the extension-generated stack.  The weights should be thought of as measuring rates of collapse toward the closed orbit; the cocycles measure the transverse directions that are invisible after taking the ordinary limit.

\begin{proposition}[Extension completion as a receptacle for filtered limits]\label{prop:extension-completion-receptacle}
Any heat-flow-based assignment which, on strictly semistable objects,
remembers the limiting positive harmonic metrics on stable graded
factors together with a compatible filtered Maurer--Cartan extension
datum determines an object of the finite extension completion of the
harmonic-bundle category.  Conversely, every object produced from a
relative harmonic filtration gives such filtered harmonic data.
\end{proposition}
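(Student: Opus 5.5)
The plan is to prove both directions by reconstructing upper-triangular objects from their diagonal blocks and off-diagonal Maurer--Cartan data. For the first direction, let a heat-flow-based assignment produce, for a strictly semistable $(E,\theta)$, a filtration $\mathcal F$, positive harmonic metrics $h_i$ on the stable graded factors $Q_i$, and a filtered Maurer--Cartan class $[\boldsymbol\beta]_{\mathrm{fil}}$ satisfying \eqref{eq:upper-MC}. Choose any representative $\boldsymbol\beta$. Define a Higgs operator on $\bigoplus_i Q_i$ by the upper-triangular matrix \eqref{eq:upper-triangular-Ddoubleprime-heat}, with $\cD''_{Q_i}$ on the diagonal and $\beta_{ij}$ above it. Squaring this matrix gives, on the diagonal, $(\cD''_{Q_i})^2=0$, and in block $(i,j)$ exactly the left side of \eqref{eq:upper-MC}. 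So the Maurer--Cartan system is precisely integrability. The partial sums $E_k=Q_1\oplus\cdots\oplus Q_k$ are invariant subbundles, and each quotient $E_k/E_{k-1}\cong Q_k$ carries the harmonic metric $h_k$. This is a relative harmonic filtration in the sense of \cref{def:relative-harmonic-filtration}. By \cref{prop:filtration-extension}, the object is a finite iterated extension of harmonic objects, hence an object of the extension completion.

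Next I check that the output does not depend on the choice of representative. A filtered change of splitting is an upper-triangular automorphism $g$ whose diagonal entries are identities, or more generally isomorphisms preserving the graded data. Conjugating the matrix by $g$ changes $\boldsymbol\beta$ to the equivalent representative, so $g$ is an isomorphism of the reconstructed Higgs objects compatible with their filtrations. Isomorphisms of filtered harmonic data, as in \cref{def:filtered-harmonic-shadow}, therefore induce isomorphisms in the extension completion. In the renormalized version of \cref{def:renormalized-heat-limit}, the weighted block gauge $R_t$ is itself block-diagonal. The retained renormalized limits $\boldsymbol\beta^{\mathrm{ren}}$ still satisfy \eqref{eq:upper-MC} by the assumed compatibility, so the same construction applies to them.

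For the converse, start with an object carrying a relative harmonic filtration. Choose a smooth vector-bundle splitting, for instance by orthogonal complements for a background metric as in \cref{sec:filtrations}. This puts $\cD''_E$ in the upper-triangular form \eqref{eq:upper-triangular-D}, and integrability yields \eqref{eq:upper-MC}. The harmonic quotient metrics together with the filtered class of $\boldsymbol\beta$ then form filtered harmonic data. Changing the splitting is a filtered change of splitting, so the class is well defined.

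The main obstacle is interpretive rather than computational. The statement mentions "stable graded factors", while relative harmonic filtrations only require harmonic, not necessarily stable, quotients. I would phrase the converse for harmonic graded pieces, as in \cref{def:filtered-harmonic-shadow}, and add a remark that one refines to stable factors only where a relative stable splitting exists (\cref{thm:constant-type}). I would also make explicit that "determines an object" means up to isomorphism in the completion, not a canonical choice of representative.
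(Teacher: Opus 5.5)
Your proposal is correct and follows essentially the paper's route: you assemble the upper-triangular operator, identify its integrability with the Maurer--Cartan system \eqref{eq:upper-MC}, and read off the iterated extensions, and for the converse you split a relative harmonic filtration and note that changes of splitting act by filtered gauge transformations. The differences are minor. You obtain the extensions via the partial-sum filtration and \cref{prop:filtration-extension}, where the paper iterates the short exact sequences directly. Your caveat that the converse naturally yields harmonic, not necessarily stable, graded pieces is a fair sharpening that the paper's proof passes over.
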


\begin{proof}
The first direction is the construction of the finite extension
completion in upper-triangular form.  The harmonic metrics on the stable
graded factors give objects of the harmonic-bundle category.  The
adjacent components of $\boldsymbol\beta$ specify the successive short
exact sequences, while the remaining components and the
Maurer--Cartan equations encode the compatibility of those extensions
with the chosen total differential.  Iterating the short exact
sequences produces an object of the extension completion.

Conversely, an object of the extension completion is, locally by
\cref{thm:filtration-criterion}, represented by a finite relative
harmonic filtration.  Choosing local splittings writes it in
upper-triangular form as in
\eqref{eq:upper-triangular-Ddoubleprime-heat}.  Integrability of the
total Higgs operator is exactly the Maurer--Cartan equation for its
strictly upper-triangular part.  Changing the splitting acts by a
filtered gauge transformation, so it changes the representative
$\boldsymbol\beta$ but not its filtered class or the resulting extension
object.
\end{proof}

The proposition explains why extension completion should not be regarded as a purely categorical substitute for a missing analytic construction.  It is the natural target for any analytic construction that tries to remember what the unrenormalized heat flow loses.

\subsection{Where geometricity fails and what remains possible}

The preceding discussion gives a precise sense in which non-Abelian Hodge theory begins to become non-geometric at the semistable boundary.  On individual stable and polystable objects, the correspondence is represented by honest harmonic metrics.  At a strictly semistable object, the same equation on the same underlying Higgs bundle would force a splitting.  The analytic flow therefore sees the polystable closed orbit.  The stacky extension construction records the nonclosed orbit data that the flow contracts away.

For parameter families there is an intermediate possibility.  A slicewise harmonic reduction may become singular while the adjoint Higgs operator and the resulting flat connection remain continuous because the singularity is compensated by vanishing of the Higgs coefficients.  The square-root family of Proposition~\ref{prop:square-root-polystable-failure} exhibits this phenomenon, and \cref{subsec:weak-C0-mediator} packages it in a weak $C^0$ operator-level mediator.  This does not restore a regular metric or retain non-split semistable extension data, but it shows that failure of metric regularity is not identical to failure of every relative operator-level correspondence.

This yields the following informal but mathematically constrained principle.

\begin{quote}
The semistable boundary is not non-geometric because harmonic metrics are technically hard to construct there.  It is non-geometric because ordinary harmonic metrics geometrize closed complex-gauge orbits, whereas strictly semistable objects live in nonclosed orbits whose extension data is transverse to the closed-orbit limit.
\end{quote}

There are therefore three possible responses.
\begin{enumerate}[label=(\roman*)]
\item One may pass to S-equivalence or coarse moduli.  Then the heat flow has a geometric target, but extension data is deliberately discarded.
\item One may work with the extension-generated stack.  Then semistable extension data is retained, but the target is no longer the ordinary groupoid of harmonic metrics on single polystable objects.
\item One may try to build a filtered or renormalized heat flow.  Such a flow would have to converge to data of the form in Definition~\ref{def:renormalized-heat-limit}, not merely to a positive harmonic metric on the original semistable Higgs bundle.
\end{enumerate}

The third option is a genuine analytic problem.  It would amount to giving a heat-flow interpretation of the extension completion rather than bypassing it.

\begin{problem}[Filtered heat flow and extension data]\label{prob:filtered-heat-flow}
Develop a Yang--Mills--Higgs heat-flow theory in which a semistable initial Higgs bundle produces a filtered asymptotic object consisting of:
\begin{enumerate}[label=(\alph*)]
\item the polystable harmonic metrics on the Jordan--H\"older graded factors;
\item asymptotic weights measuring collapse rates toward the closed orbit;
\item a renormalized filtered Maurer--Cartan datum representing the
successive extensions and their compatibility;
\item functoriality under pullback in parameter families.
\end{enumerate}
Determine whether the resulting filtered asymptotic object agrees with the image of the same semistable family in the extension-generated diffeological stack.
\end{problem}

This problem is deliberately phrased as a comparison, not as an attempt to avoid the stack.  A successful answer would show that the extension completion has an analytic heat-flow incarnation.  A negative answer would be equally informative: it would identify extension data that is genuinely stack-theoretic and not recoverable from asymptotic metric analysis alone.

\section{The \texorpdfstring{$\lambda$}{lambda}-direction and a diffeological Hodge family}\label{sec:lambda}

\subsection{\texorpdfstring{$\lambda$}{lambda}-connections and stable harmonic families}

Let $(E,\cD'',h)$ be a smooth family of harmonic bundles over $U\times X$, with
\[
  \cD''=\ddbar_E+\theta,
  \qquad
  \cD'_h=\partial_{E,h}+\theta^{\dagger_h}.
\]
The harmonic-bundle identities \eqref{eq:harmonic-bicomplex} give the standard Hodge family.  For $\lambda\in\C$ define a varying holomorphic structure and a holomorphic $\lambda$-connection by
\begin{equation}\label{eq:lambda-connection}
  \ddbar_{E,\lambda}
  :=\ddbar_E+\lambda\theta^{\dagger_h},
  \qquad
  D_{E,\lambda}
  :=\lambda\partial_{E,h}+\theta.
\end{equation}
The relevant Leibniz rule is
\[
  D_{E,\lambda}(fs)
  =\lambda(\partial f)\otimes s+fD_{E,\lambda}s
\]
for smooth scalar functions $f$; compatibility with $\ddbar_{E,\lambda}$ then makes $D_{E,\lambda}$ a holomorphic $\lambda$-connection on the holomorphic bundle defined by $\ddbar_{E,\lambda}$.

Package the pair into the total degree-one operator
\begin{equation}\label{eq:lambda-total-differential}
  \mathbb D_\lambda
  :=\ddbar_{E,\lambda}+D_{E,\lambda}
  =\cD''+\lambda\cD'_h.
\end{equation}
At $\lambda=0$, the pair is $(\ddbar_E,\theta)$, namely the original Higgs bundle.  For $\lambda\ne0$, the associated ordinary flat connection is
\begin{equation}\label{eq:lambda-flat-rescaled}
  \nabla_\lambda
  :=\ddbar_{E,\lambda}+\lambda^{-1}D_{E,\lambda}
  =\ddbar_E+\partial_{E,h}
   +\lambda^{-1}\theta+\lambda\theta^{\dagger_h}.
\end{equation}
At $\lambda=1$ this is the non-Abelian Hodge flat connection $\nabla_h$.

\begin{proposition}[Integrability of the Hodge family]\label{prop:lambda-flatness}
For every $\lambda\in\C$,
\begin{align}
  \ddbar_{E,\lambda}^2&=0,\label{eq:lambda-dbar-square}\\
  D_{E,\lambda}^2&=0,\label{eq:lambda-D-square}\\
  \ddbar_{E,\lambda}D_{E,\lambda}
  +D_{E,\lambda}\ddbar_{E,\lambda}&=0.\label{eq:lambda-compatibility}
\end{align}
Equivalently,
\begin{equation}\label{eq:lambda-square}
  \mathbb D_\lambda^2=0.
\end{equation}
For $\lambda\ne0$, \eqref{eq:lambda-flat-rescaled} is flat.  If the harmonic family is smooth in $u$, all coefficients are jointly smooth in $(u,\lambda,x)$ and polynomial in $\lambda$ before the rescaling by $\lambda^{-1}$.
\end{proposition}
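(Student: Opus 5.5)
The proof expands $\mathbb D_\lambda^2$ as a polynomial in $\lambda$ and reads off the three coefficients from the harmonic-bundle identities \eqref{eq:harmonic-bicomplex}. By \eqref{eq:lambda-total-differential},
\[
  \mathbb D_\lambda^2
  =(\cD'')^2+\lambda[\cD'',\cD'_h]_{\mathrm{gr}}+\lambda^2(\cD'_h)^2 .
\]
Each coefficient vanishes slicewise because $h_u$ is harmonic for every $u$, so $\nabla_{h_u}=\cD''_u+\cD'_{h_u}$ is flat. This gives \eqref{eq:lambda-square} for all $\lambda\in\C$.

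To separate \eqref{eq:lambda-dbar-square}--\eqref{eq:lambda-compatibility}, I will decompose each of the three identities in \eqref{eq:harmonic-bicomplex} by form type.

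\begin{itemize}
\item From $(\cD'')^2=0$ one gets $\ddbar_E^2=0$, $\ddbar_E\theta=0$ and $\theta\wedge\theta=0$.
\item From $(\cD'_h)^2=0$ one gets $\partial_{E,h}^2=0$, $\partial_{E,h}\theta^{\dagger_h}=0$ and $\theta^{\dagger_h}\wedge\theta^{\dagger_h}=0$.
\item From the anticommutator one gets the $(1,1)$ identity $[\ddbar_E,\partial_{E,h}]+[\theta,\theta^{\dagger_h}]=0$, together with $\partial_{E,h}\theta=0$ and $\ddbar_E\theta^{\dagger_h}=0$.
\end{itemize}

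Next I expand $\ddbar_{E,\lambda}^2$, $D_{E,\lambda}^2$ and the anticommutator in \eqref{eq:lambda-compatibility}. Each becomes a polynomial in $\lambda$, and every coefficient is one of the type components just listed; for instance, the $\lambda$-coefficient of \eqref{eq:lambda-compatibility} is $[\ddbar_E,\partial_{E,h}]+[\theta,\theta^{\dagger_h}]$. The bookkeeping needs care because the splitting $\mathbb D_\lambda=\ddbar_{E,\lambda}+D_{E,\lambda}$ is not the $(0,1)/(1,0)$ type splitting: $\lambda\theta^{\dagger_h}$ sits in the first summand. I will therefore group the terms by which operator they come from rather than by bidegree, and check each monomial against the list above. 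This is the only place where an error is likely, so I expect it to be the main (if routine) obstacle.

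For $\lambda\ne0$, flatness of \eqref{eq:lambda-flat-rescaled} follows by the same expansion with $\lambda^{-1}\theta$ in place of $\theta$. The terms rearrange as $(\ddbar_E+\lambda^{-1}\theta)+(\partial_{E,h}+\lambda\theta^{\dagger_h})$, and the curvature is a Laurent polynomial in $\lambda$ whose coefficients are exactly the same type components, all of which vanish. An alternative is to note that $\nabla_\lambda^2=\lambda^{-1}\bigl(\lambda\ddbar_{E,\lambda}+D_{E,\lambda}\bigr)^2$, which reduces to \eqref{eq:lambda-dbar-square}--\eqref{eq:lambda-compatibility}.

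For regularity, the coefficients are $\ddbar_E$, $\theta$, the relative Chern operator $\partial_{E,h}$ and $\theta^{\dagger_h}$. These are jointly smooth in $(u,x)$ by \eqref{eq:chern-matrix} and the smoothness of $h$, which comes from \cref{thm:intro-smooth-metrics} in the stable case or from the standing hypothesis that the harmonic family is smooth. They enter \eqref{eq:lambda-connection} affinely in $\lambda$, so the coefficients are polynomial in $\lambda$ and jointly smooth in $(u,\lambda,x)$.
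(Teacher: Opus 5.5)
Your proof is correct. However, your remark that $\mathbb D_\lambda=\ddbar_{E,\lambda}+D_{E,\lambda}$ is not the type splitting is false, and it is what sends you down a longer route than the paper's. Since $\theta^{\dagger_h}$ is an $\End E$-valued $(0,1)$-form, $\ddbar_{E,\lambda}=\ddbar_E+\lambda\theta^{\dagger_h}$ is purely of type $(0,1)$, and $D_{E,\lambda}=\lambda\partial_{E,h}+\theta$ is purely of type $(1,0)$.

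The paper uses exactly this. Once $\mathbb D_\lambda^2=0$ follows from the three-term expansion, the terms $\ddbar_{E,\lambda}^2$, $[\ddbar_{E,\lambda},D_{E,\lambda}]_{\mathrm{gr}}$ and $D_{E,\lambda}^2$ are its $(0,2)$, $(1,1)$ and $(2,0)$ components. So \eqref{eq:lambda-dbar-square}--\eqref{eq:lambda-compatibility} drop out with no further computation, and this is also what makes ``equivalently'' literally an equivalence. Flatness of $\nabla_\lambda$ follows the same way, since its curvature has components $\ddbar_{E,\lambda}^2$, $\lambda^{-1}[\ddbar_{E,\lambda},D_{E,\lambda}]_{\mathrm{gr}}$ and $\lambda^{-2}D_{E,\lambda}^2$.

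You instead push the type decomposition down to the nine component identities of \eqref{eq:harmonic-bicomplex} and match $\lambda$-coefficients. This does work. In powers $\lambda^0,\lambda^1,\lambda^2$ the coefficients are:
\begin{itemize}
\item for $\ddbar_{E,\lambda}^2$: $\ddbar_E^2,\ \ddbar_E\theta^{\dagger_h},\ \theta^{\dagger_h}\wedge\theta^{\dagger_h}$;
\item for $D_{E,\lambda}^2$: $\theta\wedge\theta,\ \partial_{E,h}\theta,\ \partial_{E,h}^2$;
\item for the anticommutator: $\ddbar_E\theta,\ [\ddbar_E,\partial_{E,h}]_{\mathrm{gr}}+[\theta,\theta^{\dagger_h}]_{\mathrm{gr}},\ \partial_{E,h}\theta^{\dagger_h}$.
\end{itemize}
All of these are on your list. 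Your route has the modest merit of displaying every component identity used, but it carries the same content with more bookkeeping.

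There is also one slip in your alternative argument for $\lambda\ne0$. From $\nabla_\lambda=\lambda^{-1}(\lambda\ddbar_{E,\lambda}+D_{E,\lambda})$ one gets $\nabla_\lambda^2=\lambda^{-2}(\lambda\ddbar_{E,\lambda}+D_{E,\lambda})^2$, not a factor $\lambda^{-1}$. The vanishing is unaffected.
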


\begin{proof}
Expanding the square of the total operator gives
\[
 \mathbb D_\lambda^2
 = (\cD'')^2
 +\lambda[\cD'',\cD'_h]_{\mathrm{gr}}
 +\lambda^2(\cD'_h)^2.
\]
Each coefficient vanishes by the harmonic-bundle identities
\[
  (\cD'')^2=0,
  \qquad
  [\cD'',\cD'_h]_{\mathrm{gr}}=0,
  \qquad
  (\cD'_h)^2=0.
\]
so \eqref{eq:lambda-square} holds for every complex $\lambda$.
Relative to the complex structure on $X$, $\ddbar_{E,\lambda}$ has type
$(0,1)$ and $D_{E,\lambda}$ has type $(1,0)$.  The $(0,2)$, $(2,0)$,
and $(1,1)$ components of $\mathbb D_\lambda^2=0$ are respectively
\eqref{eq:lambda-dbar-square}, \eqref{eq:lambda-D-square}, and
\eqref{eq:lambda-compatibility}.

For $\lambda\ne0$, divide the $(1,0)$ operator by $\lambda$.  The three
identities just proved are exactly the type components of the curvature
of $\ddbar_{E,\lambda}+\lambda^{-1}D_{E,\lambda}$, so that connection is
flat.  Finally, \eqref{eq:lambda-connection} is assembled from jointly
smooth family coefficients by addition and scalar multiplication; it
is polynomial in $\lambda$.  This proves the regularity assertion.
\end{proof}

For a stable plot, the smoothly varying normalized harmonic metric makes the preceding $\lambda$-connection construction smooth in both the original parameter and $\lambda$.  We spell out the resulting family before passing to the stack morphism.

Combine \cref{thm:intro-smooth-metrics} and \cref{prop:lambda-flatness}.

\begin{theorem}[Smooth Hodge family on the stable locus]\label{thm:stable-Hodge-family}
Let $(E,\cD'')$ be a smooth family of stable Higgs bundles satisfying the NAH numerical conditions.  There is a smooth family of integrable holomorphic $\lambda$-connections
\[
  (u,\lambda)\longmapsto
  (\ddbar_{E,u,\lambda},D_{E,u,\lambda})
\]
over $U\times\C$ such that:
\begin{enumerate}[label=(\roman*)]
\item at $\lambda=0$ it is the original Higgs family;
\item at $\lambda=1$ the associated ordinary connection is the flat family produced by the non-Abelian Hodge transform;
\item the construction commutes with smooth pullback in $U$;
\item the resulting Hodge pair is independent of the residual positive-scalar ambiguity in the harmonic metric.
\end{enumerate}
\end{theorem}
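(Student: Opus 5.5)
The plan is to build the Hodge family directly from the global normalized harmonic metric and then check the four properties one at a time, invoking earlier results rather than redoing any analysis. By \cref{thm:intro-smooth-metrics}, the stable family $(E,\cD'')$ carries a global smooth harmonic metric $h$ on all of $U\times X$, normalized by a global Hermitian--Einstein determinant metric $q$ from \cref{prop:smooth-det}. We then pull $E$, $\cD''$ and $h$ back along the projection $U\times\C\to U$ and set
\[
  \ddbar_{E,u,\lambda}=\ddbar_{E,u}+\lambda\theta_u^{\dagger_{h_u}},
  \qquad
  D_{E,u,\lambda}=\lambda\partial_{E,h_u}+\theta_u,
\]
exactly as in \eqref{eq:lambda-connection}. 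Because $h$ is jointly smooth and the relative Chern operator and the Higgs adjoint are obtained from $h$, $h^{-1}$ and one $X$-derivative by algebraic operations (see \eqref{eq:chern-matrix}), the coefficients are jointly smooth in $(u,x)$ and polynomial in $\lambda$. Integrability for every $(u,\lambda)$ then follows slicewise from \cref{prop:lambda-flatness}, since each $h_u$ is harmonic and so satisfies \eqref{eq:harmonic-bicomplex}.

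Properties (i) and (ii) are direct substitutions. At $\lambda=0$ the pair reduces to $(\ddbar_E,\theta)$, and at $\lambda=1$ formula \eqref{eq:lambda-flat-rescaled} gives $\ddbar_E+\partial_{E,h}+\theta+\theta^{\dagger_h}=\nabla_h$, which is $\operatorname{NAH}_U(E,\cD'')$ in \eqref{eq:NAH-family-transform}. For (iv), any other admissible harmonic family differs from $h$ by $e^{c\circ\operatorname{pr}_U}$. This holds either by normalized uniqueness (\cref{prop:normalized-uniqueness}) together with the fact that changing the Hermitian--Einstein determinant metric multiplies it by an $X$-constant, or directly from the slicewise statement that stable harmonic metrics are unique up to positive constants; in the latter case smoothness of the ratio in $u$ is inherited from both metrics. \Cref{prop:scalar-independence} then shows that $\partial_{E,h}$ and $\theta^{\dagger_h}$ are unchanged, so both components of the Hodge pair are unchanged for every $\lambda$.

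For (iii), let $f:V\to U$ be smooth and consider $f\times\Id_\C$. By \cref{prop:pullback-solutions}, $f_X^*h$ is harmonic and $f_X^*q$-normalized for the pulled-back family. By uniqueness it is therefore the metric the construction selects on $V$; by (iv), any other normalization on $V$ yields the same pair anyway. Since relative Chern operators and adjoints commute with pullback (as in the proof of \cref{prop:pullback-solutions}), the Hodge pair on $V\times\C$ is the pullback of the one on $U\times\C$.

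We expect the only real subtlety to be step (iv) combined with (iii). The point is to make sure the comparison scalar between two admissible metric families is genuinely constant along $X$ and smooth in $u$, so that \cref{prop:scalar-independence} applies on the nose rather than only fibrewise. We would handle this by writing the ratio as $(\det h'/\det h)^{1/r}$ slicewise: this is a quotient of two smooth Hermitian--Einstein determinant metrics, whose logarithm is harmonic on each connected slice and hence a smooth function of $u$ alone. Everything else is bookkeeping on top of \cref{thm:intro-smooth-metrics} and \cref{prop:lambda-flatness}.
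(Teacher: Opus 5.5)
Your proposal is correct and follows essentially the same route as the paper: you build the pair from the global normalized harmonic metric of \cref{thm:intro-smooth-metrics}, get integrability from \cref{prop:lambda-flatness}, read off (i) and (ii) by substitution, and obtain (iii) and (iv) from \cref{prop:pullback-solutions}, normalized uniqueness, and \cref{prop:scalar-independence}. Your extra check that the comparison scalar between two admissible metric families is $X$-constant and smooth in $u$, by writing it as $(\det h'/\det h)^{1/r}$, makes explicit a point the paper leaves implicit, and the argument is sound.
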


\begin{proof}
Choose a global smooth normalized harmonic metric $h$ by \cref{thm:intro-smooth-metrics}.  Define the Hodge pair by \eqref{eq:lambda-connection}; explicitly, the $(0,1)$ and $(1,0)$ pieces are formed from the original Higgs data, the Chern operator of $h$, and the adjoint Higgs field.  Every ingredient depends smoothly on $u$, and the dependence on $\lambda$ is polynomial.  Hence the resulting family is smooth on $U\times\C$.

Integrability for every $\lambda$ is \cref{prop:lambda-flatness}.  At $\lambda=0$, the terms involving the harmonic adjoint and Chern part drop out in exactly the way prescribed by \eqref{eq:lambda-connection}, leaving the original Higgs family.  At $\lambda=1$, the two pieces combine to the ordinary flat connection
\[
  \nabla_h=\cD''+\cD'_h,
\]
which is the stable transform of \eqref{eq:smooth-stack-map}.

For a smooth map of parameter manifolds $f:V\to U$, \cref{prop:pullback-solutions} identifies the harmonic metric of the pulled-back family with $f_X^*h$ after the determinant data are pulled back.  The Chern operator, Higgs adjoint, and formula \eqref{eq:lambda-connection} therefore commute with pullback.  Finally, replacing $h$ by $e^{c(u)}h$ leaves the relative Chern connection and $\theta^{\dagger_h}$ unchanged by \cref{prop:scalar-independence}.  Thus the Hodge pair is independent of the residual positive-scalar ambiguity.
\end{proof}

\subsection{The stable Hodge morphism and its first variation}

Let $\MdiffHod(X)\to\C$ denote the diffeological prestack of smooth families of integrable holomorphic $\lambda$-connections, where an object over a test manifold $S$ includes a smooth function $\lambda:S\to\C$.  The abstract $\lambda$-$d$ formalism in \cite{AzamRayan2026} gives a uniform definition.  We do not need a global geometricity theorem here; the point is the explicit family supplied by harmonic metrics.

The base parameter must be displayed explicitly.  A Higgs plot over $U$ produces a Hodge family over $U\times\C$, not an object over $U$ with an unrecorded extra variable.  Thus the natural statement is the following map over the $\lambda$-line.

\begin{corollary}[Stable Hodge lift]\label{cor:stable-Hodge-lift}
The construction of \cref{thm:stable-Hodge-family} defines a smooth morphism over $\C$
\[
  \MdiffDol^{\st,0}(X)\times\C
  \longrightarrow
  \MdiffHod(X).
\]
Its pullback along $\{0\}\hookrightarrow\C$ is the original Dolbeault family, while along $\{1\}\hookrightarrow\C$ the associated ordinary connection is the stable transform \eqref{eq:smooth-stack-map}.
\end{corollary}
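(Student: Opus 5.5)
The plan is to build the morphism plotwise from \cref{thm:stable-Hodge-family}, and then check the three properties a morphism of diffeological prestacks over $\C$ must have: compatibility with pullback of plots, functoriality on arrows, and compatibility with the structure map to $\C$.

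\textbf{Objects.} A plot of $\MdiffDol^{\st,0}(X)\times\C$ over a test manifold $S$ is a pair consisting of a stable Higgs family $(E,\cD'')$ over $S\times X$ satisfying the NAH numerical conditions, together with a smooth function $\lambda:S\to\C$. Choose a global normalized harmonic metric $h$ by \cref{thm:intro-smooth-metrics}. Send the pair to the family
\[
  \bigl(\ddbar_E+\lambda\theta^{\dagger_h},\ \lambda\partial_{E,h}+\theta\bigr)
\]
over $S\times X$, with structure function $\lambda$. This is the pullback of the $U\times\C$ family of \cref{thm:stable-Hodge-family} along the graph map $S\to S\times\C$. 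Integrability of each slice follows from \cref{prop:lambda-flatness} applied pointwise in $\lambda$. Joint smoothness follows because the coefficients are polynomial in $\lambda$ and smooth in $(u,x)$.

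\textbf{Independence of choices and naturality.} By \cref{prop:scalar-independence}, the formula does not depend on the normalization of $h$, since $\partial_{E,h}$ and $\theta^{\dagger_h}$ are unchanged under $X$-constant rescalings. So the output depends only on the Higgs family and on $\lambda$.

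For a smooth map $f:S'\to S$:
\begin{itemize}
  \item \cref{prop:pullback-solutions} shows that $f_X^*h$ is a normalized harmonic metric for the pulled-back family.
  \item Hence the construction commutes with pullback strictly.
\end{itemize}

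For an isomorphism $g$ of Higgs families over $S$:
\begin{itemize}
  \item $g$ transports $h$ to a harmonic metric on the target.
  \item That metric agrees with the chosen one up to an $X$-constant scalar, by \cref{prop:normalized-uniqueness}.
  \item Therefore $g$ intertwines $\partial_{E,h}$ and $\theta^{\dagger_h}$.
  \item Hence $g$ is an isomorphism of $\lambda$-connection families, and composition is preserved.
\end{itemize}
Descent compatibility follows from strict pullback compatibility, as in \cref{thm:smooth-stable-transform}.

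\textbf{Fibres over $0$ and $1$.} Along $\{0\}\hookrightarrow\C$, setting $\lambda=0$ recovers $(\ddbar_E,\theta)$. Along $\{1\}\hookrightarrow\C$, the associated connection \eqref{eq:lambda-flat-rescaled} is $\nabla_h$, which is \eqref{eq:NAH-family-transform}.

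\textbf{Main obstacle.} The step needing the most care is arrows: isomorphisms of Higgs families need not preserve a chosen normalized $h$ unless the determinant data are matched. I would handle this by working with the unnormalized harmonic metric torsor and invoking scalar independence, so that no choice enters the output at all.
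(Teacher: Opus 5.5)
Your proposal is correct and follows essentially the same route as the paper. Like the paper, you build the Hodge pair from the global harmonic metric of \cref{thm:intro-smooth-metrics}, use \cref{prop:scalar-independence} to remove the dependence on the normalization, invoke \cref{prop:pullback-solutions} for naturality and descent, and read off the fibres at $\lambda=0$ and $\lambda=1$. The paper's proof leaves two points implicit that you make explicit: you describe plots of the product as pairs consisting of $(E,\cD'')$ and $\lambda:S\to\C$ (the pullback of the $U\times\C$ family along the graph map), and you check directly that the construction is compatible with arrows.
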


\begin{proof}
Represent a plot of $\MdiffDol^{\st,0}(X)$ by a stable Higgs family over a parameter manifold $U$.  The global harmonic metric of \cref{thm:intro-smooth-metrics} gives a smooth Hodge object over $U\times\C$.  If another normalization is chosen, the two harmonic metrics differ by a positive scalar function of the parameter, constant along $X$.  By \cref{prop:scalar-independence}, the relative Chern operator and the adjoint Higgs field are identical for the two choices.  Thus the global Hodge object is independent of the auxiliary normalization.

The pullback compatibility in \cref{thm:stable-Hodge-family} shows that this assignment is natural in plots and respects descent; thus it defines a smooth morphism of diffeological stacks over $\C$.  Evaluating at $\lambda=0$ recovers the original Higgs family, while at $\lambda=1$ the associated ordinary connection is $\nabla_h$, the stable transform \eqref{eq:smooth-stack-map}.
\end{proof}

The Hodge family also has an explicit infinitesimal description.  Differentiating in a plot direction combines the metric correction computed earlier with the elementary variation of the $\lambda$-dependent pair.

Let $\eta=a+\varphi$ be the derivative of an actual smooth stable Higgs plot in a chosen local smooth gauge, and let $s_{\mathrm{tot}}$ be the corresponding logarithmic metric variation.  Differentiating the total operator \eqref{eq:lambda-total-differential} with respect to the family parameter gives
\begin{equation}\label{eq:lambda-family-variation}
  \dot{\mathbb D}_\lambda
  =\eta
  +\lambda\bigl(
    \eta^{\star_h}+\cD'_h s_{\mathrm{tot}}
  \bigr).
\end{equation}
In a fixed-determinant direction, substituting $s_0=-G_h\cS_h(\eta)$ yields
\begin{equation}\label{eq:lambda-differential}
  \dot{\mathbb D}_\lambda
  =\eta+
  \lambda\left(
  \eta^{\star_h}
  -\cD'_hG_h\cS_h(\eta)
  \right).
\end{equation}
At $\lambda=0$, this is the original Higgs deformation.  At $\lambda=1$, it is the infinitesimal NAH operator \eqref{eq:inf-NAH-operator}.

\begin{proposition}[Interpolation of infinitesimal deformations]\label{prop:lambda-inf}
In the chosen gauge, formula \eqref{eq:lambda-differential} gives a polynomial interpolation between the Dolbeault representative at $\lambda=0$ and the de Rham representative at $\lambda=1$.  For each $\lambda$, it satisfies the linearized integrability equation of the Hodge family.
\end{proposition}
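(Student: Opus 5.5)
The plan is to differentiate the identity $\mathbb D_{\lambda,t}^2=0$ along the actual plot, rather than checking the linearized equation directly from the Green-operator formula. For each fixed $\lambda$ and each $t$ along a curve representing the plot direction, \cref{prop:lambda-flatness} gives $\mathbb D_{\lambda,t}^2=0$, where
\[
\mathbb D_{\lambda,t}=\cD''_t+\lambda\cD'_{h_t}
\]
is built from the smooth normalized harmonic family supplied by \cref{thm:intro-smooth-metrics}. In the chosen local gauge the underlying bundle is fixed, so $\mathbb D_{\lambda,t}=\mathbb D_\lambda+t\dot{\mathbb D}_\lambda+O(t^2)$. Differentiating $\mathbb D_{\lambda,t}^2=0$ at $t=0$ then gives
\[
[\mathbb D_\lambda,\dot{\mathbb D}_\lambda]_{\mathrm{gr}}=0,
\]
which is precisely the linearized integrability equation of the Hodge family at $\lambda$, i.e.\ closedness in the deformation complex with differential $[\mathbb D_\lambda,\cdot\,]_{\mathrm{gr}}$. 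This mirrors the proof of \cref{prop:complex-level-closedness}, and like that proof it needs no integration of abstract closed classes.

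Next I identify $\dot{\mathbb D}_\lambda$ with \eqref{eq:lambda-family-variation}. The derivative of $\cD''_t$ is $\eta$. The derivative of $\cD'_{h_t}$ splits into the fixed-metric part $\eta^{\star_h}$, from \eqref{eq:star-companion}, and the metric part $\cD'_hs_{\mathrm{tot}}$, from \cref{lem:variation-Dprime}. These are the same three terms already used in \cref{thm:differential-plot}, now with the second and third multiplied by $\lambda$. In a fixed-determinant direction, substituting $s_0=-G_h\cS_h(\eta)$ from \cref{thm:first-variation-metric} yields \eqref{eq:lambda-differential}.

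The resulting expression is affine in $\lambda$, hence polynomial. Setting $\lambda=0$ leaves $\eta$, the Dolbeault representative. Setting $\lambda=1$ gives $\mathfrak N_h(\eta)$ of \cref{def:inf-NAH}, the de Rham representative of \cref{thm:differential-plot}.

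The main subtlety is $\lambda$-independence of the metric correction. The harmonic metric $h_t$, and hence $s_{\mathrm{tot}}$, is determined at $\lambda=1$ and does not depend on $\lambda$. That is the only reason a single curve produces the whole interpolating family, and it is consistent because $\mathbb D_\lambda$ is linear in $\cD'_h$. One could alternatively verify the equation directly via the bicomplex identities \eqref{eq:harmonic-bicomplex} together with $L_hs_0=-\cS_h(\eta)$. That route would require the vanishing of the $\lambda^1$ and $\lambda^2$ coefficients separately, including $\cD'_h(\eta^{\star_h}+\cD'_hs_{\mathrm{tot}})=0$, which is not evident from the trace-free equation alone. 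I would therefore use the plotwise differentiation argument and only remark that the coefficients of each power of $\lambda$ vanish as a consequence.
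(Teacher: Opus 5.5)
Your proposal is correct and follows essentially the same route as the paper: differentiate $\mathbb D_{t,\lambda}^2=0$ along an actual stable plot at $t=0$ to obtain $[\mathbb D_{\lambda},\dot{\mathbb D}_\lambda]_{\mathrm{gr}}=0$. You then read off polynomiality and the endpoint values from the $\lambda$-independence of the metric variation, as in \cref{thm:differential-plot}. Your closing remark, that the $\lambda^0$, $\lambda^1$ and $\lambda^2$ coefficients are exactly the $t$-derivatives of the three harmonic-bundle identities, is correct and is a slightly more explicit reading of the same argument.
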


\begin{proof}
Let $t\mapsto\mathbb D_{t,\lambda}$ be the Hodge family associated to an actual one-parameter stable plot.  For every $(t,\lambda)$ the integrability equation is
\[
  \mathbb D_{t,\lambda}^2=0.
\]
Differentiate with respect to $t$ at $t=0$.  If $\dot{\mathbb D}_{\lambda}$ denotes the expression in \eqref{eq:lambda-differential}, the Leibniz rule gives
\[
  [\mathbb D_{0,\lambda},\dot{\mathbb D}_{\lambda}]_{\mathrm{gr}}=0,
\]
which is precisely the linearized integrability equation for a deformation of the $\lambda$-connection.

Formula \eqref{eq:lambda-differential} is polynomial in $\lambda$ because the underlying Hodge family is polynomial in $\lambda$ and the metric derivative does not introduce additional $\lambda$-dependence.  Evaluating at $\lambda=0$ leaves the Dolbeault deformation representative, whereas at $\lambda=1$ the terms combine to the differentiated flat connection described in \cref{thm:differential-plot}.  Thus the formula interpolates between the two representatives while remaining tangent to the integrable Hodge locus for every $\lambda$.
\end{proof}

\subsection{Twistor normalization and extensions beyond the stable locus}

The usual twistor family is often written with a parameter $\zeta\in\C^\times$ as
\begin{equation}\label{eq:twistor-family}
  \nabla_\zeta
  =\zeta^{-1}\theta+D_h+\zeta\theta^{\dagger_h}.
\end{equation}
The Hodge pair \eqref{eq:lambda-connection} and the twistor family \eqref{eq:twistor-family} differ by the standard rescaling and choice of affine chart on $\mathbb P^1$.  For our purposes the Hodge form is preferable because it extends directly to $\lambda=0$ as a $\lambda$-connection.

\begin{remark}[Scope]
We do not claim here a new derived twistor structure.  Our result is a smooth family theorem over arbitrary finite-dimensional parameter plots.  Derived and shifted twistor constructions address a different level of geometry; see \cite{KryczkaTanakaYau2026}.  The relationship between those constructions and the diffeological family stack merits separate study.
\end{remark}

Beyond the stable locus the construction separates into two regimes.  Constant-type polystable families inherit the centralizer ambiguity already described, whereas semistable extension data requires a compatible $\lambda$-dependent lifting of the off-diagonal cocycles.

On a locally split constant-type polystable family, choose a smooth harmonic metric as in \cref{thm:constant-type} and apply \eqref{eq:lambda-connection}.  The result depends on the multiplicity-space metric choices but is naturally organized by the harmonic metric groupoid.

For a semistable family with a relative harmonic filtration, each harmonic quotient block has the $\lambda$-family constructed above.  It is tempting to transport the off-diagonal extension data simultaneously in $\lambda$.  In an upper-triangular splitting, the desired object would have the form
\begin{equation}\label{eq:lambda-upper-triangular}
  \cD_{\lambda,E}
  =
  \begin{pmatrix}
    \cD_{\lambda,Q_1} & \beta_{12}(\lambda) & \cdots\\
    0 & \cD_{\lambda,Q_2} & \cdots\\
    \vdots & \ddots & \ddots
  \end{pmatrix},
\end{equation}
where the off-diagonal terms solve the $\lambda$-dependent
Maurer--Cartan equations and recover the original filtered extension
datum at $\lambda=0$.  The extension-generated equivalence of
\cite{AzamRayan2026} identifies the endpoint categories, but by itself
it does not supply a canonical smooth choice of the functions
$\beta_{ij}(\lambda)$ for an arbitrary family.  We therefore regard
\eqref{eq:lambda-upper-triangular} as a target construction, not as a
theorem proved here.  For split families, or whenever a compatible
smooth $\lambda$-dependent lift of the full filtered Maurer--Cartan
datum is given, it does define a filtered Hodge family.  Establishing
such lifts in general is part of the global enhancement problem below.

This distinction leaves a natural global question: whether the extension-generated diffeological correspondence admits a canonical Hodge enhancement without imposing extra choices on semistable extension data.

\begin{question}[Diffeological Deligne--Hitchin enhancement]\label{q:global-Hodge}
Can the equivalence
\[
  \MHDol(X)\simeq\MHdR(X)
\]
be promoted to a global diffeological Hodge or Deligne--Hitchin stack whose local objects are smooth filtered $\lambda$-families and whose transition at $\lambda\ne0$ is governed by the diffeological Riemann--Hilbert correspondence?
\end{question}

The stable theorem supplies a concrete global family model.  The semistable difficulty is again the smooth assembly of filtrations and extension classes, not the formal existence of a $\lambda$-parameter.

\section{Examples and model calculations}\label{sec:examples}

\subsection{Abelian and stable model calculations}

For a line bundle, the nonlinear commutator term disappears and the metric equation becomes linear.  Let $(L_u,\ddbar_u)$ be a smooth degree-zero family.  Relative to a background metric $q^0_u$, write the normalized Hermitian--Einstein metric as
\[
  q_u=q^0_ue^{f_u},
  \qquad
  \int_X f_u\,\frac{\omega^n}{n!}=0.
\]
Then $f_u$ is determined by the scalar Poisson equation \eqref{eq:det-poisson}.  The linearized operator is the scalar Laplacian
\[
  \Delta_\omega:W^{k+2,2}_0(X)\longrightarrow W^{k,2}_0(X)
\]
on mean-zero functions.  If $\sigma_{\det}(\eta)$ denotes the derivative, at fixed metric, of the scalar curvature-contraction term on the right side of \eqref{eq:det-poisson}, then
\[
  \dot f=-\Delta_\omega^{-1}\sigma_{\det}(\eta)
\]
up to the fixed convention factor in \eqref{eq:det-poisson}.

This is the abelian analogue of the Green-operator formula.  It should not be confused with the trace-free source $\cS_h$: in rank one the trace-free endomorphism bundle is zero, so $\cS_h$ vanishes identically and all metric variation lies in the determinant equation.

\begin{example}[A family of degree-zero line bundles on a curve]\label{ex:line-curve}
Let $X$ be a compact Riemann surface and let
\[
  \alpha_u\in A^{0,1}(X)
\]
be a smooth family of $\ddbar$-closed forms.  Define
\[
  \ddbar_u=\ddbar+\alpha_u.
\]
Take a fixed background metric.  The Chern curvature is affine in $\partial\alpha_u$ and its conjugate.  The normalized harmonic metric correction $f_u$ is obtained by applying the fixed Green operator of the scalar Laplacian to the mean-zero curvature contraction.  Hence the dependence on $u$ is as smooth, or as real analytic, as the family $\alpha_u$.
\end{example}

The rank-one calculation has no Jacobi correction beyond the scalar normalization.  A stable rank-two family is the first example in which the Green-operator term is genuinely visible.

Let $X$ be a compact Riemann surface and $(E,\ddbar_E,\theta)$ a stable rank-two Higgs bundle of degree zero with fixed determinant.  Let an actual smooth one-parameter family through this point have trace-free first derivative
\[
  \eta=(a,\varphi),
  \qquad
  \cD''\eta=0.
\]
The normalized harmonic metric variation along this plot is
\begin{equation}\label{eq:rank2-s}
  s=-G_h\sqrt{-1}\Lambda\Bigl(
  [\eta,\cD'_h]_{\mathrm{gr}}
  +[\cD'',\eta^{\star_h}]_{\mathrm{gr}}
  \Bigr).
\end{equation}
The flat deformation is
\begin{equation}\label{eq:rank2-flat}
  \dot\nabla
  =\eta+\eta^{\star_h}+\cD'_hs.
\end{equation}

Even when $\eta$ is represented by a simple algebraic variation of the Higgs field, $s$ is nonlocal because it involves the Green operator.  This nonlocality is the analytic content hidden by the pointwise statement that a harmonic metric exists uniquely.

Two elementary specializations separate the sources of the first variation.  We begin by varying the Higgs field while holding the holomorphic structure fixed.

Suppose the holomorphic structure is fixed and
\[
  \theta_t=\theta+t\varphi+O(t^2),
  \qquad
  \ddbar_E\varphi+[\theta,\varphi]=0.
\]
Then
\[
  \eta=\varphi.
\]
The source simplifies to
\begin{equation}\label{eq:higgs-only-source}
  \cS_h(\varphi)
  =\sqrt{-1}\Lambda\Bigl(
  [\varphi,\cD'_h]_{\mathrm{gr}}
  +[\cD'',\varphi^{\dagger_h}]_{\mathrm{gr}}
  \Bigr)_0.
\end{equation}
In the standard type-expanded form, the zero-order commutators
\[
  [\varphi,\theta^{\dagger_h}]
  +[\theta,\varphi^{\dagger_h}]
\]
are the characteristic terms.  If $\theta=0$, the trace-free moment-map source vanishes to first order: the derivative terms in the invariant graded formula have pure types $(2,0)$ and $(0,2)$ and are killed by $\Lambda$, while the commutator $[\theta_t,\theta_t^{\dagger_h}]$ starts at quadratic order in $t$.  Thus the first nontrivial metric response to a Higgs-field variation through the zero section is generally second order.

We then reverse the roles and vary only the Dolbeault operator.  The comparison makes the sign in the companion deformation particularly transparent.

Suppose $\theta$ is fixed in a smooth trivialization and
\[
  \ddbar_{E,t}=\ddbar_E+ta+O(t^2).
\]
Then
\[
  \eta=a.
\]
The $(1,1)$ Chern-curvature contribution is
\[
  \sqrt{-1}\Lambda\bigl(
  \partial_{E,h}a-\ddbar_Ea^{\dagger_h}
  \bigr),
\]
with the remaining terms supplied by the interaction with the fixed Higgs field in the invariant formula \eqref{eq:source-operator}.  Again the metric correction is obtained by solving an elliptic equation.

\subsection{Polystable model calculations}

Let $F_u$ and $G_u$ be smooth stable degree-zero Higgs families, pairwise non-isomorphic for all $u$ in a parameter neighbourhood.  Set
\[
  E_u=F_u\oplus G_u.
\]
Choose normalized smooth harmonic metrics $h_F(u)$ and $h_G(u)$.  Then
\[
  h_u=h_F(u)\oplus h_G(u)
\]
is smooth and harmonic.  The kernel of the Jacobi operator on Hermitian endomorphisms is
\[
  \R\Id_F\oplus\R\Id_G.
\]
After fixing the total determinant there remains a one-dimensional trace-free kernel generated by
\[
  \frac1{\rk F}\Id_F
  \oplus
  -\frac1{\rk G}\Id_G.
\]
This is the infinitesimal freedom to rescale the two factor metrics inversely while preserving the total determinant.

The example shows why determinant normalization alone does not give uniqueness on the polystable locus.

A multiplicity-two stable factor exhibits the centralizer phenomenon in its simplest nontrivial form.  The cone of harmonic metric choices is already positive-dimensional even after the stable factor itself is fixed.

Let $(F,\cD''_F)$ be stable and consider
\[
  E=F\otimes\C^2.
\]
Choose a normalized harmonic metric $h_F$.  Every harmonic metric is
\[
  h_F\otimes q,
  \qquad
  q\in\operatorname{Herm}^+(\C^2).
\]
Even after fixing the determinant of $E$, the allowed $q$ form the symmetric space
\[
  \operatorname{SL}(2,\C)/SU(2).
\]
The trace-free Jacobi kernel has real dimension three, matching the tangent space of this symmetric space.  This is the simplest explicit model of the harmonic metric fibre over a polystable point.

\subsection{Explicit failures across a polystable stabilizer jump}\label{sec:explicit-polystable-failures}

The loss of a uniform Jacobi inverse at a stabilizer jump can reflect an actual failure of continuous harmonic metrics, rather than only a limitation of the proof.  The following two elementary families also distinguish that analytic failure from membership in the extension-generated stack.

Let $X$ be an elliptic curve and fix a nowhere-vanishing holomorphic one-form
\[
  \alpha\in H^0(X,\Omega_X^1).
\]
In both examples the underlying holomorphic bundle is
\[
  E=\mathcal O_X^{\oplus2}
\]
with its fixed trivial Dolbeault operator.  A matrix depending only on the parameter and multiplied by $\alpha$ therefore defines an integrable Higgs field.  All fibres have trivial Chern classes and satisfy the NAH numerical conditions.

\begin{proposition}[No continuous harmonic metric, but a harmonic filtration]\label{prop:coalescing-lines-filtered}
On a real interval $U$ about $0$, consider
\begin{equation}\label{eq:triangular-polystable-family}
  \theta_t
  =
  \begin{pmatrix}
    0&t\\
    0&t^2
  \end{pmatrix}\alpha.
\end{equation}
Every fibre $(E,\ddbar+\theta_t)$ is polystable, but no choice of fibrewise harmonic metrics extends continuously through $t=0$.  Nevertheless the family admits a smooth relative harmonic filtration and hence belongs to $\mathscr M_{\Dol,d}^{\cH}(X)$ for every $d\in\{0,1,2,\ldots,\infty\}$.
\end{proposition}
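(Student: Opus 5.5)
The plan is to split the family explicitly for $t\neq0$ and use the orthogonality clause of \cref{thm:constant-type} to force any continuous harmonic metric to degenerate at $t=0$. The filtration assertion should then follow from an obvious invariant line subbundle.

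Write $A_t=\begin{pmatrix}0&t\\0&t^2\end{pmatrix}$, so $\theta_t=A_t\alpha$, and let $e_1,e_2$ be the standard constant frame of $\mathcal O_X^{\oplus2}$. I will proceed in the following order.

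\emph{Polystability.} For $t\neq0$ the constant matrix $A_t$ has the distinct eigenvalues $0$ and $t^2$. The corresponding eigenvectors are $e_1$ and $v_t:=e_1+te_2$, since $A_tv_t=(t^2,t^3)^{T}=t^2v_t$. Hence $(E,\ddbar+\theta_t)$ is the direct sum of the Higgs line bundles $(\mathcal O_X,0)$ and $(\mathcal O_X,t^2\alpha)$. Both summands are stable because they have rank one, and both have trivial Chern classes. They are non-isomorphic: an isomorphism of the trivial line bundle is a nonzero constant scalar, which cannot carry the Higgs field $0$ to $t^2\alpha\neq0$. At $t=0$ the Higgs field vanishes, and $\mathcal O_X^{\oplus2}$ with zero Higgs field is polystable. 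This proves the first claim.

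\emph{No continuous harmonic metric.} Fix any choice of fibrewise harmonic metrics $h_t$. For each fixed $t\neq0$, apply \cref{thm:constant-type} to the single fibre; the splitting is of constant type over a point. Its last sentence gives $h_t(e_1,v_t)=0$, which expands to
\[
  h_t(e_1,e_1)=-t\,h_t(e_1,e_2).
\]
Suppose $t\mapsto h_t$ extended continuously to $t=0$, even just from one side and only at the level of $C^0(X)$. The right-hand side tends to $0$, so the limit metric would satisfy $h_0(e_1,e_1)=0$, contradicting positive definiteness. This is really the coalescence $v_t\to e_1$ of two eigenlines that are forced to be mutually orthogonal.

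\emph{The filtration.} The line subbundle $E_1=\mathcal O_Xe_1$ satisfies $\theta_te_1=0$, so it is $\cD''$-invariant for every $t$ and defines a smooth subbundle over $U\times X$. Its induced Higgs field is $0$, and the quotient $E/E_1\cong\mathcal O_X\bar e_2$ carries the Higgs field $t^2\alpha$. Both are rank-one families whose constant flat metrics are harmonic: the Chern connection is trivial, $[\theta,\theta^{\dagger}]=0$ in rank one, and $\alpha$ is holomorphic, so $d(t^2\alpha+\overline{t^2\alpha})=0$. These metrics are smooth, indeed polynomial, in $t$. Thus $0\subset E_1\subset E$ is a smooth, and in particular $C^d$, relative harmonic filtration, and \cref{thm:Cd-filtration-criterion} gives membership in $\mathscr M_{\Dol,d}^{\cH}(X)$ for every $d$.

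The main obstacle is the second step: making sure that the orthogonality of distinct stable summands is genuinely forced for \emph{every} harmonic metric, not just for the product metrics that were constructed. This is exactly the converse part of \cref{thm:constant-type}, which rests on harmonic metrics differing by a flat-parallel endomorphism. If that citation felt too heavy here, I would verify it directly on the elliptic curve. Write $h=h^0f$, where $h^0$ is the standard constant metric, which is harmonic for $\theta_t$ because $A_t$ is normal in the eigenbasis after a constant change of frame. The endomorphism $f$ is then parallel, so it commutes with $A_t$. It is therefore diagonal in the basis $(e_1,v_t)$, and $(e_1,v_t)$ is orthogonal for $h^0$ and hence for $h$.
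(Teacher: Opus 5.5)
Your proof is correct and follows the paper's argument essentially step for step. You decompose into the eigensummands $(\mathcal O_X,0)\oplus(\mathcal O_X,t^2\alpha)$, use the orthogonality forced by \cref{thm:constant-type} to get $h_t(e_1,e_1+te_2)=0$ and hence $h_0(e_1,e_1)=0$ in the limit, and feed the constant invariant line $\mathcal O_Xe_1$ into \cref{thm:Cd-filtration-criterion}. One correction to your optional direct check: $h^0$ must be the constant metric making $(e_1,e_1+te_2)$ orthonormal, because the metric that is standard in the frame $(e_1,e_2)$ is not harmonic for $t\neq0$ ($A_t$ is not normal with respect to it).
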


\begin{proof}
For $t\ne0$, the two eigensummands are
\[
  L_{0,t}=\mathcal O_X e_1,
  \qquad
  L_{t^2,t}=\mathcal O_X(e_1+t e_2),
\]
with Higgs fields $0$ and $t^2\alpha$, respectively.  They are non-isomorphic stable line Higgs bundles, so their direct sum is polystable.  At $t=0$ the Higgs field vanishes and the fibre is the polystable bundle $\mathcal O_X\oplus\mathcal O_X$.

Suppose that $h_t$ were a continuous family of harmonic metrics through $0$.  For $t\ne0$, \cref{thm:constant-type} applied to the fibrewise decomposition shows that the two non-isomorphic stable eigensummands are orthogonal for every harmonic metric.  Thus, at any fixed $x\in X$,
\[
  h_t(e_1,e_1+t e_2)=0.
\]
Letting $t\to0$ gives $h_0(e_1,e_1)=0$, contradicting positive definiteness.

On the other hand, the constant line subbundle $F=\mathcal O_Xe_1$ is invariant for every $t$.  Its Higgs field is zero, and the induced Higgs field on $E/F$ is $t^2\alpha$.  These are smooth families of harmonic line Higgs bundles.  Hence
\[
  0\subset F\subset E
\]
is a smooth relative harmonic filtration.  It has every finite parameter regularity, so \cref{thm:Cd-filtration-criterion} gives the last assertion.
\end{proof}

The preceding example shows why failure of a metric on the total polystable family does not by itself imply failure of the extension-generated correspondence.  The next family has the stronger property asked for in the family-level questions of \cite{AzamRayan2026}.

\begin{proposition}[A polystable family outside every extension-generated locus]\label{prop:square-root-polystable-failure}
Let $U\subset\C$ be a disk about $0$, and set
\begin{equation}\label{eq:square-root-polystable-family}
  \theta_z
  =
  \begin{pmatrix}
    0&z\\
    z^2&0
  \end{pmatrix}\alpha.
\end{equation}
This is a real-analytic family of polystable Higgs bundles.  On no neighbourhood of $0$ does it admit a continuous family of harmonic metrics.  It also admits no continuous relative harmonic filtration on such a neighbourhood.  Consequently
\[
  (E,\ddbar+\theta)\notin\mathscr M_{\Dol,d}^{\cH}(X)
\]
locally at $0$ for every $d\in\{0,1,2,\ldots,\infty\}$.
\end{proposition}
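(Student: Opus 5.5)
I would begin with the fibrewise structure.  For $z\ne0$ the constant matrix $M_z=\begin{pmatrix}0&z\\ z^2&0\end{pmatrix}$ has eigenvalues $\pm z^{3/2}$, which are distinct and nonzero.  Hence $(E,\ddbar+\theta_z)$ splits as $L_+(z)\oplus L_-(z)$.  Here $L_\pm(z)=\mathcal O_X v_\pm(z)$ with $v_\pm(z)=(1,\pm z^{1/2})^{T}$, and the Higgs fields are $\pm z^{3/2}\alpha$.  These are non-isomorphic stable line Higgs bundles of degree zero, so every fibre with $z\neq0$ is polystable.  At $z=0$ the Higgs field vanishes and the fibre $\mathcal O_X^{\oplus2}$ is again polystable.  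Real analyticity is clear, since the coefficients are polynomial in $z$.

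For the metric statement I would argue by contradiction, exactly as in \cref{prop:coalescing-lines-filtered}.  Suppose $h_z$ is a continuous harmonic family near $0$.  For $z\ne0$, \cref{thm:constant-type} (applied fibrewise) forces $L_+(z)\perp_{h_z}L_-(z)$, so at a fixed point $x$ we have $h_z(v_+,v_-)=0$.  Writing $H_z$ for the matrix of $h_z$, this reads
\[
  H_{11}-|z|H_{22}+\bigl(z^{1/2}H_{21}-\overline{z}^{1/2}H_{12}\bigr)=0 .
\]
The choice of branch does not matter, because the relation holds for both roots.  Letting $z\to0$ and using continuity and boundedness of $H_z$ gives $H_{11}(0)=0$.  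This contradicts positivity at $z=0$.

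For the filtration statement, suppose there were a continuous relative harmonic filtration on a neighbourhood $V$ of $0$.  Its length is at most two, and a length-one filtration is a harmonic metric on the whole family, which is already excluded.  So the filtration is $0\subset F\subset E$ with $F$ a continuous line subbundle that is invariant for every $z$.  For $z\ne0$, an invariant line subbundle of $\mathcal O_X^{\oplus 2}$ with this Higgs field must be $L_+(z)$ or $L_-(z)$.  To see this, note that the composite $F\to E\to E/L_\pm$ is a Higgs morphism.  Degree considerations make it either zero or an isomorphism, and $L_+\not\cong L_-$, so $F$ is one of the two eigenlines.  Thus on the punctured neighbourhood $F_z$ is the eigenline spanned by $(1,w(z))$, where $w$ is a continuous choice of square root of $z$ (continuity of $F$ in $z$ at a fixed point $x$ forces the choice of sign to be continuous).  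Restricting to a small circle around $0$ yields a continuous square root of $z$ on that circle, which is impossible by monodromy.  This is the step I expect to need the most care: one must make precise that a continuous line subbundle of the trivial bundle induces a continuous map to $\mathbb P^1$ at a fixed $x$, and that this map selects one of the two eigenlines continuously.  The selection is forced because the eigenlines are distinct for $z\neq0$.

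Finally, membership in $\mathscr M_{\Dol,d}^{\cH}(X)$ locally at $0$ would, by \cref{thm:Cd-filtration-criterion}, produce a $C^d$ relative harmonic filtration on some neighbourhood of $0$.  Any such filtration is in particular continuous, so the previous paragraph excludes this for every $d$.
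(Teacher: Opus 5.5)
Your proposal is correct and follows the paper's proof essentially step for step: orthogonality of the two coalescing eigenlines from \cref{thm:constant-type} forces $h_0(e_1,e_1)=0$, a continuous invariant line subbundle would give a continuous square root of $z$ on a punctured disk, and \cref{thm:Cd-filtration-criterion} finishes the argument. The only difference is that the paper identifies $F_{(z,x)}$ as an eigenline pointwise, using $\alpha(x)\neq0$. That route is simpler than your degree argument, which silently needs $\deg F=0$; this does hold, because $F$ is a harmonic graded piece.
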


\begin{proof}
Write
\[
  A(z)=
  \begin{pmatrix}
    0&z\\
    z^2&0
  \end{pmatrix}.
\]
Then $A(z)^2=z^3\Id$.  For $z\ne0$ it has two distinct eigenlines.  In the affine chart of $\mathbf P^1$ they are generated by vectors $(1,a)$ with
\begin{equation}\label{eq:eigenline-square-root}
  a^2=z.
\end{equation}
Thus the corresponding rank-one Higgs summands are stable and non-isomorphic, while at $z=0$ the Higgs field is zero.  Every fibre is therefore polystable.

Along the positive real ray $z=r>0$, the eigenlines are generated by
\[
  (1,\sqrt r),
  \qquad
  (1,-\sqrt r).
\]
If harmonic metrics $h_z$ extended continuously to $z=0$, these two non-isomorphic stable summands would be $h_r$-orthogonal for $r>0$ by \cref{thm:constant-type}.  Both generators tend to $e_1$, so continuity would again give $h_0(e_1,e_1)=0$, a contradiction.

It remains to rule out a relative harmonic filtration.  A proper filtration of this rank-two family would contain a continuous invariant line subbundle $F$.  Fix $x\in X$.  Because $\alpha(x)\ne0$, the line $F|_{(z,x)}$ must be an eigenline of $A(z)$ for every $z\ne0$.  Its first coordinate is then nonzero, so it determines a continuous function $a$ on the punctured disk satisfying \eqref{eq:eigenline-square-root}.  Such a function would be a continuous square root of $z$ on a punctured disk, which is impossible: on a circle the identity $a^2=z$ would give $2\deg(a)=1$.  Hence no continuous invariant line subbundle exists on any neighbourhood of $0$.

A one-step filtration would instead assert that the whole family itself carries a continuous harmonic metric, which has already been excluded.  These are the only possibilities in rank two.  The finite-regularity filtration criterion \cref{thm:Cd-filtration-criterion} now shows that the family is outside $\mathscr M_{\Dol,d}^{\cH}(X)$ for every $d$, since every $C^d$ filtration or metric would in particular be continuous.
\end{proof}

\subsection{A weak \texorpdfstring{$C^0$}{C0} operator-level harmonic mediator}\label{subsec:weak-C0-mediator}

Proposition~\ref{prop:square-root-polystable-failure} rules out a continuous family of harmonic reductions and even a continuous relative harmonic filtration.  It does not rule out continuity of the operators obtained after the singular reduction has been used slicewise.  In the square-root family, the singularity of the metric is compensated by vanishing of the off-diagonal Higgs coefficients.  The resulting adjoint Higgs field, and hence the associated flat connection, remain continuous.

\begin{proposition}[A continuous flat lift with singular harmonic reduction]\label{prop:weak-flat-lift-square-root}
For the family \eqref{eq:square-root-polystable-family}, set $r=|z|$ and, for $z\ne0$, define
\begin{equation}\label{eq:singular-harmonic-metric-square-root}
  h_z=
  \begin{pmatrix}
    r^{1/2}&0\\
    0&r^{-1/2}
  \end{pmatrix},
  \qquad h_0=\Id.
\end{equation}
Then every $h_z$ is a smooth harmonic metric on the slice over $z$, with $\det h_z=1$, but $z\mapsto h_z$ is not continuous at $0$.  Nevertheless, the $h_z$-adjoint matrices extend continuously as
\begin{equation}\label{eq:continuous-adjoint-square-root}
  A(z)^{\dagger_{h_z}}
  =C(z)
  :=
  \begin{cases}
  \begin{pmatrix}
    0&\displaystyle\frac{\bar z^2}{|z|}\\[5pt]
    |z|\bar z&0
  \end{pmatrix},&z\ne0,\\[14pt]
  0,&z=0.
  \end{cases}
\end{equation}
Consequently
\begin{equation}\label{eq:continuous-flat-lift-square-root}
  \nabla_z=d+A(z)\alpha+C(z)\bar\alpha
\end{equation}
is a $C^0_UC^\infty_X$ family of flat connections with slicewise semisimple monodromy.  Its pointwise coarse isomorphism class is the classical non-Abelian Hodge image of $(E,\ddbar+\theta_z)$.

The original Higgs family therefore defines a holomorphic map from $U$ to the coarse Dolbeault moduli space, and \eqref{eq:continuous-flat-lift-square-root} is a continuous lift of its image under the classical coarse non-Abelian Hodge homeomorphism.

The resulting map to the coarse Betti moduli space is not $C^3$ at $z=0$.  In particular, no $C^3_UC^\infty_X$, and hence no smooth or real-analytic, family of flat connections can represent this same coarse map near $0$.
\end{proposition}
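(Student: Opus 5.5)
The plan is to verify everything by hand, since the underlying bundle is the trivial rank-two bundle on an elliptic curve, the metrics are constant in $x$, and $\alpha,\bar\alpha$ are closed. I use the matrix convention in which the $h$-adjoint of a matrix $B$ is $H^{-1}B^{*}H$, where $H$ is the matrix of $h$. This matches the adjoint formula $\theta^{\dagger_{h_t}}=e^{-ts}\theta^{\dagger_h}e^{ts}$ used in the proof of \cref{lem:variation-Dprime}.

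\emph{Harmonicity and continuity.} With $H=\operatorname{diag}(r^{1/2},r^{-1/2})$ and $A(z)^*=\begin{pmatrix}0&\bar z^2\\ \bar z&0\end{pmatrix}$, a direct multiplication gives $H^{-1}A^*H=C(z)$ for $z\ne0$. This is \eqref{eq:continuous-adjoint-square-root}.
\begin{itemize}
\item Multiplying out shows $AC=CA=|z|^3\Id$, so $A$ is $h_z$-normal.
\item Since $H$ is constant on $X$, the relative Chern connection is the trivial connection $d$ and its curvature vanishes. The Higgs term $[\theta,\theta^{\dagger_{h_z}}]$ is $[A,C]\,\alpha\wedge\bar\alpha=0$.
\item Hence the full curvature of $\nabla_z=d+A\alpha+C\bar\alpha$ vanishes: $d$ kills the constant coefficients, and the quadratic terms $A^2\alpha\wedge\alpha$, $C^2\bar\alpha\wedge\bar\alpha$ and $[A,C]\alpha\wedge\bar\alpha$ all vanish. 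So $h_z$ is a flat harmonic metric with $\det h_z=1$.
\item At $z=0$ the Higgs field vanishes, and $h_0=\Id$ is harmonic for the trivial Higgs bundle.
\item The diagonal entries $r^{\pm1/2}$ of $h_z$ do not converge to those of $h_0$, so $z\mapsto h_z$ is discontinuous at $0$. This is also forced by \cref{prop:square-root-polystable-failure}.
\item The entries of $C(z)$ have moduli $|z|$ and $|z|^2$, so $C$ is continuous with $C(0)=0$. Because the $x$-dependence is only through the fixed forms $\alpha,\bar\alpha$, $\nabla_z$ is a $C^0_UC^\infty_X$ family.
\end{itemize}

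\emph{Semisimplicity and the coarse statements.} Semisimplicity of the monodromy holds slicewise because each $\nabla_z$ is the flat connection of a harmonic bundle (Corlette--Simpson); at $z=0$ the monodromy is trivial. The coarse class of $\nabla_z$ is, by definition, the classical non-Abelian Hodge image of the slice, because $h_z$ is a harmonic metric for that slice. The family $\theta_z$ is holomorphic in $z$, so the induced map to the coarse Dolbeault space is holomorphic. Composing with the coarse homeomorphism therefore gives a map that $\nabla_z$ continuously lifts.

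\emph{Failure of $C^3$ regularity, which is the main step.} Since $A$ and $C$ commute and have constant coefficients, the holonomy along a loop $\gamma$ is $\exp\bigl(-(A\,P_\gamma+C\,\overline{P_\gamma})\bigr)$, where $P_\gamma=\int_\gamma\alpha$.
\begin{itemize}
\item For $z\ne0$, take a simultaneous eigenbasis of $A$ and $C$. The eigenvalues of $A$ are $\pm z^{3/2}$ for a local branch. The eigenvalue of $C=A^{\dagger_{h_z}}$ on the same line is the conjugate, $\pm\overline{z^{3/2}}$; this is consistent with $AC=|z|^3\Id$.
\item Hence $\tr\operatorname{Hol}_\gamma=2\cosh w$ with $w=z^{3/2}P_\gamma+\overline{z^{3/2}P_\gamma}$. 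Because $\cosh$ is even, this is independent of the branch, and
\[
  \tr\operatorname{Hol}_\gamma=2+w^2+O(w^4),
  \qquad
  w^2=2\operatorname{Re}\bigl(z^3P_\gamma^2\bigr)+2|z|^3|P_\gamma|^2 .
\]
\item Choose $\gamma$ with $P_\gamma\ne0$. The term $z^3P_\gamma^2$ is real-analytic. The term $|z|^3$ is $C^2$ but not $C^3$ at $0$. The remainder $w^4=4\bigl(\operatorname{Re}(z^{3/2}P_\gamma)\bigr)^4$ is a combination of $z^3$-type monomials times powers of $|z|^3$, which is at least $C^5$, so it cannot cancel the $|z|^3$ defect.
\item The trace function is continuous on the coarse Betti space and is regular on its algebraic structure, so the coarse map cannot be $C^3$.
\item Finally, a $C^3_UC^\infty_X$ family of flat connections has $C^3$ holonomy by smooth dependence of ODE solutions on parameters. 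Its trace would then be $C^3$, a contradiction. This gives the last assertion.
\end{itemize}

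The delicate point is exactly this non-cancellation bookkeeping, together with checking that the conjugate pairing of eigenvalues of $A$ and $C$ is correct.
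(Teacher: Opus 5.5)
Your proposal is correct and follows essentially the same route as the paper: the explicit computation $H^{-1}A^{*}H=C(z)$, commutation of $A$ and $C$ giving flatness (you show $AC=CA=\abs{z}^3\Id$, the paper writes $C=(\bar\lambda/\lambda)A$), the eigenline splitting with conjugate eigenvalues, and the expansion of $2\cosh w$, whose term $2\abs{P_\gamma}^2\abs{z}^3$ makes the holonomy trace fail to be $C^3$. Your remainder bookkeeping is slightly loose: in fact $w^4=16\bigl(\operatorname{Re}(z^{3/2}P_\gamma)\bigr)^4$, and the whole tail beyond $w^2$, not just $w^4$, has to be shown $C^3$; restricting to the real axis settles this cleanly, since there the one-sided third derivatives of the trace at $0$ are $24(\operatorname{Re}P_\gamma)^2$ and $-24(\operatorname{Im}P_\gamma)^2$, which differ whenever $P_\gamma\ne0$.
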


\begin{proof}
For $z\ne0$, choose $a$ with $a^2=z$.  The eigenlines of $A(z)$ are generated by
\[
  v_+=(1,a),
  \qquad
  v_-=(1,-a).
\]
Since $|a|^2=|z|=r$, formula \eqref{eq:singular-harmonic-metric-square-root} gives
\[
  h_z(v_+,v_-)=r^{1/2}-|a|^2r^{-1/2}=0.
\]
Thus the two stable rank-one Higgs summands are orthogonal.  The metric is constant in the $X$-direction, so its Chern connection is the trivial connection.  Direct calculation gives
\[
  h_z^{-1}A(z)^\dagger h_z
  =
  \begin{pmatrix}
    0&\bar z^2/r\\
    r\bar z&0
  \end{pmatrix}.
\]
This is \eqref{eq:continuous-adjoint-square-root}.  Its upper-right and lower-left entries have norms $r$ and $r^2$, respectively, and therefore extend continuously by zero.  The metric itself does not extend: its two eigenvalues tend to $0$ and $+\infty$.

For $z\ne0$, choose $\lambda$ with $\lambda^2=z^3$.  Then
\[
  C(z)=\frac{\bar\lambda}{\lambda}A(z),
\]
an expression independent of the sign of $\lambda$.  Hence $A(z)$ and $C(z)$ commute.  Their coefficients are constant on $X$, while $\alpha$ and $\bar\alpha$ are closed, so the curvature of \eqref{eq:continuous-flat-lift-square-root} vanishes.  On the two eigenlines the connection is
\[
  d+\lambda\alpha+\bar\lambda\bar\alpha,
  \qquad
  d-\lambda\alpha-\bar\lambda\bar\alpha.
\]
These are the rank-one non-Abelian Hodge transforms of $(\mathcal O_X,\lambda\alpha)$ and $(\mathcal O_X,-\lambda\alpha)$.  At $z=0$ the connection is $d\oplus d$.  Thus every fibre is semisimple and the coarse class is the classical non-Abelian Hodge image.  On the Dolbeault side the unordered pair is single-valued because changing $\lambda$ to $-\lambda$ interchanges its members; equivalently, its characteristic polynomial is
\[
  \eta^2-z^3\alpha^2.
\]

It remains to identify the intrinsic regularity loss.  Let $\gamma$ be a loop in $X$ with nonzero period
\[
  p_\gamma=\int_\gamma\alpha.
\]
Up to the immaterial sign convention for holonomy, the trace of the monodromy of \eqref{eq:continuous-flat-lift-square-root} around $\gamma$ is
\[
  T_\gamma(z)
  =2\cosh\bigl(\lambda p_\gamma+\bar\lambda\bar p_\gamma\bigr).
\]
Since $\lambda^2=z^3$ and $|\lambda|^2=|z|^3$, expansion at the origin gives
\begin{equation}\label{eq:coarse-trace-regularity}
  T_\gamma(z)
  =2+p_\gamma^2z^3+\bar p_\gamma^2\bar z^3
   +2|p_\gamma|^2|z|^3+O(|z|^6).
\end{equation}
The function $|z|^3$ is $C^2$ but not $C^3$ at the origin.  Holonomy traces of a $C^3_UC^\infty_X$ family of flat connections are $C^3$ in the parameter, by parameter dependence for the parallel-transport equation.  Formula \eqref{eq:coarse-trace-regularity} therefore rules out any $C^3$ representative of the same coarse Betti map.
\end{proof}

\begin{remark}[Slicewise heat flow and the nonuniform limit]\label{rem:weak-mediator-heat-flow}
One may reach the same operator-level conclusion by solving the harmonic-metric heat equation separately on the fibres.  Starting from a regular background family, the finite-time solutions retain parameter regularity, while standard convergence on each polystable slice produces a harmonic limit.  In the square-root family every such limit makes the two distinct rank-one summands orthogonal.  Changing the positive weights on those summands changes the limiting metric but not the adjoint matrix $C(z)$, and hence not the flat connection \eqref{eq:continuous-flat-lift-square-root}.  The loss of metric regularity is therefore a failure of uniform long-time convergence in the parameter, whereas the endpoint relative operator exhibits compensated continuity.  No particular heat-flow selection is built into the weak mediator below.
\end{remark}

The proposition suggests separating regularity of the relative operators from regularity of a harmonic reduction.  The mixed $C^0_UC^\infty_X$ category of \cref{sec:finite-regularity} is the natural setting.  Write $\mathscr M_{\Dol,0}(X)$ and $\mathscr M_{\dR,0}(X)$ for the stackifications, on the open-cover site of $C^0$ manifolds, of the evident prestacks of mixed $C^0_UC^\infty_X$ Higgs and flat families.  We retain $\mathscr M_{\Dol,0}^{\cH}(X)$ for the metric-regular extension-generated stack introduced in \cref{sec:finite-regularity} and write $\mathscr M_{\dR,0}^{\cH}(X)$ for its flat analogue.

\begin{definition}[Weak $C^0$ harmonic mediator]\label{def:weak-C0-mediator}
For a $C^0$ parameter manifold $U$, an object of
\[
  \mathscr H^{\mathrm{wk}}_0(X)(U)
\]
is a triple $(E,\cD'',\cD')$ with the following properties:
\begin{enumerate}[label=(\roman*)]
\item $E\to U\times X$ is a mixed $C^0_UC^\infty_X$ complex vector bundle;
\item $\cD''=\ddbar_E+\theta$ and $\cD'$ are relative degree-one operators with mixed $C^0_UC^\infty_X$ coefficients satisfying
\[
  (\cD'')^2=(\cD')^2=0,
  \qquad
  \cD''\cD'+\cD'\cD''=0;
\]
\item for every $u\in U$ there exists a smooth Hermitian metric $h_u$ on $E_u$ such that
\[
  \cD'_u=\partial_{E_u,h_u}+\theta_u^{\dagger_{h_u}}.
\]
\end{enumerate}
No parameter regularity is required of the assignment $u\mapsto h_u$.  Morphisms are mixed $C^0_UC^\infty_X$ bundle isomorphisms intertwining both $\cD''$ and $\cD'$.  The metrics $h_u$ are witnesses of pointwise metrizability, not additional rigid structure.
\end{definition}

Equivalently, one may display an object as $(E,\cD'',\nabla,\{h_u\})$, where
\[
  \nabla=\cD''+\cD'
\]
is continuous and flat, while the slicewise metrics may be discontinuous.  Treating the metrics only as witnesses removes artificial multiplicity coming from arbitrary discontinuous scalar rescalings, all of which determine the same relative operators.

\begin{proposition}[Pullback and endpoint functoriality]\label{prop:weak-mediator-functoriality}
The assignment $U\mapsto\mathscr H^{\mathrm{wk}}_0(X)(U)$ is a prestack on the open-cover site of $C^0$ manifolds.  It has natural endpoint functors
\begin{equation}\label{eq:weak-mediator-endpoints}
  \mathscr H^{\mathrm{wk}}_0(X)
  \longrightarrow \mathscr M_{\Dol,0}(X),
  \qquad
  \mathscr H^{\mathrm{wk}}_0(X)
  \longrightarrow \mathscr M_{\dR,0}(X),
\end{equation}
given by
\[
  (E,\cD'',\cD')\longmapsto(E,\cD''),
  \qquad
  (E,\cD'',\cD')\longmapsto(E,\cD''+\cD').
\]
The metric-regular $C^0$ harmonic mediator is the full subprestack for which the witnesses $h_u$ can be chosen as a mixed $C^0_UC^\infty_X$ metric.  Its inclusion in $\mathscr H^{\mathrm{wk}}_0(X)$ is strict.
\end{proposition}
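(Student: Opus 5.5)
Three things need checking: that $\mathscr H^{\mathrm{wk}}_0(X)$ is a prestack, that the two endpoint assignments are functors landing in the stated targets, and that the metric-regular mediator sits inside strictly.

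\emph{Prestack.} Let $f:V\to U$ be a continuous map of $C^0$ manifolds. I pull back a triple $(E,\cD'',\cD')$ along $f_X=f\times\Id_X$. Every defining condition is preserved.
\begin{itemize}
\item Mixed $C^0_UC^\infty_X$ regularity of the bundle and of the coefficients is preserved, since composing a continuous map into $C^\infty(X)$ with $f$ is still continuous into $C^\infty(X)$.
\item The three identities $(\cD'')^2=(\cD')^2=\cD''\cD'+\cD'\cD''=0$ are slicewise in $X$, so they pull back.
\item For the witnesses, take $h_{f(v)}$ on the slice over $v$. The argument of \cref{prop:pullback-solutions} shows that the relative Chern operator and the Higgs adjoint commute with pullback. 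Since $f_X$ is the identity on each $X$-slice, this is in fact a literal equality slice by slice.
\end{itemize}
Morphisms, being continuous bundle isomorphisms intertwining both operators, pull back as well. Compatibility of pullback with composition is the usual pseudofunctoriality of pullback of bundles, so $\mathscr H^{\mathrm{wk}}_0(X)$ is a prestack on the open-cover site.

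\emph{Endpoint functors.} The Dolbeault endpoint is clear: $(E,\cD'')$ has mixed $C^0$ coefficients and integrable slices, and forgetting $\cD'$ is compatible with morphisms and pullback. For the de Rham endpoint, $\nabla=\cD''+\cD'$ has mixed $C^0$ coefficients and satisfies
\[
  \nabla^2=(\cD'')^2+[\cD'',\cD']_{\mathrm{gr}}+(\cD')^2=0 .
\]
Condition (iii) says that on each slice $\nabla_u$ is the connection $D_{h_u}+\theta_u+\theta_u^{\dagger_{h_u}}$. So it is a genuine relative connection, not just a degree-one operator. A morphism intertwining $\cD''$ and $\cD'$ intertwines their sum, and both endpoint maps commute strictly with pullback.

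\emph{The subprestack and strictness.} The metric-regular mediator consists of the objects whose witnesses can be chosen as a mixed $C^0_UC^\infty_X$ metric $h$. This condition is stable under pullback by the same remark as above, so it defines a full subprestack. The real content is that the inclusion is strict, and I will show this with the square-root family. By \cref{prop:weak-flat-lift-square-root}, take
\[
  \cD''=\ddbar+A(z)\alpha, \qquad \cD'=\partial+C(z)\bar\alpha
\]
over the disk, where $\partial$ is the trivial Chern operator. The required facts all come from that proposition:
\begin{itemize}
\item The coefficients are continuous in $z$.
\item On each slice, the trivial connection is the Chern connection of the $X$-constant metric $h_z$, and $C(z)$ is the $h_z$-adjoint. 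This gives (iii).
\item The relations in (ii) follow from flatness and type decomposition, using that $A$ and $C$ commute and that $\alpha,\bar\alpha$ are closed.
\end{itemize}
Hence this triple is an object of $\mathscr H^{\mathrm{wk}}_0(X)$. Suppose it were also in the metric-regular mediator. Then some continuous metric $h$ with $\cD'=\cD'_h$ would exist, and by (ii) and (iii) it would be harmonic on every slice. \cref{prop:square-root-polystable-failure} rules out a continuous family of harmonic metrics near $0$. The same conclusion holds for the object restricted to any neighbourhood of $0$, so strictness is not an artifact of the chosen domain.

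The main obstacle is the wording of condition (iii). It requires $\cD'$ to equal $\cD'_{h_u}$ for some witness, but it does not require harmonicity outright. I must derive harmonicity, meaning flatness of $\cD''+\cD'_{h_u}$, from (ii) rather than assume it. Once that is done, \cref{prop:square-root-polystable-failure} applies directly.
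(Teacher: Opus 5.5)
Your proposal is correct and follows the paper's proof essentially step for step. The shared steps are slicewise pullback of the possibly discontinuous witnesses $h_{f(v)}$, flatness of $\cD''+\cD'$ from (ii), and strictness via the square-root object of \cref{prop:weak-flat-lift-square-root}: there (ii) and (iii) turn any continuous witness into a continuous harmonic family, which \cref{prop:square-root-polystable-failure} excludes. The one point the paper adds, and which you should state explicitly beyond pseudofunctoriality, is that the Hom presheaves are sheaves, because intertwining both relative operators is a local condition.
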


\begin{proof}
For a $C^0$ map $f:V\to U$, pull back the bundle and both relative operators.  A pointwise harmonic witness pulls back slicewise by
\[
  (f^*h)_v=h_{f(v)}.
\]
No continuity of this assignment is needed.  The three operator identities in Definition~\ref{def:weak-C0-mediator} are preserved by pullback, and the Hom presheaves are sheaves because intertwining both relative operators is a local condition.  This proves the prestack assertion.  The total operator $\cD''+\cD'$ is flat by (ii), giving the second endpoint in \eqref{eq:weak-mediator-endpoints}; the first is immediate.

Every metric-regular harmonic family determines such a triple, and the inclusion is full with the stated choice of morphisms.  Proposition~\ref{prop:weak-flat-lift-square-root} supplies an object of the weak mediator whose Dolbeault endpoint is the family of Proposition~\ref{prop:square-root-polystable-failure}.  That proposition rules out any continuous harmonic witness, proving strictness.
\end{proof}

The earlier categorical construction can now be applied at the operator level.  Write
\[
  \cD'=\partial_E^{\mathrm{wk}}+\psi
\]
according to type.  The pointwise witnesses identify $\partial_E^{\mathrm{wk}}$ and $\psi$ with $\partial_{E,h_u}$ and $\theta_u^{\dagger_{h_u}}$ on each slice, but the two coefficients are continuous by definition even when the witnesses are not.  Therefore
\begin{equation}\label{eq:weak-Hodge-family}
  \ddbar_{E,\lambda}=\ddbar_E+\lambda\psi,
  \qquad
  D_{E,\lambda}=\lambda\partial_E^{\mathrm{wk}}+\theta
\end{equation}
is a mixed $C^0$ Hodge family.  This is the operator-level reason that the categorical construction survives the singular reduction.

\begin{proposition}[Weak extension-completed correspondence]\label{prop:weak-extension-correspondence}
Let
\[
  \mathscr M_{\Dol,0}^{\mathrm{wk}\cH}(X),
  \qquad
  \mathscr M_{\dR,0}^{\mathrm{wk}\cH}(X)
\]
be the stacky Dolbeault and de Rham endpoint images obtained from $\mathscr H^{\mathrm{wk}}_0(X)$ by finite iterated extension completion and stackification.  The common operator-level mediator induces an equivalence
\begin{equation}\label{eq:weak-extension-equivalence}
  \mathscr M_{\Dol,0}^{\mathrm{wk}\cH}(X)
  \simeq
  \mathscr M_{\dR,0}^{\mathrm{wk}\cH}(X).
\end{equation}
The original metric-regular extension-generated stacks embed into these weak endpoint stacks.
\end{proposition}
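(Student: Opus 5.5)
The proof will follow the template of the original construction in \cite{AzamRayan2026}, with the metric-regular harmonic prestack replaced throughout by the weak operator-level mediator $\mathscr H^{\mathrm{wk}}_0(X)$. The structural input is \cref{prop:weak-mediator-functoriality}. It supplies the two endpoint functors \eqref{eq:weak-mediator-endpoints}, which are compatible with pullback along $C^0$ maps. The approach is to show that each endpoint functor is fully faithful (in the appropriate dg sense) on the mediator. Extension completion and stackification are then formal operations that preserve an equivalence between the two images.

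\textbf{Step 1: fully faithfulness on the mediator.} Given objects $(E_1,\cD''_1,\cD'_1)$ and $(E_2,\cD''_2,\cD'_2)$ over $U$, I will compare three things: morphisms intertwining both operators, morphisms intertwining $\cD''$ only, and morphisms intertwining $\nabla=\cD''+\cD'$ only.

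On each slice, the witnesses $h_{1,u},h_{2,u}$ are harmonic metrics. The classical Simpson statement then applies: a $\cD''$-morphism between harmonic bundles is automatically $\cD'$-compatible, and likewise a $\nabla$-morphism splits by type. This follows from the Kähler identities and the Bochner argument applied to $\Hom$ with the product harmonic metric.

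The key observation is that the morphism itself is already a mixed $C^0_UC^\infty_X$ section. Checking compatibility with the other operator is therefore a pointwise condition in $u$, and no parameter regularity of the witnesses is needed. This gives equality of the three Hom sheaves.

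\textbf{Step 2: higher Ext groups.} To pass to extension completion, I also need agreement of the degree-one Ext groups. These are the groups that classify extensions, computed by the relative Dolbeault and de Rham complexes of $\Hom$.

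I will argue slicewise with the principle of two types for harmonic $\Hom$-bundles, as in \eqref{eq:classical-H1-comparison}. Here the weak setting creates the difficulty: a relative extension cocycle must be continuous in $u$. The identification of an extension on one side with one on the other is realized by the mediator's own operators, since the extension is formed inside the operator-level category using $\cD''$, $\cD'$ and the off-diagonal block.

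So rather than transporting cohomology classes through witness-dependent Green operators, which could be discontinuous, I will define the extension completion of $\mathscr H^{\mathrm{wk}}_0(X)$ directly. Its objects are filtered triples $(E,\cD'',\cD')$ whose graded pieces lie in the mediator. The endpoint functors then forget one operator.

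Full faithfulness on filtered objects will be proved by induction on filtration length, using Step 1 together with the five lemma for Hom sheaves. Essential surjectivity onto the endpoint images holds by definition of the images.

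\textbf{Step 3: stackification and the embedding.} Stackification preserves equivalences of prestacks on the $C^0$ open-cover site, which gives \eqref{eq:weak-extension-equivalence}. For the final claim, the metric-regular mediator is a full subprestack by \cref{prop:weak-mediator-functoriality}, and extension completion is monotone. Fullness of the resulting functor follows from Step 1, since Hom sheaves are computed identically in both settings.

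\textbf{Main obstacle.} I expect Step 2 to be the main obstacle: ensuring the de Rham side is not larger. An arbitrary continuous flat extension of weak-mediator flat families may not be visibly a forgetful image, because its Dolbeault counterpart requires splitting a continuous $d_\nabla$-cocycle into types continuously in $u$.

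I would first try to avoid this by defining both endpoint images as the essential images of the completed mediator, so that the equivalence is tautological up to Step 1. If genuine surjectivity onto extensions formed in $\mathscr M_{\dR,0}(X)$ is wanted, I would instead treat the question locally on $U$ using constant-type strata, and accept that this goes beyond what the statement asserts.
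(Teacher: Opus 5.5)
Your Steps 1 and 3 are sound. The degree-zero comparison is a pointwise condition in $u$, so the irregularity of the witnesses is harmless there. Step 2, however, has a genuine gap: the category you propose as the completion is too large, and neither endpoint functor is full on it. That category consists of all filtered triples $(E,\cD'',\cD')$ whose filtration is preserved by both operators and whose graded pieces lie in $\mathscr H^{\mathrm{wk}}_0(X)$.

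The failure already occurs over a point. Let $X$ be an elliptic curve, $E=\mathcal O_X^{\oplus2}$, $N=\begin{pmatrix}0&1\\0&0\end{pmatrix}$, and set
\[
  T_0=(E,\ddbar,\partial),\qquad
  T_1=(E,\ddbar,\partial+N\,d\bar z),\qquad
  T_2=(E,\ddbar+N\,d\bar z,\partial).
\]
Because $d\bar z$ is closed and $N^2=0$, all three triples satisfy the three operator identities and preserve $\mathcal O_Xe_1$. Their graded pieces are $(\mathcal O_X,\ddbar,\partial)$ with the flat witness.

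\begin{itemize}
\item $T_0$ and $T_1$ have the same Dolbeault endpoint. Their de Rham endpoints $d$ and $d+N\,d\bar z$ are not isomorphic, since the latter has nontrivial unipotent monodromy.
\item $T_1$ and $T_2$ have the same de Rham endpoint. Their Dolbeault endpoints are $\mathcal O_X^{\oplus2}$ and the non-split extension of $\mathcal O_X$ by itself, both with zero Higgs field, and these are not isomorphic.
\item Concretely, $\operatorname{diag}(0,1)$ is $\cD''$-parallel on $T_1$, but $[\cD',\operatorname{diag}(0,1)]=N\,d\bar z\neq0$.
\end{itemize}

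So your category induces no equivalence between the two endpoint images. The five-lemma induction could not have caught this. In the sequence $0\to\Hom(Q,F)\to\Hom(E,F)\to\Hom(E_1,F)\to\Ext^1(Q,F)$, fullness in the middle term requires injectivity on the $\Ext^1$ term. That is exactly the degree-one comparison you set out to avoid. Degree-zero agreement alone only gives faithfulness, which is automatic for forgetful functors, so the equivalence is not ``tautological up to Step 1.''

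The missing ingredient is the correct dg-enhancement of the mediator. The completion should consist of Simpson-type twisted complexes
\[
\bigl(\textstyle\bigoplus_iQ_i,\ \operatorname{diag}(\cD''_{Q_i})+\beta,\ \operatorname{diag}(\cD'_{Q_i})\bigr),
\qquad \cD''\beta+\beta\wedge\beta=0,\quad \cD'\beta=0.
\]
Equivalently, the extension data enter as a $\lambda$-independent Maurer--Cartan element for $\mathbb D_\lambda=\cD''+\lambda\cD'$. This excludes $T_1$.

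On that category your pointwise-in-$u$ strategy does work, but each inductive step needs the degree-one principle of two types on the graded harmonic Hom bundles. The error term there is a $\cD'$-closed, $\cD''$-exact one-form (on the de Rham side: $D$-exact and closed for both operators). It is therefore $\cD''\cD'$-exact from degree $-1$, hence zero. This is a slicewise argument and needs no continuity of the witnesses.

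The paper does not redo any of this. It observes that the extension-completion and stackification argument of \cite{AzamRayan2026} is operator-level: Hom complexes, upper-triangular Maurer--Cartan matrices, pullback, and descent. Those operations differentiate only along $X$ and so preserve $C^0_UC^\infty_X$ coefficients. The weak Hodge family \eqref{eq:weak-Hodge-family}, built from the continuous coefficients $\partial_E^{\mathrm{wk}}$ and $\psi$ of $\cD'$, supplies the same $\lambda$-$d$ interpolation as the smooth mediator. Evaluating the common completion at $\lambda=0$ and $\lambda=1$ then gives the equivalence. Your proposal never uses this interpolation, and it has no substitute for the degree-one comparison that the paper inherits from the earlier construction.
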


\begin{proof}
The finite-extension and stackification argument of \cite{AzamRayan2026} uses relative operators, Hom complexes, finite upper-triangular extension matrices, Maurer--Cartan equations, pullback, and descent.  These operations differentiate only in the $X$-direction and preserve mixed $C^0_UC^\infty_X$ coefficients.  Formula \eqref{eq:weak-Hodge-family} supplies the same $\lambda$-$d$ interpolation used in the smooth mediator.  Thus the common finite extension completion has Dolbeault and de Rham endpoint realizations, and evaluation at the two endpoints gives \eqref{eq:weak-extension-equivalence}.  The inclusion of the metric-regular mediator commutes with both endpoint functors, finite extensions, and stackification, producing the final embeddings.
\end{proof}

\begin{remark}[What the formal equivalence asserts]\label{rem:weak-correspondence-scope}
Proposition~\ref{prop:weak-extension-correspondence} concerns the two stacky images defined from the common weak mediator.  It does not identify either image with the full ambient $C^0$ Higgs or flat stack, and it does not turn a map into a coarse moduli space into a canonical family-level lift.  A continuous flat endpoint is additional relative data and may fail to exist or fail to be unique on other parameter spaces.
\end{remark}

\begin{corollary}[Strict enlargement at $C^0$ regularity]\label{cor:strict-weak-enlargement}
There are strict inclusions
\[
  \mathscr M_{\Dol,0}^{\cH}(X)
  \subsetneq
  \mathscr M_{\Dol,0}^{\mathrm{wk}\cH}(X),
  \qquad
  \mathscr M_{\dR,0}^{\cH}(X)
  \subsetneq
  \mathscr M_{\dR,0}^{\mathrm{wk}\cH}(X).
\]
The Dolbeault witness for strictness is the family of Proposition~\ref{prop:square-root-polystable-failure}; its de Rham witness is the flat family \eqref{eq:continuous-flat-lift-square-root}.
\end{corollary}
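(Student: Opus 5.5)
The plan is to obtain both inclusions from \cref{prop:weak-extension-correspondence} and then use the square-root family to show they are strict. The inclusions come from the last assertion of that proposition. Every metric-regular $C^0$ harmonic family $(E,\cD'',h)$ gives the triple $(E,\cD'',\cD'_h)$ of \cref{def:weak-C0-mediator}, with the continuous metric serving as its own witness. \cref{prop:weak-mediator-functoriality} identifies this as a full subprestack. Finite iterated extension completion and stackification are functorial and commute with the two endpoint functors. Hence the metric-regular endpoint stacks $\mathscr M_{\Dol,0}^{\cH}(X)$ and $\mathscr M_{\dR,0}^{\cH}(X)$ map into the weak endpoint stacks. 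Both sides are defined as substacks of the ambient stacks $\mathscr M_{\Dol,0}(X)$ and $\mathscr M_{\dR,0}(X)$, so these maps are genuine inclusions.

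For Dolbeault strictness, I take the square-root family \eqref{eq:square-root-polystable-family} over a disk $U$. By \cref{prop:weak-flat-lift-square-root}, the triple $(E,\ddbar+\theta_z,\partial+C(z)\bar\alpha)$ is an object of $\mathscr H^{\mathrm{wk}}_0(X)(U)$. Its witnesses are the slicewise metrics \eqref{eq:singular-harmonic-metric-square-root}, and the operator identities hold because \eqref{eq:continuous-flat-lift-square-root} is flat and the Higgs data are integrable. It is therefore already in the weak Dolbeault image before any extension completion is applied. On the other hand, \cref{prop:square-root-polystable-failure} shows that it is not in $\mathscr M_{\Dol,d}^{\cH}(X)$ locally at $0$, in particular for $d=0$. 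So the first inclusion is strict.

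For de Rham strictness, I use the flat family $\nabla_z$ of \eqref{eq:continuous-flat-lift-square-root}, which lies in the weak de Rham image. The step I expect to be the main obstacle is showing that $\nabla_z$ is not in $\mathscr M_{\dR,0}^{\cH}(X)$ near $0$. By the de Rham half of \cref{thm:Cd-filtration-criterion}, membership would give a local $C^0$ relative flat filtration with harmonic-family quotients. I will rule out both cases in rank two.

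First, a proper filtration would be a continuous $\nabla$-invariant line subbundle. For $z\neq0$ the monodromy splits into the distinct characters $\pm(\lambda\alpha+\bar\lambda\bar\alpha)$. An invariant line must therefore be one of the two eigenlines $(1,\pm a)$ with $a^2=z$, and a continuous choice would give a continuous square root of $z$, exactly as in \cref{prop:square-root-polystable-failure}.

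Second, a one-step filtration would be a continuous harmonic metric $h_z$ for $\nabla_z$. For $z\neq0$ the flat bundle is a direct sum of two non-isomorphic rank-one summands. By Corlette uniqueness (as in the proof of \cref{thm:constant-type}), any harmonic metric must make these summands orthogonal. Both eigenline generators tend to $e_1$, so continuity would force $h_0(e_1,e_1)=0$, a contradiction.

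Should the flat-side harmonic metric need to be related to the Higgs side, I would instead note that the decomposition of $\nabla_z$ with respect to such an $h$ recovers a Higgs field with the same eigenline decomposition, so the same orthogonality argument applies. Both possibilities are excluded, and this establishes the second strict inclusion.
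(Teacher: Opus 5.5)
Your proposal is correct and follows the paper's own argument. The inclusions come from the last assertion of \cref{prop:weak-extension-correspondence}, Dolbeault strictness comes from \cref{prop:square-root-polystable-failure,prop:weak-flat-lift-square-root}, and de Rham strictness comes from the flat filtration criterion after excluding two things: a continuous invariant line (which would give a continuous square root of $z$) and a continuous harmonic metric (ruled out by the coalescing orthogonal eigenlines). Your final fallback paragraph is unnecessary, since Corlette uniqueness on the flat side already forces the orthogonality you use.
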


\begin{proof}
The Dolbeault assertion follows immediately from Propositions~\ref{prop:square-root-polystable-failure} and \ref{prop:weak-flat-lift-square-root}.  On the de Rham side, for $z\ne0$ the two rank-one flat summands of \eqref{eq:continuous-flat-lift-square-root} have the same eigenlines generated by $(1,\pm\sqrt z)$.  A continuous invariant line subbundle would therefore choose a continuous square root of $z$ on the punctured disk, which is impossible.  A one-step metric-regular presentation would instead provide a continuous harmonic metric; orthogonality of the two non-isomorphic rank-one flat summands along the positive real ray gives the same coalescing-lines contradiction as in Proposition~\ref{prop:square-root-polystable-failure}.  The flat analogue of Theorem~\ref{thm:Cd-filtration-criterion} excludes the family from $\mathscr M_{\dR,0}^{\cH}(X)$, while Definition~\ref{def:weak-C0-mediator} includes it before extension completion.
\end{proof}

The weak mediator enlarges the polystable--semisimple family correspondence, but it does not make extension completion unnecessary at a strictly semistable nonpolystable fibre.  Such a fibre has no harmonic metric even slicewise on the original object.  Extension data, or an analytically enhanced substitute retaining that data, is still required there.

\begin{corollary}[Sharp failure on the full polystable locus]\label{cor:full-polystable-failure}
The forgetful map from families of harmonic bundles to polystable Higgs families is not locally essentially surjective, even for real-analytic families over a disk.  Moreover, for every $d\in\{0,1,2,\ldots,\infty\}$, the extension-generated stack $\mathscr M_{\Dol,d}^{\cH}(X)$ is a proper substack of the stack of semistable $C^d_UC^\infty_X$ Higgs families satisfying the NAH numerical conditions.  Its intersection with the polystable substack is a proper substack of the latter.  In fact, neither $\mathscr M_{\Dol,d}^{\cH}(X)$ nor the full polystable substack contains the other.
\end{corollary}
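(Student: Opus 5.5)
The corollary collects consequences of the two explicit elliptic-curve families, so I plan to prove it clause by clause, using \cref{prop:coalescing-lines-filtered,prop:square-root-polystable-failure} as the inputs.

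\emph{Non-surjectivity.} The square-root family \eqref{eq:square-root-polystable-family} is real analytic over a disk and polystable in every fibre. By \cref{prop:square-root-polystable-failure} it admits no continuous, and hence no smooth, harmonic metric on any neighbourhood of $0$. A local essential preimage in the harmonic-bundle prestack would supply a smooth harmonic metric on some neighbourhood of $0$. Pulling that metric back along the isomorphism of Higgs families gives such a metric on the original family, which is a contradiction.

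\emph{Properness of $\mathscr M_{\Dol,d}^{\cH}(X)$ and of its intersection with the polystable substack.} All fibres of the square-root family are polystable, hence semistable. They satisfy the NAH numerical conditions, since the underlying bundle is trivial. Being real analytic, the family is $C^d_UC^\infty_X$ for every $d$. \Cref{prop:square-root-polystable-failure} shows it lies outside $\mathscr M_{\Dol,d}^{\cH}(X)$ locally at $0$. It is therefore an object of the semistable stack that is not in $\mathscr M_{\Dol,d}^{\cH}(X)$. Because it is polystable, it also lies outside the intersection of $\mathscr M_{\Dol,d}^{\cH}(X)$ with the polystable substack. This already shows that the polystable substack is not contained in $\mathscr M_{\Dol,d}^{\cH}(X)$.

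\emph{Non-containment in the other direction.} I need an object of $\mathscr M_{\Dol,d}^{\cH}(X)$ that is not fibrewise polystable. The natural source is a strictly semistable nonsplit extension. On an elliptic curve, take the constant family given by the nontrivial extension $0\to\mathcal O_X\to V\to\mathcal O_X\to 0$ with zero Higgs field, that is, the Atiyah bundle. The constant line subbundle $\mathcal O_X$ gives a smooth relative harmonic filtration: its graded pieces are the trivial harmonic line bundles. By \cref{thm:Cd-filtration-criterion}, the family lies in $\mathscr M_{\Dol,d}^{\cH}(X)$ for every $d$. Its fibre is semistable but not polystable, because the extension does not split. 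So $\mathscr M_{\Dol,d}^{\cH}(X)$ is not contained in the polystable substack.

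\emph{Expected main obstacle.} The only delicate point is bookkeeping about what ``substack'' and ``not contained'' mean. Membership is checked locally on the parameter space, so I will phrase each containment failure through a specific plot over a neighbourhood of $0$. The failure of the square-root family at $0$ persists on every smaller neighbourhood, so no refinement of covers or stackification can repair it. The remaining steps are direct citations of earlier results.
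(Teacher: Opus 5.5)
Your treatment of non-surjectivity, of properness, and of the intersection clause follows the paper. Everything rests on the square-root family of \cref{prop:square-root-polystable-failure}, with the same reduction from a local lift to a harmonic metric on a neighbourhood of $0$.

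For the reverse non-containment you use a different witness. The paper takes the Simpson family \eqref{eq:Simpson-family-A}, whose fibre at $t=0$ is the non-split Higgs extension with nilpotent field $\begin{pmatrix}0&dz\\0&0\end{pmatrix}$ on $\mathcal O_X^{\oplus2}$. You take the constant family whose fibre is the Atiyah bundle with zero Higgs field. Your example is valid: the constant subbundle $\mathcal O_X$ is a relative harmonic filtration with trivially harmonic graded pieces, and a non-split extension of $\mathcal O_X$ by itself is semistable but not polystable. It is also slightly more economical, since the non-polystability comes from a nonzero class in $H^1(X,\mathcal O_X)$ and no parameter dependence is needed. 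The paper's choice instead reuses a family it already has and ties the corollary to Simpson's discontinuity example.

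One step is missing. ``Proper substack of the semistable stack'' also asserts that $\mathscr M_{\Dol,d}^{\cH}(X)$ is contained in the stack of semistable families satisfying the NAH numerical conditions, and you only exhibit an object outside it. The paper supplies this containment in one line:
\begin{itemize}
\item fibres of harmonic families are polystable of slope zero;
\item an extension of semistable slope-zero Higgs bundles is again semistable of slope zero;
\item therefore finite extension completion and stackification stay inside the semistable stack.
\end{itemize}
For the numerical conditions, also note that harmonic (flat) fibres have $\nu_1=\nu_2=0$ and that $\nu_1,\nu_2$ are additive in exact sequences. With that sentence added, your proof is complete.
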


\begin{proof}
The family \eqref{eq:square-root-polystable-family} is real analytic and
therefore defines a $C^d_UC^\infty_X$ polystable family for every listed
$d$.  By \cref{prop:square-root-polystable-failure}, no neighbourhood of
its central parameter admits even a continuous harmonic metric.  Hence
the forgetful harmonic-to-polystable map has no local lift of this
object and is not locally essentially surjective.

The same proposition rules out every continuous relative harmonic
filtration.  The filtration criterion therefore excludes the family
from $\mathscr M_{\Dol,d}^{\cH}(X)$ for all $d$, while it remains an
object of the ambient polystable, and hence semistable, $C^d$ stack.
This proves that the intersection of the extension-generated and
polystable substacks is proper, and also that the full polystable
substack is not contained in $\mathscr M_{\Dol,d}^{\cH}(X)$.

Conversely, harmonic Higgs bundles on the numerical locus are
polystable of slope zero, and an extension of semistable slope-zero
Higgs bundles is again semistable of slope zero.  Finite extension
completion and stackification therefore place
$\mathscr M_{\Dol,d}^{\cH}(X)$ inside the semistable stack.  The
square-root family above shows that this inclusion is proper.  For the
opposite noncontainment, the Simpson family recalled in
\cref{subsec:elliptic-degeneration} has a smooth relative harmonic
filtration and hence belongs to $\mathscr M_{\Dol,d}^{\cH}(X)$ for every
$d$, whereas its fibre at $t=0$ is a non-split extension of two
rank-one harmonic Higgs bundles and is strictly semistable rather than
polystable.  Thus the extension-generated substack is not contained in
the polystable substack either.
\end{proof}

\subsection{The elliptic degeneration and filtration data}\label{subsec:elliptic-degeneration}

We now revisit the family used in \cite{AzamRayan2026}, originating in Simpson's counterexample \cite{Simpson1994}.  Let $X$ be an elliptic curve with a holomorphic one-form $dz$.  Let
\[
  U=(\C\setminus\{0\})\times\R
\]
with coordinates $(a,t)$.  On the trivial rank-two bundle, consider the Higgs field
\begin{equation}\label{eq:Simpson-family-A}
  A(a,t)
  =
  \begin{pmatrix}
    0 & dz\\
    0 & at\,dz
  \end{pmatrix}.
\end{equation}
As explained in \cite{AzamRayan2026}, this family is globally an extension of two rank-one harmonic Higgs families.  Therefore it belongs to $\MHDol(X)$ by \cref{thm:filtration-criterion}.

The associated flat family in the extension correspondence has connection matrix
\begin{equation}\label{eq:Simpson-family-B}
  B(a,t)
  =
  \begin{pmatrix}
    0 & dz+\frac{\bar a}{a}\,d\bar z\\
    0 & at\,dz+\bar a t\,d\bar z
  \end{pmatrix},
\end{equation}
with the same convention as in \cite{AzamRayan2026,Simpson1994}.  In the coarse semistable Higgs moduli space,
\[
  \lim_{t\to0}A(a,t)
\]
is independent of $a$ after S-equivalence, whereas the corresponding flat limit retains $a$-dependence through $\bar a/a$.  This obstructs continuity of a coarse extension.

The elliptic degeneration also shows why the filtration language retains information that disappears under coarse S-equivalence.  We make that distinction explicit before turning to obstruction examples.

The family \eqref{eq:Simpson-family-A} has a visible invariant line subbundle generated by the first basis vector.  In the resulting filtration
\[
  0\subset L_1\subset E,
\]
the quotient is another line family.  The off-diagonal term $dz$ is a smooth relative extension cocycle.  Passing to S-equivalence discards this extension and remembers only the graded object
\[
  L_1\oplus L_2.
\]
The diffeological extension-generated stack instead retains the cocycle through the family.

\begin{proposition}[Family-level explanation of the coarse discontinuity]\label{prop:coarse-discontinuity-explanation}
In the elliptic family \eqref{eq:Simpson-family-A}, the parameter dependence lost by the coarse semistable Dolbeault quotient is carried by relative extension data and its harmonic transformation.  The diffeological stack correspondence remains defined because the relative harmonic filtration persists smoothly at $t=0$.
\end{proposition}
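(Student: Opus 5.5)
We will prove the two assertions of the proposition separately: first that the coarse Dolbeault quotient forgets exactly the extension cocycle, and then that the stack correspondence survives because the filtration survives. For the first assertion, we compute the S-equivalence class of the fibre $(E,\ddbar+A(a,t)\alpha)$ at every $(a,t)$. The line $L_1=\mathcal O_Xe_1$ is invariant with Higgs field $0$. The quotient $L_2=E/L_1$ carries the induced Higgs field $at\,dz$. Hence the Jordan--H\"older graded object is $(\mathcal O_X,0)\oplus(\mathcal O_X,at\,dz)$, and at $t=0$ it is $(\mathcal O_X,0)^{\oplus2}$, independently of $a$. By \cref{prop:heat-flow-factors-shadow}, any coarse or unrenormalized harmonic assignment factors through this shadow. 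It therefore cannot see the dependence on $a$ at $t=0$.

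Next, we identify what the stack retains. In the splitting $E=L_1\oplus L_2$, the Higgs field has the upper-triangular form \eqref{eq:upper-triangular-D} with diagonal blocks $0$ and $at\,dz$ and off-diagonal cocycle $\beta=dz$. We check that $\beta$ satisfies \eqref{eq:upper-MC}: the relevant Hom differential is $\ddbar$ plus the commutator with the diagonal Higgs fields, and this is immediate since $dz$ is holomorphic and the $(1,0)$-forms wedge to zero on a curve. The class $[\beta]$ is a smooth relative extension datum in the sense of \cref{def:smooth-relative-ext}. The diagonal blocks are rank-one Higgs families, so by \cref{prop:smooth-det} (or simply because the constant metric is harmonic for a constant Higgs field on the trivial bundle) they carry smooth harmonic metrics over all of $U$, including $t=0$. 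Thus $0\subset L_1\subset E$ is a relative harmonic filtration on the whole of $U$, and \cref{thm:filtration-criterion} places the family in $\MHDol(X)$ with no degeneration at $t=0$.

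The transformation of the data then proceeds blockwise. Each rank-one block is sent to $d+\lambda\,dz+\bar\lambda\,d\bar z$ with $\lambda\in\{0,at\}$, giving the diagonal of \eqref{eq:Simpson-family-B}. The cocycle $dz$ is transformed, via the extension mechanism of \cite{AzamRayan2026} and \cref{prop:length-preservation}, into the de Rham extension cocycle $dz+(\bar a/a)\,d\bar z$. The factor $\bar a/a$ arises because the flat extension class must be $d_\nabla$-closed for the flat block connections, which involve $\bar a t$. We will simply cite the construction of \cite{AzamRayan2026} for this identification.

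This step is the main obstacle. The $a$-dependence surviving at $t=0$ on the de Rham side sits entirely in this off-diagonal term, which the Dolbeault shadow discards. We must make precise that the limit $t\to0$ of the flat family is a non-split extension whose class depends on $a$, while the Dolbeault coarse limit is the split shadow. To establish this, we will compute the de Rham extension class in $H^1(X,\C)$: since the blocks at $t=0$ are trivial, the off-diagonal term is closed, and its class is $[dz]+(\bar a/a)[d\bar z]$. This class visibly depends on $\arg a$.
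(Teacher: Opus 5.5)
Your proposal is correct and follows essentially the same route as the paper. The constant invariant line $\mathcal O_Xe_1$ gives a smooth relative harmonic filtration through $t=0$ with harmonic rank-one quotients, so \cref{thm:filtration-criterion} applies, while passing to S-equivalence discards the off-diagonal cocycle that the stack transports into the $\bar a/a$ term of \eqref{eq:Simpson-family-B}. Your explicit checks (the Maurer--Cartan equation, the $d_\nabla$-closedness that forces the coefficient $\bar a/a$, and the $\arg a$-dependent class $[dz]+(\bar a/a)[d\bar z]$ at $t=0$) go beyond what the paper writes out, but, as in the paper, the de Rham cocycle itself is taken from the earlier construction rather than derived.
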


\begin{proof}
In the chosen trivialization of the elliptic family, the invariant line subbundle is independent of $(a,t)$.  It therefore defines a smooth relative subbundle through $t=0$, and the quotient is smooth as well.  Both graded pieces are line-bundle families satisfying the harmonic conditions.  Hence the two-step filtration persists as a relative harmonic filtration at the limiting parameter value.  By \cref{thm:filtration-criterion}, the whole family remains an object of $\MHDol(X)$.

The distinction from the coarse semistable quotient is now visible.  Passing to S-equivalence first replaces the extension by its associated graded object and discards the off-diagonal extension cocycle.  The extension-generated stack does not perform this collapse: it transports the smooth relative extension class through the harmonic mediator.  Consequently the transformed family retains parameter dependence that is invisible in the coarse Dolbeault quotient.  Formula \eqref{eq:Simpson-family-B} exhibits that dependence explicitly and shows that it survives at the level of the family stack even though the coarse-space map is discontinuous.
\end{proof}

\subsection{Obstruction models and approach to the polystable boundary}

Let a central semistable Higgs bundle split smoothly as
\[
  E_0=F\oplus Q
\]
and let $F$ be the desired first filtration term.  Consider an infinitesimal deformation
\[
  \eta=
  \begin{pmatrix}
    \eta_F & \eta_{FQ}\\
    \eta_{QF} & \eta_Q
  \end{pmatrix}.
\]
Then the first obstruction to preserving $F$ is
\begin{equation}\label{eq:block-obstruction}
  \operatorname{ob}_1(F,\eta)
  =[\eta_{QF}]
  \in\mathbf H^1\bigl(\cC^\bullet_{\Dol}(\Hom(F,Q))\bigr).
\end{equation}
The upper-right block changes the extension of $Q$ by $F$ but does not destroy invariance of $F$; the lower-left block attempts to move $F$ into $Q$ and is the obstruction.

The block form shows directly that only the lower-left component enters the first obstruction for this chosen subbundle.

The first two-step model admits a useful vanishing criterion: when the relevant hypercohomology group vanishes, the local invariant subbundle lifts smoothly.

Suppose
\[
  \mathbf H^1\bigl(\cC^\bullet_{\Dol}(\Hom(F,Q))\bigr)=0.
\]
Then every closed infinitesimal lower-left source is a coboundary and can be removed to first order by varying the subbundle.  Under the hypotheses of \cref{thm:smooth-lifting-subobject}, the invariant subbundle persists smoothly after shrinking the parameter neighbourhood.  If the resulting quotient families satisfy the hypotheses of \cref{cor:acyclic-JH}, this gives local membership in the extension-generated stack.

The complementary model shows how a nonzero class prevents such a lift even though invariant subspaces exist fibrewise.

Conversely, suppose
\[
  \mathbf H^1\bigl(\cC^\bullet_{\Dol}(\Hom(F,Q))\bigr)\ne0
\]
and choose a deformation whose lower-left block represents a nonzero class.  Then $F$ cannot extend to first order as an invariant subbundle.  This does not prove that the deformed family lies outside $\MHDol(X)$: a different filtration may exist.  It does prove that the chosen Jordan--H\"older term cannot be assembled through that deformation direction.

\begin{warning}
The obstruction theory is filtration-dependent.  Semistable objects can admit multiple Jordan--H\"older filtrations, and the failure of one chosen filtration to extend does not imply the failure of all relative harmonic filtrations.
\end{warning}

Finally, we return to the analytic boundary of the stable locus and relate degeneration of the Green operator to the appearance of a polystable stabilizer.  Let $u\mapsto(E_u,\cD''_u)$ be stable for $u\ne0$ and polystable at $u=0$.  For $u\ne0$,
\[
  s_u=-G_{h_u}\cS_{h_u}(\eta_u).
\]
If the smallest positive eigenvalue $\lambda_1(u)$ of $L_{h_u}$ tends to zero, the standard spectral estimate gives
\begin{equation}\label{eq:blowup-estimate}
  \norm{s_u}_{L^2}
  \le \lambda_1(u)^{-1}\norm{\cS_{h_u}(\eta_u)}_{L^2}.
\end{equation}
The upper bound alone does not prove blow-up; it only records deterioration of the inverse estimate.  The complementary lower bound of \cref{prop:spectral-parameter-singularity} identifies when loss of regularity is forced: a component of the source along a small-eigenvalue direction must decay at least at the same rate as the eigenvalue.  This turns the approach to a stabilizer jump into a quantitative compatibility problem and links the present boundary model directly to the finite-regularity and singular-parameter questions studied in \cref{sec:finite-regularity}.

\section{Scope and further questions}\label{sec:further}

The stable-family theorem gives global smooth normalized harmonic metrics for arbitrary finite-dimensional smooth parameter plots and local real-analytic dependence when the coefficient data are real analytic.  \Cref{sec:finite-regularity} strengthens this in a different direction: for every $d=0,1,2,\ldots,\infty$, a stable family that is $C^d$ in the parameter and smooth along $X$ has a global normalized harmonic metric of the same mixed regularity.  The ambient coefficient-space solution operator also pulls back to locally ambiently smooth reduced singular parameter spaces.  These results give a positive stable-locus answer, and a partial semistable answer through filtrations, to Question~5.3.4 of \cite{AzamRayan2026}.  The resulting transformed smooth plot has the first-variation formula of \cref{sec:first-variation}; at smooth or unobstructed moduli points, its cohomology class agrees with the classical Dolbeault--de Rham tangent comparison.  At obstructed points we make no assertion that every closed degree-one class is represented by a plot.

For polystable families, the positive theorem applies on the locally split constant-decomposition-type locus.  There the metric ambiguity is described by multiplicity-space Hermitian forms and compact isotropy.  The explicit families of \cref{sec:explicit-polystable-failures} show that no corresponding local theorem holds on the full polystable locus; one of them is outside every $C^d$ extension-generated stack.  The same example nevertheless belongs to the weak $C^0$ operator-level mediator of Definition~\ref{def:weak-C0-mediator}: its determinant-normalized metrics degenerate, but the associated flat operators extend continuously.  Thus the example separates three loci---metric-regular harmonic families, weakly operator-harmonic families, and arbitrary fibrewise polystable families---that coincide on points but differ for parameter families.  For semistable families, \cref{thm:filtration-criterion} identifies the extension-generated stack with families admitting local relative harmonic filtrations, and \cref{sec:obstructions} studies the problem of lifting the required invariant subbundles.

The limitations are equally important, since they determine the next analytic and stack-theoretic problems.

The paper disproves smooth local existence of harmonic metrics for arbitrary polystable families: the obstruction can already occur at the level of continuity across a stabilizer jump.  It likewise disproves the assertion that every smooth or $C^d$ polystable family belongs to the corresponding metric-regular extension-generated stack, because the required invariant subbundles may fail to assemble even continuously.  The relative harmonic filtration criterion, rather than fibrewise polystability, is the exact positive condition for that original stack.  The weak mediator changes neither statement: it enlarges the $C^0$ endpoint images by retaining regular relative operators with singular metric witnesses, but it does not manufacture continuous invariant subbundles or a regular harmonic reduction.  Finally, \cref{thm:ambient-singular-stable} concerns reduced singular parameter spaces with ambient extensions; it is not a theory of harmonic metrics over arbitrary nonreduced $C^\infty$-schemes.

The Hodge construction of \cref{sec:lambda} is likewise a smooth diffeological construction on the loci for which harmonic families have been produced.  No global derived or shifted twistor stack is constructed here.

\subsection{Stratified polystable families and Kuranishi geometry}

A possible next step is to combine the Jacobi operator with a stratification of the polystable stack by automorphism type.  On a stratum where
\[
  \dim\End_{\mathrm{Higgs}}(E_u)
\]
is locally constant and the kernel bundle of $L_{h_u}$ varies smoothly, the reduced Green operator \eqref{eq:reduced-green} should provide a transverse implicit-function theorem.  The finite-dimensional kernel directions should then be organized by the centralizer groupoid.

The principal technical issue is to construct slices compatibly with diffeological pullback and descent.  A successful theorem would extend \cref{thm:constant-type} within suitable strata beyond explicitly split families; \cref{cor:full-polystable-failure} shows that it cannot cover arbitrary transverse plots.

The Kuranishi description suggests a natural diffeological stratification by obstruction type.  Understanding how these finite-dimensional local models glue is one route toward characterizing the liftable polystable locus.

The weak mediator introduces a second, strictly larger liftable locus at $C^0$ regularity.  Its Dolbeault objects are polystable families for which the pointwise coarse non-Abelian Hodge image admits a continuous semisimple flat lift whose complementary harmonic operator is continuous, even though no continuous harmonic reduction need exist.  The coarse map alone does not supply such a lift, so its existence and nonuniqueness are relative stack questions rather than consequences of the coarse homeomorphism.

\begin{problem}[Weakly liftable polystable locus]\label{prob:weak-liftable-locus}
Characterize the essential image of
\[
  \mathscr H^{\mathrm{wk}}_0(X)
  \longrightarrow
  \mathscr M_{\Dol,0}(X)
\]
and its de Rham counterpart.  Determine when a continuous coarse non-Abelian Hodge map lifts to a continuous family of semisimple flat bundles, classify the ambiguity of such lifts, and determine the maximal parameter regularity of the lifted operators near changes of stabilizer type.
\end{problem}

At a polystable point, the local moduli problem is controlled by a differential graded Lie algebra and a Kuranishi map.  Diffeology offers a way to retain smooth parameter plots through singular quotient models.  It would be valuable to compare:
\begin{enumerate}[label=(\roman*)]
\item the Kuranishi obstruction map for Higgs deformations;
\item the finite-dimensional reduced harmonic-metric equation \eqref{eq:LS-reduced};
\item the filtration obstruction classes \eqref{eq:first-obstruction} and \eqref{eq:obN}.
\end{enumerate}
A useful comparison problem is to determine which parts of these three constructions are related by the harmonic-bundle identifications and which depend on distinct slice choices.

\subsection{Higher variation and moving geometric data}

The first derivative is
\[
  s_1=-G_h\cS_h(\eta_1).
\]
In a fixed local slice, differentiating the harmonic equation a second time and collecting terms gives an equation of the form
\begin{equation}\label{eq:second-metric-variation}
  L_hs_2
  =-\cS_h(\eta_2)-\cQ_h(\eta_1,s_1),
\end{equation}
where $\cQ_h$ denotes the Hessian contribution determined by the chosen local coordinates and slice.  Thus
\[
  s_2=-G_h\bigl(\cS_h(\eta_2)+\cQ_h(\eta_1,s_1)\bigr).
\]
Repeated differentiation gives analogous recursive equations, but we do not develop an invariant higher-jet formalism here.

\begin{problem}[Higher-order plotwise NAH]\label{prob:higher-order}
Develop an invariant higher-jet formalism for the diffeological non-Abelian Hodge transform and compare its obstruction tensors with higher-order isomonodromic deformation equations.
\end{problem}

Recent work on higher-order isomonodromic deformation makes this comparison particularly timely; see \cite{HuSunYangZuo2026}.

The higher-variation formulas were derived for fixed $X$.  Allowing the complex structure of the base manifold to move introduces additional Kodaira--Spencer terms and connects the present framework with universal moduli problems.

In this paper the K\"ahler manifold $X$ is fixed.  One can instead let the complex structure vary.  Then the Dolbeault operators, type decomposition, K\"ahler identities, and harmonic equation all depend on the parameter.  Hitchin's recent universal moduli construction over Teichm\"uller space provides a complementary perspective in the curve case \cite{Hitchin2026Universal}.

One may ask whether the analytic method extends after identifying the underlying smooth fibres and allowing the K\"ahler structure to vary.  The linearized operator is expected to remain elliptic in a suitable relative slice, while additional source terms should arise from variation of $\Lambda_\omega$ and of the type decomposition.  No theorem for varying $X$ is proved here.

\begin{problem}[Relative base manifold]\label{prob:varying-X}
Extend the parametric harmonic-metric theorem to a smooth proper family of compact K\"ahler manifolds
\[
  \mathcal X\to U
\]
and formulate the resulting non-Abelian Hodge transform as a morphism of diffeological stacks over the parameterized K\"ahler moduli problem.
\end{problem}

A second direction changes the asymptotic geometry on $X$ rather than the regularity or singularity of the parameter space.  We keep this terminology separate from \cref{sec:finite-regularity}: parabolic, tame, and wild harmonic bundles should require weighted versions of the parameter-dependent estimates used here.

The compact nonsingular case isolates the family mechanism.  For parabolic Higgs bundles, parameter dependence of harmonic metrics has already been studied in important settings, notably by Kim and Wilkin \cite{KimWilkin2018}.  Singular settings introduce weighted Sobolev spaces, model metrics, and possible wall crossing in parabolic weights.  The diffeological viewpoint suggests treating weight parameters and Higgs-family parameters simultaneously.

\begin{problem}[Tame and wild diffeological NAH]\label{prob:singular}
Construct diffeological moduli stacks of smooth families of tame or wild harmonic bundles with varying singular data, and determine the loci on which harmonic metrics vary smoothly across weight and residue parameters.
\end{problem}

The vector-bundle presentation is likewise not essential in principle.  For principal groups, however, the normalization and centralizer geometry must be reformulated in the adjoint bundle and deserve separate treatment.

We have written the paper for complex vector bundles.  An analogous argument is expected for principal $G$-Higgs bundles with $G$ complex reductive, but no principal-group extension is proved here.  The metric is replaced by a reduction to a maximal compact subgroup $K$, and the Jacobi operator acts on sections of the associated bundle with fibre the symmetric complement $\mathfrak g/\mathfrak k$.

On the stable/simple locus, one expects the kernel to be the appropriate compact central part, so that a central normalization should produce a transverse inverse under the usual simplicity hypotheses.  At polystable points, the kernel should be governed by the reductive centralizer.  Thus an analogue of the stable/polystable distinction developed here is natural in Lie-theoretic form, but establishing it with the required slices and parameter-uniform estimates lies beyond the vector-bundle argument proved in this paper.

\subsection{Derived structures and the role of diffeological geometry}

The diffeological stack remembers smooth finite-dimensional plots.  Derived stacks remember deformation complexes and higher automorphisms.  These perspectives should interact.  Recent work of Kryczka, Tanaka, and Yau establishes Lagrangian correspondences of non-Abelian Hodge type and shifted twistor structures for derived moduli of perfect complexes \cite{KryczkaTanakaYau2026}.  A comparison with the present family-level smoothness would require a functor from suitable derived analytic plots to diffeological plots or a common enhancement.

One concrete question is whether the Green-operator differential \eqref{eq:inf-NAH-operator} can be interpreted as a choice of real-analytic splitting in the tangent complex of a derived correspondence.

The derived perspective does not replace the earlier diffeological construction.  The latter remains the mechanism that retains smooth family data across non-polystable extension phenomena.

The first paper \cite{AzamRayan2026} constructed an equivalence before the analytic questions addressed here were settled.  The present results clarify that this order was useful.  The extension-generated stack exists categorically even where a smooth selected harmonic metric is unavailable.  The analytic theorem then identifies a large locus on which the mediator can be chosen smoothly and uniquely after normalization.

Thus the categorical and analytic constructions are complementary:
\[
  \text{harmonic families}
  \quad\leadsto\quad
  \text{smooth stable transform},
\]
while
\[
  \text{extension completion}
  \quad\leadsto\quad
  \text{semistable stack correspondence}.
\]
The obstruction theory measures the gap between them.

These directions point to a common issue: the interaction between analytic regime, stabilizer type, and the stack geometry of families matters more than any single universal regularity statement.

For families, the variation of the harmonic reduction separates three regimes.  On the stable locus the relevant Jacobi operator is invertible after normalization.  At polystable points its kernel records centralizers.  In the semistable problem one must additionally transport invariant subobjects and extension data.  The stack geometry reflects these different analytic mechanisms.

The diffeological framework does not require these regimes to fit a single smooth manifold model; smoothness is tested on the parameter spaces that occur as plots.  On the stable locus and on the controlled polystable strata treated here, harmonic metrics provide that smoothness.  Beyond those loci, the linearized equations expose the stabilizer and subobject obstructions that remain.

\appendix

\section{Parameter-dependent elliptic operators}\label{app:elliptic}

This appendix collects analytic facts used in the main text.  We include proofs to make clear exactly where finite-dimensionality of the parameter manifold and compactness of $X$ enter.

\subsection{Sobolev conventions and smooth coefficient families}

Let $V\to X$ be a smooth real or complex vector bundle over a compact $m$-dimensional manifold.  Fix a background metric and connection.  For integer $j\ge0$, let
\[
  W^{j,2}(V)
\]
be the corresponding Sobolev space.  Different background choices give equivalent norms.  If
\[
  j>m/2,
\]
then $W^{j,2}$ is a Banach algebra for scalar functions, and the standard Sobolev multiplication maps are bounded in the lower-order spaces used below.  If
\[
  j>m/2+1,
\]
then $W^{j,2}\hookrightarrow C^1$.

The nonlinear moment map uses two derivatives of the logarithmic metric coordinate, so we work with
\[
  s\in W^{k+2,2},
  \qquad
  \mu(s)\in W^{k,2},
\]
for $k>m/2+1$.

With the Sobolev conventions fixed, we record how differential operators depending smoothly on a finite-dimensional parameter act between the corresponding fixed Sobolev spaces.

Let $B\subset\R^d$ be open.  A family of differential operators
\[
  P_u=\sum_{\abs\alpha\le r}a_\alpha(u,x)\partial_x^\alpha
\]
is smooth when the coefficients are jointly smooth in $(u,x)$.  Such a family defines a smooth map
\[
  B\to\mathcal L(W^{j+r,2}(V),W^{j,2}(W))
\]
for every $j$ for which the products are defined.

\begin{lemma}[Operator-norm smoothness]\label{lem:operator-norm-smoothness}
If the coefficients $a_\alpha(u,\cdot)$ are smooth into $C^{j+r}$, then
\[
  u\mapsto P_u
\]
is smooth in the operator norm topology
\[
  \mathcal L(W^{j+r,2},W^{j,2}).
\]
\end{lemma}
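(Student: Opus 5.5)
The plan is to reduce operator-norm smoothness of $u\mapsto P_u$ to smoothness of the finitely many coefficient maps $u\mapsto a_\alpha(u,\cdot)$ into $C^{j+r}$. The reduction rests on a bounded bilinear pairing. For a fixed multi-index $\alpha$ with $\abs\alpha\le r$, consider
\[
  \mathcal M_\alpha: C^{j+r}(\End\text{-valued})\times W^{j+r,2}(V)\longrightarrow W^{j,2}(W),
  \qquad
  (a,v)\longmapsto a\,\partial_x^\alpha v.
\]
In local coordinates, and after a partition of unity subordinate to trivializing charts on compact $X$, the Leibniz rule expands $\partial^\beta(a\,\partial^\alpha v)$ for $\abs\beta\le j$ as a sum of terms $(\partial^{\beta_1}a)(\partial^{\beta_2+\alpha}v)$. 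Here $\abs{\beta_1}\le j\le j+r$ and $\abs{\beta_2+\alpha}\le j+r$. Each such term is bounded in $L^2$ by $\norm{a}_{C^{j+r}}\norm{v}_{W^{j+r,2}}$. So $\mathcal M_\alpha$ is a continuous bilinear map, and hence the linear map $a\mapsto\mathcal M_\alpha(a,\cdot)$ from $C^{j+r}$ into $\mathcal L(W^{j+r,2},W^{j,2})$ is bounded. This is where compactness of $X$ enters: the chart constants are uniform and there are finitely many charts.

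Next, we compose. The map $u\mapsto P_u$ equals $\sum_\alpha \mathcal M_\alpha(a_\alpha(u,\cdot),\cdot)$. This is a finite sum of compositions of the smooth maps $u\mapsto a_\alpha(u,\cdot)\in C^{j+r}$ with bounded linear maps into $\mathcal L(W^{j+r,2},W^{j,2})$. A bounded linear map composed with a $C^\infty$ Banach-valued map is $C^\infty$, and its derivatives are obtained by applying the linear map to the derivatives. Concretely, $\partial_u^\gamma P_u=\sum_\alpha(\partial_u^\gamma a_\alpha)(u,\cdot)\,\partial_x^\alpha$, and the operator norm of this derivative is controlled by $\sum_\alpha\norm{\partial_u^\gamma a_\alpha(u)}_{C^{j+r}}$. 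This gives smoothness in the operator-norm topology. Finite-dimensionality of the parameter space is used only in that smoothness of $u\mapsto a_\alpha(u)$ means ordinary multivariable differentiability of a Banach-valued map.

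The one point needing care is the hypothesis as stated. The lemma requires $a_\alpha(u,\cdot)$ to be smooth as a map into $C^{j+r}$, which is stronger than pointwise joint smoothness. I will note that joint smoothness on $B\times X$ implies this hypothesis locally uniformly on compact subsets of $B$. The reason is that Taylor's theorem with the integral remainder in $u$, applied to $\partial_x^\beta a_\alpha$ with uniform bounds on $K\times X$ for compact $K$, shows that difference quotients converge in $C^{j+r}$. This recovers the remark preceding the lemma. The main, still minor, obstacle is the Leibniz bookkeeping, which must take care that no more than $j+r$ derivatives fall on $v$. The count $\abs{\beta_2}+\abs\alpha\le j+r$ ensures this.
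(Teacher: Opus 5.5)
Your proposal is correct and follows the same route as the paper: bound each term $a_\alpha\partial_x^\alpha$ in $\mathcal L(W^{j+r,2},W^{j,2})$ by $C^{j+r}$ norms of its coefficient, then pass parameter derivatives onto the coefficients. Your packaging, a bounded linear map $C^{j+r}\to\mathcal L(W^{j+r,2},W^{j,2})$ composed with the smooth coefficient map, is a cleaner version of the paper's explicit Taylor-formula and difference-quotient argument.
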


\begin{proof}
For a multi-index $\alpha$ with $|\alpha|\le r$, multiplication by the coefficient $a_\alpha(u,\cdot)$ followed by $\partial^\alpha$ defines a bounded map
\[
  W^{j+r,2}\longrightarrow W^{j,2}.
\]
The corresponding operator norm is controlled by finitely many $C^{j+r}$ norms of the coefficient.  Hence on a relatively compact parameter set there is a uniform estimate
\[
  \|P_u\|_{\mathcal L(W^{j+r,2},W^{j,2})}
  \le C\sum_{|\alpha|\le r}\|a_\alpha(u,\cdot)\|_{C^{j+r}}.
\]

Differentiate the coefficients with respect to $u$.  Each parameter derivative $\partial_u^\beta P_u$ is again a differential operator of order at most $r$, with coefficients $\partial_u^\beta a_\alpha(u,x)$, and satisfies the same type of operator-norm estimate.  Taylor's formula for the coefficient functions, together with these uniform estimates, shows that the difference quotients converge in $\mathcal L(W^{j+r,2},W^{j,2})$.  Iterating the argument for all parameter derivatives proves smoothness in operator norm.
\end{proof}

\subsection{Uniform estimates and smooth inverses}

Suppose $P_u$ is a family of second-order strongly elliptic operators.  On a compact parameter subset $K\Subset B$, strong ellipticity is uniform: there is $c>0$ such that
\[
  \Re\ip{\sigma_2(P_u)(x,\xi)v}{v}
  \ge c\abs\xi^2\abs v^2
\]
for all $u\in K$.

\begin{proposition}[Uniform estimate]\label{prop:uniform-estimate-app}
For every $j\ge0$ there is $C_{j,K}$ such that
\[
  \norm{v}_{W^{j+2,2}}
  \le C_{j,K}
  \bigl(\norm{P_uv}_{W^{j,2}}+\norm v_{L^2}\bigr)
\]
for all $u\in K$.
\end{proposition}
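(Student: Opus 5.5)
The estimate will be deduced from the standard single-operator Gårding/elliptic estimate by a compactness and perturbation argument in the parameter. For a fixed $u_1\in K$, classical elliptic theory for the strongly elliptic operator $P_{u_1}$ on the compact manifold $X$ gives a constant $C_{j}(u_1)$ with $\norm{v}_{W^{j+2,2}}\le C_j(u_1)(\norm{P_{u_1}v}_{W^{j,2}}+\norm v_{L^2})$. This is proved in the usual way: localize with a finite partition of unity subordinate to trivializing charts, freeze coefficients, apply the Fourier-side coercivity coming from $\Re\ip{\sigma_2(P_{u_1})(x,\xi)v}{v}\ge c\abs\xi^2\abs v^2$ (Gårding), absorb commutator and lower-order terms into lower Sobolev norms, interpolate those down to $L^2$, and induct on $j$ by differentiating the equation tangentially. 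I would take this as known and only track which quantities the constant depends on.

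The key observation is that $C_j(u_1)$ depends only on the ellipticity constant $c$ and on finitely many $C^{j+2}$ (or $C^{j}$ for the lower-order) norms of the coefficients, together with fixed data of $X$ and the chosen atlas and partition of unity. On $K$ the lower bound $c$ is uniform by hypothesis, and the coefficients $a_\alpha(u,\cdot)$ are jointly smooth, hence bounded in every $C^\ell$ norm uniformly for $u$ in the compact set $K$. Thus, running the proof with these uniform bounds, one gets a single constant $C_{j,K}$ directly.

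As an alternative, cleaner route if one prefers not to reopen the classical proof, I would argue by perturbation: by \cref{lem:operator-norm-smoothness}, $u\mapsto P_u$ is continuous into $\mathcal L(W^{j+2,2},W^{j,2})$, so for $u$ in a neighbourhood $N_{u_1}$ with $C_j(u_1)\norm{P_u-P_{u_1}}\le \tfrac12$ one gets $\norm{v}_{W^{j+2,2}}\le C_j(u_1)(\norm{P_uv}_{W^{j,2}}+\norm{v}_{L^2})+\tfrac12\norm v_{W^{j+2,2}}$, hence the estimate with constant $2C_j(u_1)$ on $N_{u_1}$. Covering $K$ by finitely many such neighbourhoods and taking the maximum gives $C_{j,K}$. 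The main (really the only) delicate point is ensuring the single-operator estimate genuinely holds with the stated $L^2$ lower-order term rather than a $W^{j+1,2}$ term; this is handled by the interpolation inequality $\norm v_{W^{j+1,2}}\le\eps\norm v_{W^{j+2,2}}+C_\eps\norm v_{L^2}$ and absorption, which I would invoke explicitly at the end of the induction.
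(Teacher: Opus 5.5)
Your proposal is correct. Your main route is exactly the paper's proof: run the localized G\aa rding estimate and observe that its constant depends only on the ellipticity lower bound and finitely many coefficient norms, both of which are uniform on $K$ (finite chart cover, uniform local estimates, partition of unity, commutators absorbed).

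Your alternative perturbation argument is genuinely different. It treats the single-operator estimate at $u_1$ as a black box and gets uniformity from operator-norm continuity of $u\mapsto P_u$ in $\mathcal L(W^{j+2,2},W^{j,2})$, via \cref{lem:operator-norm-smoothness}. You absorb $C_j(u_1)\norm{(P_u-P_{u_1})v}_{W^{j,2}}\le\frac12\norm v_{W^{j+2,2}}$ and then take a finite subcover of $K$.

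That route is softer. It never reopens the local analysis, and it does not need uniform ellipticity on $K$ as an input, since pointwise ellipticity plus continuity and compactness suffice. However, the constant it produces is non-explicit, and it genuinely requires continuity of the family in the parameter.

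The paper's constant-tracking route applies to any family sharing one ellipticity bound and one set of coefficient bounds, whether or not it is continuous in $u$. It also shows exactly which quantities control $C_{j,K}$, which is what the quantitative uniformity statements elsewhere in the paper rely on.

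Your explicit use of the interpolation inequality $\norm v_{W^{j+1,2}}\le\varepsilon\norm v_{W^{j+2,2}}+C_\varepsilon\norm v_{L^2}$ fills in a step the paper only summarizes as commutators being ``absorbed into the lower-order terms.'' That step is what brings the commutator and lower-order terms down to $\norm v_{L^2}$.
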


\begin{proof}
Choose finitely many coordinate charts covering $X$ and trivialize the bundle on each chart.  Strong ellipticity gives a local G\aa rding-type estimate for compactly supported sections,
\[
  \|v\|_{W^{j+2,2}(U')}
  \le C\bigl(\|P_uv\|_{W^{j,2}(U)}+\|v\|_{L^2(U)}\bigr),
\]
whenever $U'\Subset U$.  The constant depends only on a positive lower bound for the principal-symbol ellipticity and on finitely many coefficient norms of $P_u$.

For $u\in K$, compactness of $K$ and smoothness of the coefficients give uniform bounds for all coefficient seminorms entering the local estimate.  The uniform ellipticity hypothesis supplies one lower ellipticity constant valid for every $u\in K$.  Thus the local constants can be chosen independently of $u$.  Apply the estimate to a finite partition of unity subordinate to the coordinate cover.  Commutators of $P_u$ with the cutoff functions have order at most one and are absorbed into the lower-order terms using the same uniform coefficient bounds.  Summing the finitely many local estimates and enlarging the constant yields
\[
  \|v\|_{W^{j+2,2}}
  \le C_{j,K}\bigl(\|P_uv\|_{W^{j,2}}+\|v\|_{L^2}\bigr)
\]
for all $u\in K$.
\end{proof}

Uniform estimates become particularly effective when the kernel vanishes: the inverse then persists and depends smoothly on the parameter.  This is the precise functional-analytic input used in the stable metric theorem.

Let
\[
  P_u:W^{j+2,2}(V)\to W^{j,2}(V)
\]
be a smooth family of bounded operators.  Suppose $P_{u_0}$ is invertible.  Then invertibility persists locally.

\begin{proposition}[Smooth inverse family]\label{prop:smooth-inverse-app}
After shrinking around $u_0$, $P_u$ is invertible and
\[
  u\mapsto P_u^{-1}
\]
is smooth in
\[
  \mathcal L(W^{j,2},W^{j+2,2}).
\]
Its derivative is
\begin{equation}\label{eq:inverse-derivative}
  \partial_{u_i}P_u^{-1}
  =-P_u^{-1}(\partial_{u_i}P_u)P_u^{-1}.
\end{equation}
\end{proposition}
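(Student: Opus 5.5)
The proof uses only that the invertible operators form an open subset of the Banach space $\mathcal L(W^{j+2,2},W^{j,2})$ and that inversion on this open set is smooth; no ellipticity beyond the hypotheses is needed. First, for invertibility near $u_0$, set $A=P_{u_0}$ and write
\[
  P_u=A\bigl(\Id+A^{-1}(P_u-A)\bigr).
\]
Since $u\mapsto P_u$ is continuous in operator norm (by \cref{lem:operator-norm-smoothness} in the differential-operator case, or by hypothesis in general), we shrink to a neighbourhood on which $\|A^{-1}(P_u-A)\|_{\mathcal L(W^{j+2,2})}\le\tfrac12$. The Neumann series then gives
\[
  P_u^{-1}=\sum_{n\ge0}\bigl(-A^{-1}(P_u-A)\bigr)^nA^{-1},
\]
which is the analogue of \eqref{eq:inverse-neumann}. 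It also gives the uniform bound $\|P_u^{-1}\|\le2\|A^{-1}\|$.

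Next comes smoothness of the inverse. Let $\mathrm{inv}:\mathcal{GL}\to\mathcal L(W^{j,2},W^{j+2,2})$ be the inversion map on the open set of isomorphisms. I will show $\mathrm{inv}$ is $C^\infty$ (indeed real analytic). Near any $B\in\mathcal{GL}$ it is the convergent power series
\[
  T\mapsto\sum_n(-B^{-1}(T-B))^nB^{-1}
\]
in the increment $T-B$. Its first derivative at $B$ is $H\mapsto-B^{-1}HB^{-1}$. This follows from the identity
\[
  (B+H)^{-1}-B^{-1}+B^{-1}HB^{-1}=(B+H)^{-1}HB^{-1}HB^{-1},
\]
whose norm is $O(\|H\|^2)$ by the uniform bound above. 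Higher derivatives follow inductively, since $D\,\mathrm{inv}(B)$ is a bilinear expression in $\mathrm{inv}(B)$ and the composition maps are bounded bilinear. The map $u\mapsto P_u^{-1}$ is then the composition $\mathrm{inv}\circ P$ of smooth maps, so it is smooth, and the chain rule gives
\[
  \partial_{u_i}P_u^{-1}=D\,\mathrm{inv}(P_u)[\partial_{u_i}P_u]=-P_u^{-1}(\partial_{u_i}P_u)P_u^{-1},
\]
which is \eqref{eq:inverse-derivative}. Alternatively, one can differentiate $P_uP_u^{-1}=\Id$ once differentiability is known.

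The only real care needed is in the remainder estimate that establishes differentiability of inversion, where the uniform bound on $(B+H)^{-1}$ is essential. The rest is bookkeeping with the chain rule.
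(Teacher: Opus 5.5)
Your proof is correct and follows essentially the same route as the paper: a Neumann series around $P_{u_0}$ for persistence of invertibility, then smoothness of inversion on the open set of invertible bounded operators, which the paper cites as one of its two options. The paper gets the derivative formula by differentiating $P_uP_u^{-1}=\Id$ (the alternative you mention), and your explicit remainder identity simply proves the standard smoothness-of-inversion fact that the paper takes for granted.
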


\begin{proof}
Write
\[
  P_u=P_{u_0}\bigl(1+P_{u_0}^{-1}(P_u-P_{u_0})\bigr).
\]
Set
\[
 K_u=P_{u_0}^{-1}(P_u-P_{u_0})
 \in\mathcal L(W^{j+2,2},W^{j+2,2}).
\]
Operator-norm continuity gives $\|K_u\|<1$ after the neighbourhood is
shrunk.  Hence
\[
 (1+K_u)^{-1}=\sum_{m=0}^\infty(-K_u)^m
\]
converges in operator norm and
$P_u^{-1}=(1+K_u)^{-1}P_{u_0}^{-1}$.  This proves both persistence of
invertibility and continuity of the inverse.  Termwise differentiation
of the locally uniformly convergent series, or smoothness of inversion
on the open group of invertible bounded operators, proves smoothness to
all orders.  Finally, differentiating
$P_uP_u^{-1}=\Id$ gives
\[
 (\partial_{u_i}P_u)P_u^{-1}
 +P_u(\partial_{u_i}P_u^{-1})=0.
\]
Multiplication on the left by $P_u^{-1}$ yields
\eqref{eq:inverse-derivative}.
\end{proof}

\subsection{Joint regularity and fixed-dimensional kernels}

We give a detailed version of \cref{prop:parametric-bootstrap}.

\begin{theorem}[Joint regularity theorem]\label{thm:joint-regularity-app}
Let $P_u$ be a smooth family of second-order strongly elliptic operators and let
\[
  v:B\to W^{k+2,2}(V)
\]
be a $C^\infty$ map satisfying
\[
  P_uv(u)=f(u),
\]
where $f$ is jointly $C^\infty$ in $(u,x)$.  If the coefficients of $P$ are jointly smooth, then $v$ is jointly smooth.
\end{theorem}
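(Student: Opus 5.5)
The plan is a double induction: first on the order of parameter derivatives, then on the $X$-Sobolev order, with the uniform estimate of \cref{prop:uniform-estimate-app} as the only analytic input. I work locally on a relatively compact $B'\Subset B$ and fix a compact $K\supset\overline{B'}$, so that all coefficient norms of $P_u$ and their $u$-derivatives are bounded on $K$ and strong ellipticity is uniform there. The goal is to show that $u\mapsto v(u)$ is $C^\infty$ into $W^{\ell,2}(V)$ for every $\ell$, locally uniformly in $u$; joint smoothness then follows from Sobolev embedding.

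The first step is to raise the $X$-regularity with no parameter derivatives. For each fixed $u$, the function $f(u)$ is smooth, so ordinary elliptic regularity gives $v(u)\in C^\infty$. For uniformity I apply \cref{prop:uniform-estimate-app} inductively in $j$:
\[
  \|v(u)\|_{W^{j+2,2}}\le C_{j,K}\bigl(\|f(u)\|_{W^{j,2}}+\|v(u)\|_{L^2}\bigr).
\]
The right side is locally bounded because $f$ is jointly smooth and $v$ is continuous into $W^{k+2,2}\subset L^2$. For continuity in $u$ at level $j+2$, I apply the same estimate to $v(u)-v(u')$, which satisfies
\[
  P_u\bigl(v(u)-v(u')\bigr)=f(u)-f(u')-(P_u-P_{u'})v(u').
\]
The last term tends to zero in $W^{j,2}$ by \cref{lem:operator-norm-smoothness} together with the already established bound on $v(u')$ in $W^{j+2,2}$.

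The second step handles parameter derivatives. Since $v$ is $C^\infty$ into $W^{k+2,2}$, each $\partial_u^\alpha v$ exists there and satisfies the Leibniz identity
\[
  P_u(\partial_u^\alpha v)=\partial_u^\alpha f-\sum_{0<\beta\le\alpha}\binom{\alpha}{\beta}(\partial_u^\beta P_u)(\partial_u^{\alpha-\beta}v),
\]
where differentiation inside the operator is justified by operator-norm smoothness. By induction on $|\alpha|$, the lower derivatives $\partial_u^{\alpha-\beta}v$ are already continuous into every $W^{\ell,2}$. The operators $\partial_u^\beta P_u$ are second-order operators with smooth coefficients, so they map $W^{\ell+2,2}\to W^{\ell,2}$ continuously in $u$. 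The right side is therefore continuous into every $W^{j,2}$, and repeating the first step for $\partial_u^\alpha v$ puts it in every $W^{j+2,2}$ continuously. This establishes that $u\mapsto v(u)$ is $C^\infty$ into each $W^{\ell,2}$: the derivatives exist at the base level, and the bootstrap shows they are continuous at every higher level.

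The main obstacle is a subtle point: differentiability at a high Sobolev level does not follow merely from the derivatives being bounded in high norms. I would handle it by interpolation. The difference quotients converge in $W^{k+2,2}$ and are bounded in $W^{\ell+1,2}$ uniformly on $B'$, since the mean-value form expresses them through $\partial_u v$, which is continuous into $W^{\ell+1,2}$. Interpolating between these two facts shows that the difference quotients converge in $W^{\ell,2}$. Finally, I choose $\ell>m/2+a$. Then $\partial_u^\alpha v$ is continuous into $C^a(X)$, so every mixed derivative $\partial_u^\alpha\partial_x^\gamma v$ with $|\gamma|\le a$ is jointly continuous. Since $B'$ was arbitrary, $v$ is jointly smooth on $B\times X$.
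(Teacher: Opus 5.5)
Your proposal is correct and follows the same route as the paper's proof: elliptic regularity for each fixed $u$, the uniform estimate of \cref{prop:uniform-estimate-app} for local Sobolev bounds, the Leibniz identity for $P_u\partial_u^\alpha v$ with induction on $|\alpha|$, and finally Sobolev embedding. Your continuity argument via the difference equation and your handling of differentiability at higher Sobolev levels supply details the paper's proof only asserts; note that the mean-value representation already gives convergence of the difference quotients in $W^{\ell+1,2}$ directly, so the interpolation step is optional.
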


\begin{proof}
We first improve $X$-regularity without differentiating in $u$.  The equation and elliptic regularity imply that if $f(u)$ and the coefficients are smooth in $x$, then $v(u)$ is smooth in $x$ for each fixed $u$.

For uniform parameter control, fix $K\Subset B$.  The estimate of \cref{prop:uniform-estimate-app} gives
\[
  \norm{v(u)}_{W^{j+2,2}}
  \le C_{j,K}
  \bigl(\norm{f(u)}_{W^{j,2}}+\norm{v(u)}_{L^2}\bigr).
\]
Continuity of $v$ in the original Sobolev topology bounds the $L^2$ term.  Induction gives local boundedness in every Sobolev norm.

Now differentiate in $u_i$:
\[
  P_u\partial_{u_i}v
  =\partial_{u_i}f-(\partial_{u_i}P_u)v.
\]
The right side is bounded in arbitrary Sobolev order because $v$ has already been bootstrapped in $x$.  Thus $\partial_{u_i}v$ has arbitrary $X$-regularity.  Repeating for a multi-index $\beta$ gives
\[
  P_u\partial_u^\beta v
  =\partial_u^\beta f
  -\sum_{0<\gamma\le\beta}
  \binom{\beta}{\gamma}
  (\partial_u^\gamma P_u)
  (\partial_u^{\beta-\gamma}v).
\]
Induction on $\abs\beta$ and elliptic estimates yield smooth dependence into every Sobolev space.  Sobolev embedding then implies joint smoothness of all mixed derivatives.
\end{proof}

For polystable strata the kernel need not vanish but may have constant dimension.  The same elliptic framework then gives smooth kernel projections and inverses on the orthogonal complement.

The polystable analysis uses a Fredholm family with nonzero kernel.  Let $P_u$ be self-adjoint elliptic and suppose
\[
  \dim\ker P_u=N
\]
is constant near $u_0$.  Assume a spectral gap separates zero from the rest of the spectrum.  Choose a small circle $\Gamma$ around zero containing no other spectrum.  The spectral projection is
\begin{equation}\label{eq:Riesz-projection}
  \Pi_u=\frac1{2\pi\sqrt{-1}}
  \int_\Gamma(z-P_u)^{-1}\,dz.
\end{equation}

\begin{proposition}[Smooth kernel bundle]\label{prop:smooth-kernel-bundle-app}
Under the hypotheses above, $u\mapsto\Pi_u$ is smooth in operator norm.  The kernels form a smooth rank-$N$ vector bundle over the parameter neighbourhood, and the reduced inverse
\[
  G_u^\perp=(P_u|_{(1-\Pi_u)H})^{-1}
\]
varies smoothly.
\end{proposition}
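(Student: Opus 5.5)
The plan is to treat \(\Pi_u\) as a contour integral of resolvents and transfer smoothness from \(u\mapsto P_u\) through analytic operations on bounded operators.

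\emph{Step 1: the contour is admissible.} By \cref{lem:operator-norm-smoothness}, \(u\mapsto P_u\) is smooth into \(\mathcal L(W^{j+2,2},W^{j,2})\). Fix \(z\in\Gamma\). By hypothesis \(z\) is not in the spectrum of \(P_{u_0}\), and since \(P_{u_0}\) is elliptic the spectrum is discrete and governed by Fredholm theory. Hence \(z-P_{u_0}\colon W^{j+2,2}\to W^{j,2}\) is invertible. \Cref{prop:smooth-inverse-app}, with a constant in the Neumann series that is uniform over compact \(\Gamma\), shows that after shrinking the neighbourhood \(z-P_u\) stays invertible for all \(z\in\Gamma\). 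The same result shows that \((u,z)\mapsto(z-P_u)^{-1}\) is smooth into \(\mathcal L(W^{j,2},W^{j+2,2})\), jointly continuous in \(z\), with \(u\)-derivatives given by \eqref{eq:inverse-derivative}.

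\emph{Step 2: smoothness of the projection.} Differentiating under the integral sign in \eqref{eq:Riesz-projection} then shows that \(u\mapsto\Pi_u\) is smooth in operator norm, with values in \(\mathcal L(W^{j,2},W^{j+2,2})\) and hence in every \(\mathcal L(W^{j,2})\). Standard holomorphic functional calculus gives \(\Pi_u^2=\Pi_u\), and self-adjointness gives that \(\Pi_u\) is the \(L^2\)-orthogonal projection.

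\emph{Step 3: the kernel bundle.} The main point is to identify \(\operatorname{im}\Pi_u\) with \(\ker P_u\). The range of \(\Pi_u\) is the sum of the eigenspaces of \(P_u\) with eigenvalues inside \(\Gamma\). The rank of a continuous family of projections is locally constant, so this rank equals \(\operatorname{rank}\Pi_{u_0}=N\). Since \(\ker P_u\subset\operatorname{im}\Pi_u\) and \(\dim\ker P_u=N\) by hypothesis, the two spaces coincide. This is exactly where the constant-dimension hypothesis enters: without it, small nonzero eigenvalues could sit inside \(\Gamma\).

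Local triviality follows by the usual trick. Choose a basis \(e_a\) of \(\ker P_{u_0}\) and set \(e_a(u)=\Pi_ue_a\). These vectors are smooth in \(u\) and remain linearly independent near \(u_0\), because the Gram matrix is continuous and positive at \(u_0\). They therefore give smooth frames and make the kernels a smooth rank-\(N\) bundle.

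\emph{Step 4: the reduced inverse.} Since \(P_u\) commutes with \(\Pi_u\), it preserves \((1-\Pi_u)H\). Injectivity of \(P_u\) there follows from the kernel identification, and surjectivity from self-adjoint Fredholm theory. I would write
\[
  G_u^\perp=\frac{1}{2\pi\sqrt{-1}}\int_{\Gamma'}z^{-1}(z-P_u)^{-1}\,dz\,(1-\Pi_u),
\]
for a contour \(\Gamma'\) enclosing the remaining spectrum. That spectrum is unbounded, so this integral is awkward. It is simpler to use
\[
  G_u^\perp=\bigl(P_u+\Pi_u\bigr)^{-1}(1-\Pi_u).
\]
The operator \(P_u+\Pi_u\) is invertible: it acts as the identity on \(\ker P_u\) and as \(P_u\) on the complement. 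It is smooth in \(u\), so \Cref{prop:smooth-inverse-app} gives smoothness of its inverse, and hence of \(G_u^\perp\).
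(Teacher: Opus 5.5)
Your proof is correct. Steps 1--3 follow the paper's own argument: the resolvent is smooth in $u$ uniformly on the compact contour by the smooth-inverse proposition, you differentiate under the Riesz integral, and you obtain a local frame by applying $\Pi_u$ to a basis of $\ker P_{u_0}$.

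At one point you are more careful than the paper. The paper simply asserts that the ranges have constant rank $N$ and are the kernels, relying on the gap hypothesis holding for all nearby $u$. You instead derive $\operatorname{im}\Pi_u=\ker P_u$ from local constancy of the rank together with $\dim\ker P_u=N$. So your argument needs the gap only at $u_0$, and it shows exactly where the constant-dimension hypothesis enters.

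The genuine divergence is the reduced inverse. The paper gets invertibility on $(1-\Pi_u)W^{j+2,2}$ from a lower bound supplied by the gap. It then trivializes the moving complements using the projections and applies the smooth-inverse theorem to the conjugated fixed-space family; that identification of moving subspaces is left implicit. Your identity $G_u^\perp=(P_u+\Pi_u)^{-1}(1-\Pi_u)$ avoids moving subspaces altogether. The operator $P_u+\Pi_u$ is a smooth family in $\mathcal L(W^{j+2,2},W^{j,2})$, and it is block-diagonal with invertible blocks relative to $\ker P_u\oplus(1-\Pi_u)W$. Smoothness of $G_u^\perp$ therefore reduces to smoothness of inversion composed with a smooth projection.

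Your route is shorter and fully explicit. The paper's version keeps the spectral gap visible as the quantitative bound on $G_u^\perp$, which is the quantity it tracks in its spectral-degeneration remarks.

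Incidentally, the contour formula you set aside does work over the small circle itself. Once $0$ is the only spectrum of $P_u$ inside $\Gamma$, which your Step 3 establishes, one has $G_u^\perp=-\frac{1}{2\pi\sqrt{-1}}\int_\Gamma z^{-1}(z-P_u)^{-1}\,dz$, and this is smooth in $u$ by your Step 1.
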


\begin{proof}
Choose the contour $\Gamma$ inside the fixed spectral gap.  For every $z\in\Gamma$, the operator $z-P_u$ is invertible for $u$ near $u_0$.  By \cref{prop:smooth-inverse-app},
\[
  u\longmapsto (z-P_u)^{-1}
\]
is smooth in operator norm, uniformly for $z$ on the compact contour.  Differentiation may therefore be passed under the contour integral in \eqref{eq:Riesz-projection}, proving that $u\mapsto\Pi_u$ is smooth in operator norm.

Because $\Pi_u$ is a smooth family of projections of constant finite rank $N$, its ranges form a smooth rank-$N$ subbundle: locally, a basis of $\operatorname{ran}\Pi_{u_0}$ remains linearly independent after applying $\Pi_u$ for $u$ close to $u_0$.  Self-adjointness gives the complementary smooth subbundle $\operatorname{ran}(1-\Pi_u)$.

The spectral gap also gives a uniform lower bound for $|P_u|$ on this complementary subspace.  Hence the restricted operator
\[
  P_u:(1-\Pi_u)W^{j+2,2}\to(1-\Pi_u)W^{j,2}
\]
is invertible.  Trivialize the moving complementary bundles by the smooth projections and apply the smooth inverse theorem to the resulting fixed-space family.  This proves smooth dependence of the reduced inverse $G_u^\perp$.
\end{proof}

The constant decomposition-type theorem gives a more explicit version of this spectral construction.

The preceding smooth-dependence statements have a real-analytic analogue whenever the coefficient family is analytic in operator norm.  The argument is local in the parameter and uses the same spectral separation as above.

Suppose $u\mapsto P_u$ is real analytic in operator norm.  Invertibility at $u_0$ implies analytic dependence of the inverse near $u_0$.  One can see this from the Neumann series
\[
  P_u^{-1}
  =\bigl(1+P_{u_0}^{-1}(P_u-P_{u_0})\bigr)^{-1}P_{u_0}^{-1}
\]
and analytic dependence of the geometric series.  The same argument applies to the resolvent and Riesz projection when the kernel dimension is fixed.

We finish the appendix by applying these general statements to the Jacobi family arising from the Hitchin--Simpson equation.

For the harmonic metric equation,
\[
  P_u=L_{h_u}=(\cD''_u)^{*h_u}\cD''_u.
\]
Its principal symbol is the scalar Laplace symbol, independent of the Higgs zero-order term.  On a stable normalized family it is invertible.  Therefore all results above apply.  On a constant-type polystable family, the kernel projection is explicitly determined by the centralizer and is smooth.

\section{Normalization, scalar ambiguity, and determinant data}\label{app:normalization}

\subsection{Scalar ambiguity and integral normalization}

Let $(E,\cD'')$ be stable and let $h_1,h_2$ be harmonic.  The standard uniqueness theorem implies
\[
  h_2=c\,h_1
\]
for some $c>0$.  In a family, the scalar may be a positive function $c(u)$.  Without normalization, a smooth family of harmonic metrics is therefore not unique.  This ambiguity is harmless for some constructions but not for a literal implicit-function inverse.

Uniqueness up to scalar is converted into an actual choice by fixing an integral normalization.  The formula below is convenient because it behaves smoothly in families.

Given a smooth background metric $\kappa$, define
\[
  N_\kappa(h)
  =\frac1{r\Vol_\omega(X)}
  \int_X\log\det(\kappa^{-1}h)\,\frac{\omega^n}{n!}.
\]
Then
\[
  N_\kappa(ch)=N_\kappa(h)+\log c.
\]
Hence every scalar class of metrics contains a unique representative satisfying
\[
  N_\kappa(h)=0.
\]
If $h(u)$ is smooth, the normalizing scalar
\[
  c(u)=e^{-N_\kappa(h(u))}
\]
is smooth.

\begin{proposition}[Equivalence of normalizations]\label{prop:equiv-normalizations}
On a stable family, determinant normalization and integral normalization differ by multiplication by a unique smooth positive scalar function of the parameter.  Either normalization yields exactly the same relative flat connection, because the two metrics differ by a positive scalar depending only on the parameter and hence constant in the $X$-directions.
\end{proposition}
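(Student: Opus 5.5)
The plan is to reduce the comparison to fibrewise scalar uniqueness and then control the regularity of the scalar. Let $h^q$ be the determinant-normalized family from \cref{thm:intro-smooth-metrics} and $h^\kappa$ the integrally normalized one. On each slice both are harmonic metrics on the same stable Higgs bundle. By the uniqueness-up-to-scalar statement recalled at the start of this appendix, which is the content of \cref{prop:normalized-uniqueness} before the determinant is fixed, there is a unique $c(u)>0$ with $h^\kappa_u=c(u)h^q_u$. Uniqueness of $c(u)$ follows on taking determinants: $\det h^\kappa_u=c(u)^r q_u$ pins $c(u)$ down pointwise on $X$.

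Next I show $c$ is smooth. I would not argue abstractly. Instead I identify $c$ explicitly with the transformation law $N_\kappa(ch)=N_\kappa(h)+\log c$. Since $N_\kappa(h^\kappa)_u=0$, we get $\log c(u)=-N_\kappa(h^q)_u$. The map $u\mapsto N_\kappa(h^q)_u$ is smooth, because $h^q$ and $\kappa$ are jointly smooth on $U\times X$ and positive-definite. Hence $\log\det(\kappa^{-1}h^q)$ is jointly smooth, and integration over compact $X$ against the fixed volume form preserves smoothness in $u$, by differentiation under the integral. So $c=\exp(-N_\kappa(h^q))$ is a smooth positive function of $u$ alone. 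Equivalently, $c(u)^r=(\det h^\kappa_u)/q_u$ is constant in $x$. This identifies the two normalized families as $h^\kappa=e^{-N_\kappa(h^q)\circ\operatorname{pr}_U}h^q$.

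Finally, for the flat connections I would apply \cref{prop:scalar-independence} with the smooth function $\log c:U\to\R$. A scalar factor constant along the $X$-slices is killed by $\partial_{X/U}$, so the relative Chern operator is unchanged. It also cancels in the defining identity for $\theta^{\dagger_h}$. Therefore $\cD'_{h^\kappa}=\cD'_{h^q}$, and the two relative flat connections $\nabla=\cD''+\cD'$ coincide literally, not merely up to gauge.

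The only step needing care is the smoothness of $c$. The pointwise scalar from the uniqueness theorem has no a priori regularity, so the key move is to express it through $N_\kappa$ rather than argue fibre by fibre. Everything else is a direct invocation of earlier results.
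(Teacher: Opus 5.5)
Your proposal is correct and follows essentially the same route as the paper: fibrewise uniqueness up to a positive scalar, identification of that scalar through the transformation law $N_\kappa(ch)=N_\kappa(h)+\log c$ (the paper also offers the determinant ratio $c(u)^r=\det h_{\mathrm{int}}(u)/\det h_{\det}(u)$), and then \cref{prop:scalar-independence} for the flat connection. Making the $N_\kappa$ formula your primary smoothness argument is slightly cleaner, since it needs only the smoothness of $h^q$ and does not presuppose that the integrally normalized family is already known to be smooth.
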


\begin{proof}
Let $h_{\det}(u)$ and $h_{\mathrm{int}}(u)$ denote harmonic metrics selected by determinant and integral normalization, respectively.  Stability implies that for every $u$ there is a unique positive scalar $c(u)$ with
\[
  h_{\mathrm{int}}(u)=c(u)h_{\det}(u).
\]
The scalar is smooth.  For example, taking determinants gives
\[
  c(u)^r=\frac{\det h_{\mathrm{int}}(u)}{\det h_{\det}(u)},
\]
and the right side is a smooth positive function.  Equivalently, applying $N_\kappa$ gives
\[
  N_\kappa(c h)=N_\kappa(h)+\log c,
\]
so the integral normalization also determines $c$ smoothly and uniquely.

The scalar depends only on the parameter $u$.  Therefore it is constant along the $X$-directions, and the calculation of \cref{prop:scalar-independence} applies: the relative Chern connection is unchanged, as is the adjoint Higgs field.  Consequently
\[
  \nabla_{c(u)h}=\nabla_h.
\]
Thus the two normalization procedures choose different representatives of the same scalar class of harmonic metrics but produce exactly the same relative flat connection.
\end{proof}

\subsection{Determinant normalization in families}

If $h_t=he^{ts}$, then
\[
  \frac d{dt}\bigg|_0\log\det h_t=\tr(s).
\]
Thus fixing the determinant is exactly the linear condition
\[
  \tr(s)=0.
\]
The scalar kernel of $L_h$ is removed by restricting to this trace-free subspace.  By contrast, integral normalization imposes a mean-zero trace condition.  It also removes the scalar kernel but is slightly less local in $x$.

The determinant line packages the scalar normalization globally over the parameter space and separates it from the trace-free nonlinear equation.

The determinant holomorphic structure is induced from $\ddbar_E$.  The Higgs field on $\det E$ is $\tr\theta$.  The determinant moment-map equation is abelian.  Degree zero makes its Hermitian--Einstein constant vanish, and the scalar Poisson argument of \cref{prop:smooth-det} gives a smooth normalized solution on the determinant line over all of $U\times X$.  A global background metric exists without trivializing the line, and the fixed Green operator acts only on the $X$-variable; hence parameter-space monodromy creates no obstruction.  Under the NAH numerical hypotheses used in the main theorem, the subsequent Chern--Weil step is what places this determinant equation inside the flat harmonic correspondence.  Thus no separate nonlinear or descent difficulty is hidden in the determinant part of the implicit-function argument.

In a fixed-determinant problem this scalar part disappears from the tangent calculation, leaving only the trace-free Jacobi equation.

If the family is given in a fixed determinant moduli problem with a prescribed holomorphic determinant line and a prescribed Hermitian--Einstein determinant metric, the determinant step disappears.  The nonlinear map is then defined directly on trace-free Hermitian endomorphisms.  This is the simplest setting for the local analysis and is the usual one for fixed-determinant $SL_r$ problems.

The contrast with the polystable case is worth making explicit.  Fixing the determinant removes only one scalar direction and does not eliminate the higher-dimensional centralizer freedom.

For a polystable decomposition
\[
  E=\bigoplus_jE_j\otimes W_j,
\]
fixing $\det h$ imposes one scalar equation on
\[
  \prod_j\operatorname{Herm}^+(W_j).
\]
It does not collapse the symmetric-space fibre unless there is only one stable factor of multiplicity one.  Thus determinant normalization is sufficient precisely on the stable locus and not on the general polystable locus.

\section{Comparison with the first diffeological paper}\label{app:dictionary}

The notation and constructions used here build directly on \cite{AzamRayan2026}.  The following dictionary identifies the main objects in the two papers.

\subsection{Moduli stacks and harmonic families}

The earlier paper constructs diffeological stacks
\[
  \MdiffDol(X),\qquad\MdiffdR(X)
\]
from dg-prestacks of relative connections with respect to partial Dolbeault and partial exterior differentials.  The present paper uses the same stacks but works, for the analytic sections, in local smooth trivializations of a plot.

The notation
\[
  \MHDol(X),\qquad\MHdR(X)
\]
denotes the substacks generated from the harmonic mediator under finite iterated extensions and stackification.  The equivalence
\[
  \MHDol(X)\simeq\MHdR(X)
\]
is the principal theorem of \cite{AzamRayan2026}.

The mediator in the earlier paper is the prestack of harmonic families.  The present analytic theorem shows that every stable family lifts to an object of that mediator over the same parameter manifold.  Thus stable objects are already in the essential image of its Dolbeault forgetful map before extension completion or stackification; choosing a determinant normalization makes the metric unique, although that normalization is not canonical.

Definition~\ref{def:weak-C0-mediator} introduces a different enlargement at $C^0$ parameter regularity.  It retains mixed $C^0_UC^\infty_X$ operators $(\cD'',\cD')$ and requires only that $\cD'_u$ be induced by some smooth harmonic metric on each slice; the witnesses need not vary continuously.  The original $C^0$ harmonic mediator is the metric-regular subprestack.  Proposition~\ref{prop:weak-flat-lift-square-root} shows that the inclusion is strict, while Proposition~\ref{prop:weak-extension-correspondence} applies the earlier categorical extension construction to the continuous operator data.  This weak mediator was not part of \cite{AzamRayan2026}; it is motivated by the compensated regularity discovered in the square-root family.

A smooth harmonic family in the earlier paper consists of relative operators satisfying the harmonic bundle identities slicewise and smoothly in the parameter.  In the present paper such a family is written
\[
  (E,\cD'',h),
  \qquad
  \cD''=\ddbar_E+\theta,
\]
with
\[
  \cD'_h=\partial_{E,h}+\theta^{\dagger_h}.
\]
The flat connection is
\[
  \nabla_h=\cD''+\cD'_h.
\]

\subsection{Open questions, extension completion, and geometricity}

Question 5.3.1 of \cite{AzamRayan2026} asks whether fibrewise harmonic metrics on a smooth polystable family can be chosen smoothly.  The present paper proves:
\begin{enumerate}[label=(\roman*)]
\item a global smooth-existence theorem for stable families, obtained from local analytic dependence and normalized gluing, \cref{thm:intro-smooth-metrics};
\item a theorem for locally split constant-type polystable families, \cref{thm:constant-type};
\item a groupoid description of the residual ambiguity, \cref{prop:metric-groupoid-poly};
\item explicit real-analytic polystable families with no continuous family of harmonic metrics, \cref{prop:coalescing-lines-filtered,prop:square-root-polystable-failure};
\item a stratumwise question for characterizing the remaining liftable polystable loci, Question~\ref{ques:stratified-poly}.
\end{enumerate}

Question 5.3.2 asks whether all semistable or polystable families are locally finite iterated extensions of harmonic families.  The answer is negative: \cref{prop:square-root-polystable-failure} is polystable and is outside the extension-generated stack even locally at its central parameter.  The exact positive statement is the relative harmonic filtration criterion \cref{thm:filtration-criterion}; the obstruction theory developed here studies the assembly of the required fibrewise filtrations.  The contrasting family in \cref{prop:coalescing-lines-filtered} shows that a total harmonic metric is not necessary for extension-generated membership.

Question 5.3.3 asks about the relation between fibrewise quasi-fullness and local extension from harmonic bundles.  Our obstruction theory suggests one link: failure to lift an invariant subobject is detected in the hypercohomology of a Hom deformation complex, exactly the type of complex on which quasi-fullness questions are formulated.  A complete equivalence remains open.

Question 5.3.4 asks about $C^d$ parameter families and, beyond them, families over $C^\infty$-schemes.  \Cref{thm:Cd-stable-metrics} gives a global positive answer on the stable locus for every $d=0,1,2,\ldots,\infty$, with no loss of parameter regularity.  \Cref{thm:Cd-filtration-criterion,cor:Cd-semistable-criterion} give the corresponding positive criterion when a relative filtration of the required regularity exists, while \cref{prop:square-root-polystable-failure} shows that there are real-analytic polystable families outside $\mathscr M_{\Dol,d}^{\cH}(X)$ for every $d$.  \Cref{thm:ambient-singular-stable} treats a class of reduced singular parameter spaces by pulling the ambient coefficient-space solution operator back from a smooth presentation.  The genuinely nonreduced $C^\infty$-scheme problem remains open and is isolated in \cref{prob:nonreduced-Cinfty}.  The heat-flow discussion of \cref{sec:heat-flow-geometricity} gives a conceptual reason for the same boundary: an unenhanced harmonic-metric limit reaches the polystable shadow, while the extension-generated stack remembers the transverse extension data.

The finite extension completion is the categorical operation that enlarges these harmonic families to the semistable substack described geometrically by relative harmonic filtrations in this paper.

In the earlier paper, finite iterated extensions are categorical.  The present paper's \cref{prop:filtration-extension} identifies them with geometric filtrations by smooth invariant subbundles.  In a chosen splitting, the extension completion is represented by upper-triangular matrices \eqref{eq:upper-triangular-D} satisfying the Maurer--Cartan equations \eqref{eq:upper-MC}.

The resulting comparison is genuinely diffeological: plots retain parameter-dependent extension data that need not descend to a continuous map of coarse moduli spaces.

The earlier paper proves that the family moduli stacks are presented by diffeological groupoids.  We do not repeat those constructions.  The new analytic contribution is instead to show that on stable plots the harmonic metric and transformed connection are genuinely smooth in the parameter.  Thus the harmonic mediator is not merely a slicewise existence device on that locus; it defines a smooth operation on plots.

\section{Detailed variation formulas in local coordinates}\label{app:local-formulas}

This appendix expands several invariant formulas used in the text, principally to make the sign conventions checkable.

\subsection{Chern and adjoint variations}

Fix a holomorphic structure $\ddbar_E$ and a Hermitian metric $h$.  Let
\[
  h_t=he^{ts}
\]
with $s=s^{\dagger_h}$.  In a local frame, write the $(1,0)$ Chern connection matrix as
\[
  A_h^{1,0}=H^{-1}\partial H+\text{terms from }A^{0,1}.
\]
Then
\[
  \dot A_h^{1,0}=\partial_hs.
\]
Consequently
\[
  \frac d{dt}\bigg|_0F_{D_{h_t}}
  =\ddbar_E\partial_hs
\]
when the holomorphic structure is fixed.

The corresponding variation of the adjoint Higgs field is computed in the same local frame and uses the metric logarithm introduced earlier.

The adjoint is characterized by
\[
  h(\theta v,w)=h(v,\theta^{\dagger_h}w).
\]
Under $h_t=he^{ts}$,
\[
  \dot\theta^{\dagger}
  =[\theta^{\dagger_h},s].
\]
Therefore
\begin{align*}
  \frac d{dt}\bigg|_0[\theta,\theta^{\dagger_{h_t}}]
  &=[\theta,[\theta^{\dagger_h},s]].
\end{align*}
The combination with the Chern curvature variation is the Higgs Laplacian $L_hs$ after contraction with $\sqrt{-1}\Lambda$.

\subsection{Expanded Jacobi and simultaneous variations}

On a Hermitian endomorphism $s$, contraction with $\Lambda$ leaves the $(1,1)$ terms
\begin{equation}\label{eq:expanded-Jacobi}
  L_hs
  =\sqrt{-1}\Lambda\left(
  \ddbar_E\partial_hs
  +[\theta,[\theta^{\dagger_h},s]]
  \right).
\end{equation}
Indeed, the remaining terms in $\cD''\cD'_hs$ have types $(2,0)$ and $(0,2)$ and are annihilated by $\Lambda$.  The invariant identity
\[
  L_h=(\cD'')^*\cD''
\]
keeps the positivity and convention dependence transparent.

Combining the two elementary variations gives the full source term for a general Higgs deformation.

Let
\[
  \ddbar_{E,t}=\ddbar_E+ta+O(t^2),
  \qquad
  \theta_t=\theta+t\varphi+O(t^2).
\]
At fixed $h$, the companion variation of the Chern--Higgs operator is
\[
  \eta^{\star_h}=-a^{\dagger_h}+\varphi^{\dagger_h}.
\]
Accordingly, the fixed-metric derivative of the moment map is
\begin{align}\label{eq:expanded-source-app}
  \cS_h(a,\varphi)
  =\sqrt{-1}\Lambda\Bigl(&
  \partial_{E,h}a-\ddbar_Ea^{\dagger_h}
  +[\varphi,\theta^{\dagger_h}]
  +[\theta,\varphi^{\dagger_h}]
  \Bigr)_0.
\end{align}
Formula \eqref{eq:source-operator} remains the invariant definition.

\subsection{Differential and second variation of the transform}

In a fixed local smooth gauge and for a fixed-determinant plot direction,
\[
  s=-G_h\cS_h(a,\varphi)
\]
and a connection-valued representative of the transformed derivative is
\begin{align}\label{eq:expanded-dNAH-app}
  \dot\nabla
  =&\ a+\varphi-a^{\dagger_h}+\varphi^{\dagger_h}
  \\
  &\ -\cD'_hG_h\cS_h(a,\varphi).
\end{align}
The first line is the fixed-metric companion of the Higgs deformation; the second is the correction produced by variation of the harmonic metric.  A change of smooth gauge adds a $d_{\nabla_h}$-exact term.

A second differentiation shows where derivatives of the Green operator and quadratic source terms enter.  The schematic form needed for the higher-variation discussion is given below.

Let $p_t$ denote the Higgs data and $s_t$ the logarithmic metric coordinate, and set
\[
  p_i=\frac{d^ip_t}{dt^i}\bigg|_{t=0},
  \qquad
  s_i=\frac{d^is_t}{dt^i}\bigg|_{t=0}.
\]
The equation is
\[
  \Phi(p_t,s_t)=0.
\]
At first order,
\[
  Ls_1=-\Phi_p p_1.
\]
At second order,
\begin{equation}\label{eq:second-derivative-app}
  Ls_2
  =-\Phi_pp_2
  -\Phi_{pp}(p_1,p_1)
  -2\Phi_{ps}(p_1,s_1)
  -\Phi_{ss}(s_1,s_1).
\end{equation}
This formula is coordinate- and slice-dependent through the derivatives of $\Phi$.  It records the recursive structure used in \cref{sec:further}; no invariant higher-jet theorem is asserted here.

\section{Dependencies among the main results}\label{app:dependency}

The following table records the principal inputs used by the main results.

\begin{longtable}{@{}p{0.29\textwidth}p{0.62\textwidth}@{}}
\toprule
Result & Main input \\
\midrule
\endfirsthead

\toprule
Result & Main input \\
\midrule
\endhead

\midrule
\multicolumn{2}{r@{}}{\small\itshape Continued on next page} \\
\endfoot

\bottomrule
\endlastfoot
\Cref{prop:smooth-det} & Scalar Poisson equation and fixed Green operator \\
\Cref{prop:Jacobi} & Harmonic-bundle K\"ahler identities \\
\Cref{prop:stable-kernel} & Energy identity plus stability implies simplicity \\
\Cref{thm:intro-smooth-metrics} & Global determinant Poisson solve, Banach implicit-function theorem, parameter elliptic bootstrap, and normalized gluing \\
\Cref{thm:smooth-stable-transform} & Smooth metric family plus pullback uniqueness \\
\Cref{thm:first-variation-metric} & Differentiated moment map plus Green operator \\
\Cref{thm:differential-plot} & Variation of $\cD'_h$ and metric first variation \\
\Cref{thm:constant-type} & Stable theorem on each factor plus polystable uniqueness and centralizer decomposition \\
\Cref{prop:coalescing-lines-filtered} & Orthogonality of distinct stable eigensummands plus the visible invariant line filtration \\
\Cref{prop:square-root-polystable-failure} & Coalescing orthogonal eigenlines plus the monodromy obstruction to a continuous square root \\
\Cref{prop:weak-flat-lift-square-root} & Explicit singular determinant-normalized metric, compensated continuity of the adjoint Higgs field, and holonomy-trace regularity \\
\Cref{prop:weak-mediator-functoriality} & Pullback of continuous relative operators and slicewise harmonic witnesses \\
\Cref{prop:weak-extension-correspondence} & $C^0$ operator-level $\lambda$-$d$ family plus finite extension completion and stackification \\
\Cref{thm:filtration-criterion} & Extension completion plus filtration--extension equivalence \\
\Cref{thm:infinitesimal-lifting} & Linearized invariant-projection equation \\
\Cref{thm:kuranishi-subobject} & Hodge decomposition of the elliptic Hom complex, graph Bianchi identity, and implicit-function theorem \\
\Cref{thm:smooth-lifting-subobject} & Vanishing of the finite-dimensional obstruction space \\
\Cref{thm:Cd-stable-metrics} & Ambient coefficient-space implicit-function theorem plus finite-regularity elliptic bootstrap \\
\Cref{thm:Cd-filtration-criterion} & Finite extension completion in mixed $C^d_UC^\infty_X$ regularity \\
\Cref{thm:ambient-singular-stable} & Pullback of the ambient solution operator plus normalized uniqueness \\
\Cref{prop:ordinary-heat-polystable} & Hitchin--Simpson correspondence and polystability direction \\
\Cref{prop:heat-flow-factors-shadow} & Closed complex-gauge orbit represented by the Jordan--H\"older graded object \\
\Cref{prop:extension-completion-receptacle} & Relative harmonic filtration criterion and finite extension completion \\
\Cref{thm:stable-Hodge-family} & Smooth metric theorem plus harmonic bicomplex identities \\
\end{longtable}

The chart also makes clear which statements are unconditional theorems and which directions remain open.  In particular, Question~\ref{ques:stratified-poly} is not used as an input to any theorem.

\pagebreak

\end{document}